\newtheorem{theorem}{{\bf Theorem}}[section]
\newtheorem{proposition}[theorem]{{\bf Proposition}}
\newtheorem{remark}[theorem]{{\bf Remark}}
\newcommand{\ZZ}{ \ensuremath{\mathbb{Z}}}
\begin{document}

\title{Quotient maps of 2,3-uniform tilings of the plane on the torus}

	\author[1] {Marbarisha M. Kharkongor}
 	\author[2] {Debashis Bhowmik}
 	\author[1] {Dipendu Maity}
 	\affil[1]{Department of Sciences and Mathematics,
 		Indian Institute of Information Technology Guwahati, Bongora, Assam-781\,015, India.\linebreak
 		\{marbarisha.kharkongor, dipendu\}@iiitg.ac.in/\{marbarisha.kharkongor, dipendumaity\}@gmail.com.}
 	\affil[2]{Department of Mathematics, Indian Institute of Technology Patna, Patna 801\,106, India.
 		debashisiitg@gmail.com.}

\date{\today}

\maketitle

\begin{abstract}

A 2-uniform tiling is an edge-to-edge tiling by regular polygons having $2$ distinct transitivity classes of vertices. 
There are 20 distinct 2-uniform tilings (these are of $14$ different types) on the plane, and since the plane is the universal cover of the torus, it is natural to explore maps on the torus that correspond to the 2-uniform tilings. In this article, we discuss that if a map is the quotient of a plane's $2$-uniform lattice then what would be the bounds of the number of vertex orbits.

A 3-uniform tiling is an edge-to-edge tiling by regular polygons having $3$ distinct transitivity classes of vertices. There are $61$ distinct $3$-uniform tilings on the plane.   
In this article, we discuss that if a map is the quotient of a plane's $3$-uniform lattice then what would be the bounds of the number of vertex orbits.
\end{abstract}

\noindent {\small {\em MSC 2010\,:} 52C20, 52B70, 51M20, 57M60.

\noindent {\em Keywords:} Polyhedral map on torus; 2-uniform maps; 2-semiequivelar maps; Symmetric group.}

\section{Introduction}

A map is a connected $2$-dimensional cell complex on a surface. Equivalently, it is a cellular embedding of a connected graph on a surface. 
For a map $\mathcal{K}$, let $V(\mathcal{K})$ be the vertex set of $\mathcal{K}$ and $u\in V(\mathcal{K})$. The faces containing  $u $ form a cycle (called the {\em face-cycle} at  $u $)  $C_u $ in the dual graph  of  $\mathcal{K} $. That is,  $C_u $ is of the form  $(F_{1,1}\mbox{-}\cdots\mbox{-}F_{1,n_1})\mbox{-}\cdots\mbox{-}(F_{k,1}\mbox{-}\cdots \mbox{-}F_{k,n_k})\mbox{-}$ $F_{1,1} $, where  $F_{i,\ell} $ is a  $p_i $-gon for  $1\leq \ell \leq n_i $,  $1\leq i \leq k $,  $p_r\neq p_{r+1} $ for  $1\leq r\leq k-1 $ and  {$p_k\neq p_1 $}. In this case, the vertex $u$ is said to be of type $ [p_1^{n_1}, \dots, p_k^{n_k}]$ (addition in the suffix is modulo  $k $).
A map  $\mathcal{K} $ is said to be {\em k-semiequivelar} (or in short, $semiequivelar$) if  $V(\mathcal{K}) = V_1 \sqcup V_2 \sqcup \dots \sqcup V_k$ such that the type of the vertices of $V_i$ ($1 \le i \le k$) is same and  the types of the vertices in $V_i$ and $V_j$ ($i \ne j$, $1 \le i, j \le k$) are  different. In this article, we use semiequivelar to mean $k$-semiequivelar for some $k$. 
A semiequivelar map is said to be an $equivelar$ map if it consists of single type of faces with all the vertices are of same type.   

A {\em $2$-uniform tiling} is a tiling of the plane having $2$ distinct transitivity classes of vertices. A vertex-transitive map (or tiling) is a map (or tiling) on a closed surface (or on the plane) on which the automorphism group acts transitively on the set of vertices. A $2$-uniform tiling or map will have vertices that we could label $X$, and others that we could label $Y$. Each $X$ vertex can be mapped onto every other $X$ vertex, but cannot be mapped to any $Y$ vertex. 


A {\em semiregular} tiling of $\mathbb{R}^2$ is also known as {\em Archimedean}, or {\em homogeneous}, or {\em uniform} tiling. In \cite{GS1977}, Gr\"{u}nbaum and Shephard showed that there are exactly eleven types of Archimedean tilings on the plane. These types are $[3^6]$, $[3^4,6^1]$, $[3^3,4^2]$,  $[3^2,4^1,3^1,4^1]$, $[3^1,6^1,3^1,6^1]$, $[3^1,4^1,6^1,4^1]$, $[3^1,12^2]$, $[4^4]$, $[4^1,6^1,12^1]$, $[4^1,8^2]$, $[6^3]$.
Clearly, a {\em semiregular} tiling on $\mathbb{R}^2$ gives a semiequivelar map on $\mathbb{R}^2$. But, there are semiequivelar maps on the plane which are not (not isomorphic to) an Archimedean tiling. In fact, there exists $[p^q]$ equivelar maps on $\mathbb{R}^2$ whenever $1/p+1/q<1/2$ (see in \cite{CM1957}, \cite{FT1965}). Thus, we have

\begin{proposition} \label{prop:plane}
There are infinitely many types of equivelar maps on the plane $\mathbb{R}^2$.
\end{proposition}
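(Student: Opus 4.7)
The plan is simply to invoke the existence result quoted immediately before the proposition and to exhibit an infinite family of admissible parameters. By the cited references \cite{CM1957, FT1965}, for each pair of integers $(p,q)$ with $p,q\ge 3$ and $1/p+1/q<1/2$, there exists an equivelar map of type $[p^q]$ on $\mathbb{R}^2$ (these are the planar realizations of the hyperbolic $\{p,q\}$-tilings, whose underlying topological surface is $\mathbb{R}^2$). So it is enough to produce infinitely many such pairs that yield pairwise distinct types.

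First, I would fix $q=3$ and let $p$ range over integers with $p\ge 7$. The inequality $1/p+1/3<1/2$ is equivalent to $p>6$, so the entire infinite sequence $p=7,8,9,\dots$ is admissible. This yields an infinite family of equivelar maps on $\mathbb{R}^2$, one of type $[p^{3}]$ for each $p\ge 7$. (Any other infinite family works equally well, e.g.\ the diagonal family $(p,q)=(n,n)$ with $n\ge 5$, since $2/n\le 2/5<1/2$.)

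Next, I would observe that two equivelar maps of different types $[p^q]$ and $[p'^{q'}]$ cannot be isomorphic, because any map isomorphism carries the face-cycle at a vertex to the face-cycle at its image vertex and preserves the size of each face. Hence the type is a combinatorial invariant, and the maps constructed for distinct values of $p$ are pairwise non-isomorphic. This gives infinitely many types of equivelar maps on $\mathbb{R}^2$, which is the content of the proposition.

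There is no real obstacle here: the substantive statement (existence of $[p^q]$ equivelar planar maps for $1/p+1/q<1/2$) is cited, and the only remaining points — producing infinitely many admissible $(p,q)$ and noting that the type is an isomorphism invariant — are immediate.
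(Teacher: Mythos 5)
Your proposal is correct and follows essentially the same route as the paper, which simply cites the existence of $[p^q]$ equivelar maps on $\mathbb{R}^2$ whenever $1/p+1/q<1/2$ (via \cite{CM1957}, \cite{FT1965}) and concludes immediately. You merely make explicit the two trivial steps the paper leaves implicit, namely exhibiting infinitely many admissible pairs $(p,q)$ and noting that the type is an isomorphism invariant.
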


We know that the plane is the universal cover of the torus. Since there are infinitely many equivelar maps on the plane, it is natural to ask what are the other types of semiequivelar maps that exist on the torus.  Here we have the following result. 

\begin{proposition} \cite{DM2017, DM2018} \label{theo:GrSh}
Let $X$ be a semiequivelar map on a surface $M$. If $M$ is the torus then the type of $X$ is $[3^6]$, $[6^3]$, $[4^4]$, $[3^4,6^1]$, $[3^3,4^2]$, $[3^2,4^1,3^1,4^1]$,  $[3^1,6^1,3^1,6^1]$,
$[3^1,4^1,6^1,4^1]$,  $[3^1,12^2]$, $[4^1,8^2]$  or $[4^1,6^1,12^1]$.
\end{proposition}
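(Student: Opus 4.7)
My plan is to reduce the problem to a Diophantine enumeration via the Euler-characteristic constraint, rule out the arrangements that fail to extend to a plane tiling, and finally treat the non-orientable case by a symmetry argument.

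First I would set up the counting. Fix a semiequivelar map $X$ of type $[p_1^{n_1}, \ldots, p_k^{n_k}]$ on $M$ with $\chi(M) = 0$. Let $V$, $E$, $F$ denote the number of vertices, edges, and faces, and put $n = n_1 + \cdots + n_k$. Double-counting vertex-edge incidences gives $2E = nV$, and for each integer $p \geq 3$ the number of $p$-gonal faces equals $V \bigl(\sum_{i : p_i = p} n_i\bigr)/p$ by double-counting vertex-face incidences. Substituting into $V - E + F = 0$ and dividing by $V$ yields
\begin{equation*}
\sum_{i=1}^{k} n_i\,\frac{p_i - 2}{p_i} \;=\; 2,
\end{equation*}
which is the Euclidean angle-sum condition at the vertex.

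Next I would enumerate the solutions. Each summand $n_i(1 - 2/p_i)$ is at least $1/3$, so $n \leq 6$; a finite case analysis over $k$ and $p_i \geq 3$ produces finitely many candidate multisets. For each multiset one must check which cyclic orders are realizable as a plane tiling: a short local-extension argument (as in Gr\"{u}nbaum--Shephard) shows that multisets such as $\{5,5,10\}$, $\{3,7,42\}$, $\{3,8,24\}$, $\{3,9,18\}$, $\{3,10,15\}$, $\{4,5,20\}$ satisfy the equation but cannot be propagated consistently past a single vertex, whereas $\{3,3,3,4,4\}$ realizes two distinct arrangements $[3^3,4^2]$ and $[3^2,4^1,3^1,4^1]$. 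This leaves exactly the eleven Archimedean types in part~(a).

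For part~(b), a semiequivelar map on the Klein bottle lifts under the universal cover to a semiequivelar plane map of the same type, and the deck-transformation group is a subgroup of its combinatorial symmetry group containing at least one orientation-reversing element. Among the eleven Archimedean tilings, the snub trihexagonal tiling $[3^4,6^1]$ is the unique chiral one; its symmetry group is the wallpaper group $p6$, which consists only of rotations and translations and hence admits no non-orientable quotient. Excluding this type from the list in (a) leaves the ten types in (b). The main obstacle I anticipate is the local-extension step in the enumeration---verifying that each of the six ``bad'' multisets yields a contradiction at a neighboring vertex, so that only the Archimedean arrangements survive---since the Diophantine bookkeeping and the chirality argument for the Klein bottle are otherwise routine.
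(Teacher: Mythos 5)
The paper does not prove this proposition at all --- it is imported verbatim from \cite{DM2017, DM2018} --- so your proposal can only be judged against the strategy of those papers, which it does essentially follow (Euler characteristic gives the angle equation, local combinatorics kills the spurious solutions, and the Klein bottle is handled by lifting to the plane and invoking the chirality of $[3^4,6^1]$). There are, however, two genuine gaps. The first is in the exclusion step: besides the six degree-three multisets you list, the equation $\sum_i n_i(p_i-2)/p_i=2$ also admits the degree-four arrangements $[3^2,4^1,12^1]$, $[3^1,4^1,3^1,12^1]$, $[3^2,6^2]$ and $[3^1,4^2,6^1]$, none of which is Archimedean, and for these your criterion ``cannot be propagated consistently past a single vertex'' is simply false: each of them occurs as a vertex type in a genuine $2$-uniform tiling (indeed in the lattices $K_6$, $K_{16}$, $K_{15}$, $K_{17}$ of this very paper), so arbitrarily large consistent patches containing such vertices exist. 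What fails is that \emph{every} vertex can have that type, and the contradiction comes from a parity argument around a single face --- e.g.\ for $[3^1,4^2,6^1]$ the two edges of a triangle meeting at any of its vertices must be shared with a $4$-gon and a $6$-gon respectively, which is a proper $2$-colouring of the edges of an odd cycle. Until these four cases are argued, the assertion ``this leaves exactly the eleven Archimedean types'' is not established.

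The second gap is in part (b). You apply the chirality of the \emph{geometric} snub hexagonal tiling (symmetry group $p6$) to the universal cover of an \emph{abstract} semiequivelar map of type $[3^4,6^1]$ on the Klein bottle. This needs two facts you do not supply: (i) that a semiequivelar map of this type on the plane is combinatorially unique, hence isomorphic to the Archimedean tiling --- precisely the kind of uniqueness statement this paper has to record separately as Proposition \ref{prop1} for its $2$-uniform lattices, and not free; and (ii) that the full combinatorial automorphism group of that plane map is no larger than $p6$, so that it contains no fixed-point-free orientation-reversing element available to serve as a deck transformation. Both facts are proved in \cite{DM2017, DM2018}; as written, your argument assumes them. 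Subject to filling these two holes, the route you take is the standard (and correct) one.
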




We know that all the Archimedean {tilings} are vertex-transitive. But, it is not true on the torus. Here, we know the following.   

\begin{proposition} \cite{DM2017, DM2018} \label{prop:36&44}
Let $X$ be an equivelar map on the torus. If the type of $X$ is $[3^6]$, $[4^4]$, $[6^3]$ or $[3^3,4^2]$ then $X$ is vertex-transitive. If the type is $[3^2,4^1,3^1,4^1]$,  $[3^1,6^1,3^1,6^1]$, $[3^1,4^1,6^1,4^1]$,  $[3^1,12^2]$, $[4^1,8^2]$, $[3^4,6^1]$  or $[4^1,6^1,12^1]$, then there exists a semiequivelar toroidal map which is not vertex-transitive.
\end{proposition}

The {\em $2$-uniform} tilings of the plane $\mathbb{R}^2$ are the generalization of vertex-transitive tilings on the plane. We know from \cite{GS1977, GS1981, Otto1977} that there are {twenty} 2-uniform tiling of types
\begin{align*}
   & [3^6;3^3,4^{2}], [3^{6};3^2,4^1,3^1,4^1],  [3^4,6^1;3^2,6^{2}], [3^{3}, 4^2;3^1,4^1,6^1,4^1], [3^3, 4^2;3^2,4^1,3^1,4^1],\\
   &  [3^{6}; 3^2, 4^1,12^1], [3^1, 4^1, 6^1, 4^1; 4^1, 6^1, 12^1], [3^2,4^1,3^1,4^1;3^1, 4^1,6^1,4^1], [3^2,6^2; 3^1, 6^1, 3^1, 6^1],\\
   & [3^1, 4^1, 3^1, 12^1; 3^1, 12^2], [3^1,4^2,6^1; 3^1, 4^1, 6^1, 4^1], [3^1,4^2, 6^1; 3^1, 6^1, 3^1, 6^1], [3^3,4^2;4^4], [3^{6};3^4,6^1].
\end{align*}
 on the plane (see in Section \ref{2uniform}). 
We know that the plane is the universal cover of the torus. We also know that the maps of the above fourteen types also exist on the torus. Thus, it is natural to ask that if $X$ is a map on the torus and $\eta \colon Y \to X$ is {a} covering map where $Y$ is an $2$-uniform tiling on the plane, then what would be the number of orbits of vertices of $X$. Here, we know the following. 


\begin{proposition} \cite{KBM2021} Let $Z$ be a semiequivelar map on the torus and $\eta \colon K_5 \to Z$ be a covering map ($K_5$ given in Section \ref{2uniform}). Let the vertices of $Z$ form $m$ ${ Aut}(Z)$-orbits. Then, $m \le 4$.
\end{proposition}

 In this article, we prove the following.  

\begin{theorem} \label{theo1} Let $X$ be a semiequivelar map on the torus and $\eta \colon Y \to X $ be a covering map. Let the vertices of $X$ form $m$ ${\rm Aut}(X)$-orbits. Let $K_i$ for $1 \le i \le 20$ (in {Section} \ref{2uniform}) denote the $2$-uniform tilings on the plane. 

{\rm (1)} If $Y = K_{1}, K_9, K_{11}, K_{14}$, or $K_{17}$ then $m \le 6$.

\smallskip

{\rm (2)} If $Y = K_{2}, K_7, K_{10},$ or $K_{16}$ then $m \le 4$.

\smallskip

{\rm (3)}  If $Y = K_3, K_4, K_8, K_{12}, K_{13},$ or $K_{15}$ then $m = 2$.  

\smallskip

{\rm (4)} If $Y = K_{18}$ or $K_{19}$ then $m \le 3$.

\smallskip

{\rm (5)} If $Y = K_{6}$ then $m \le 7$.

\smallskip

{\rm (6)} If $Y = K_{20}$ then $m \le 9$.
\end{theorem}

A {\em $3$-uniform tiling} is an edge-to-edge tiling by regular polygons having $3$ distinct transitivity classes of vertices. A vertex-transitive map is a map on a closed surface on which the automorphism group acts transitively on the set of vertices. A $3$-uniform tiling or map will have vertices that we could label $X_1$, $X_2,$ $X_3$. Each $X_i$ vertex can be mapped onto every other $X_i$ vertex, but cannot be mapped to any $X_j$ for $j \neq i$ vertices. 

The {\em $k$-uniform} tilings of the plane $\mathbb{R}^2$ are the generalization of vertex-transitive tilings on the plane. We know that the plane is the universal cover of the torus. 

If $k=3$, we know from \cite{GS1977, GS1981, Otto1977} that there are $61$ $3$-uniform tiling  on the plane.
In this article, we prove the following.  If $X$ is a map on the torus and $\eta \colon Y \to X$ is a covering map where $Y$ is an $3$-uniform tiling on the plane, then what would be the number of orbits of vertices of $X$. Precisely, we have have the following result.  

\begin{theorem} \label{theo2} Let $X$ be a semiequivelar map on the torus and $\eta \colon Y \to X $ be a covering map. Let the vertices of $X$ form $m$ ${\rm Aut}(X)$-orbits. Let $K_i$ for $1 \le i \le 61$ (in {Section} \ref{3uniform}) denote the $3$-uniform tilings on the plane. If $X = K_1, K_2, \dots, or, K_{61}$ then $m \le 15$.
\end{theorem}

\section{Proof of Theorems \ref{theo1}}\label{sec:proofs-1}
 Let $K_i$ (given in Section \ref{2uniform}) be of type $A$, where 
 \begin{align*}
   A   \in &\{ [3^{6};3^4,6^1], [3^6;3^3,4^{2}], [3^{6};3^2,4^1,3^1,4^1], [3^{6}; 3^2,4^1,12^1], [3^6;3^2,6^2],\\
   &  [3^4,6^1;3^2,6^{2}], [3^3, 4^2;3^2,4^1,3^1,4^1], [3^{3}, 4^2;3^1,4^1,6^1,4^1], [3^3,4^2;4^4], \\
   &  [3^2,4^1,3^1,4^1;3^1,4^1,6^1,4^1], [3^2,6^2; 3^1, 6^1, 3^1, 6^1], [3^1, 4^1, 3^1, 12^1; 3^1, 12^2],\\
   &  [3^1,4^2,6^1; 3^1, 4^1, 6^1, 4^1], [3^1,4^2, 6^1; 3^1, 6^1, 3^1, 6^1], [3^1, 4^1, 6^1, 4^1; 4^1, 6^1, 12^1] \}.  
 \end{align*}
 
Gr\"{u}nbaum and G. C. Shephard \cite{GS1977, GS1981} and Kr\"{o}tenheerdt \cite{Otto1977} have discussed the existence and uniqueness of the $2$-uniform tilings $K_i$, $i =1, 2, \dots, 20$ of the plane. 
Thus, we have the following. 

\begin{proposition}\label{prop1}
The $2$-uniform maps $K_{i}$ ($1 \le i \le 20$)  are unique up to isomorphism. 
\end{proposition}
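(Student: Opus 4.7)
The strategy is a forced--extension argument that directly mirrors the uniqueness proof for the eleven Archimedean tilings given in \cite{DM2017, DM2018}; the novelty is only that two vertex-types $W$ and $Z$ must now be respected simultaneously. First, I would fix an arbitrary vertex $u_0 \in V(K_i)$ and observe that its vertex-type (which is either $W$ or $Z$) together with the requirement that all faces be regular polygons determines the star $\st{K_i}{u_0}$ uniquely up to a rigid motion of $\mathbb{R}^2$.

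Next, I would extend the star outward one layer at a time. For each vertex $v \in \lk{K_i}{u_0}$, the faces of $\st{K_i}{u_0}$ that contain $v$ form a consecutive block in the face-cycle $C_v$, which partially specifies the type of $v$. On a case-by-case basis I would show that the pair $\{W,Z\}$ of admissible types allows at most one completion of this block to a full face-cycle at $v$; in the few cases where both $W$ and $Z$ are a priori consistent with the partial block, a second-order compatibility check at a common neighbor in $\lk{K_i}{u_0}\cap \lk{K_i}{v}$ rules one of them out. Iterating layer by layer, the simple-connectivity of $\mathbb{R}^2$ together with the forced nature of each extension yields an isomorphism from any candidate tiling of type $A$ to $K_i$.

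The main obstacle is the combinatorial bookkeeping in the second step for the types with long face-cycles or $12$-gons, notably $K_6$, $K_{11}$, $K_{16}$, $K_{17}$, $K_{18}$, $K_{19}$ and $K_{20}$. For these, a naive local analysis admits more than one way to place the next ring of faces around a given vertex, and one has to rule out the spurious extensions either by tracking a short closed walk bounding a $12$-gon or a $6$-gon or by checking that two putative neighbors of different types cannot both be incident to the same edge while preserving the global edge-to-edge regular-polygon condition. As the authors remark immediately before the statement, all of these configurations have been worked out in Gr\"{u}nbaum--Shephard \cite{GS1977, GS1981} and Kr\"{o}tenheerdt \cite{Otto1977}, so in practice the argument reduces to invoking their classification, reproducing the local case analysis in the two-type setting, and translating the conclusion into our isomorphism-type language.
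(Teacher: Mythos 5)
Your overall strategy---local forced extension plus an appeal to Gr\"{u}nbaum--Shephard and Kr\"{o}tenheerdt---is in the same spirit as what the paper does (the paper in fact gives no details at all: it simply asserts that the proof ``may be carried out along lines analogous to'' the uniqueness of the eleven uniform tilings and cites \cite{GS1977, GS1981, Otto1977}). However, your central claim in the second step is false as stated, and the failure is not confined to the ``hard'' cases you list. You assert that the pair $\{W,Z\}$ of admissible vertex types allows at most one completion of the partial face-cycle at each link vertex, so that the extension is forced and any tiling of type $A$ is isomorphic to $K_i$. But the paper's own list contains five type pairs each realized by \emph{two} non-isomorphic $2$-uniform tilings: $K_1\not\cong K_2$ both of type $[3^6;3^4,6^1]$, $K_3\not\cong K_4$ of type $[3^6;3^3,4^2]$, $K_9\not\cong K_{10}$ of type $[3^3,4^2;3^2,4^1,3^1,4^1]$, $K_{12}\not\cong K_{13}$ of type $[3^3,4^2;4^4]$, and $K_{18}\not\cong K_{19}$ of type $[3^1,4^2,6^1;3^1,6^1,3^1,6^1]$. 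Worse, the Remark following the proposition constructs infinitely many pairwise non-isomorphic plane tilings that are all $2$-semiequivelar of the \emph{same} type. Hence the local type data genuinely does not force the extension; the ``spurious'' completions you propose to rule out are not spurious, and no amount of second-order checking at common neighbours can eliminate them.

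The gap, concretely, is that you are trying to prove uniqueness from $2$-semiequivelarity (a local condition on face-cycles), whereas the correct statement is uniqueness of each $K_i$ as a $2$-\emph{uniform} tiling, i.e.\ one whose symmetry group has exactly two vertex orbits. Any valid argument must invoke this global transitivity hypothesis---for instance by classifying, for each type pair, the finitely many ways a wallpaper group can act with two vertex orbits realizing that pair, which is essentially Kr\"{o}tenheerdt's enumeration. A corrected write-up should either carry out that symmetry-based case analysis or openly reduce the proposition to the cited classification, rather than presenting a forced-extension argument that, if it worked, would contradict examples appearing in this very paper.
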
 

\begin{remark}
Let $S = \{0, 1, 2, \dots, m-1\}$. Let $S + n := \{0+n, 1+n, 2+n, \dots, m+n-1\}$. Define $S_m = \{S+mk ~:~k \in \mathbb{Z} ~\& ~2 \mid (k-1)\}$. Let $K_m$ be a map on the plane defined as follows. 
Let the vertex set of $K_m$ be $\{(i, j) \in \mathbb{Z}\times\mathbb{Z}\}$ on $\mathbb{R}^2$. The points $(i, j)$ and $(i_1, j_1)$ are connected by an edge if  $(i, j)\sim (i_1, j_1)$. 
Thus, the edges of $K_m$ : 
\begin{align*}
& (1)~ (i, j)\sim (i_1, j_1) \mbox{ if } |i-i_1| = 1 \mbox{ and } j = j_1, \mbox{ or } i = i_1 \mbox{ and } |j-j_1| = 1,\\ 
& (2)~ (i, j)\sim (i+1, j+1), (t, j)\sim (t+1, j-1) \mbox{ if } i \in S_m, j \mbox{ is even and } t \in \mathbb{Z} \setminus S_m. 
\end{align*}
Thus, we get an object $K_m$ for each $m \ge 2$. Clearly, $K_x \not\cong K_y$ for $x \neq y$ since the number of continuous quadrilaterals are different. Therefore, there are infinitely many $2$-semiequivelar tilings of the plane.
\end{remark}

\begin{proof}[Proof of Theorem \ref{theo1}]
Let $X_1$ be a semiequivelar map on the torus that is the quotient of the plane's $2$-uniform lattice $K_1$. Let the vertices of $X_1$ form $m_1$ ${\rm Aut}(X_1)$-orbits. Let $K_1$ be as in Section \ref{2uniform}.
Let $V_{1} = V(K_1)$ be the vertex set of $K_1$. Let $H_{1}$ be the group of all the translations of $K_1$. So, $H_1 \leq {\rm Aut}(K_1)$.

Since $X_1$ is a semiequivelar map on the torus that is the quotient of the plane's $2$-uniform lattice $K_1$ (as by Proposition \ref{prop1} $K_1$ is unique), so, we can assume, there is a polyhedral covering map $\eta_{1} : K_1 \to X_1$ where $X_1 = K_1/\Gamma_{1}$  for some fixed element (vertex, edge or face) free subgroup $\Gamma_{1} \le {\rm Aut}(K_1)$. Hence $\Gamma_{1}$
consists of translations and glide reflections. Since $X_1 =
K_1/\Gamma_{1}$ is orientable, $\Gamma_{1}$ does not contain any glide reflection. Thus $\Gamma_{1} \leq H_{1}$.

 We take the middle point of the line segment joining vertices $a_{0}$ and $a_3$ as the origin $(0,0)$ of $K_1$. Let $\alpha_1 := a_{9} - a_{0}$, $\beta_1 := a_{14}- a_{0}$ and  $\gamma_1 := a_{20} - a_{0}$ in $K_1$. Then $$H_1 := \langle x\mapsto x+\alpha_1, x\mapsto x+\beta_1, x\mapsto x+\gamma_1\rangle.$$ Under the action of $H_1$, vertices of $K_1$ form twelve orbits.
Consider the subgroup $G_1$ of ${\rm Aut}(K_1)$ generated by $H_1$ and the map (the half rotation) $x\mapsto -x$. So,
\begin{align*}
  G_1 & =\{ \alpha : x\mapsto \varepsilon x + m\alpha_1 + n\beta_1 +r\gamma_1 \, : \, \varepsilon=\pm 1, m, n, r \in \ZZ\} \cong H_1\rtimes \mathbb{Z}_2.
\end{align*}
Clearly, under the action of $G_1$, vertices of $K_1$ form six orbits. The orbits are 
\begin{align*}
& O_1 :=\langle a_{0} \rangle, O_2 :=\langle a_{1} \rangle, O_3 :=\langle a_{2} \rangle, O_4 :=\langle b_{18} \rangle, O_5 :=\langle b_{20} \rangle, O_6 :=\langle b_{28} \rangle.
\end{align*}

\noindent {\bf Claim 1.} If $K \leq H_1$ then $K \unlhd G_1$. {The alphabet $K$ has already been used in the statement of \ref{theo1}}

\smallskip

Let $g \in G_1$ and $k\in K$. Then $g(x) = \varepsilon x+ma+nb+rc$ and $k(x) = x + pa+ qb+\ell c$ for some $m, n, r, p, q, \ell \in \mathbb{Z}$ and $\varepsilon\in\{1, -1\}$.
Therefore, 
\begin{align*}
(g\circ k\circ g^{-1})(x) & = (g\circ k)(\varepsilon(x-ma-nb-rc))\\
                          & = g(\varepsilon(x-ma-nb-rc)+pa+qb+\ell c)\\                          
                          & =x-ma-nb-rc+\varepsilon(pa+qb+\ell c)+ma+nb+rc\\
                          & = x+\varepsilon(pa+qb+\ell c)\\
                          & = k^{\varepsilon}(x).
\end{align*}
 Thus, $g\circ k\circ g^{-1} = k^{\varepsilon}\in K$. This completes the claim.

\smallskip

By Claim 1, $\Gamma_1$ is a normal subgroup of $G_1$. Therefore, $G_1/\Gamma_1$ acts on $X_1= K_1/\Gamma_1$.
Since 
\begin{align*}
& O_1 :=\langle a_{0} \rangle, O_2 :=\langle a_{1} \rangle, O_3 :=\langle a_{2} \rangle, O_4 :=\langle b_{18} \rangle, O_5 :=\langle b_{20} \rangle, O_6 :=\langle b_{28} \rangle.
\end{align*}
 are the $G_1$-orbits, it follows that $\eta_1(O_j)$ for $j=1, 2, \dots, 6$ are the $(G_1/\Gamma_1)$-orbits. Since the vertex set of $K_1$ is $\sqcup_{j=1}^{6}\eta_j(O_j)$ and $G_1/\Gamma_1 \leq {\rm Aut}(X_1)$, it follows that the number of ${\rm Aut}(X_1)$-orbits of vertices is $\leq 6$. 
 
 \medskip

Let $X_{9} = K_{9}/\Gamma_{9}$ be a semiequivelar map on the torus for some fixed element (vertex, edge or face) free subgroup $\Gamma_{9} \le {\rm Aut}(K_{9})$. Let the vertices of $X_{9}$ form $m_{9}$ ${\rm Aut}(X_{9})$-orbits.
We take the middle point of the line segment joining vertices $a_0$ and $a_5$ as the origin $(0,0)$ of $K_9$  (see Section \ref{2uniform}). Let  $\alpha_9 := a_{1} - a_0$, $\beta_9 := a_{2} - a_{0}$ and $\gamma_9 := a_{3} - a_{0} \in \mathbb{R}^2$. Similarly as above, define $H_9 := \langle x\mapsto x+\alpha_9, x\mapsto x+\beta_9, x\mapsto x+\gamma_9\rangle$ and 
\begin{align*}
  G_9 & =\{ \alpha : x\mapsto \varepsilon x + m_9\alpha_9 + n_9\beta_9 +r_9\gamma_9   \, : \, \varepsilon=\pm 1, m_9, n_9, r_9 \in \ZZ\} \cong H_9\rtimes \mathbb{Z}_2.
\end{align*}
By the same arguments as above and in Claim 1, $\Gamma_9 \unlhd G_9$ and the number of $G_9/\Gamma_9$-orbits of vertices of $X_9$ is six. Therefore, $G_9/\Gamma_9$ acts on $X_9 = K_9/\Gamma_9$.
Since  $O_1 = \langle a_0 \rangle,$ $O_2 = \langle b_0 \rangle,$  $O_3 = \langle b_1 \rangle,$ $O_4 = \langle b_{33} \rangle$, $O_5 = \langle a_{8} \rangle$, $O_6 = \langle b_{34} \rangle$
 are the $G_9$-orbits, it follows that $O_1 = \langle a_0 \rangle,$ $O_2 = \langle b_0 \rangle,$  $O_3 = \langle b_1 \rangle,$ $O_4 = \langle b_{33} \rangle$, $O_5 = \langle a_{8} \rangle$, $O_6 = \langle b_{34} \rangle$ are the $(G_9/\Gamma_9)$-orbits.  
Since the vertex set of $X_9$ is $\sqcup_{j=1}^6 \eta_9(O_j)$ and $G_9/\Gamma_9 \leq {\rm Aut}(X_9)$, it follows that the number of ${\rm Aut}(X_9)$-orbits of vertices is $\leq 6$. 

 \medskip

Let $X_{11} = K_{11}/\Gamma_{11}$ be a semiequivelar map on the torus for some fixed element (vertex, edge or face) free subgroup $\Gamma_{11} \le {\rm Aut}(K_{11})$. Let the vertices of $X_{11}$ form $m_{11}$ ${\rm Aut}(X_{11})$-orbits. We take the middle point of the line segment joining vertices $a_{0}$ and $a_{3}$ as the origin $(0,0)$ of $K_{11}$. Let  $\alpha_{11} := a_{15} - a_0, \beta_{11} := a_{6} - a_{0}$ and $\gamma_{11} := a_{37} - a_{0} \in \mathbb{R}^2$. Similarly as above, define $H_{11} := \langle x\mapsto x+\alpha_{11}, x\mapsto x+\beta_{11}, x\mapsto x+\gamma_{11}\rangle$ and 
\begin{align*}
  G_{11} & =\{ \alpha : x\mapsto \varepsilon x + m_{11}\alpha_{11} + n_{11}\beta_{11} +r_{11}\gamma_{11}   \, : \, \varepsilon=\pm 1, m_{11}, n_{11}, r_{11} \in \ZZ\} \cong H_{11}\rtimes \mathbb{Z}_2.
\end{align*}
By the same arguments as above and in Claim 1, $\Gamma_{11} \unlhd G_{11}$ and the number of $G_{11}/\Gamma_{11}$-orbits of vertices of $X_{11}$ is six. Therefore, $G_{11}/\Gamma_{11}$ acts on $X_{11} = K_{11}/\Gamma_{11}$.
Since  $O_1 = \langle a_0 \rangle,$ $O_2 = \langle a_1 \rangle,$  $O_3 = \langle a_2 \rangle,$ $O_4 = \langle b_0 \rangle,$ $O_5 = \langle b_2 \rangle,$  $O_6 = \langle b_{46} \rangle$
 are the $G_{11}$-orbits, it follows that $O_1 = \langle a_0 \rangle,$ $O_2 = \langle a_1 \rangle,$  $O_3 = \langle a_2 \rangle,$ $O_4 = \langle b_0 \rangle,$ $O_5 = \langle b_2 \rangle,$  $O_6 = \langle b_{46} \rangle$ are the $(G_{11}/\Gamma_{11})$-orbits.  
Since the vertex set of $X_{11}$ is $\sqcup_{j=1}^6\eta_{11}(O_j)$ and $G_{11}/\Gamma_{11} \leq {\rm Aut}(X_{11})$, it follows that the number of ${\rm Aut}(X_{11})$-orbits of vertices is $\leq 6$. 

\medskip

Let $X_{14} = K_{14}/\Gamma_{14}$ be a semiequivelar map on the torus for some fixed element (vertex, edge or face) free subgroup $\Gamma_{14} \le {\rm Aut}(K_{14})$. Let the vertices of $X_{14}$ form $m_{14}$ ${\rm Aut}(X_{14})$-orbits. We take the middle point of the line segment joining vertices $a_{0}$ and $a_{3}$ as the origin $(0,0)$ of $K_{14}$. Let  $\alpha_{14} := a_9 - a_3$, $\beta_{14} := a_{30} - a_3$ and $\gamma_{14} := a_{34} - a_3 $ $\in \mathbb{R}^2$. Similarly as above, define $H_{14} := \langle x\mapsto x+\alpha_{14}, x\mapsto x+\beta_{14}, x\mapsto x+\gamma_{14}\rangle$ and 
\begin{align*}
  G_{14} & =\{ \alpha : x\mapsto \varepsilon x + m_{14}\alpha_{14} + n_{14}\beta_{14} +r_{14}\gamma_{14}   \, : \, \varepsilon=\pm 1, m_{14}, n_{14}, r_{14} \in \ZZ\} \cong H_{14}\rtimes \mathbb{Z}_2.
\end{align*}
By the same arguments as above and in Claim 1, $\Gamma_{14} \unlhd G_{14}$ and the number of $G_{14}/\Gamma_{14}$-orbits of vertices of $X_{14}$ is six. Therefore, $G_{14}/\Gamma_{14}$ acts on $X_{14} = K_{14}/\Gamma_{14}$.
Since  $O_1 = \langle a_0 \rangle,$ $O_2 = \langle a_1 \rangle,$  $O_3 = \langle a_2 \rangle,$ $O_4 = \langle b_0 \rangle$, $O_5 = \langle b_2 \rangle$, $O_6 = \langle b_4 \rangle$
 are the $G_{14}$-orbits, it follows that $O_1 = \langle a_0 \rangle,$ $O_2 = \langle a_1 \rangle,$  $O_3 = \langle a_2 \rangle,$ $O_4 = \langle b_0 \rangle$, $O_5 = \langle b_2 \rangle$, $O_6 = \langle b_4 \rangle$ are the $(G_{14}/\Gamma_{14})$-orbits.  
Since the vertex set of $X_{14}$ is $\sqcup_{j=1}^6 \eta_{14}(O_j)$ and $G_{14}/\Gamma_{14} \leq {\rm Aut}(X_{14})$, it follows that the number of ${\rm Aut}(X_{14})$-orbits of vertices is $\leq 6$. 

\medskip

Let $X_{17} = K_{17}/\Gamma_{17}$ be a semiequivelar map on the torus for some fixed element (vertex, edge or face) free subgroup $\Gamma_{17} \le {\rm Aut}(K_{17})$. Let the vertices of $X_{17}$ form $m_{17}$ ${\rm Aut}(X_{17})$-orbits. We take the middle point of the line segment joining vertices $a_{0}$ and $a_3$ as the origin $(0,0)$ of $K_{17}$. Let  $\alpha_{17} := a_{20} - a_4$, $\beta_{17} := a_7 - a_4$ and $\gamma_{17} := a_{16} - a_4$ $\in \mathbb{R}^2$. Similarly as above, define $H_{17} := \langle x\mapsto x+\alpha_{17}, x\mapsto x+\beta_{17}, x\mapsto x+\gamma_{17}\rangle$ and 
\begin{align*}
  G_{17} & =\{ \alpha : x\mapsto \varepsilon x + m_{17}\alpha_{17} + n_{17}\beta_{17} +r_{17}\gamma_{17}   \, : \, \varepsilon=\pm 1, m_{17}, n_{17}, r_{17} \in \ZZ\} \cong H_{17}\rtimes \mathbb{Z}_2.
\end{align*}
By the same arguments as above and in Claim 1, $\Gamma_{17} \unlhd G_{17}$ and the number of $G_{17}/\Gamma_{17}$-orbits of vertices of $X_{17}$ is nine. Therefore, $G_{17}/\Gamma_{17}$ acts on $X_{17} = K_{17}/\Gamma_{17}$.
Since  $O_1 = \langle a_0 \rangle$, $O_2 = \langle a_1 \rangle$, $O_3 = \langle a_2 \rangle$, $O_4 = \langle b_0 \rangle$, $O_5 = \langle b_1 \rangle$, $O_6 = \langle b_2 \rangle$
 are the $G_{17}$-orbits, it follows that $O_1 = \langle a_0 \rangle$, $O_2 = \langle a_1 \rangle$, $O_3 = \langle a_2 \rangle$, $O_4 = \langle b_0 \rangle$, $O_5 = \langle b_1 \rangle$, $O_6 = \langle b_2 \rangle$ are the $(G_{17}/\Gamma_{17})$-orbits.  
Since the vertex set of $X_{17}$ is $\sqcup_{j=1}^6 \eta_{17}(O_j)$ and $G_{17}/\Gamma_{17} \leq {\rm Aut}(X_{17})$, it follows that the number of ${\rm Aut}(X_{17})$-orbits of vertices is $\leq 6$. Thus, it completes the part {\rm (1)}.

\medskip

Let $X_{2} = K_{2}/\Gamma_{2}$ be a semiequivelar map on the torus for some fixed element (vertex, edge or face) free subgroup $\Gamma_{2} \le {\rm Aut}(K_{2})$. Let the vertices of $X_{2}$ form $m_{2}$ ${\rm Aut}(X_{2})$-orbits.
We take the middle point of the line segment joining vertices $a_{0}$ and $a_3$ as the origin $(0,0)$ of $K_2$.  Let $\alpha_2 := a_{6} - a_0$, $\beta_2 := a_{18} - a_0$ and $\gamma_2 := a_{24} - a_{0}$ $\in \mathbb{R}^2$. Similarly as above, define $H_2 := \langle x\mapsto x+\alpha_2, x\mapsto x+\beta_2, x\mapsto x+\gamma_2\rangle$ and 
\begin{align*}
  G_2 & =\{ \alpha : x\mapsto \varepsilon x + m_1\alpha_2 + n_1\beta_2 +r_1\gamma_2 \, : \, \varepsilon=\pm 1, m_1, n_1, r_1 \in \ZZ\} \cong H_2\rtimes \mathbb{Z}_2.
\end{align*}
By the same arguments as above and in Claim 1, $\Gamma_2 \unlhd G_2$ and the number of $G_2/\Gamma_2$-orbits of vertices of $X_2$ is four. Therefore, $G_2/\Gamma_2$ acts on $X_2= K_2/\Gamma_2$.
Since 
\begin{align*}
& O_1 = \langle a_0 \rangle, O_2 = \langle a_1 \rangle, O_3 = \langle a_2 \rangle, O_4 = \langle b_0 \rangle
\end{align*}
 are the $G_2$-orbits, it follows that $\eta_{2}(O_j)$ for $j=1, 2, 3, 4$ are the $(G_2/\Gamma_2)$-orbits.  
Since the vertex set of $Y$ is $\sqcup_{j=1}^{4}\eta_j(O_j)$ and $G_2/\Gamma_2 \leq {\rm Aut}(X_2)$, it follows that the number of ${\rm Aut}(X_2)$-orbits of vertices is $\leq 4$. 

\medskip

Let $X_{7} = K_{7}/\Gamma_{7}$ be a semiequivelar map on the torus for some fixed element (vertex, edge or face) free subgroup $\Gamma_{7} \le {\rm Aut}(K_{7})$. Let the vertices of $X_{7}$ form $m_{7}$ ${\rm Aut}(X_{7})$-orbits.
We take the middle point of the line segment joining vertices $b_{0}$ and $b_{7}$ as the origin $(0,0)$ of $K_7$. Let  $\alpha_7 := b_{22} - b_0$, $\beta_7 := b_{28} - b_{0}$ and $\gamma_7 := b_{41} - b_{0}$ $\in \mathbb{R}^2$. Similarly as above, define $H_7 := \langle x\mapsto x+\alpha_7, x\mapsto x+\beta_7, x\mapsto x+\gamma_7\rangle$ and 
\begin{align*}
  G_7 & =\{ \alpha : x\mapsto \varepsilon x + m_7\alpha_7 + n_7\beta_7 +r_7\gamma_7   \, : \, \varepsilon=\pm 1, m_7, n_7, r_7 \in \ZZ\} \cong H_7\rtimes \mathbb{Z}_2.
\end{align*}
By the same arguments as above and in Claim 1, $\Gamma_7 \unlhd G_7$ and the number of $G_7/\Gamma_7$-orbits of vertices of $X_7$ is four. Therefore, $G_7/\Gamma_7$ acts on $X_7 = K_7/\Gamma_7$.
Since  $O_1 = \langle a_0 \rangle,$ $O_2 = \langle b_0 \rangle,$  $O_3 = \langle b_5 \rangle,$ $O_4 = \langle b_6 \rangle$
 are the $G_7$-orbits, it follows that $O_1 = \langle a_0 \rangle,$ $O_2 = \langle b_0 \rangle,$  $O_3 = \langle b_5 \rangle,$ $O_4 = \langle b_6 \rangle$ are the $(G_7/\Gamma_7)$-orbits.  
Since the vertex set of $X_7$ is $\sqcup_{j=1}^4 \eta_7(O_j)$ and $G_7/\Gamma_7 \leq {\rm Aut}(X_7)$, it follows that the number of ${\rm Aut}(X_7)$-orbits of vertices is $\leq 4$. 

\medskip

Let $X_{10} = K_{10}/\Gamma_{10}$ be a semiequivelar map on the torus for some fixed element (vertex, edge or face) free subgroup $\Gamma_{10} \le {\rm Aut}(K_{10})$. Let the vertices of $X_{10}$ form $m_{10}$ ${\rm Aut}(X_{10})$-orbits. We take the middle point of the line segment joining vertices $b_0$ and $b_{1}$ as the origin $(0,0)$ of $K_{10}$. Let  $\alpha_{10} := b_6-b_0$ and $\beta_{10} := b_{23}-b_0\in \mathbb{R}^2$. Similarly as above, define $H_{10} := \langle x\mapsto x+\alpha_{10}, x\mapsto x+\beta_{10}\rangle$ and 
\begin{align*}
  G_{10} & =\{ \alpha : x\mapsto \varepsilon x + m_{10}\alpha_{10} + n_{10}\beta_{10}  \, : \, \varepsilon=\pm 1, m_{10}, n_{10} \in \ZZ\} \cong H_{10}\rtimes \mathbb{Z}_2.
\end{align*}
By the same arguments as above and in Claim 1, $\Gamma_{10} \unlhd G_{10}$ and the number of $G_{10}/\Gamma_{10}$-orbits of vertices of $X_{10}$ is three. Therefore, $G_{10}/\Gamma_{10}$ acts on $X_{10} = K_{10}/\Gamma_{10}$.
Since  $O_1 = \langle a_0 \rangle, O_2 = \langle a_1 \rangle, O_3 = \langle b_0 \rangle, O_4 = \langle b_2 \rangle$
 are the $G_{10}$-orbits, it follows that $\eta_{10}(O_1), \eta_{10}(O_2), \eta_{10}(O_3),\eta_{10}(O_4)$ are the $(G_{10}/\Gamma_{10})$-orbits.  
Since the vertex set of $X_{10}$ is $\eta_{10}(O_1) \sqcup \eta_{10}(O_2) \sqcup \eta_{10}(O_3)\sqcup \eta_{10}(O_4)$ and $G_{10}/\Gamma_{10} \leq {\rm Aut}(X_{10})$, it follows that the number of ${\rm Aut}(X_{10})$-orbits of vertices is $\leq 4$.

\medskip

Let $X_{16} = K_{16}/\Gamma_{16}$ be a semiequivelar map on the torus for some fixed element (vertex, edge or face) free subgroup $\Gamma_{16} \le {\rm Aut}(K_{16})$. Let the vertices of $X_{16}$ form $m_{16}$ ${\rm Aut}(X_{16})$-orbits. We take the middle point of the line segment joining vertices $a_{0}$ and $a_4$ as the origin $(0,0)$ of $K_{16}$. Let  $\alpha_{16} := a_4 - a_1$ and $\beta_{16} := a_8 - a_1$ $\in \mathbb{R}^2$. Similarly as above, define $H_{16} := \langle x\mapsto x+\alpha_{16}, x\mapsto x+\beta_{16}\rangle$ and 
\begin{align*}
  G_{16} & =\{ \alpha : x\mapsto \varepsilon x + m_{16}\alpha_{16} + n_{16}\beta_{16}  \, : \, \varepsilon=\pm 1, m_{16}, n_{16} \in \ZZ\} \cong H_{16}\rtimes \mathbb{Z}_2.
\end{align*}
By the same arguments as above and in Claim 1, $\Gamma_{16} \unlhd G_{16}$ and the number of $G_{16}/\Gamma_{16}$-orbits of vertices of $X_{16}$ is three. Therefore, $G_{16}/\Gamma_{16}$ acts on $X_{16} = K_{16}/\Gamma_{16}$.
Since  $O_1 = \langle a_1 \rangle, O_2 = \langle b_1 \rangle, O_3 = \langle a_3 \rangle, O_4 = \langle b_2 \rangle$
 are the $G_{16}$-orbits, it follows that $\eta_{16}(O_1), \eta_{16}(O_2), \eta_{16}(O_3), \eta_{16}(O_4)$ are the $(G_{16}/\Gamma_{16})$-orbits.  
Since the vertex set of $X_{16}$ is $\eta_{16}(O_1) \sqcup \eta_{16}(O_2) \sqcup \eta_{16}(O_3) \sqcup \eta_{16}(O_4) $ and $G_{16}/\Gamma_{16} \leq {\rm Aut}(X_{16})$, it follows that the number of ${\rm Aut}(X_{16})$-orbits of vertices is $\leq 2$ and hence, $m_{16} \le 4$. Thus, it completes the part {\rm (2)}.

\medskip

Let $X_{3} = K_{3}/\Gamma_{3}$ be a semiequivelar map on the torus for some fixed element (vertex, edge or face) free subgroup $\Gamma_{3} \le {\rm Aut}(K_{3})$. Let the vertices of $X_{3}$ form $m_{3}$ ${\rm Aut}(X_{3})$-orbits.
We take the middle point of the line segment joining vertices $a_{0}$ and $a_{11}$ as the origin $(0,0)$ of $K_3$. Let  $\alpha_3 := a_1 - a_0$ and $\beta_3 := a_{35} - a_0$ $\in \mathbb{R}^2$. Similarly as above, define $H_3 := \langle x\mapsto x+\alpha_3, x\mapsto x+\beta_3\rangle$ and 
\begin{align*}
  G_3 & =\{ \alpha : x\mapsto \varepsilon x + m_3\alpha_3 + n_3\beta_3  \, : \, \varepsilon=\pm 1, m_3, n_3 \in \ZZ\} \cong H_3\rtimes \mathbb{Z}_2.
\end{align*}
By the same arguments as above and in Claim 1, $\Gamma_3 \unlhd G_3$ and the number of $G_3/\Gamma_3$-orbits of vertices of $X_3$ is two. Therefore, $G_3/\Gamma_3$ acts on $X_3= K_3/\Gamma_3$.
Since  $O_1 = \langle a_1 \rangle, O_2 = \langle b_1 \rangle$ are the $G_3$-orbits, it follows that $\eta_3(O_1), \eta_3(O_2)$ are the $(G_3/\Gamma_3)$-orbits.  
Since the vertex set of $X_3$ is $\eta_3(O_1) \sqcup \eta_3(O_2)$ and $G_3/\Gamma_3 \leq {\rm Aut}(X_3)$, it follows that the number of ${\rm Aut}(X_3)$-orbits of vertices is $\leq 2$ and hence, $m_3 =2$.

\medskip

Let $X_{4} = K_{4}/\Gamma_{4}$ be a semiequivelar map on the torus for some fixed element (vertex, edge or face) free subgroup $\Gamma_{4} \le {\rm Aut}(K_{4})$. Let the vertices of $X_{4}$ form $m_{4}$ ${\rm Aut}(X_{4})$-orbits. 
We take the middle point of the line segment joining vertices $a_{0}$ and $a_{12}$ as the origin $(0,0)$ of $K_4$. Let  $\alpha_4 := a_1 - a_0$ and $\beta_4 := a_{43} - a_0$ $\in \mathbb{R}^2$. Similarly as above, define $H_4 := \langle x\mapsto x+\alpha_4, x\mapsto x+\beta_4\rangle$ and 
\begin{align*}
  G_4 & =\{ \alpha : x\mapsto \varepsilon x + m_4\alpha_4 + n_4\beta_4  \, : \, \varepsilon=\pm 1, m_4, n_4 \in \ZZ\} \cong H_4\rtimes \mathbb{Z}_2.
\end{align*}
By the same arguments as above and in Claim 1, $\Gamma_4 \unlhd G_4$ and the number of $G_4/\Gamma_4$-orbits of vertices of $X_4$ is two. Therefore, $G_4/\Gamma_4$ acts on $X_4 = K_4/\Gamma_4$.
Since  $O_1 = \langle a_1 \rangle, O_2 = \langle b_1 \rangle$
 are the $G_4$-orbits, it follows that $\eta_4(O_1), \eta_4(O_2)$ are the $(G_4/\Gamma_4)$-orbits.  
Since the vertex set of $X_4$ is $\eta_4(O_1) \sqcup \eta_4(O_2)$ and $G_4/\Gamma_4 \leq {\rm Aut}(X_4)$, it follows that the number of ${\rm Aut}(X_4)$-orbits of vertices is $\leq 2$ and hence, $m_4 =2$.

\medskip

Let $X_{8} = K_{8}/\Gamma_{8}$ be a semiequivelar map on the torus for some fixed element (vertex, edge or face) free subgroup $\Gamma_{8} \le {\rm Aut}(K_{8})$. Let the vertices of $X_{8}$ form $m_{8}$ ${\rm Aut}(X_{8})$-orbits. We take the middle point of the line segment joining vertices $a_0$ and $a_1$ as the origin $(0,0)$ of $K_8$. Let  $\alpha_8 := a_3 - a_0$ and $\beta_8 := a_4 - a_0$ $\in \mathbb{R}^2$. Similarly as above, define $H_8 := \langle x\mapsto x+\alpha_8, x\mapsto x+\beta_8\rangle$ and 
\begin{align*}
  G_8 & =\{ \alpha : x\mapsto \varepsilon x + m_8\alpha_8 + n_8\beta_8  \, : \, \varepsilon=\pm 1, m_8, n_8 \in \ZZ\} \cong H_8\rtimes \mathbb{Z}_2.
\end{align*}
By the same arguments as above and in Claim 1, $\Gamma_8 \unlhd G_8$ and the number of $G_8/\Gamma_8$-orbits of vertices of $X_8$ is two. Therefore, $G_8/\Gamma_8$ acts on $X_8 = K_8/\Gamma_8$.
Since  $O_1 = \langle a_0 \rangle, O_2 = \langle b_0 \rangle$
 are the $G_8$-orbits, it follows that $\eta_8(O_1), \eta_8(O_2)$ are the $(G_8/\Gamma_8)$-orbits.  
Since the vertex set of $X_8$ is $\eta_8(O_1) \sqcup \eta_8(O_2)$ and $G_8/\Gamma_8 \leq {\rm Aut}(X_8)$, it follows that the number of ${\rm Aut}(X_8)$-orbits of vertices is $\leq 2$ and hence, $m_8 =2$.

\medskip

Let $X_{12} = K_{12}/\Gamma_{12}$ be a semiequivelar map on the torus for some fixed element (vertex, edge or face) free subgroup $\Gamma_{12} \le {\rm Aut}(K_{12})$. Let the vertices of $X_{12}$ form $m_{12}$ ${\rm Aut}(X_{12})$-orbits. We take the middle point of the line segment joining vertices $a_{0}$ and $a_{1}$ as the origin $(0,0)$ of $K_{12}$. Let  $\alpha_{12} := a_1 - a_0 $ and $\beta_{12} := a_{12} - a_0$ $\in \mathbb{R}^2$. Similarly as above, define $H_{12} := \langle x\mapsto x+\alpha_{12}, x\mapsto x+\beta_{12}\rangle$ and 
\begin{align*}
  G_{12} & =\{ \alpha : x\mapsto \varepsilon x + m_{12}\alpha_{12} + n_{12}\beta_{12}  \, : \, \varepsilon=\pm 1, m_{12}, n_{12} \in \ZZ\} \cong H_{12}\rtimes \mathbb{Z}_2.
\end{align*}
By the same arguments as above and in Claim 1, $\Gamma_{12} \unlhd G_{12}$ and the number of $G_{12}/\Gamma_{12}$-orbits of vertices of $X_{12}$ is two. Therefore, $G_{12}/\Gamma_{12}$ acts on $X_{12} = K_{12}/\Gamma_{12}$.
Since  $O_1 = \langle a_0 \rangle, O_2 = \langle b_0 \rangle$
 are the $G_{12}$-orbits, it follows that $\eta_{12}(O_1), \eta_{12}(O_2)$ are the $(G_{12}/\Gamma_{12})$-orbits.  
Since the vertex set of $X_{12}$ is $\eta_{12}(O_1) \sqcup \eta_{12}(O_2)$ and $G_{12}/\Gamma_{12} \leq {\rm Aut}(X_{12})$, it follows that the number of ${\rm Aut}(X_{12})$-orbits of vertices is $\leq 2$ and hence, $m_{12} =2$.

\medskip

Let $X_{13} = K_{13}/\Gamma_{13}$ be a semiequivelar map on the torus for some fixed element (vertex, edge or face) free subgroup $\Gamma_{13} \le {\rm Aut}(K_{13})$. Let the vertices of $X_{13}$ form $m_{13}$ ${\rm Aut}(X_{13})$-orbits. We take the middle point of the line segment joining vertices $a_{0}$ and $a_{12}$ as the origin $(0,0)$ of $K_{13}$. Let  $\alpha_{13} := a_1 - a_0 $ and $\beta_{13} := a_{19} - a_0 $ $\in \mathbb{R}^2$. Similarly as above, define $H_{13} := \langle x\mapsto x+\alpha_{13}, x\mapsto x+\beta_{13}\rangle$ and 
\begin{align*}
  G_{13} & =\{ \alpha : x\mapsto \varepsilon x + m_{13}\alpha_{13} + n_{13}\beta_{13}  \, : \, \varepsilon=\pm 1, m_{13}, n_{13} \in \ZZ\} \cong H_{13}\rtimes \mathbb{Z}_2.
\end{align*}
By the same arguments as above and in Claim 1, $\Gamma_{13} \unlhd G_{13}$ and the number of $G_{13}/\Gamma_{13}$-orbits of vertices of $X_{13}$ is two. Therefore, $G_{13}/\Gamma_{13}$ acts on $X_{13} = K_{13}/\Gamma_{13}$.
Since  $O_1 = \langle a_0 \rangle, O_2 = \langle b_0 \rangle$
 are the $G_{13}$-orbits, it follows that $\eta_{13}(O_1), \eta_{13}(O_2)$ are the $(G_{13}/\Gamma_{13})$-orbits.  
Since the vertex set of $X_{13}$ is $\eta_{13}(O_1) \sqcup \eta_{13}(O_2)$ and $G_{13}/\Gamma_{13} \leq {\rm Aut}(X_{13})$, it follows that the number of ${\rm Aut}(X_{13})$-orbits of vertices is $\leq 2$ and hence, $m_{13} =2$.

\medskip

Let $X_{15} = K_{15}/\Gamma_{15}$ be a semiequivelar map on the torus for some fixed element (vertex, edge or face) free subgroup $\Gamma_{15} \le {\rm Aut}(K_{15})$. Let the vertices of $X_{15}$ form $m_{15}$ ${\rm Aut}(X_{15})$-orbits. We take the middle point of the line segment joining vertices $a_{0}$ and $a_{19}$ as the origin $(0,0)$ of $K_{15}$. Let  $\alpha_{15} := a_2 - a_0 $ and $\beta_{15} := a_{10} - a_0$ $\in \mathbb{R}^2$. Similarly as above, define $H_{15} := \langle x\mapsto x+\alpha_{15}, x\mapsto x+\beta_{15}\rangle$ and 
\begin{align*}
  G_{15} & =\{ \alpha : x\mapsto \varepsilon x + m_{15}\alpha_{15} + n_{15}\beta_{15}  \, : \, \varepsilon=\pm 1, m_{15}, n_{15} \in \ZZ\} \cong H_{15}\rtimes \mathbb{Z}_2.
\end{align*}
By the same arguments as above and in Claim 1, $\Gamma_{15} \unlhd G_{15}$ and the number of $G_{15}/\Gamma_{15}$-orbits of vertices of $X_{15}$ is two. Therefore, $G_{15}/\Gamma_{15}$ acts on $X_{15} = K_{15}/\Gamma_{15}$.
Since  $O_1 = \langle a_0 \rangle, O_2 = \langle b_0 \rangle$
 are the $G_{15}$-orbits, it follows that $\eta_{15}(O_1), \eta_{15}(O_2)$ are the $(G_{15}/\Gamma_{15})$-orbits.  
Since the vertex set of $X_{15}$ is $\eta_{15}(O_1) \sqcup \eta_{15}(O_2)$ and $G_{15}/\Gamma_{15} \leq {\rm Aut}(X_{15})$, it follows that the number of ${\rm Aut}(X_{15})$-orbits of vertices is $\leq 2$ and hence, $m_{15} =2$. Thus, it completes the part {\rm (3)}.

\medskip

Let $X_{18} = K_{18}/\Gamma_{18}$ be a semiequivelar map on the torus for some fixed element (vertex, edge or face) free subgroup $\Gamma_{18} \le {\rm Aut}(K_{18})$. Let the vertices of $X_{18}$ form $m_{18}$ ${\rm Aut}(X_{18})$-orbits. We take the middle point of the line segment joining vertices $a_{0}$ and $a_{28}$ as the origin $(0,0)$ of $K_{18}$. Let  $\alpha_{18} := a_2 - a_0$ and $\beta_{18} := a_{20} - a_0$ $\in \mathbb{R}^2$. Similarly as above, define $H_{18} := \langle x\mapsto x+\alpha_{18}, x\mapsto x+\beta_{18}\rangle$ and 
\begin{align*}
  G_{18} & =\{ \alpha : x\mapsto \varepsilon x + m_{18}\alpha_{18} + n_{18}\beta_{18}  \, : \, \varepsilon=\pm 1, m_{18}, n_{18} \in \ZZ\} \cong H_{18}\rtimes \mathbb{Z}_2.
\end{align*}
By the same arguments as above and in Claim 1, $\Gamma_{18} \unlhd G_{18}$ and the number of $G_{18}/\Gamma_{18}$-orbits of vertices of $X_{18}$ is three. Therefore, $G_{18}/\Gamma_{18}$ acts on $X_{18} = K_{18}/\Gamma_{18}$.
Since  $O_1 = \langle a_0 \rangle,$ $O_2 = \langle a_1 \rangle$, $O_3 = \langle b_1 \rangle$
 are the $G_{18}$-orbits, it follows that $\eta_{18}(O_1), \eta_{18}(O_2) , \eta_{18}(O_3)$ are the $(G_{18}/\Gamma_{18})$-orbits.  
Since the vertex set of $X_{18}$ is $\eta_{18}(O_1) \sqcup \eta_{18}(O_2) \sqcup \eta_{18}(O_3)$ and $G_{18}/\Gamma_{18} \leq {\rm Aut}(X_{18})$, it follows that the number of ${\rm Aut}(X_{18})$-orbits of vertices is $\leq 3$.

\medskip

Let $X_{19} = K_{19}/\Gamma_{19}$ be a semiequivelar map on the torus for some fixed element (vertex, edge or face) free subgroup $\Gamma_{19} \le {\rm Aut}(K_{19})$. Let the vertices of $X_{19}$ form $m_{19}$ ${\rm Aut}(X_{19})$-orbits. We take the middle point of the line segment joining vertices $b_{0}$ and $b_2$ as the origin $(0,0)$ of $K_{19}$. Let  $\alpha_{19} := b_0 - b_2$ and $\beta_{19} := b_7 - b_2$ $\in \mathbb{R}^2$. Similarly as above, define $H_{19} := \langle x\mapsto x+\alpha_{19}, x\mapsto x+\beta_{19}\rangle$ and 
\begin{align*}
  G_{19} & =\{ \alpha : x\mapsto \varepsilon x + m_{19}\alpha_{19} + n_{19}\beta_{19}  \, : \, \varepsilon=\pm 1, m_{19}, n_{19} \in \ZZ\} \cong H_{19}\rtimes \mathbb{Z}_2.
\end{align*}
By the same arguments as above and in Claim 1, $\Gamma_{19} \unlhd G_{19}$ and the number of $G_{19}/\Gamma_{19}$-orbits of vertices of $X_{19}$ is three. Therefore, $G_{19}/\Gamma_{19}$ acts on $X_{19} = K_{19}/\Gamma_{19}$.
Since  $O_1 = \langle b_0 \rangle,$ $O_2 = \langle a_2 \rangle$, 
$O_3 = \langle a_3 \rangle$  are the $G_{19}$-orbits, it follows that $\eta_{19}(O_1), \eta_{19}(O_2) , \eta_{19}(O_3)$ are the $(G_{19}/\Gamma_{19})$-orbits.  
Since the vertex set of $X_{19}$ is $\eta_{19}(O_1) \sqcup \eta_{19}(O_2) \sqcup \eta_{19}(O_3)$ and $G_{19}/\Gamma_{19} \leq {\rm Aut}(X_{19})$, it follows that the number of ${\rm Aut}(X_{19})$-orbits of vertices is $\leq 3$. Thus, it completes the part {\rm (4)}.

\medskip

Let $X_{6} = K_{6}/\Gamma_{6}$ be a semiequivelar map on the torus for some fixed element (vertex, edge or face) free subgroup $\Gamma_{6} \le {\rm Aut}(K_{6})$. Let the vertices of $X_{6}$ form $m_{6}$ ${\rm Aut}(X_{6})$-orbits. We take the middle point of the line segment joining vertices $a_{0}$ and $a_{6}$ as the origin $(0,0)$ of $K_6$. Let  $\alpha_6 := a_{23} - a_3$, $\beta_6 := a_{16} - a_{6}$ and $\gamma_6 := a_{39} - a_{7}$ $\in \mathbb{R}^2$. Similarly as above, define $H_6 := \langle x\mapsto x+\alpha_6, x\mapsto x+\beta_6, x\mapsto x+\gamma_6\rangle$ and 
\begin{align*}
  G_6 & =\{ \alpha : x\mapsto \varepsilon x + m_6\alpha_6 + n_6\beta_6 +r_6\gamma_6   \, : \, \varepsilon=\pm 1, m_6, n_6, r_6 \in \ZZ\} \cong H_6\rtimes \mathbb{Z}_2.
\end{align*}
By the same arguments as above and in Claim 1, $\Gamma_6 \unlhd G_6$ and the number of $G_6/\Gamma_6$-orbits of vertices of $X_6$ is seven. Therefore, $G_6/\Gamma_6$ acts on $X_6 = K_6/\Gamma_6$.
Since  $O_j = \langle a_j \rangle,$ $j=0,1, \dots, 5$  $O_6 = \langle b_1 \rangle$
 are the $G_6$-orbits, it follows that $\eta_6(O_j),$ $j=0,1,\dots, 6$ are the $(G_6/\Gamma_6)$-orbits.  
Since the vertex set of $X_6$ is $\sqcup_{j=0}^6 \eta_6(O_j)$ and $G_6/\Gamma_6 \leq {\rm Aut}(X_6)$, it follows that the number of ${\rm Aut}(X_6)$-orbits of vertices is $\leq 7$. Thus, it completes the part {\rm (5)}.

\medskip

Let $X_{20} = K_{20}/\Gamma_{20}$ be a semiequivelar map on the torus for some fixed element (vertex, edge or face) free subgroup $\Gamma_{20} \le {\rm Aut}(K_{20})$. Let the vertices of $X_{20}$ form $m_{20}$ ${\rm Aut}(X_{20})$-orbits. We take the middle point of the line segment joining vertices $a_{0}$ and $a_6$ as the origin $(0,0)$ of $K_{20}$. Let  $\alpha_{20} :=a_{24} - a_7$, $\beta_{20} := a_{14} - a_5$ and $\gamma_{20} := a_{20} - a_4$ $\in \mathbb{R}^2$. Similarly as above, define $H_{20} := \langle x\mapsto x+\alpha_{20}, x\mapsto x+\beta_{20}, x\mapsto x+\gamma_{20}\rangle$ and 
\begin{align*}
  G_{20} & =\{ \alpha : x\mapsto \varepsilon x + m_{20}\alpha_{20} + n_{20}\beta_{20} +r_{20}\gamma_{20}   \, : \, \varepsilon=\pm 1, m_{20}, n_{20}, r_{20} \in \ZZ\} \cong H_{20}\rtimes \mathbb{Z}_2.
\end{align*}
By the same arguments as above and in Claim 1, $\Gamma_{20} \unlhd G_{20}$ and the number of $G_{20}/\Gamma_{20}$-orbits of vertices of $X_{20}$ is nine. Therefore, $G_{20}/\Gamma_{20}$ acts on $X_{20} = K_{20}/\Gamma_{20}$.
Since  $O_1 = \langle a_1 \rangle,$ $O_2 = \langle a_2 \rangle,$ $O_3 = \langle a_3 \rangle,$ $O_4 = \langle a_4 \rangle,$ $O_5 = \langle a_5 \rangle,$ $O_6 = \langle a_6 \rangle,$ $O_7 = \langle \ b_3 \rangle,$ $O_8 = \langle b_0 \rangle,$ $O_9 = \langle b_{11} \rangle$
 are the $G_{20}$-orbits, it follows that $O_1 = \langle a_1 \rangle,$ $O_2 = \langle a_2 \rangle,$ $O_3 = \langle a_3 \rangle,$ $O_4 = \langle a_4 \rangle,$ $O_5 = \langle a_5 \rangle,$ $O_6 = \langle a_6 \rangle,$ $O_7 = \langle \ b_3 \rangle,$ $O_8 = \langle b_0 \rangle,$ $O_9 = \langle b_{11} \rangle$ are the $(G_{20}/\Gamma_{20})$-orbits.  
Since the vertex set of $X_{20}$ is $\sqcup_{j=1}^9 \eta_{20}(O_j)$ and $G_{20}/\Gamma_{20} \leq {\rm Aut}(X_{20})$, it follows that the number of ${\rm Aut}(X_{20})$-orbits of vertices is $\leq 9$. Thus, it completes the part {\rm (6)}.
\end{proof}

\section{$2$-uniform tilings of the plane}\label{2uniform}

\begin{figure}[H]
    \centering
    \includegraphics[height=6cm, width= 6cm]{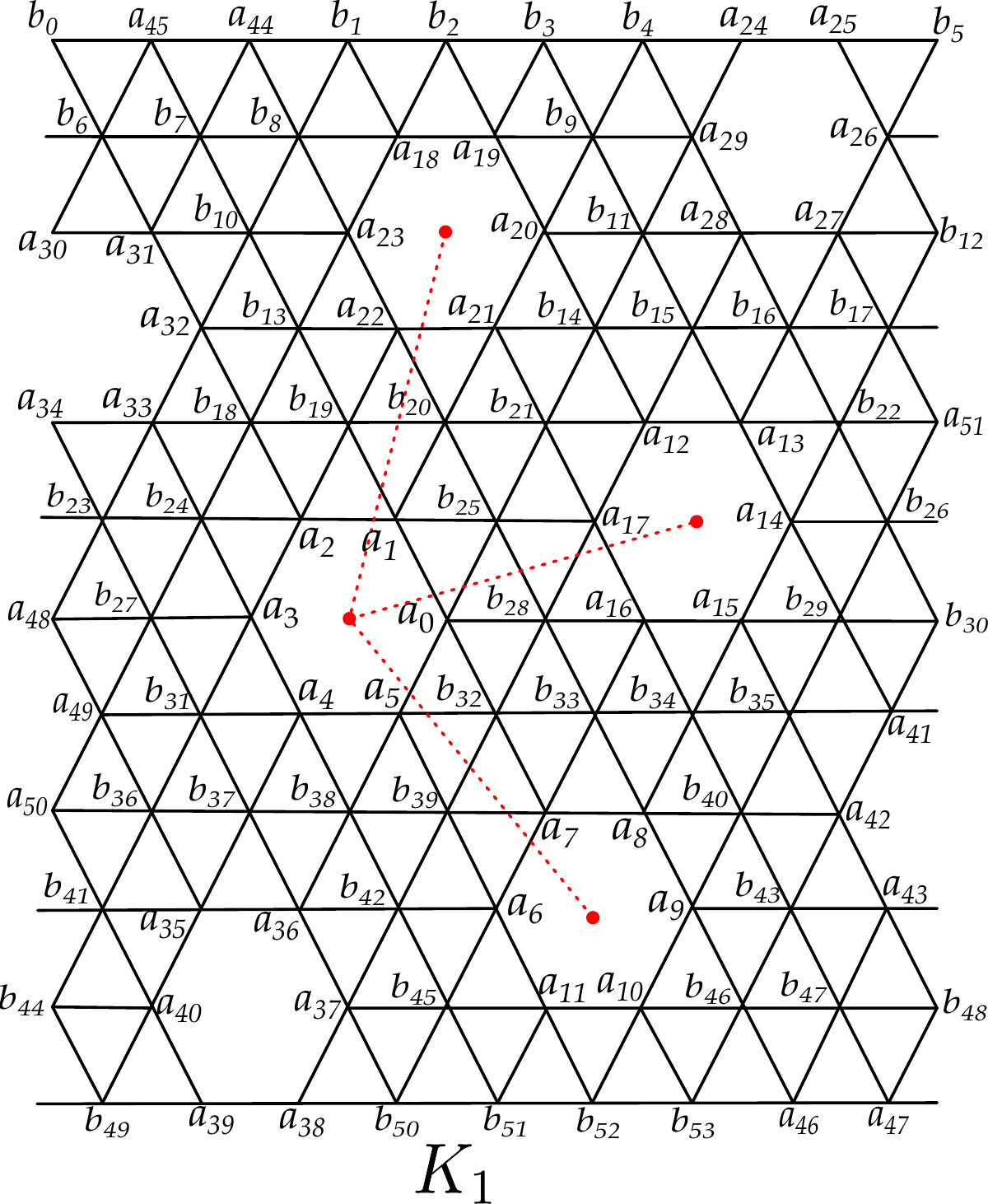}\hspace{5mm}
    \includegraphics[height=6cm, width=6cm]{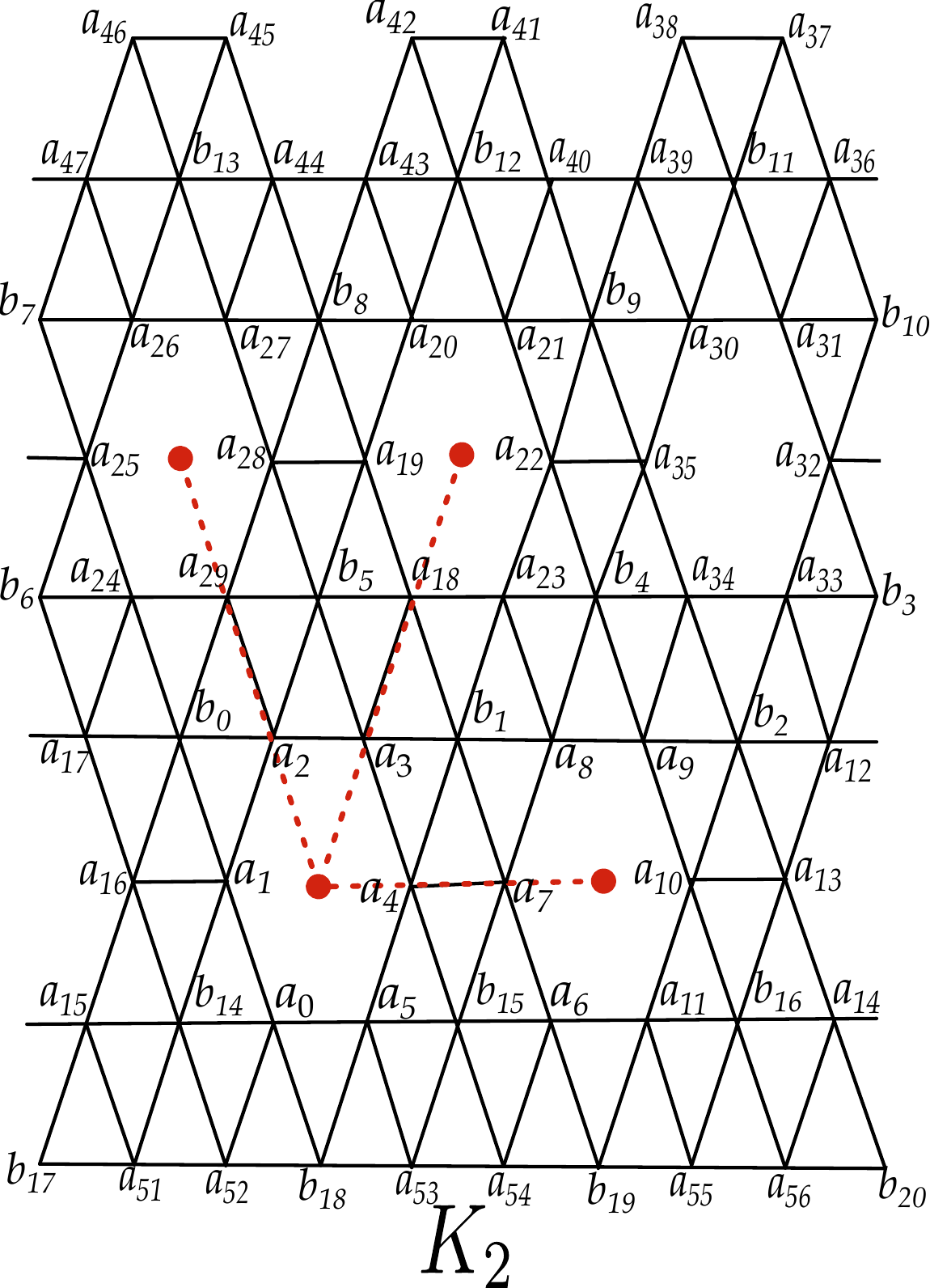}
     \vspace{10mm}
     
    \includegraphics[height=6cm, width= 6cm]{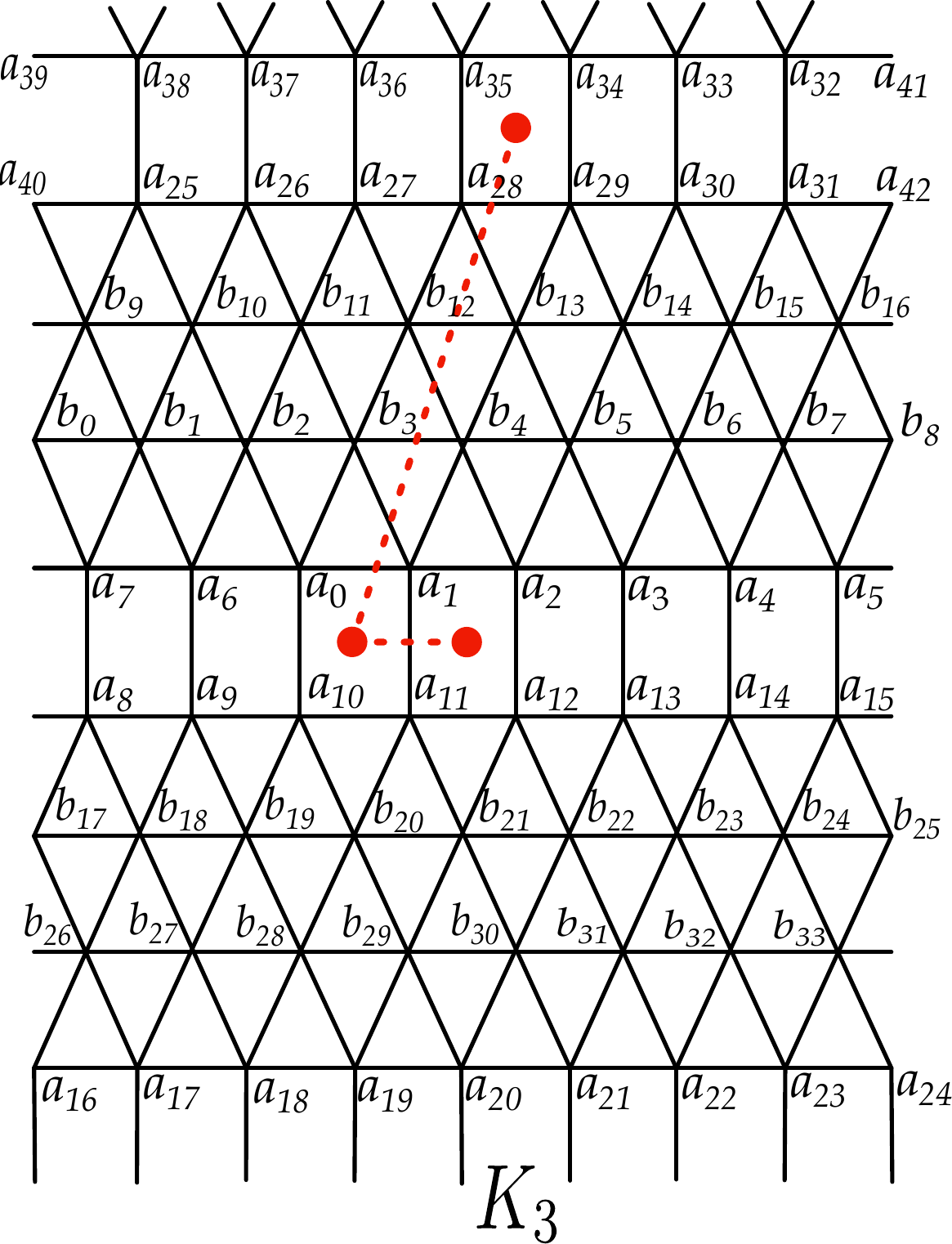}\hspace{5mm}
    \includegraphics[height=6cm, width= 6cm]{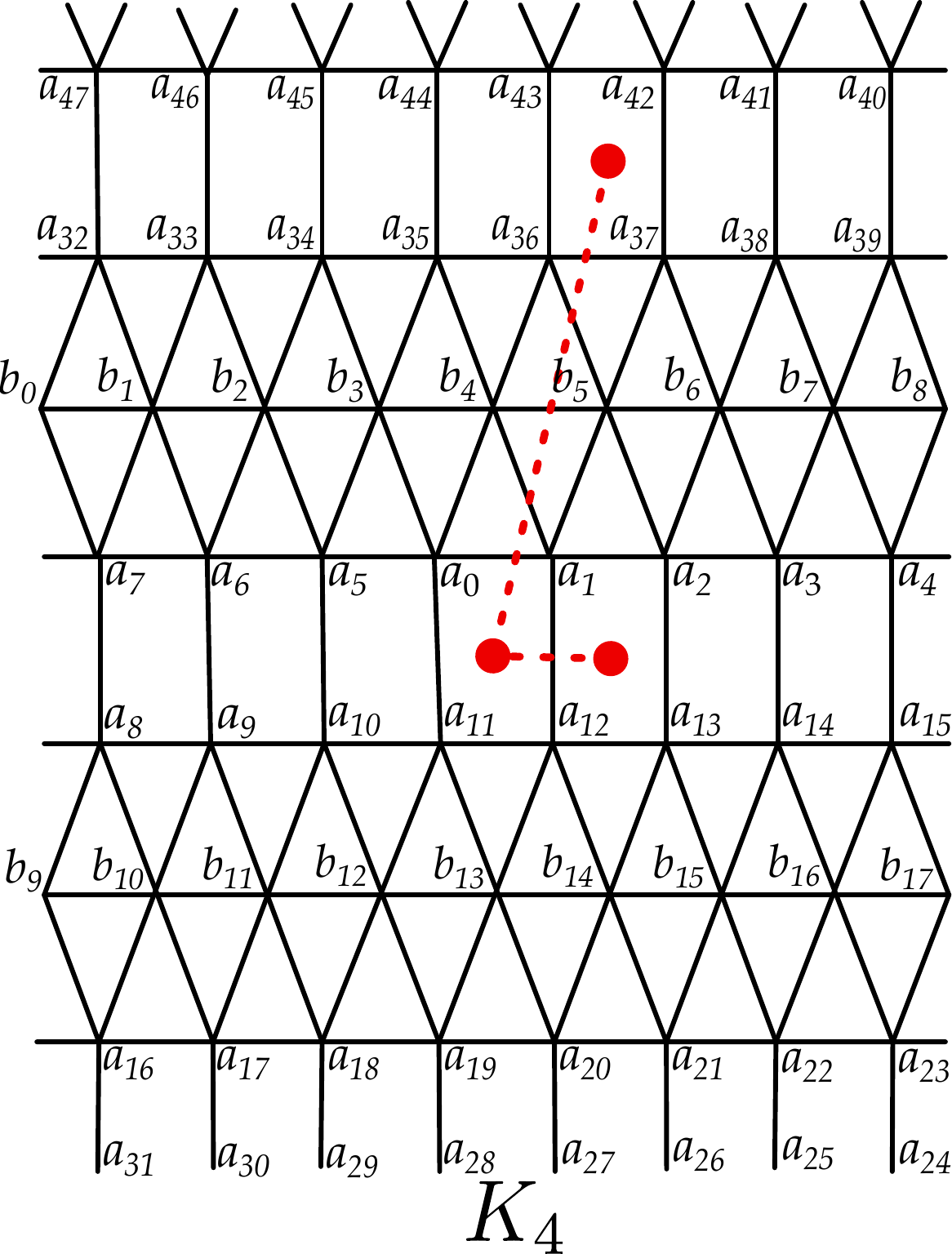}
    \vspace{10mm}
    
    \includegraphics[height=6cm, width= 6cm]{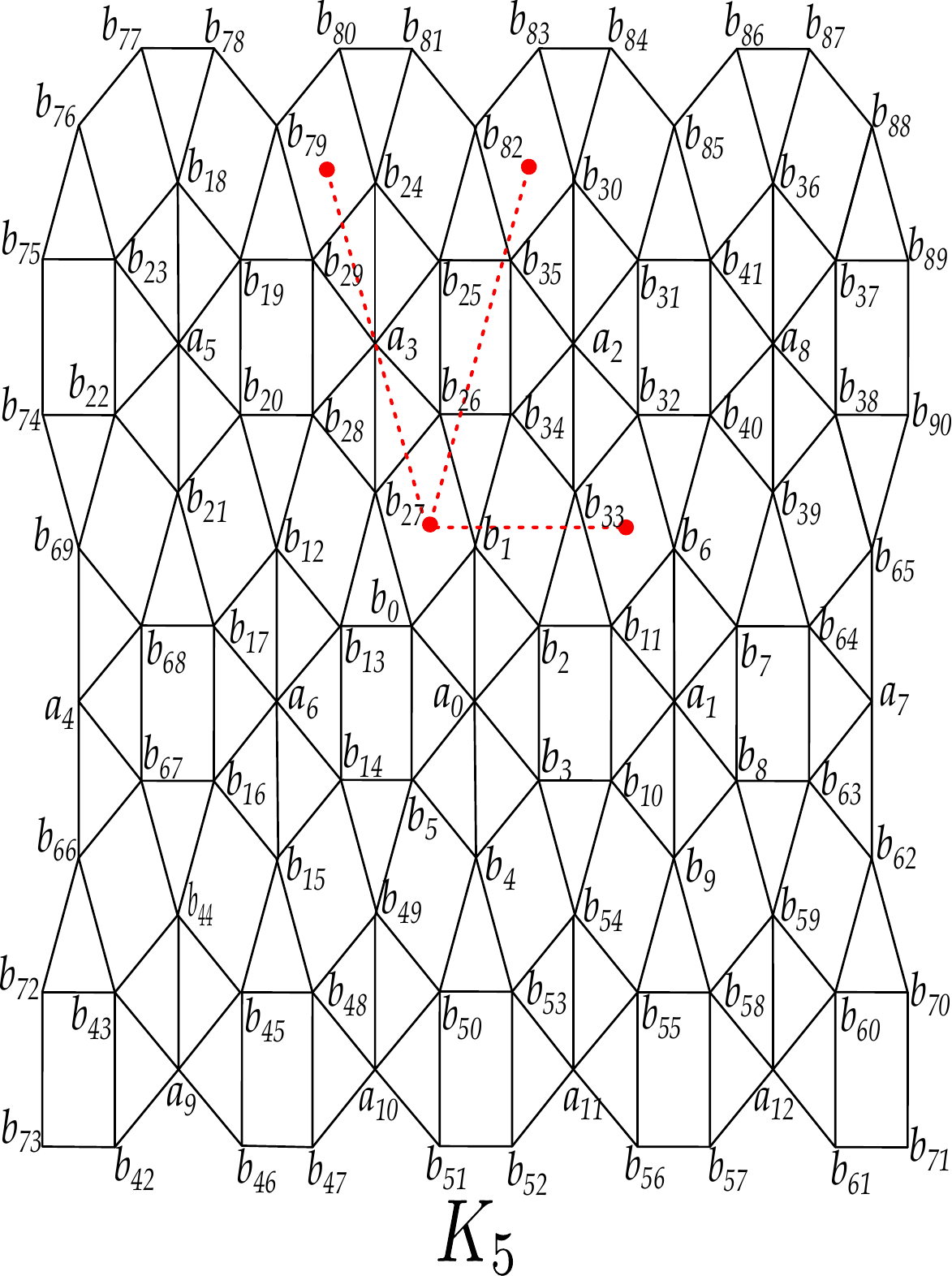}\hspace{5mm}
    \includegraphics[height=6cm, width= 6cm]{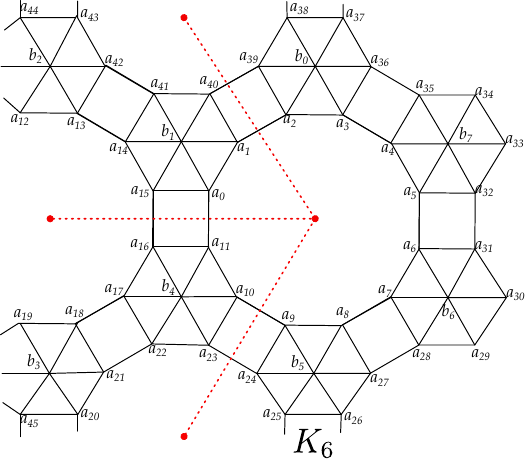}

    \end{figure}
    \begin{figure}[H]
    \centering
    \includegraphics[height=6cm, width= 6cm]{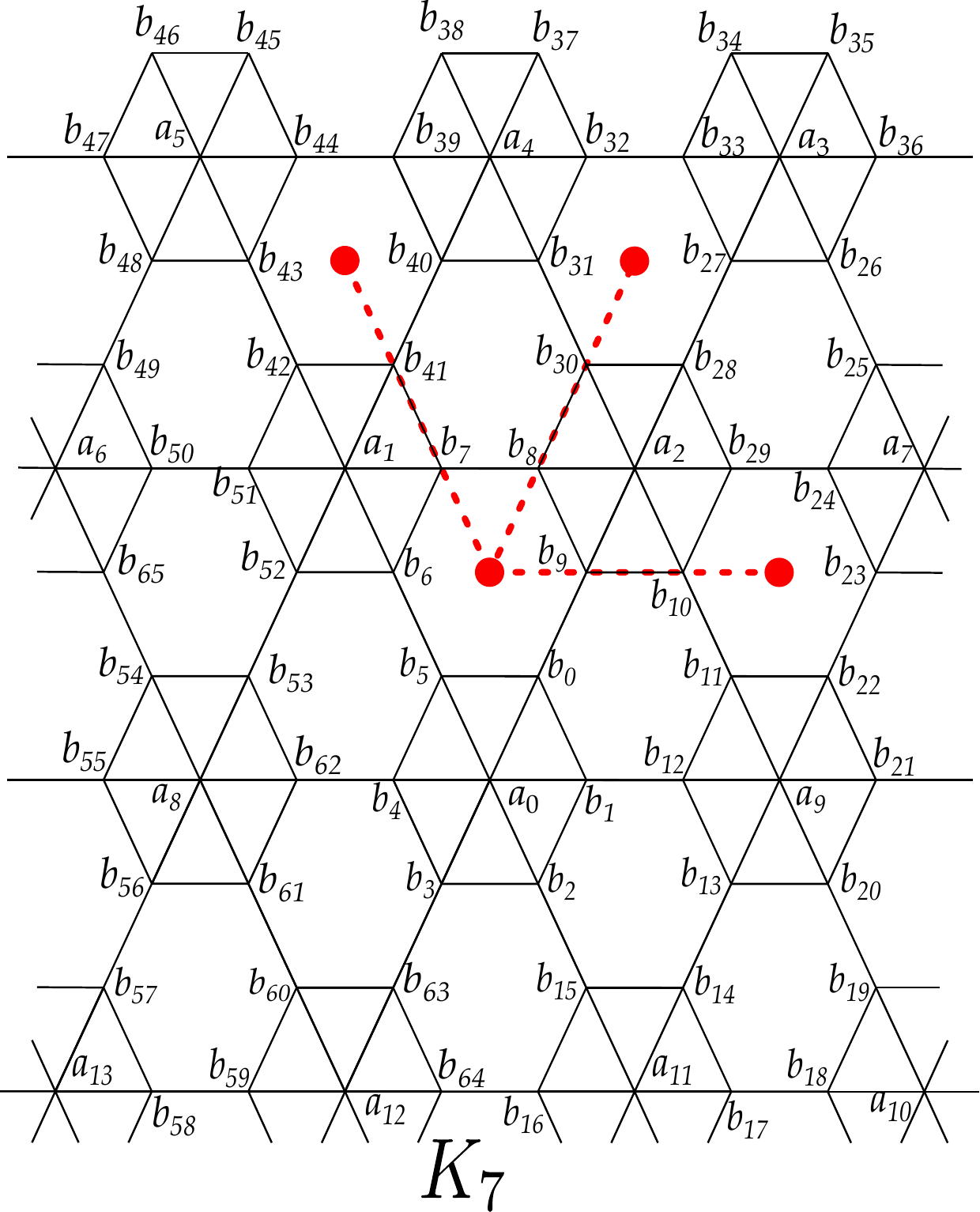}
    \includegraphics[height=6cm, width= 6cm]{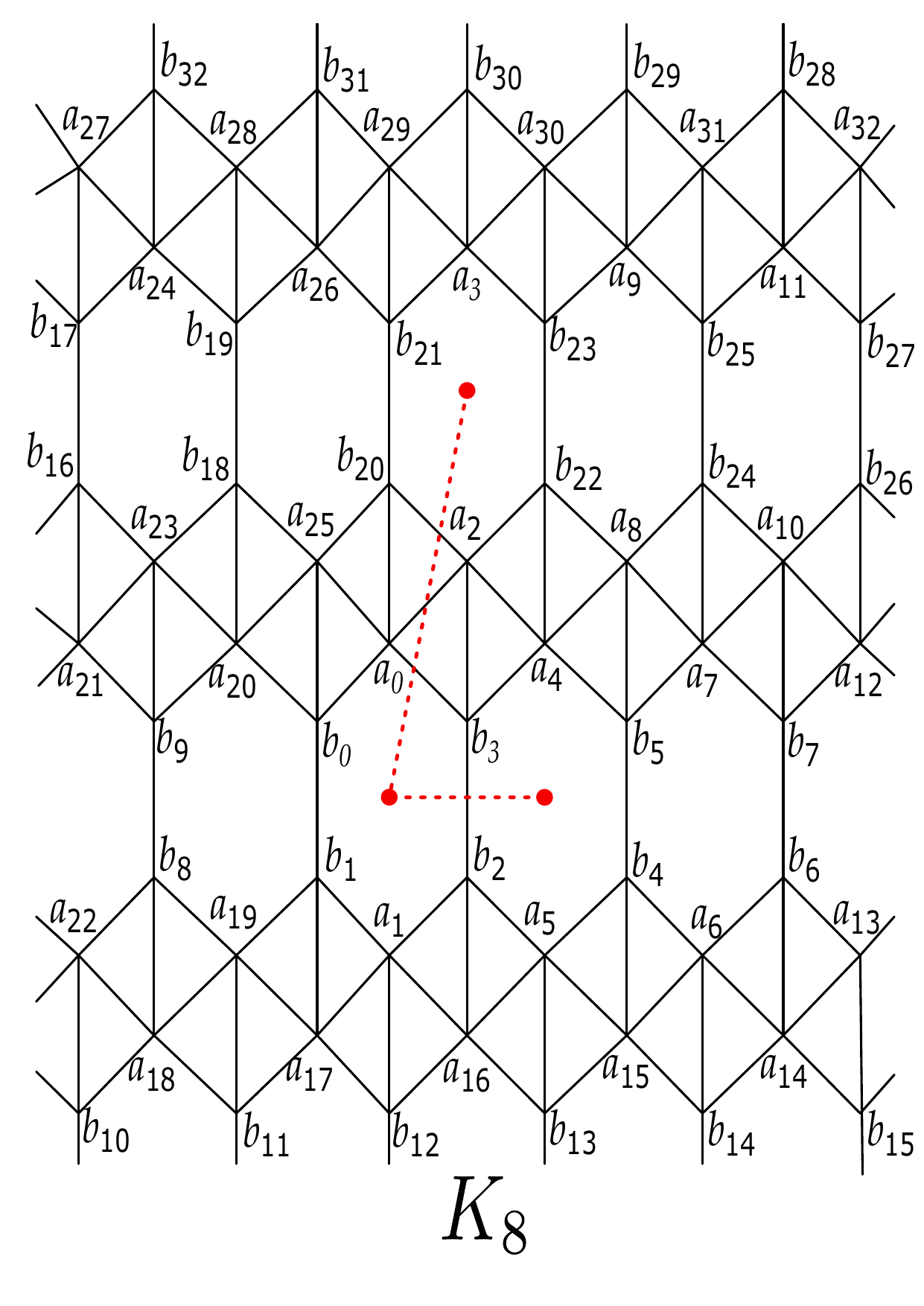}
     \vspace{10mm}
     
    \includegraphics[height=6cm, width= 6cm]{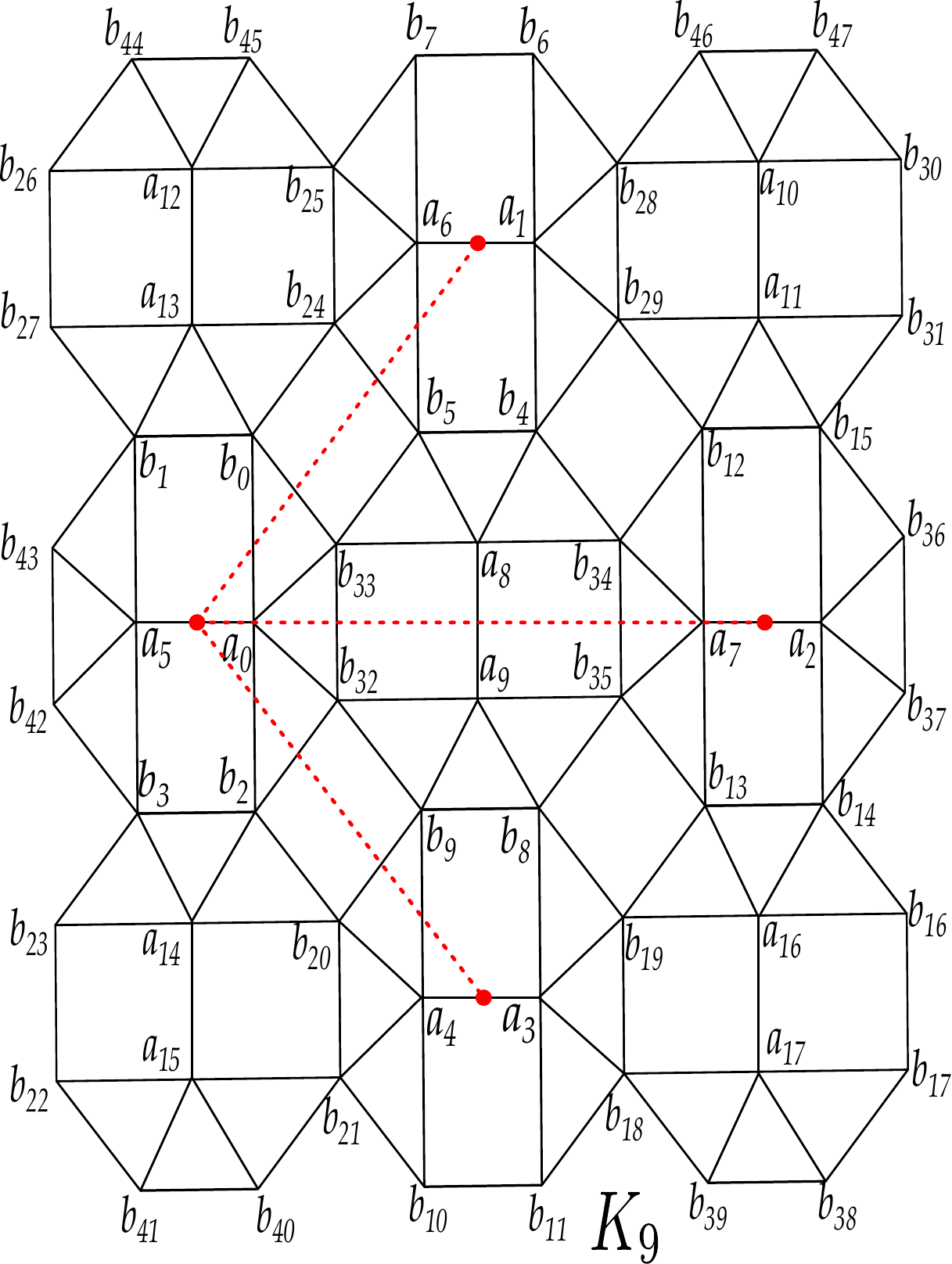}\hspace{5mm}
    \includegraphics[height=6cm, width= 6cm]{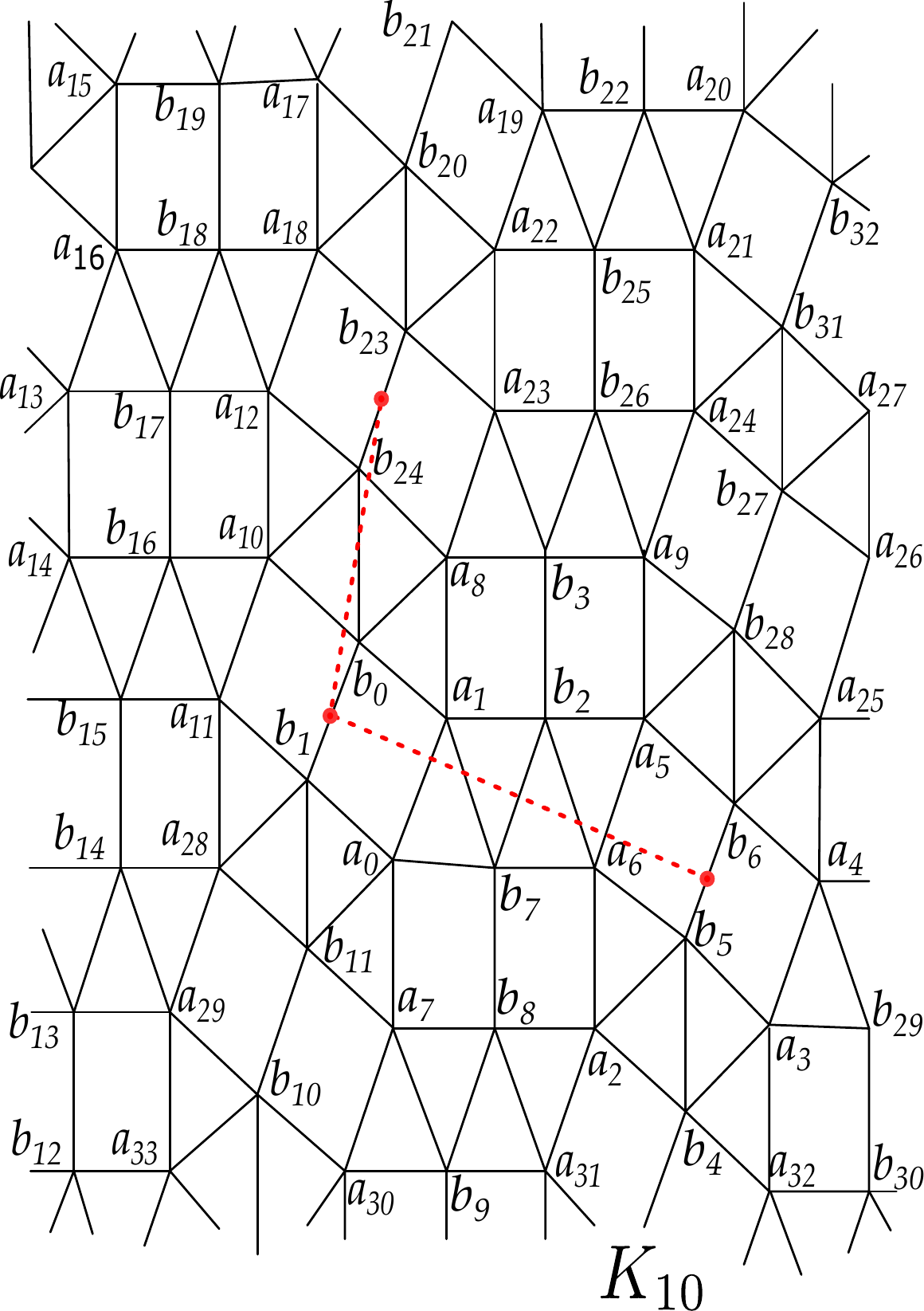}
    
     \vspace{10mm}
     
    \includegraphics[height=6cm, width= 6cm]{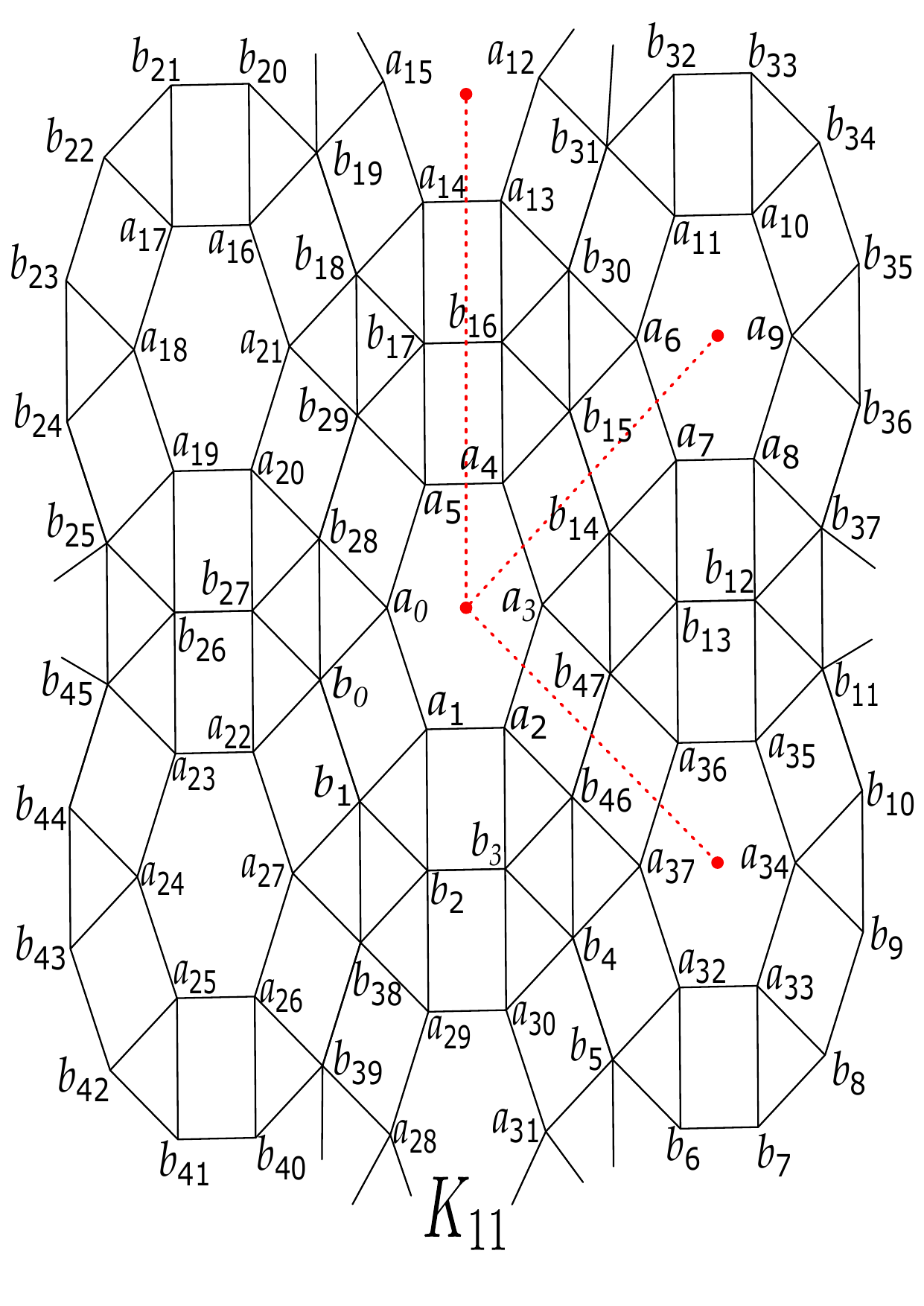}\hspace{5mm}
    \includegraphics[height=6cm, width= 6cm]{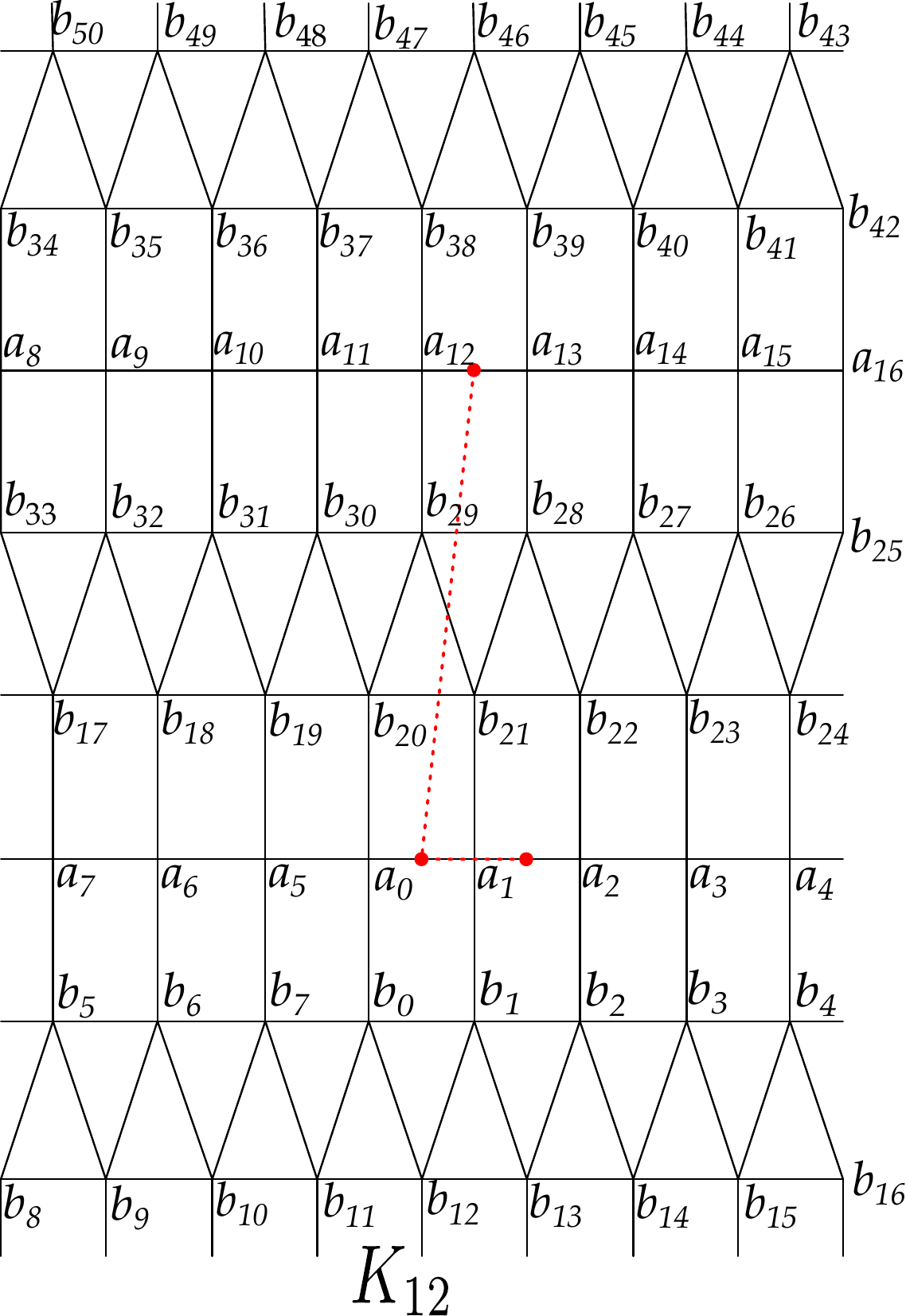}
    
     \end{figure}
     
     \begin{figure}[H]
    \centering
    \includegraphics[height=6cm, width= 6cm]{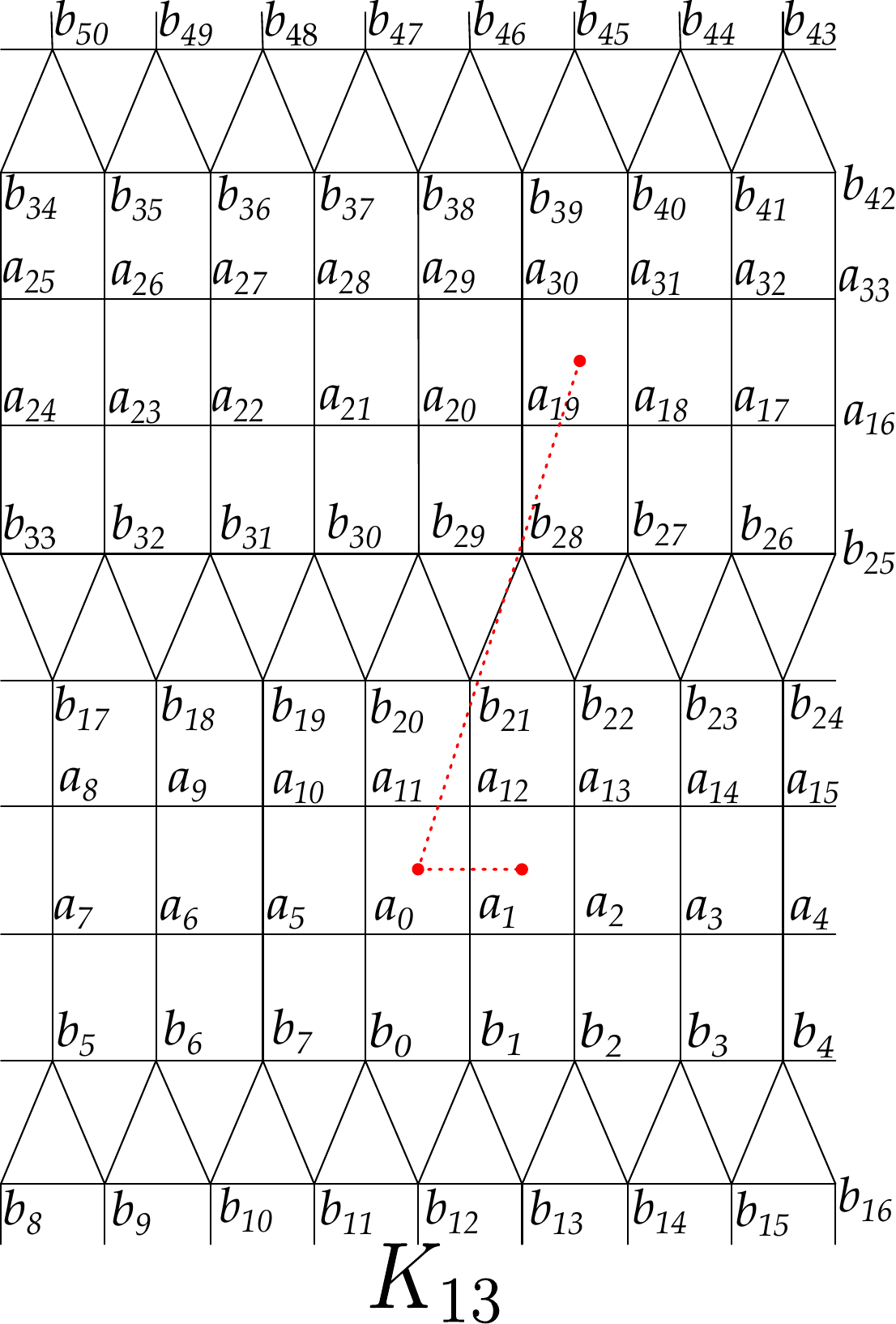}\hspace{5mm}
    \includegraphics[height=6cm, width= 6cm]{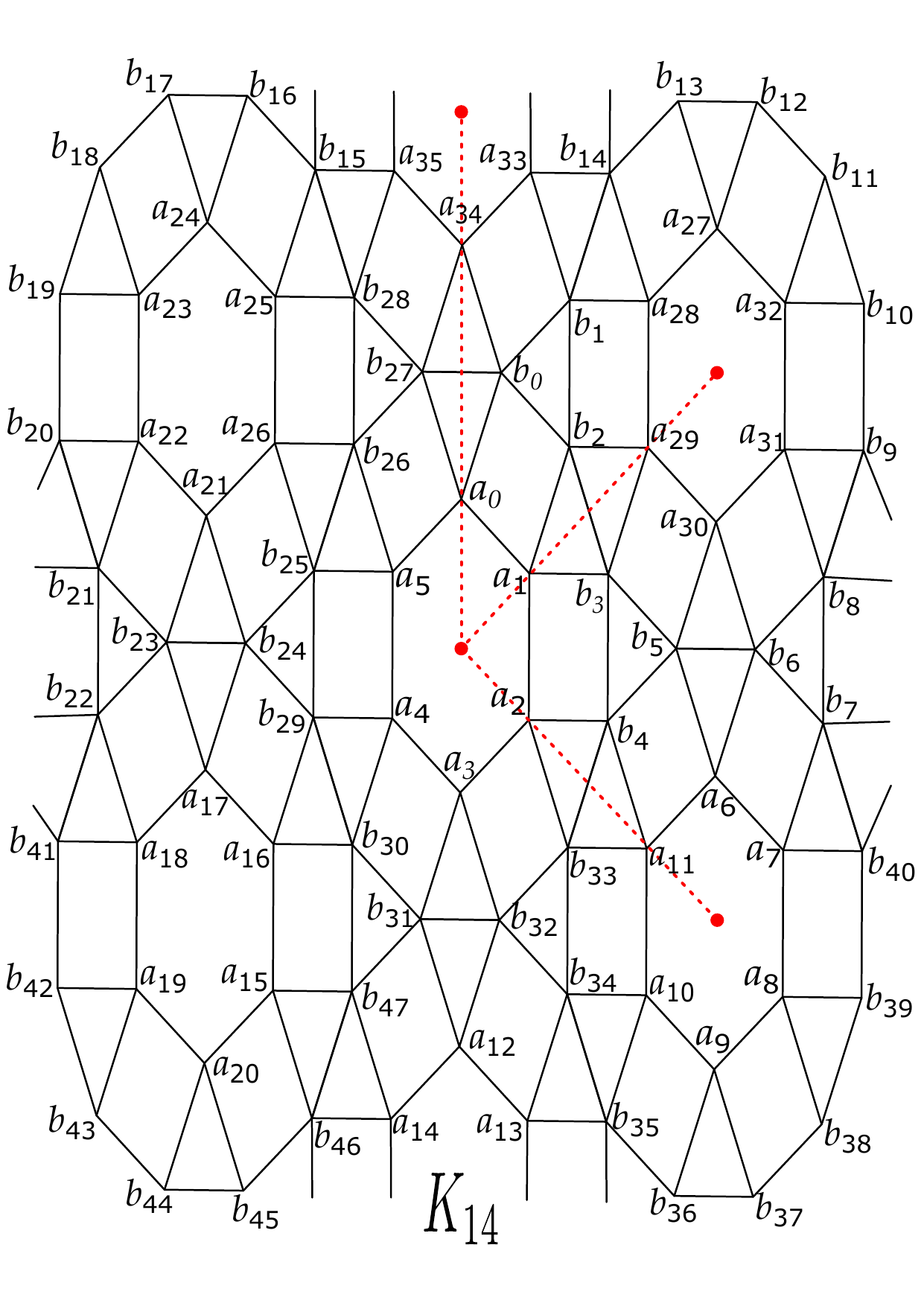}
    
     \vspace{10mm}
     
    \includegraphics[height=6cm, width= 6cm]{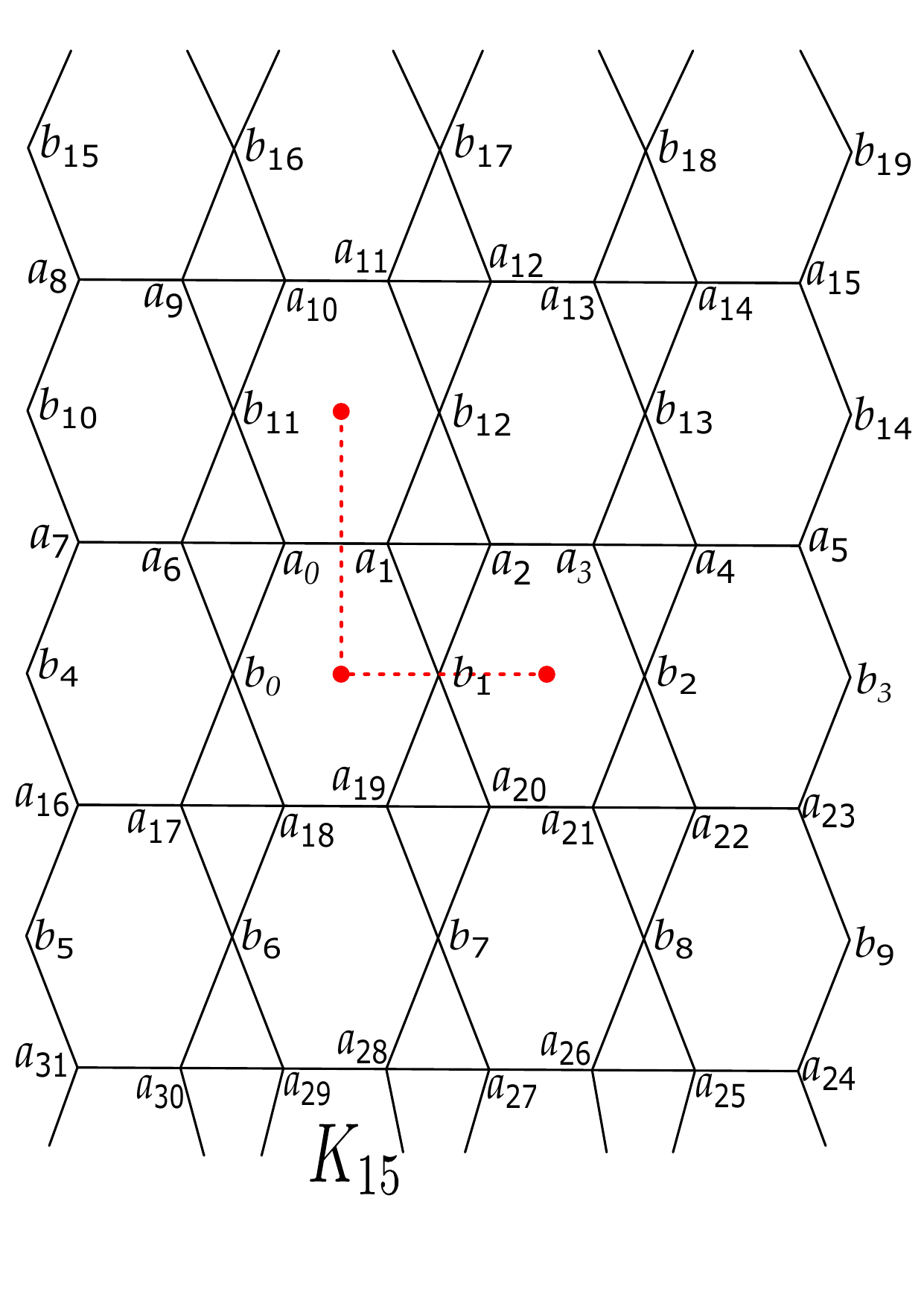}\hspace{5mm}
    \includegraphics[height=6cm, width= 6cm]{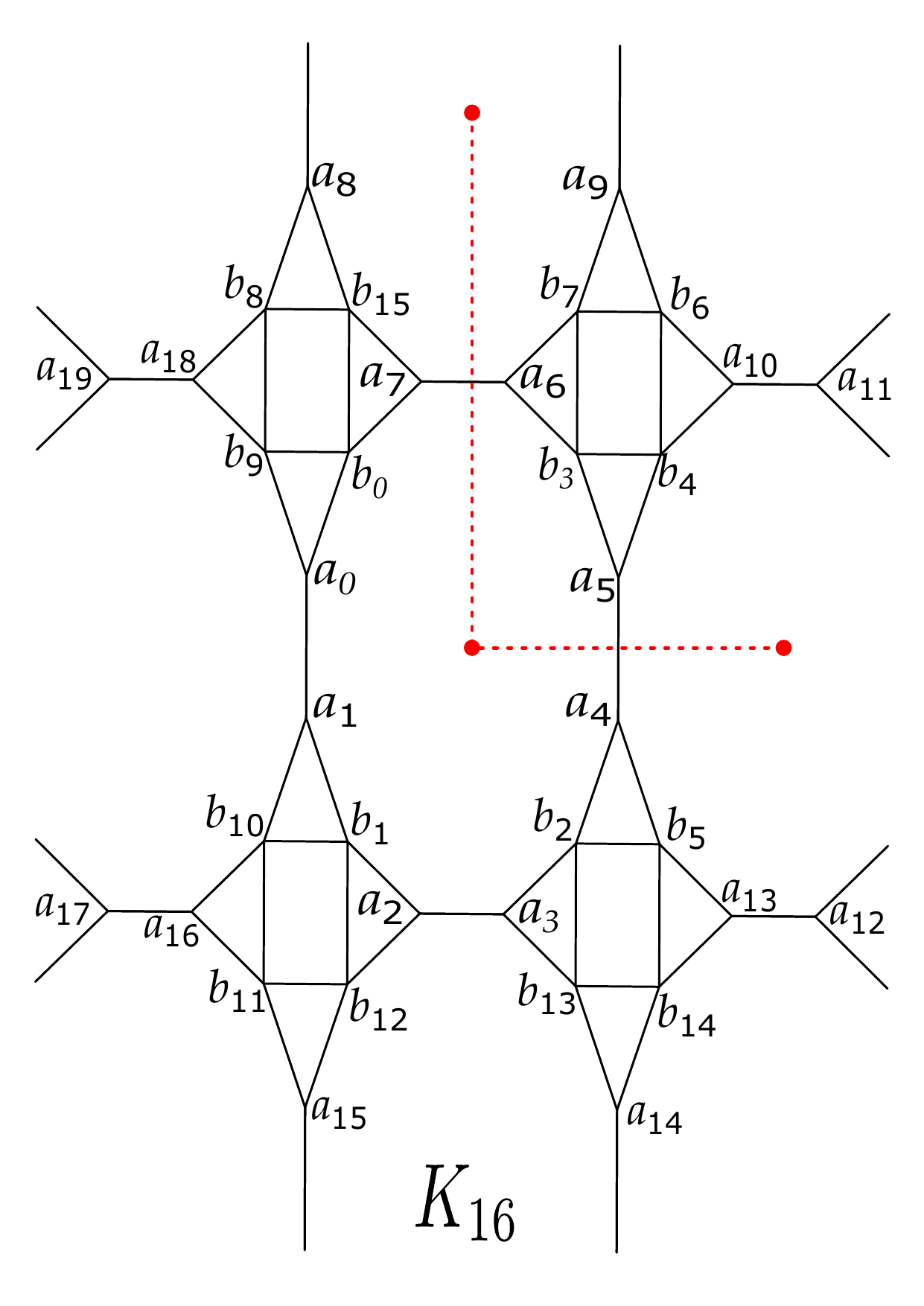}
    
     \vspace{10mm}
     
    \includegraphics[height=6cm, width= 6cm]{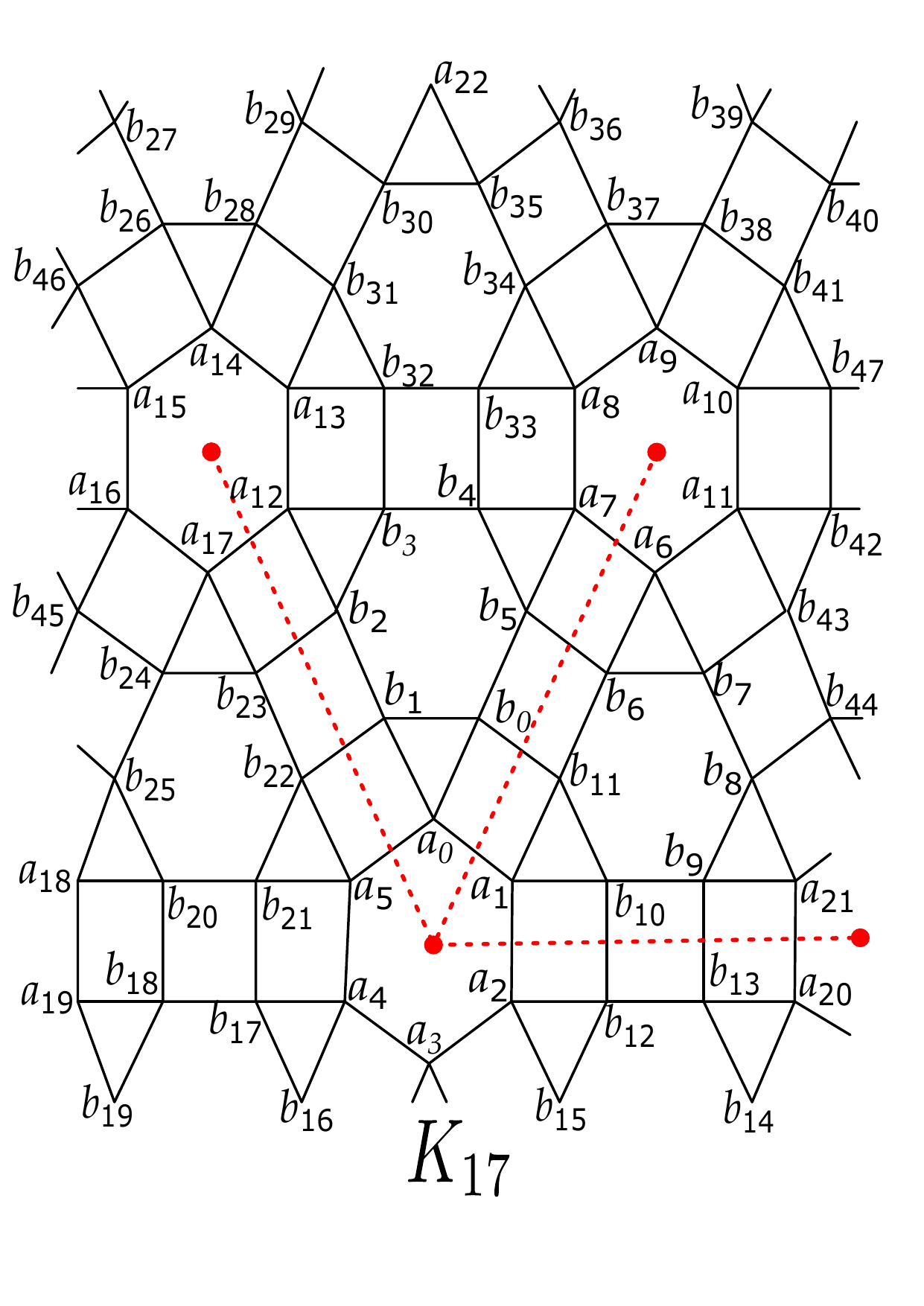}\hspace{5mm}
    \includegraphics[height=6cm, width= 6cm]{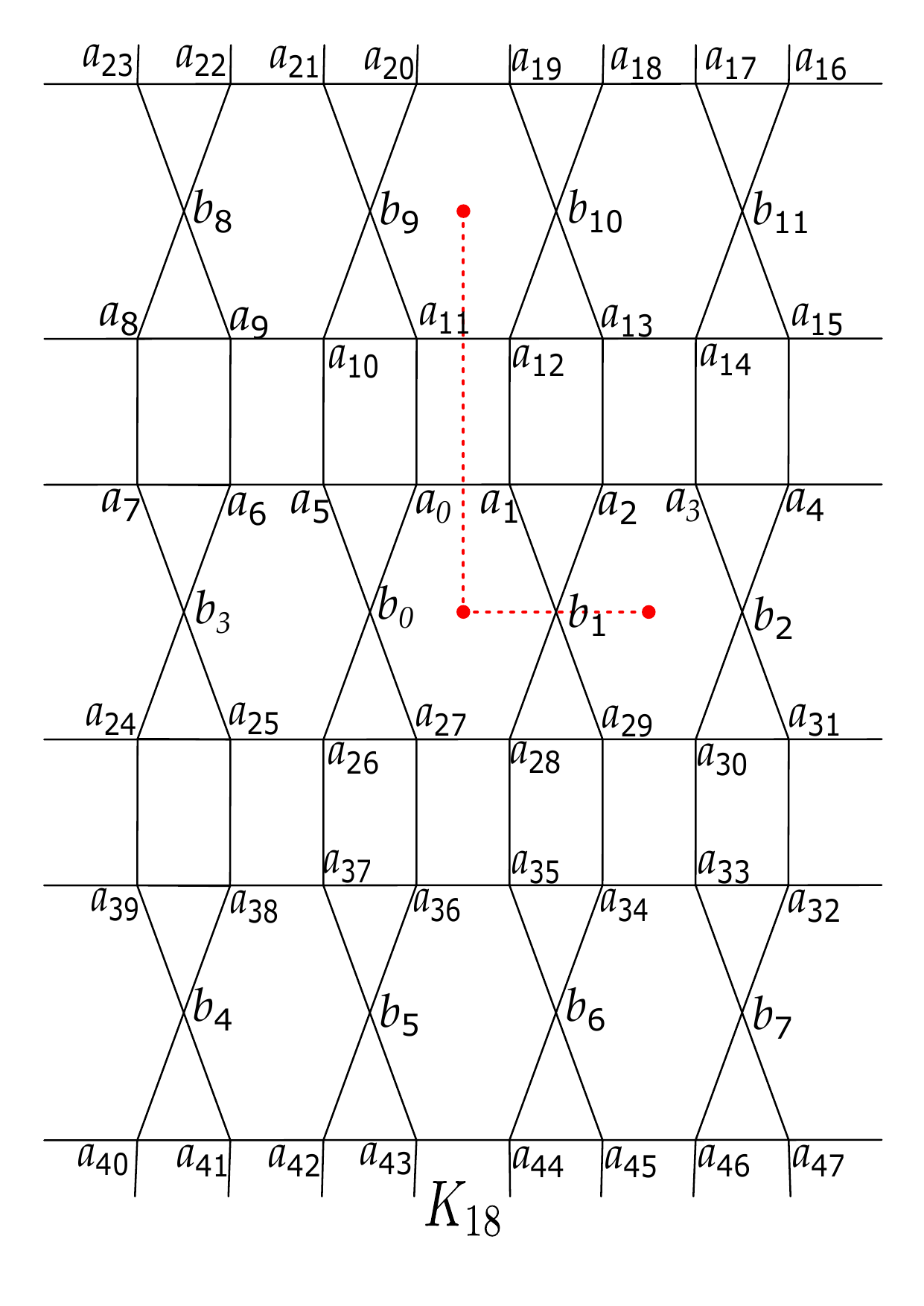}
     \vspace{10mm}
     
     \end{figure}
     
     \begin{figure}[H]
    \centering
    \includegraphics[height=6cm, width= 6cm]{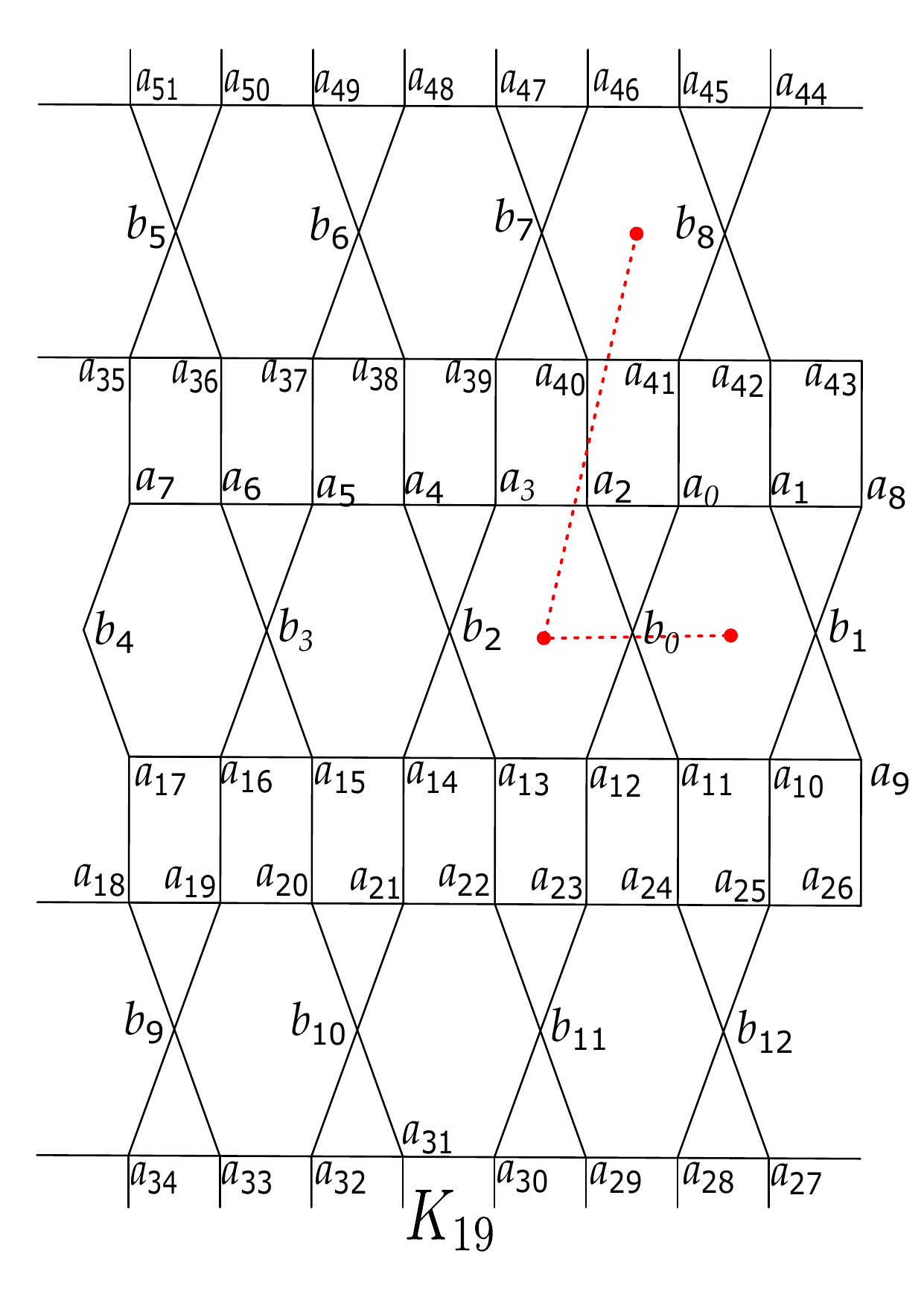}\hspace{5mm}
    \includegraphics[height=6cm, width= 6cm]{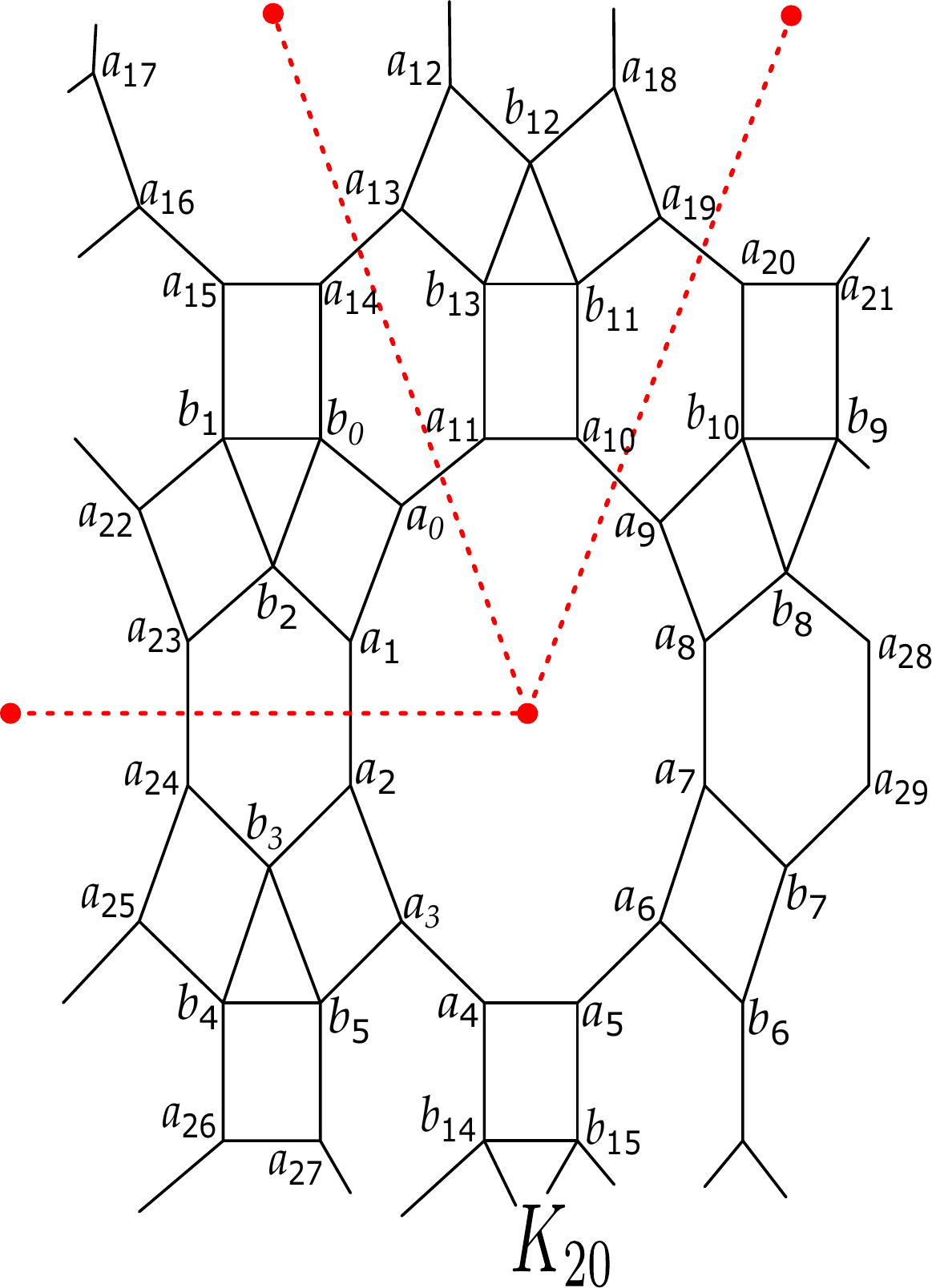}
\end{figure}

\section{$3$-uniform tilings of the plane}\label{3uniform}

\begin{figure}
    \centering
    \includegraphics[height=7cm, width= 6cm]{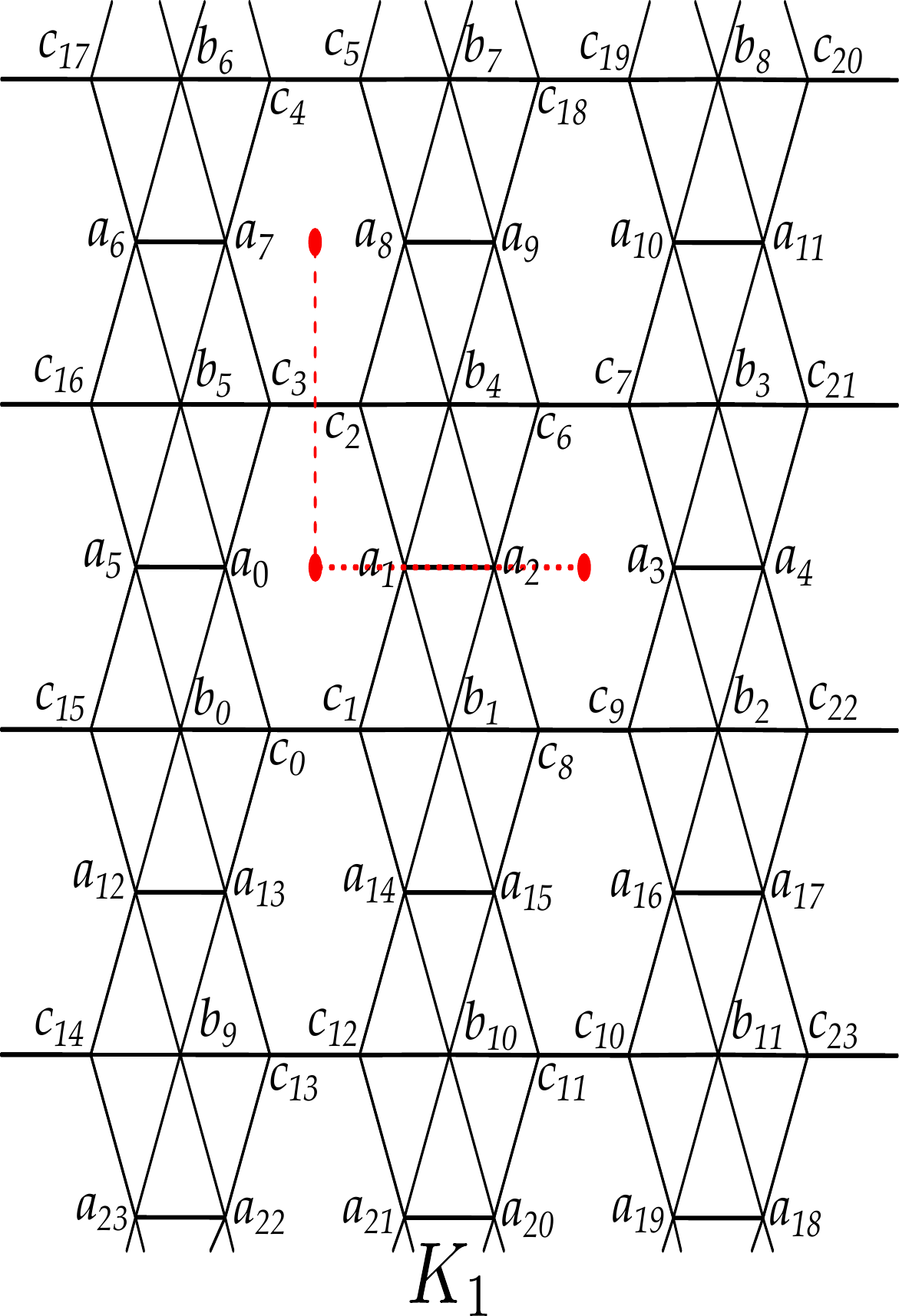}\hspace{5mm}
    \includegraphics[height=7cm, width= 6cm]{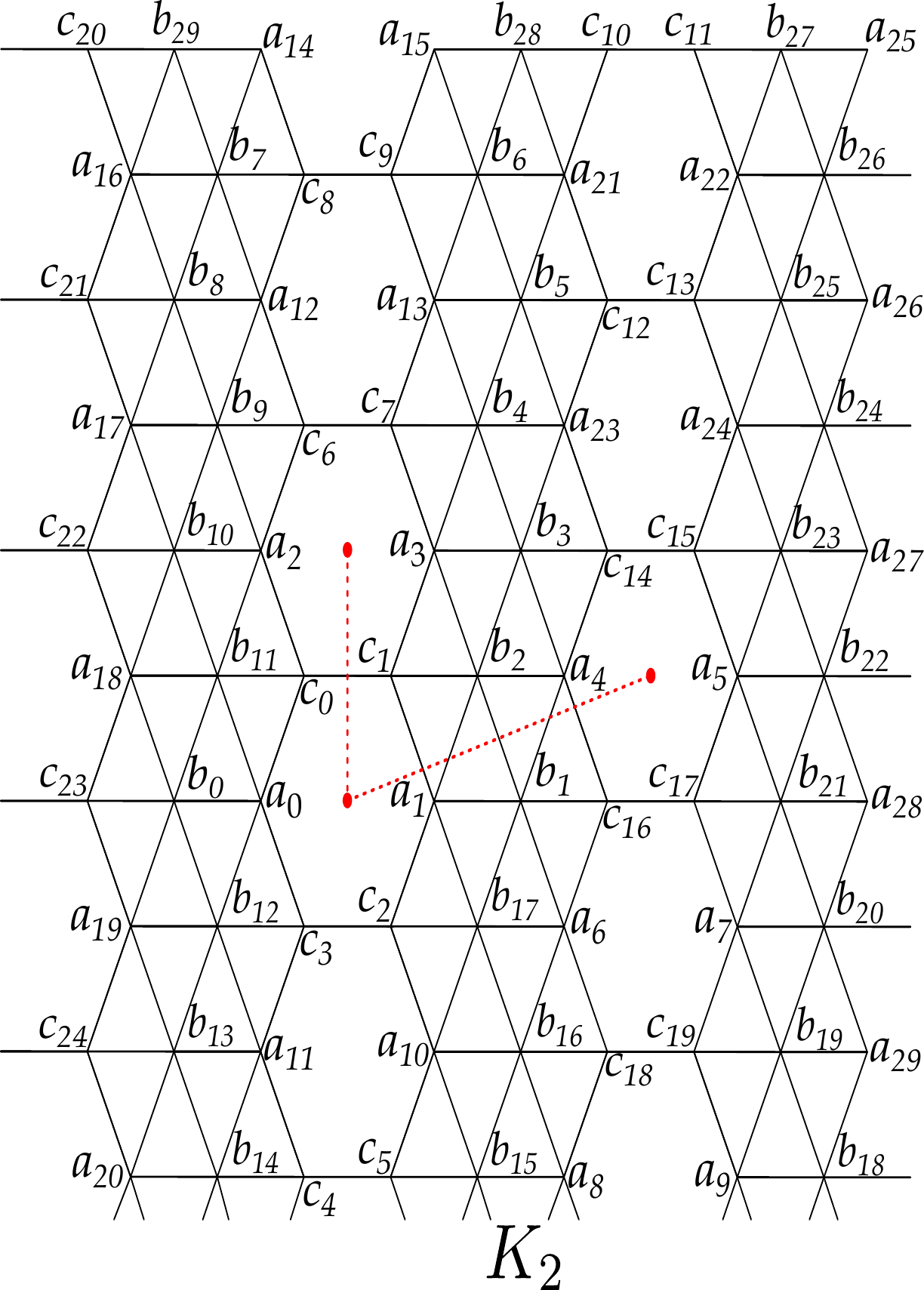}
    \end{figure}
    \begin{figure}
    \centering
    \includegraphics[height=6cm, width= 6cm]{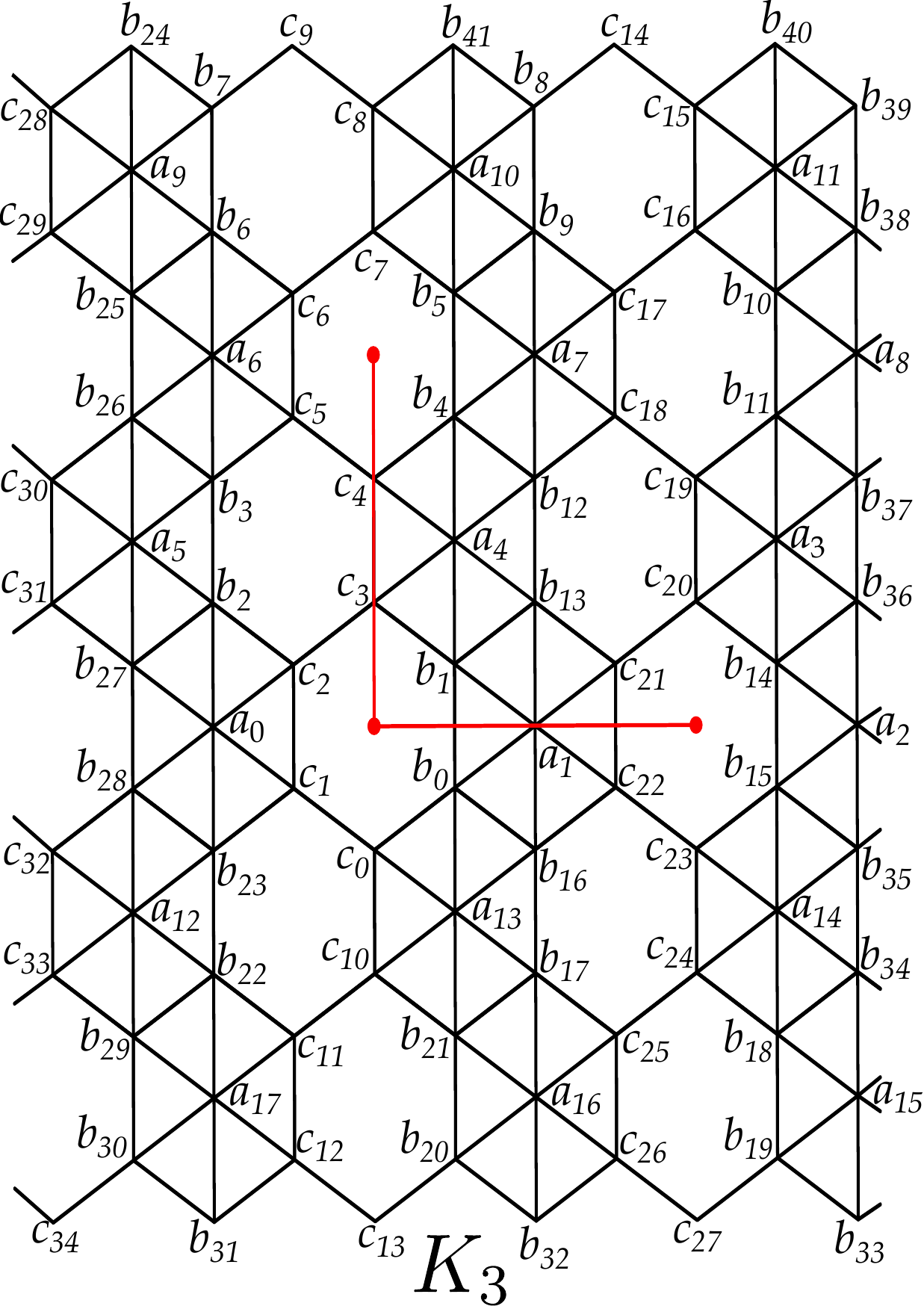}\hspace{5mm}
    \includegraphics[height=6cm, width= 6cm]{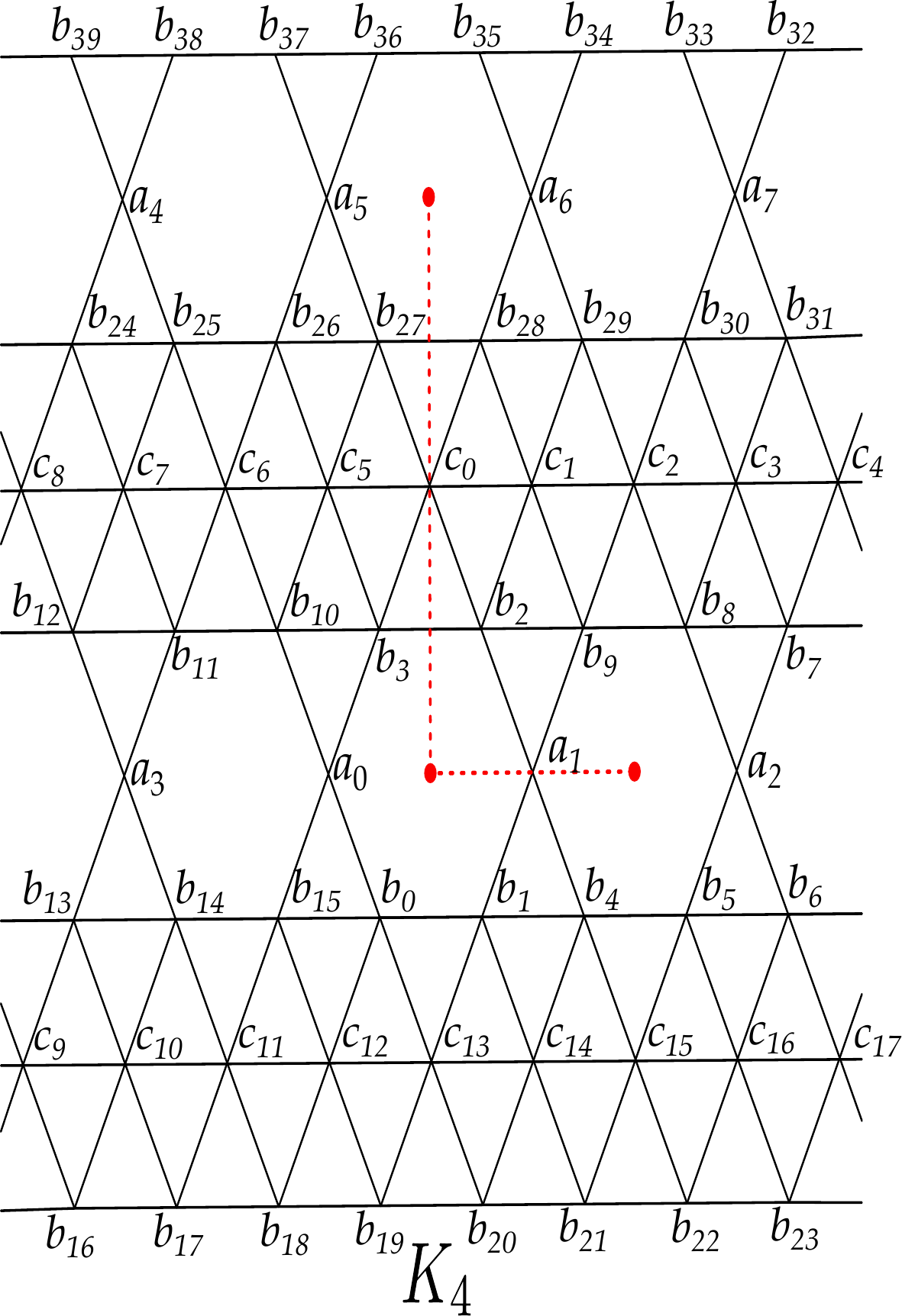}
     \end{figure}
     \begin{figure}
     \centering
    \includegraphics[height=6cm, width= 6cm]{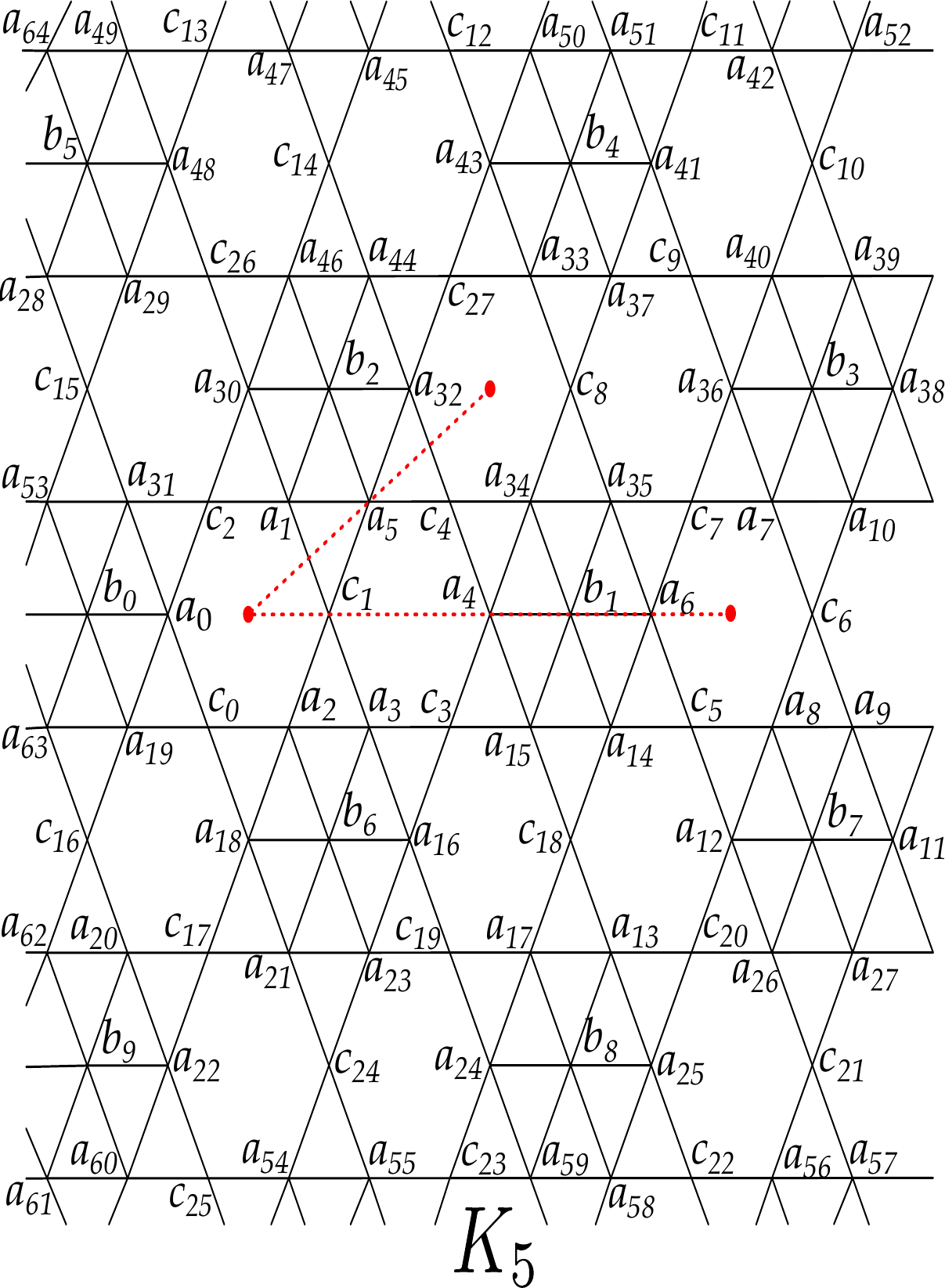}\hspace{5mm}
    \includegraphics[height=6cm, width= 6cm]{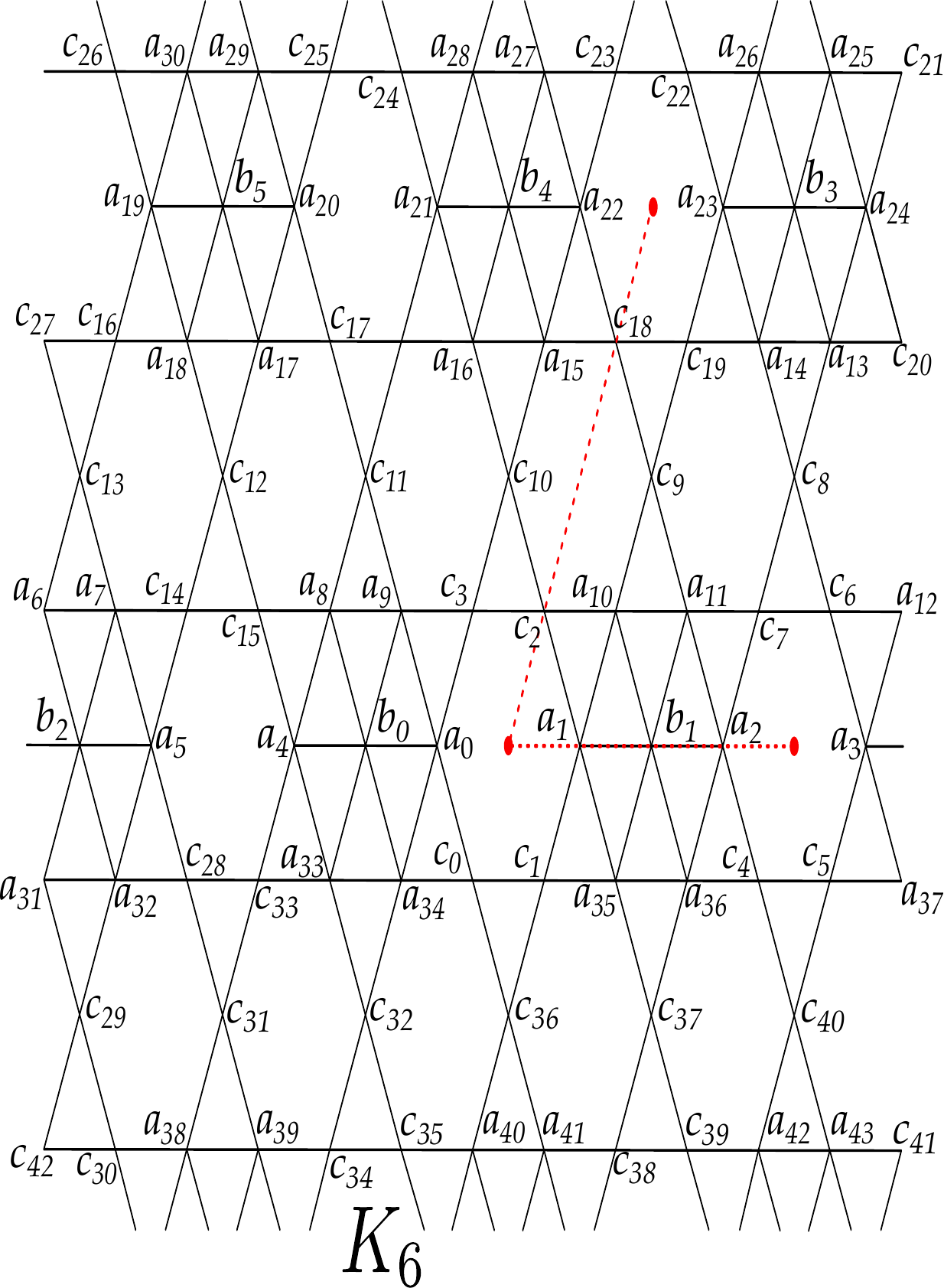}
    \end{figure}
     \begin{figure}
     \centering
    \includegraphics[height=6cm, width= 6cm]{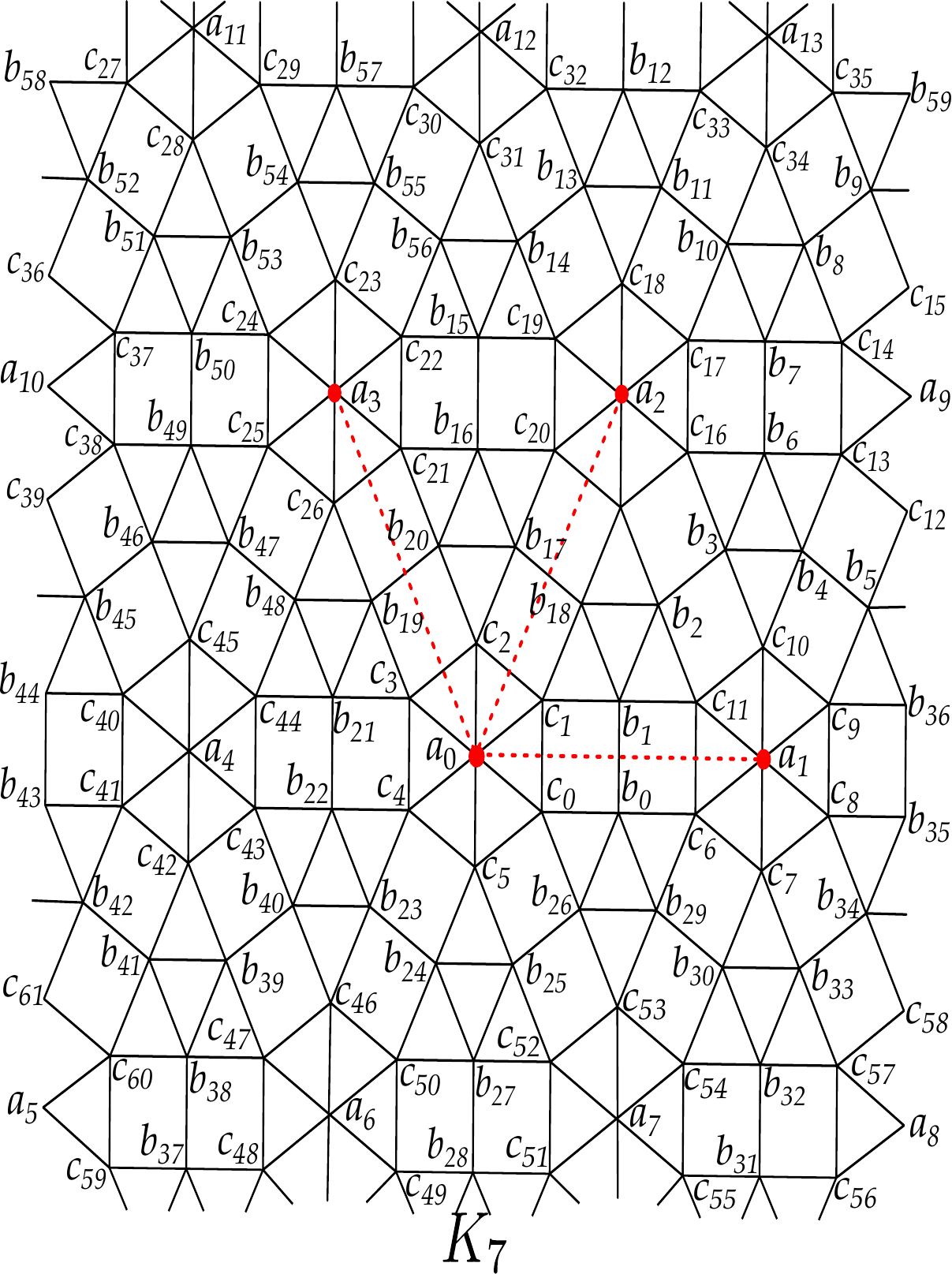}\hspace{5mm}
    \includegraphics[height=6cm, width= 6cm]{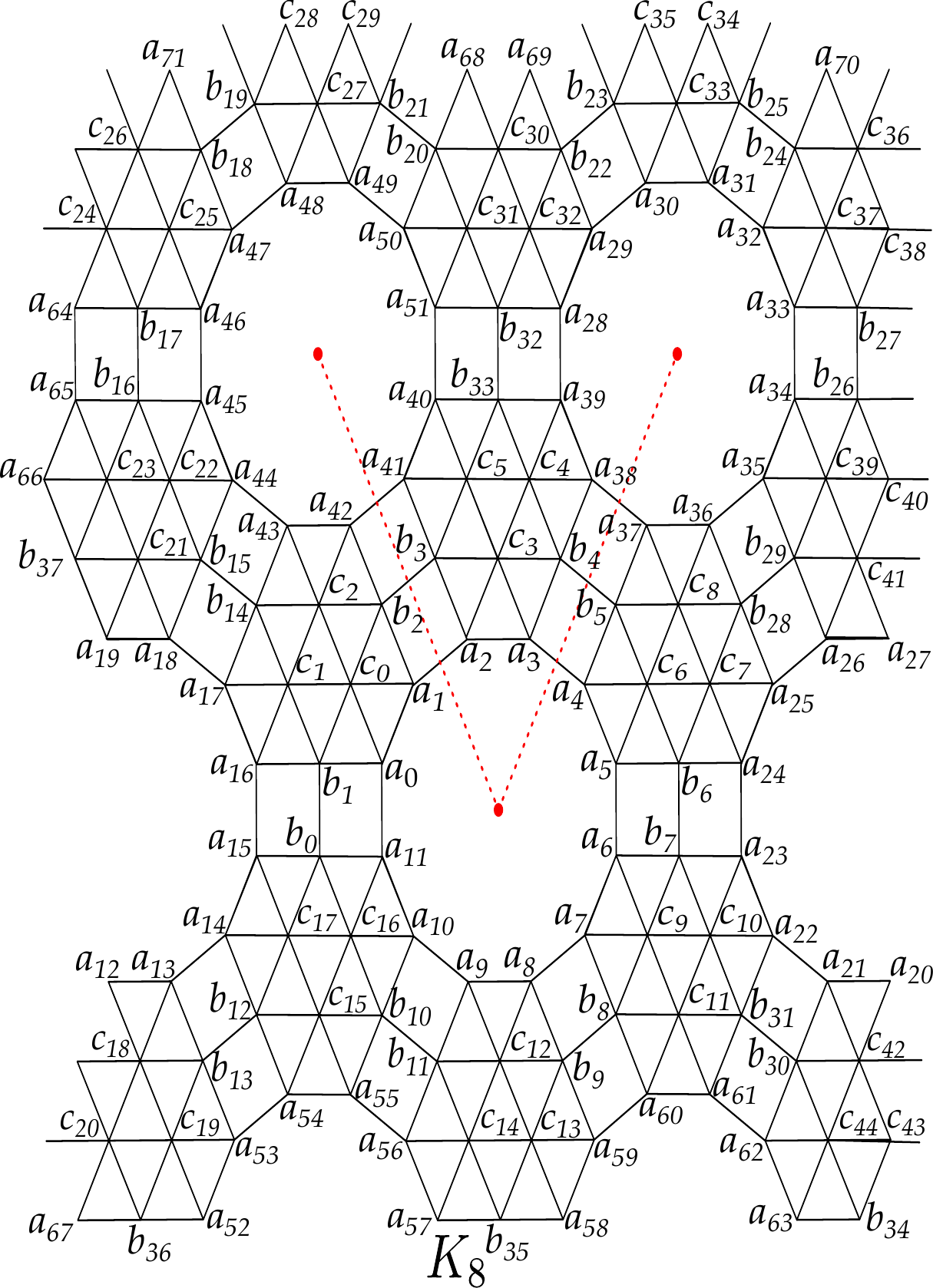}
    \end{figure}
     \begin{figure}
     \centering
    \includegraphics[height=6cm, width= 6cm]{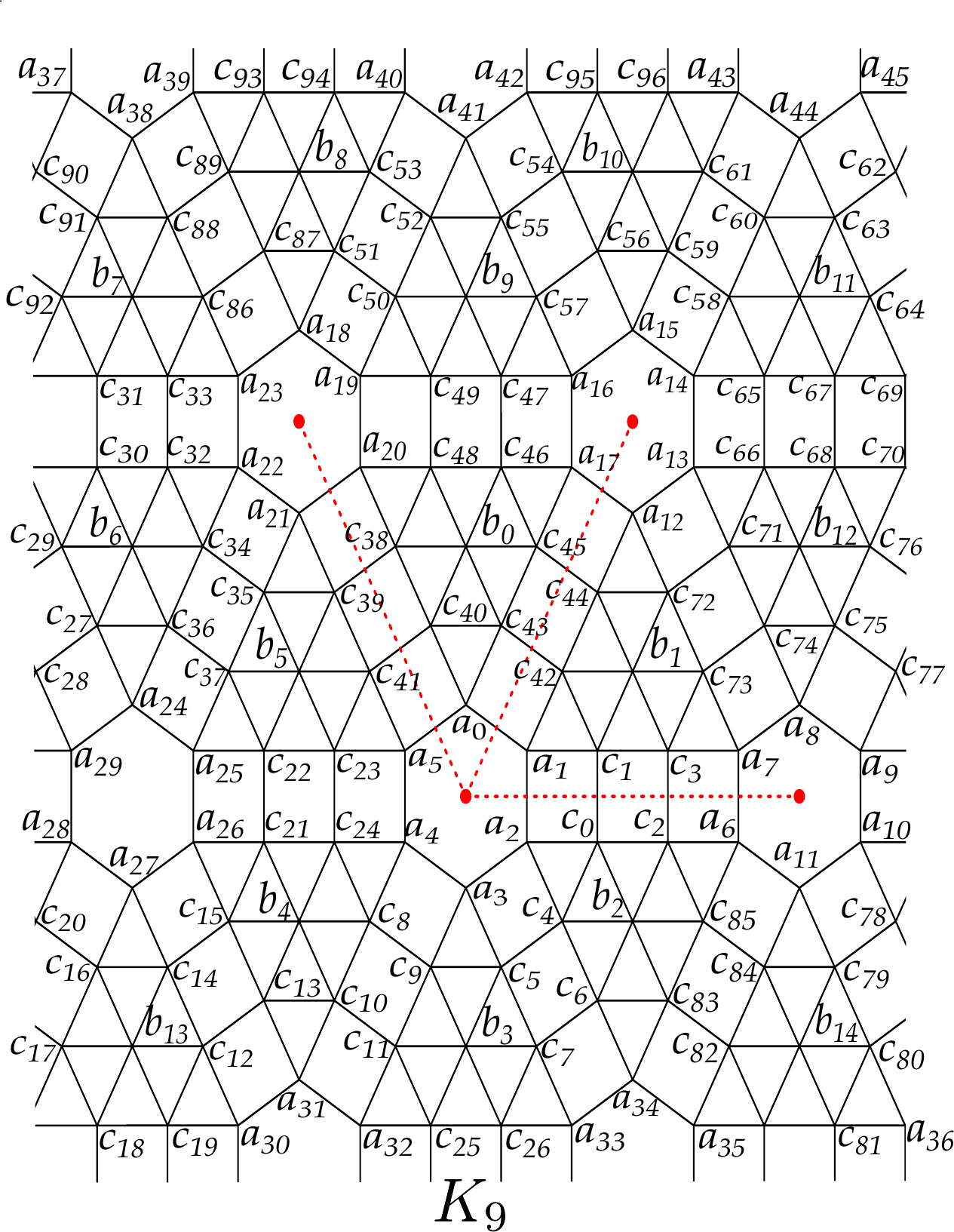}\hspace{5mm}
    \includegraphics[height=6cm, width= 6cm]{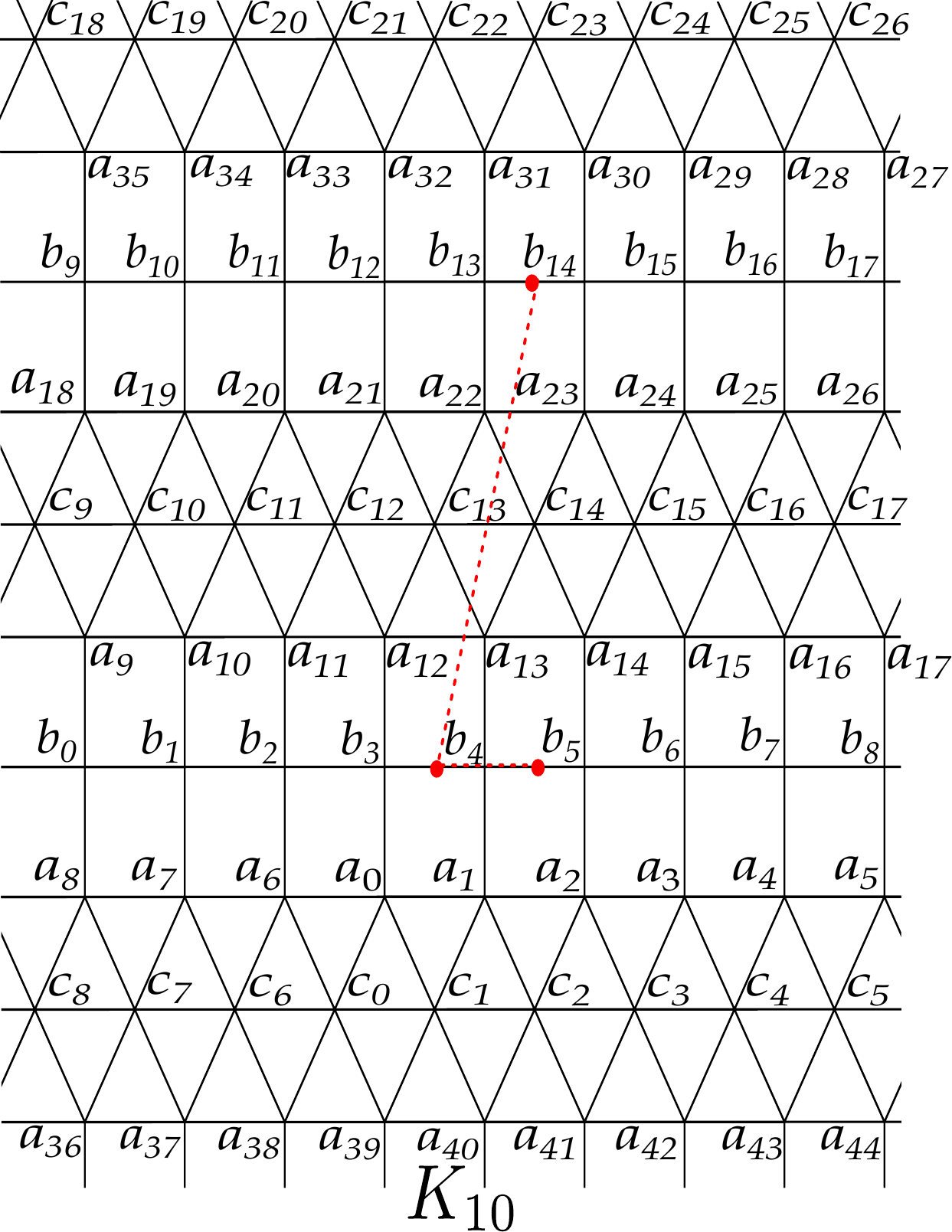}
    \end{figure}
    \begin{figure}
    \centering
    \includegraphics[height=6cm, width= 6cm]{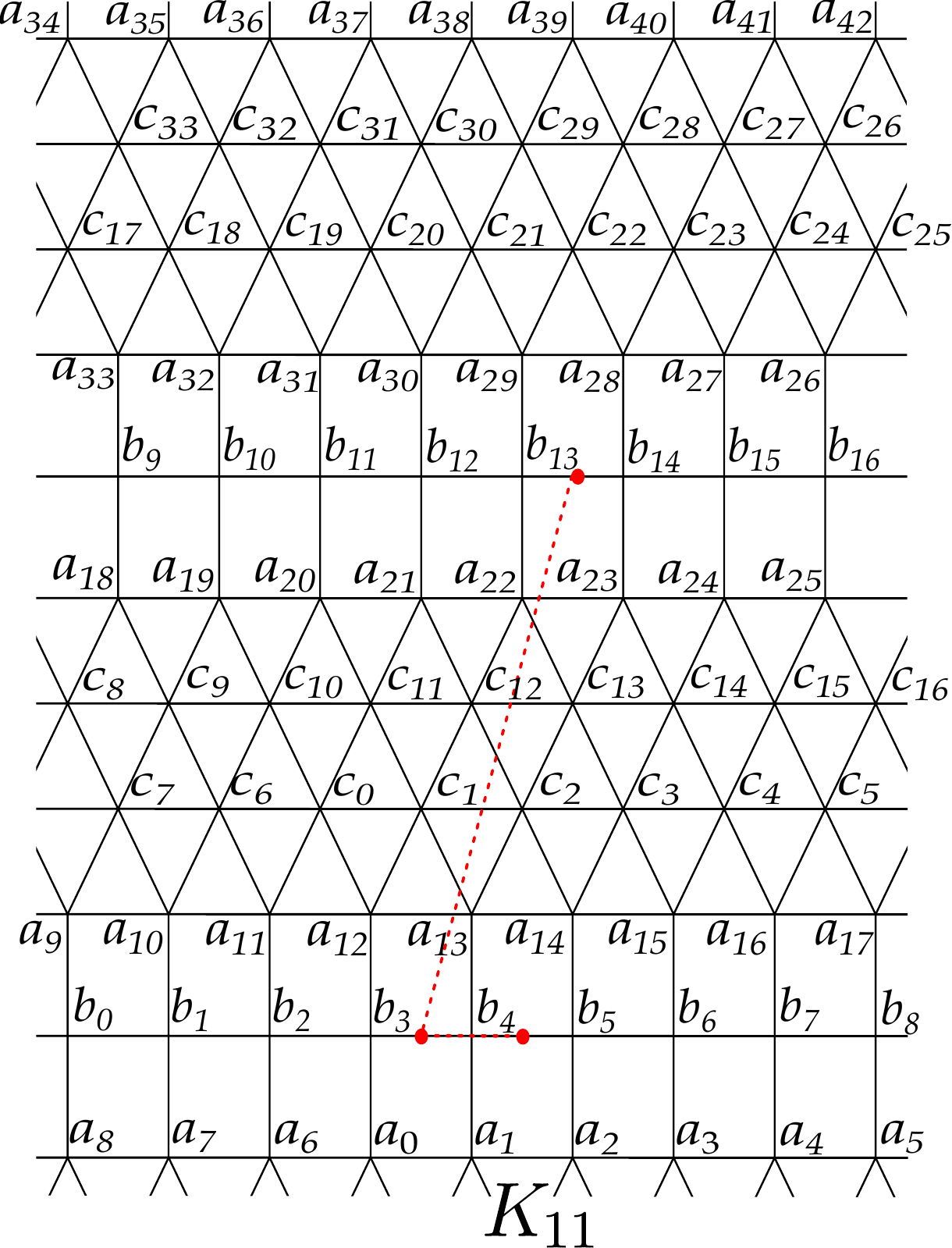}\hspace{5mm}
    \includegraphics[height=6cm, width= 6cm]{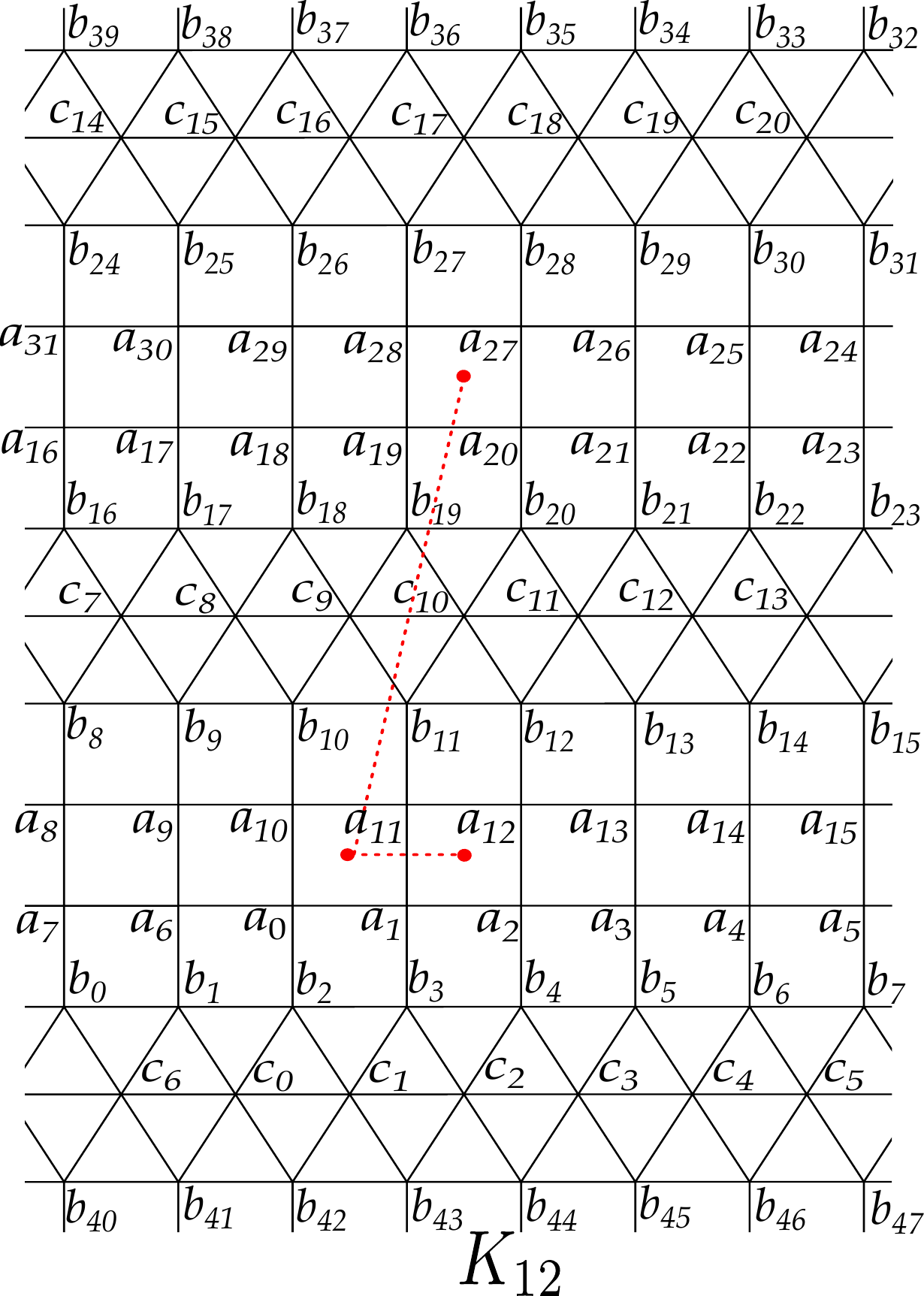}
    \end{figure}
     \begin{figure}
     \centering
    \includegraphics[height=6cm, width= 6cm]{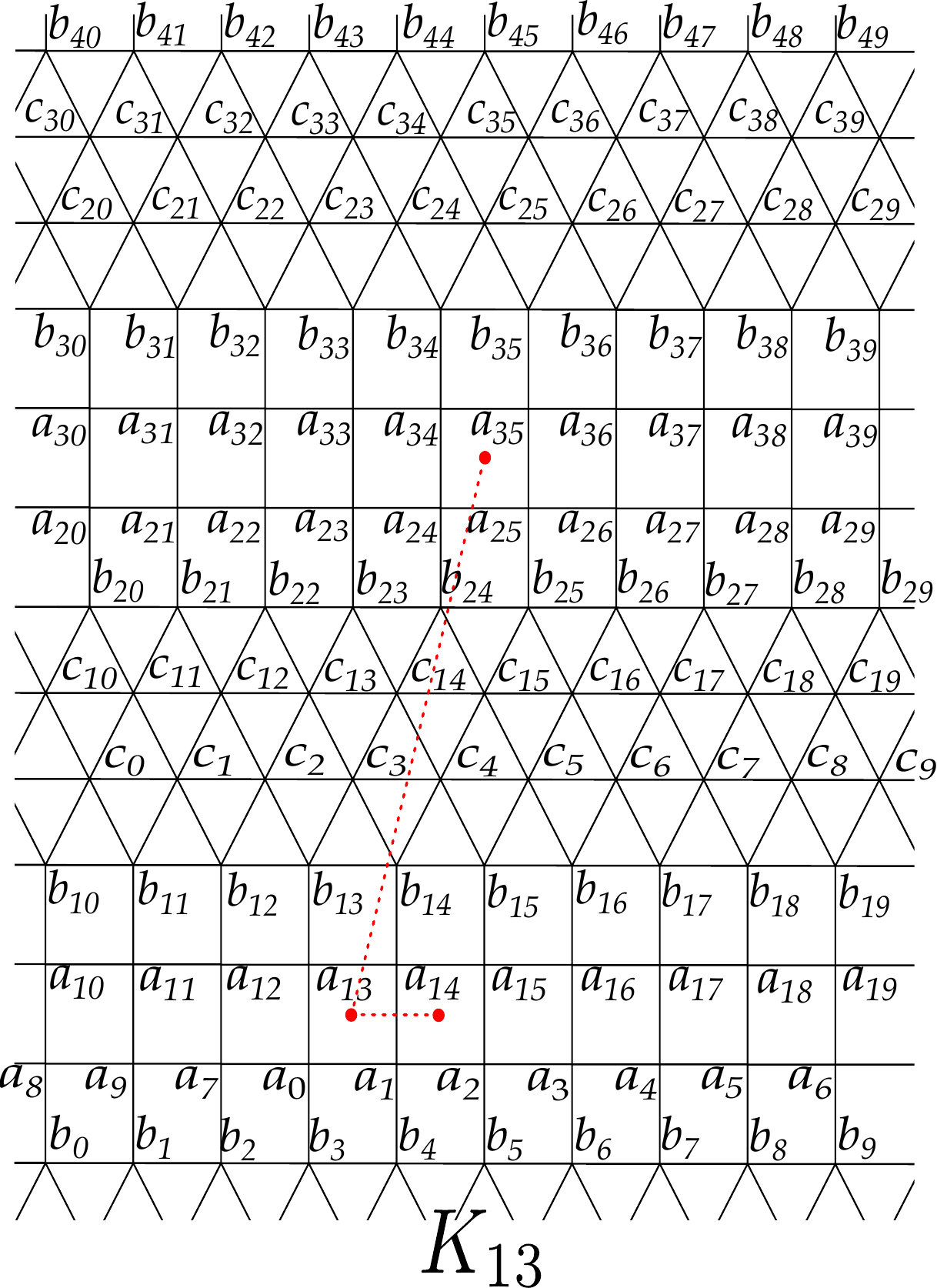}\hspace{5mm}
    \includegraphics[height=6cm, width= 6cm]{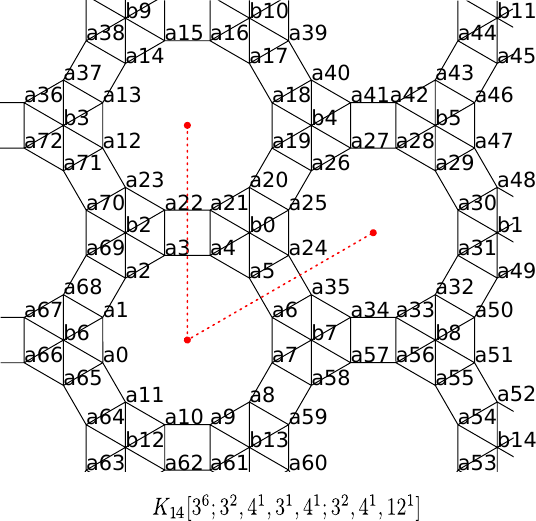}
    \end{figure}
     \begin{figure}
     \centering
    \includegraphics[height=6cm, width= 6cm]{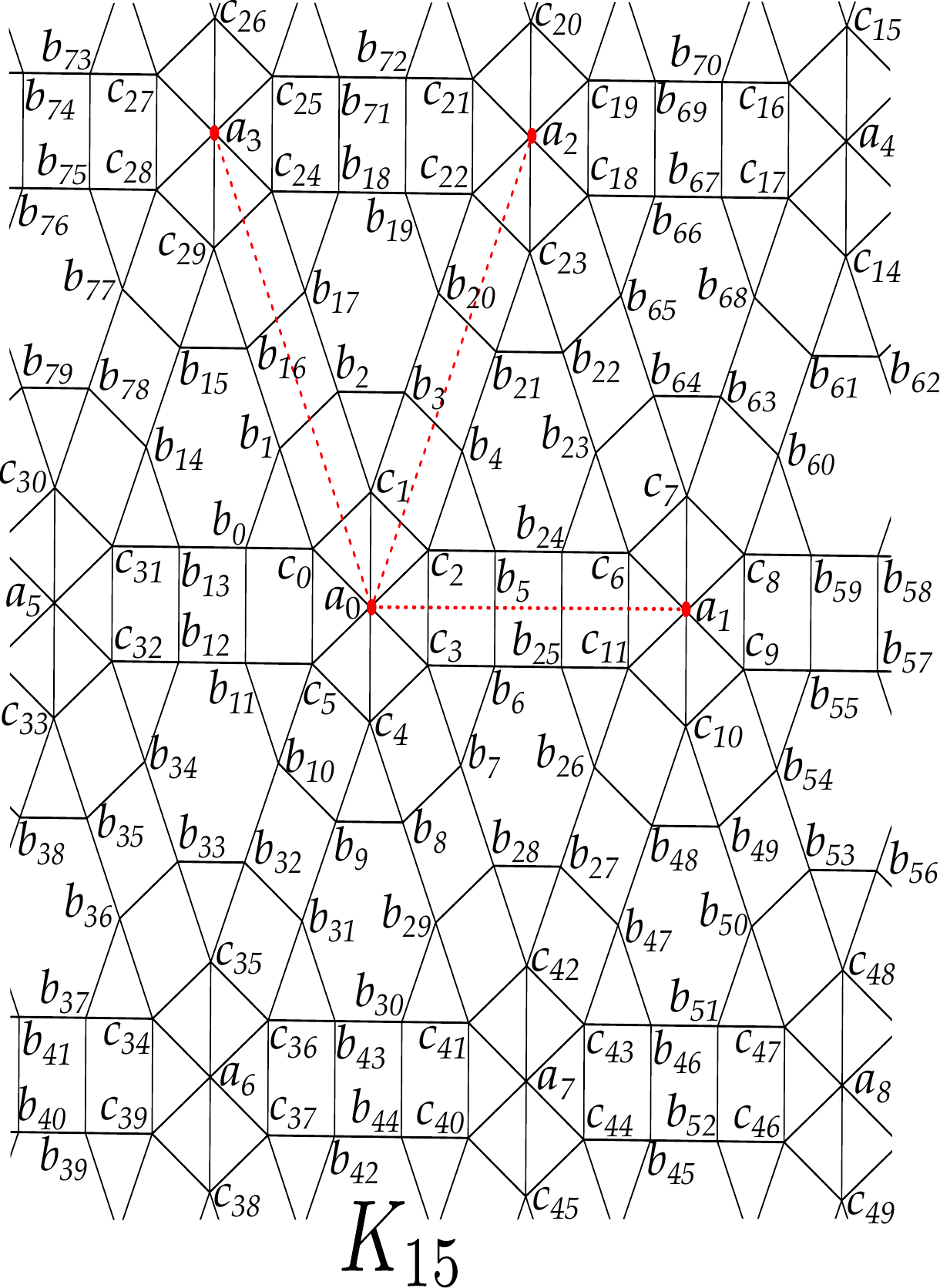}\hspace{5mm}
    \includegraphics[height=6cm, width= 6cm]{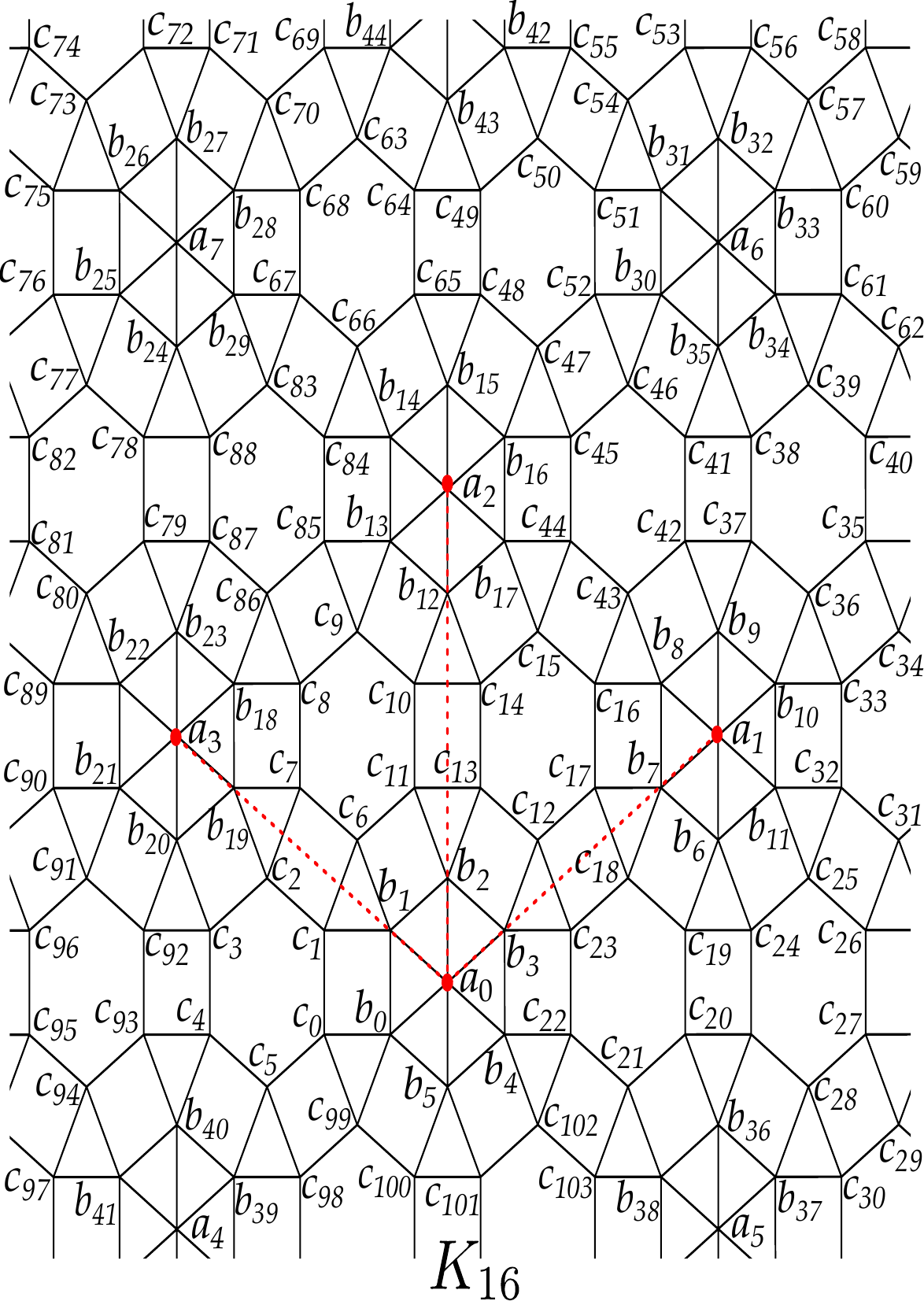}
\end{figure}
\begin{figure}
\centering
\includegraphics[height=6cm, width= 6cm]{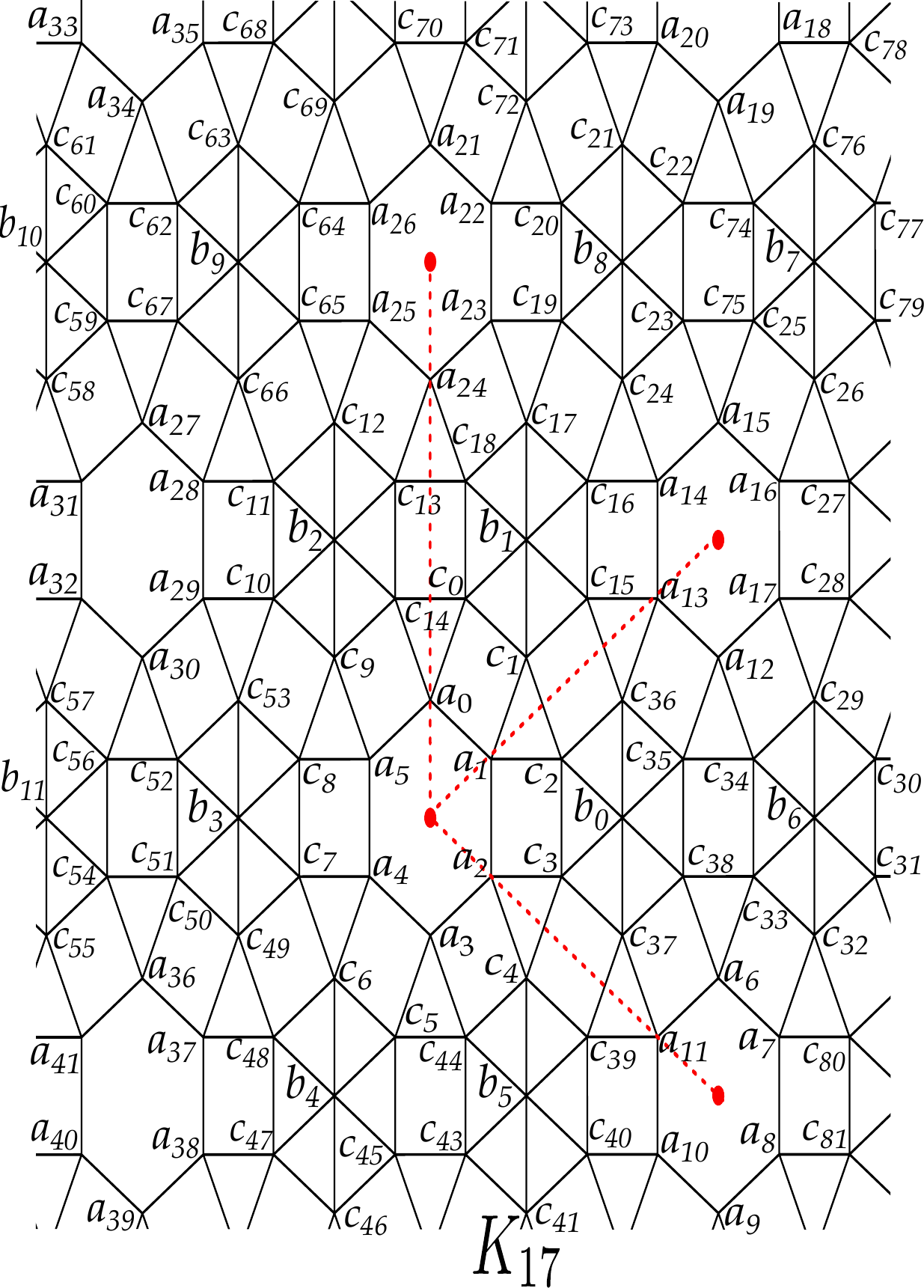}\hspace{5mm}
\includegraphics[height=6cm, width= 6cm]{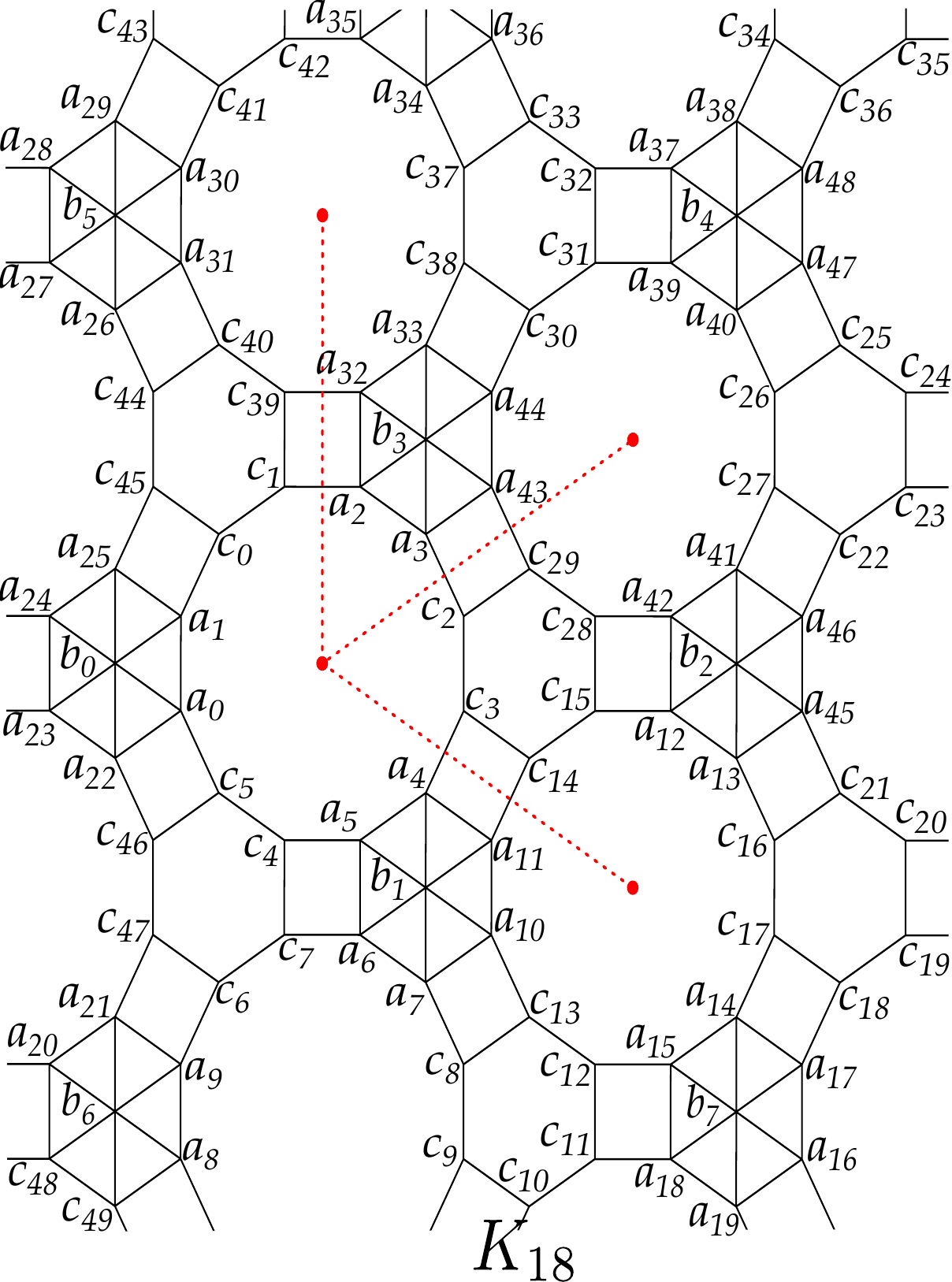}
\end{figure}
     \begin{figure}
     \centering
\includegraphics[height=6cm, width= 6cm]{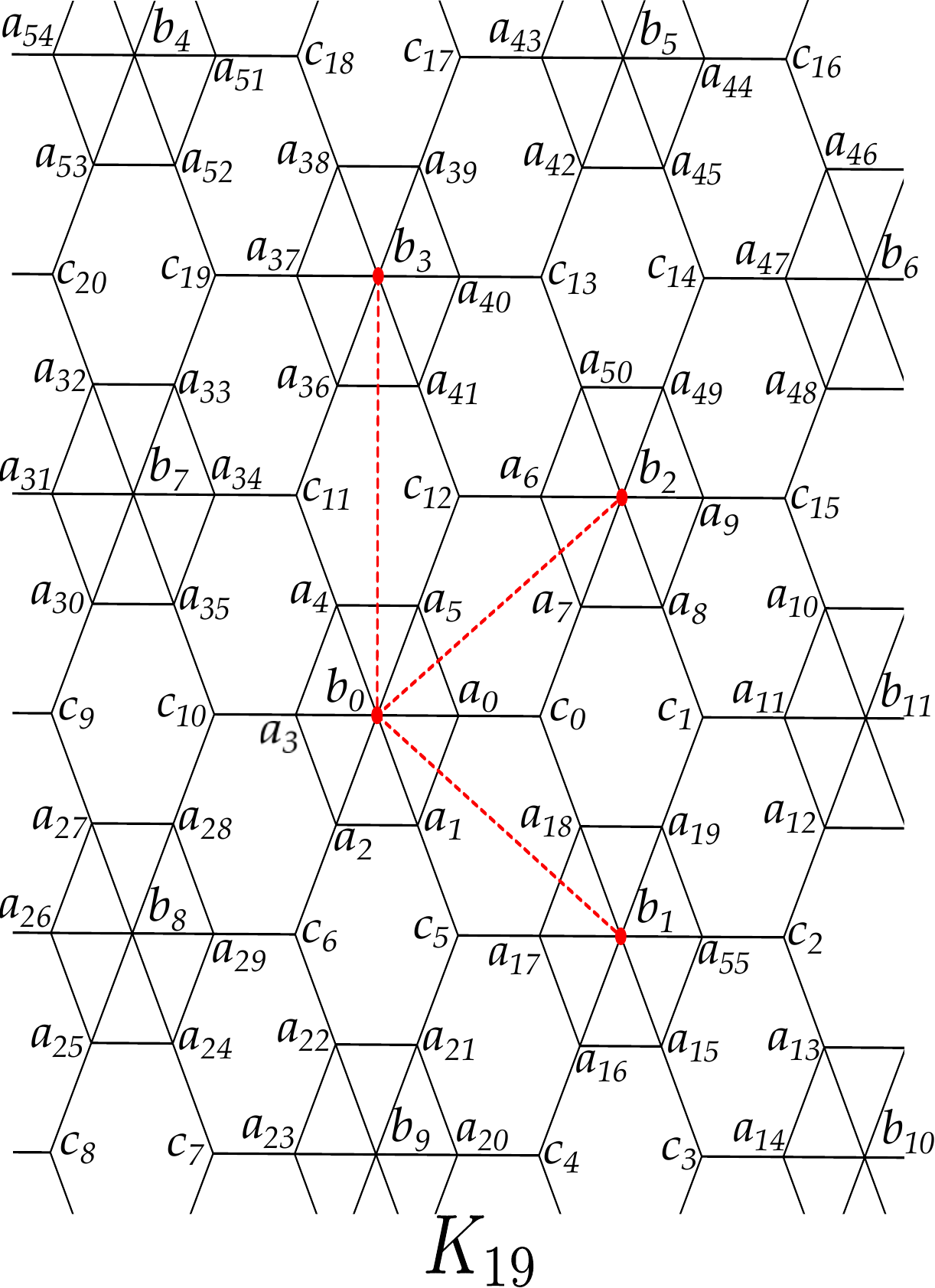}\hspace{5mm}
\includegraphics[height=6cm, width= 6cm]{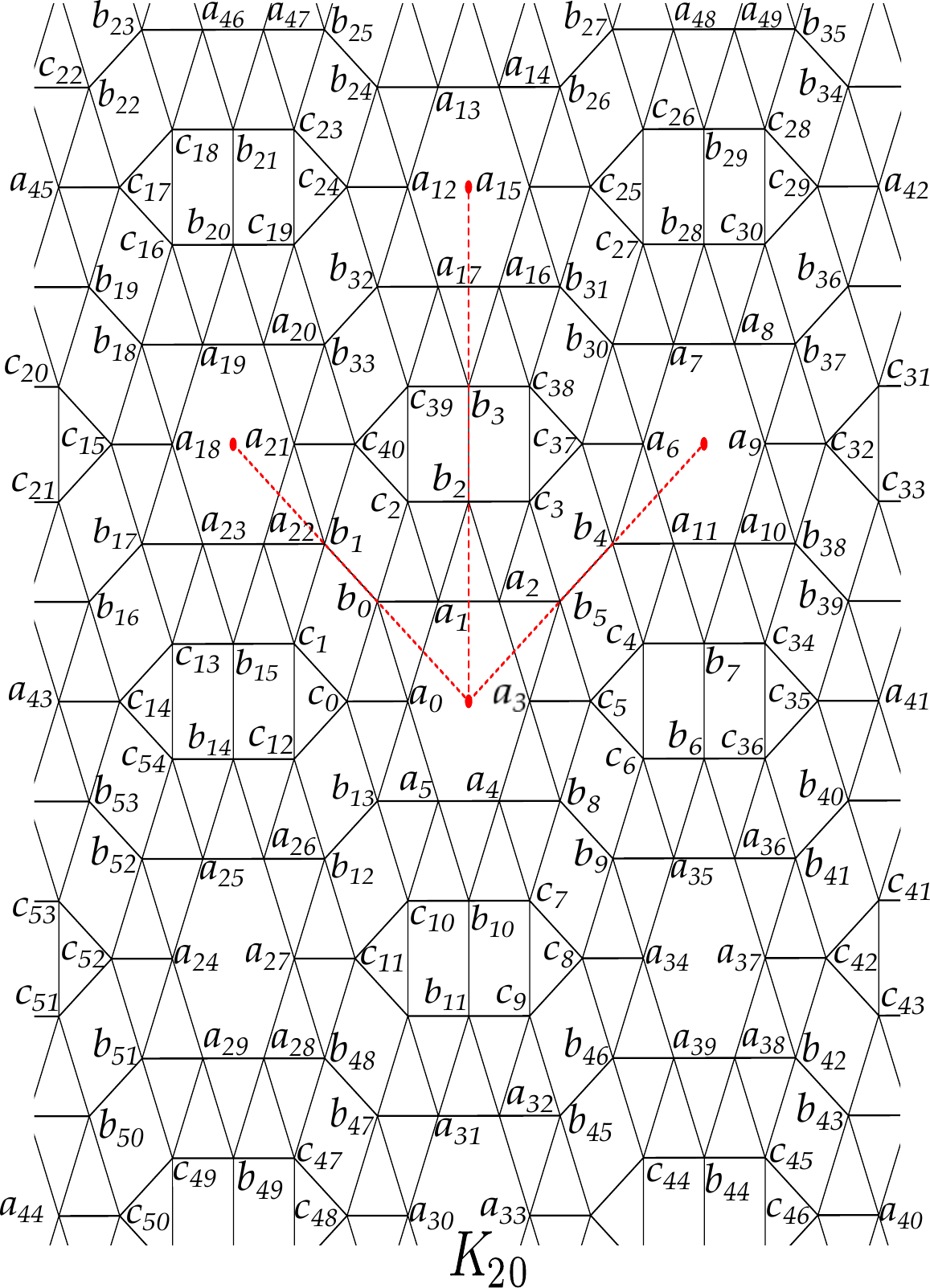}
\end{figure}
     \begin{figure}
     \centering
\includegraphics[height=6cm, width= 6cm]{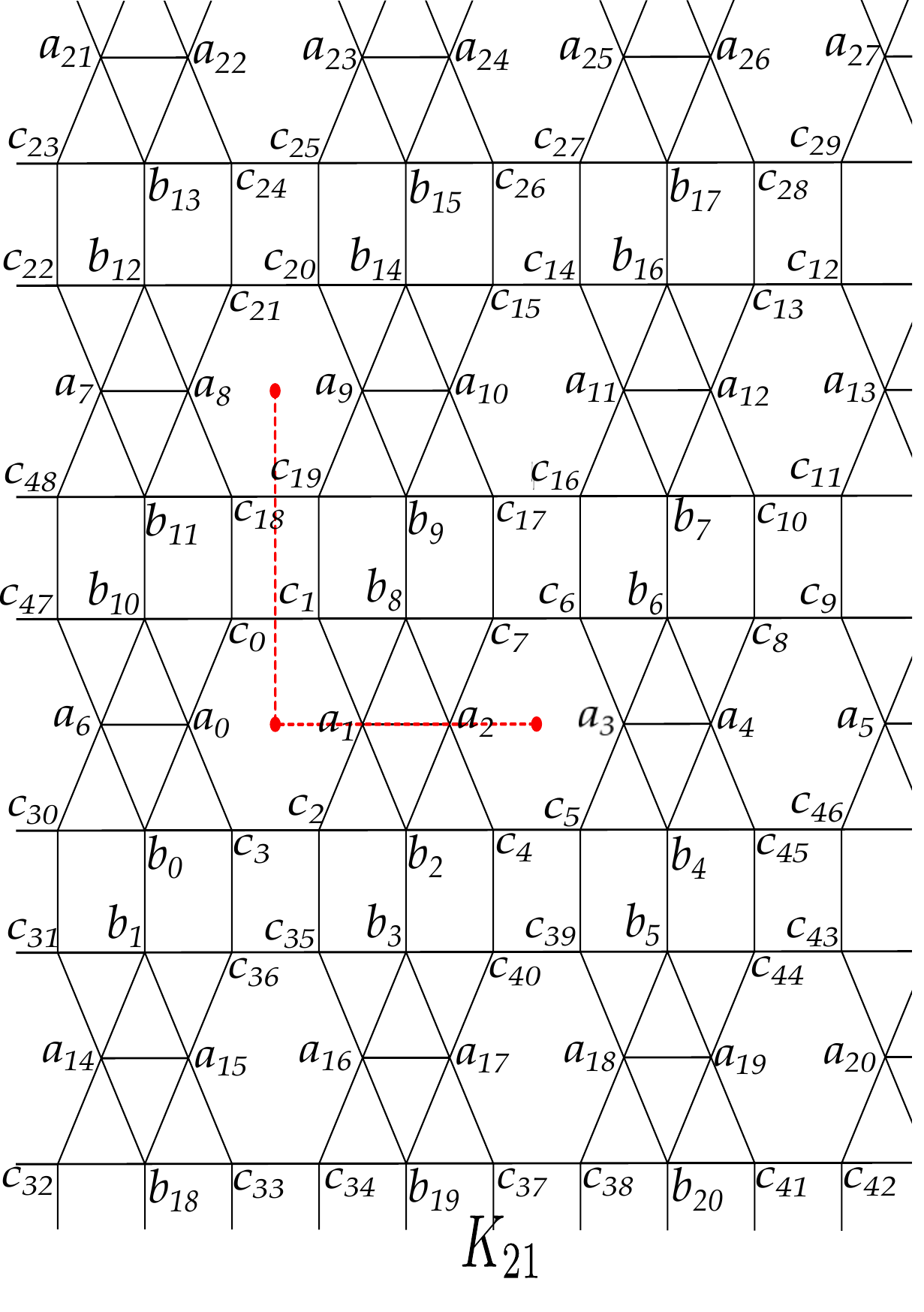}\hspace{5mm}
\includegraphics[height=6cm, width= 6cm]{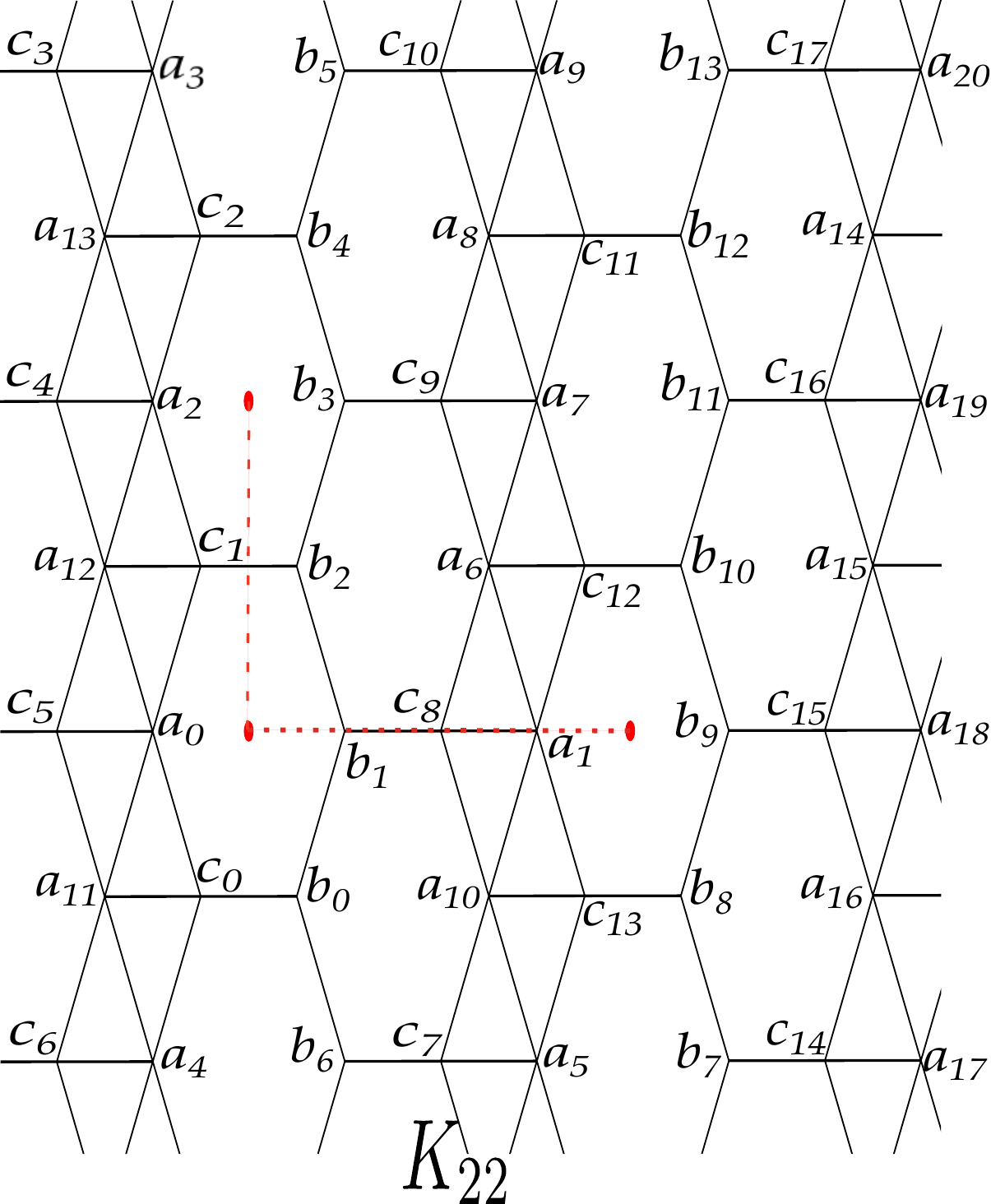}
\end{figure}
     \begin{figure}
     \centering
\includegraphics[height=6cm, width= 6cm]{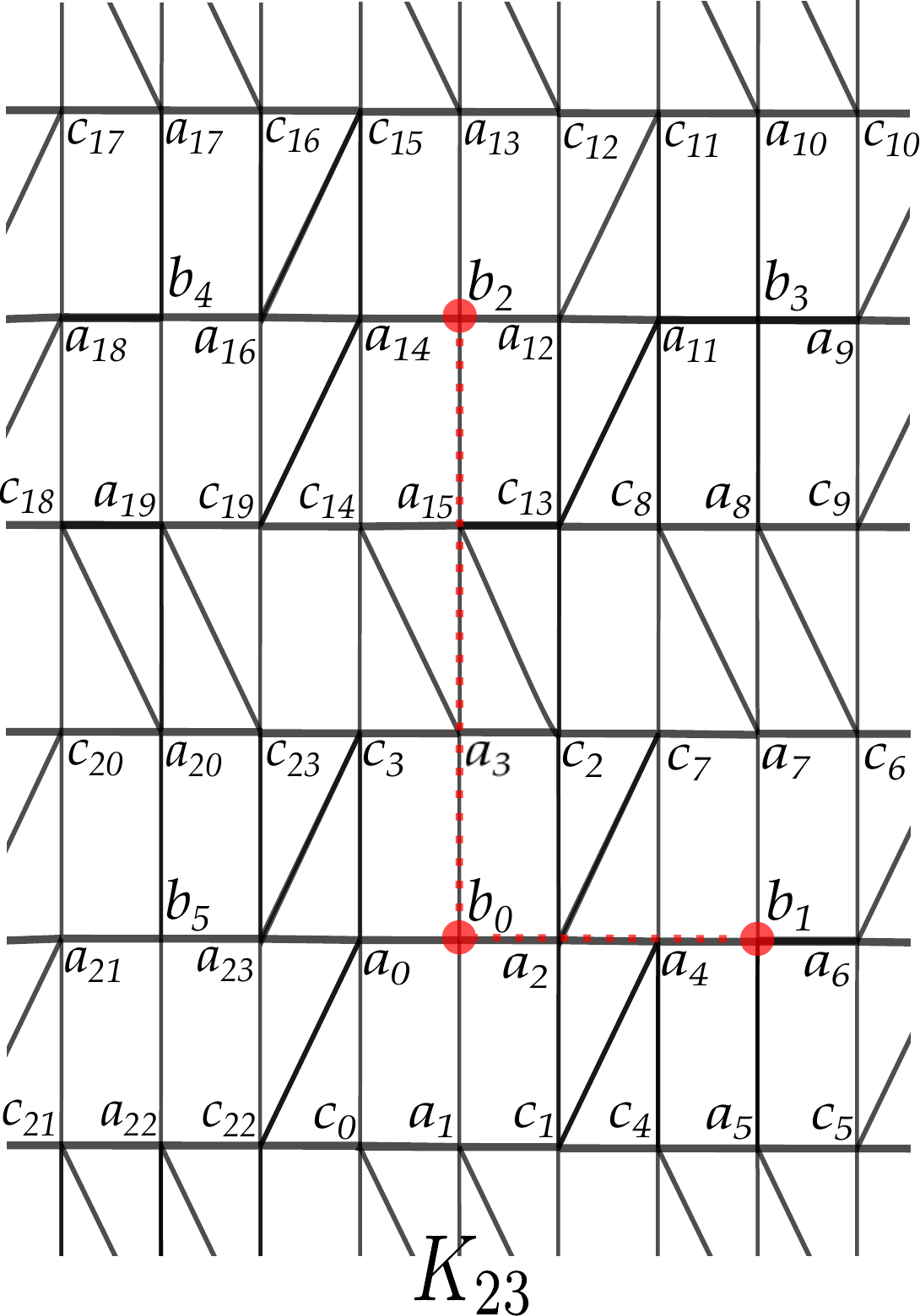}\hspace{5mm}
\includegraphics[height=6cm, width= 6cm]{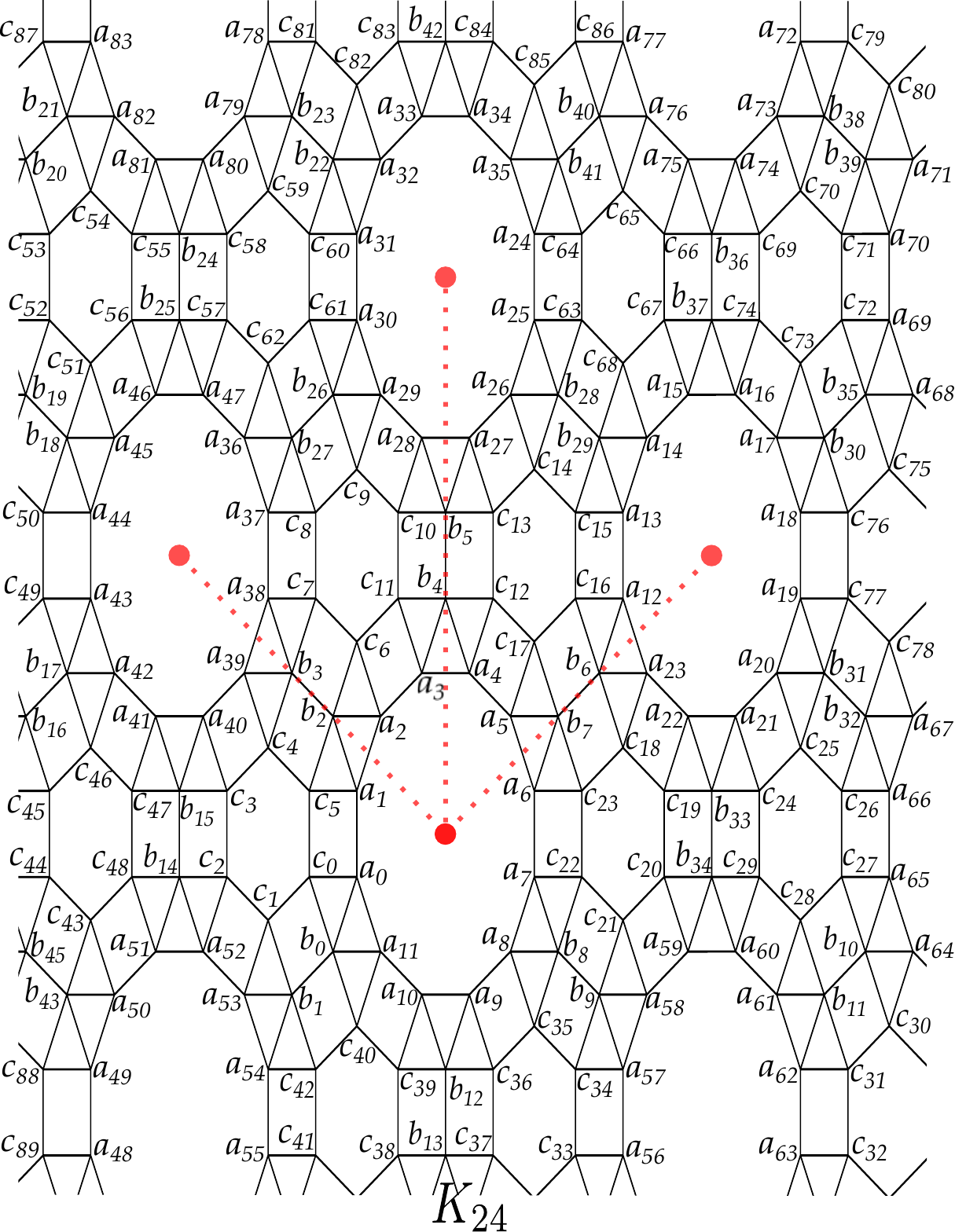}
\end{figure}
     \begin{figure}
     \centering
\includegraphics[height=6cm, width= 6cm]{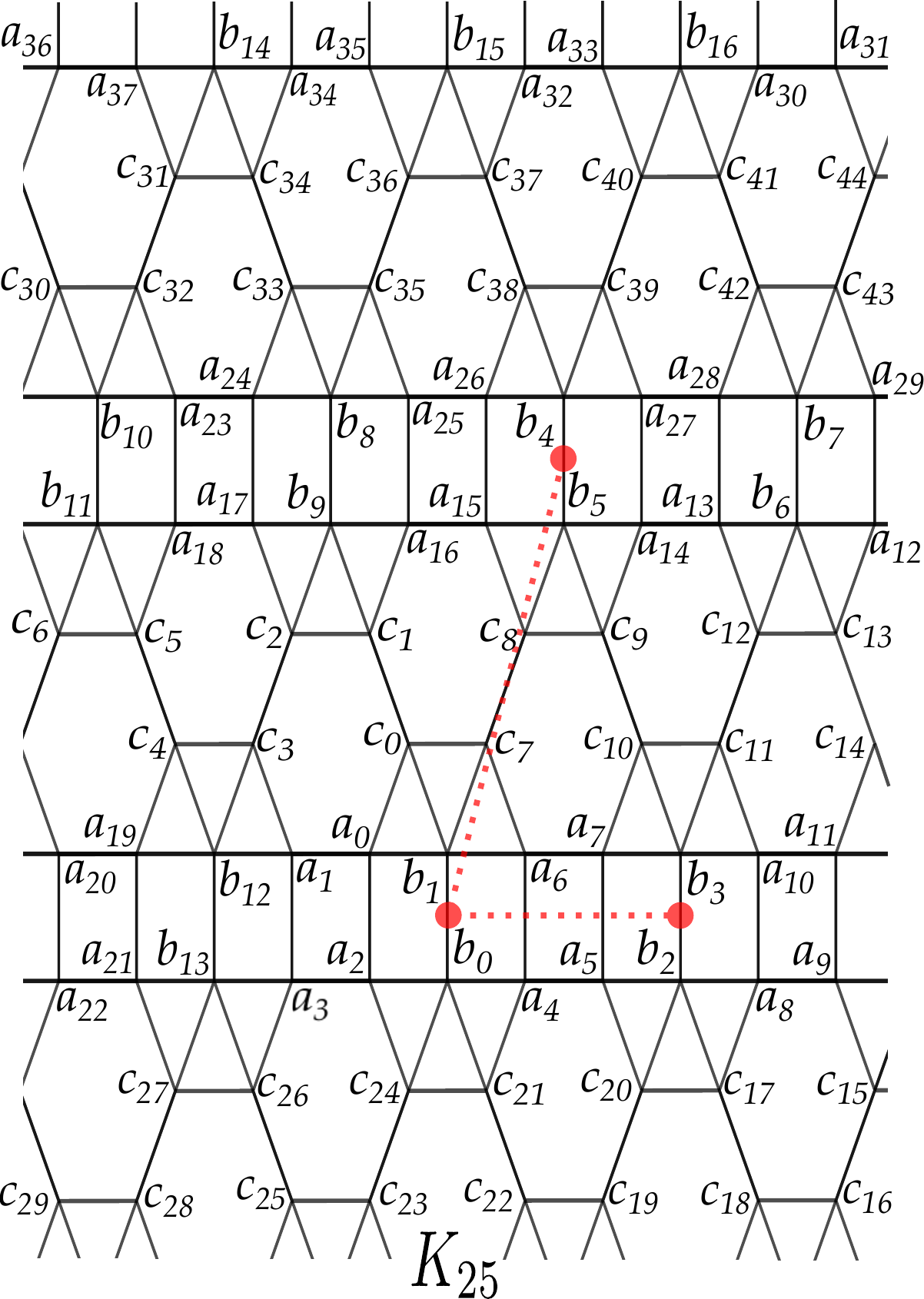}\hspace{5mm}
\includegraphics[height=6cm, width= 6cm]{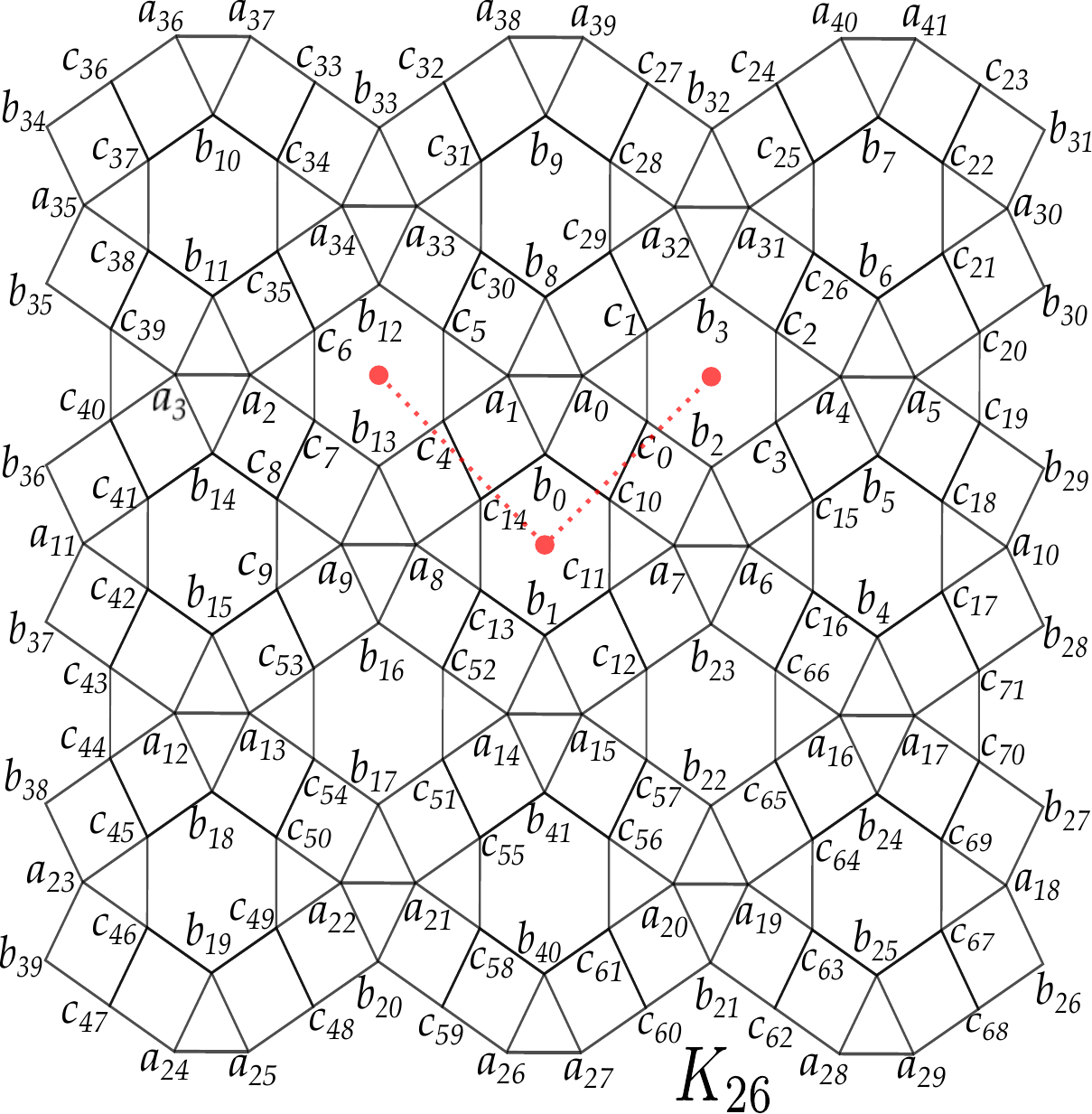}
\end{figure}
     \begin{figure}
     \centering
\includegraphics[height=6cm, width= 6cm]{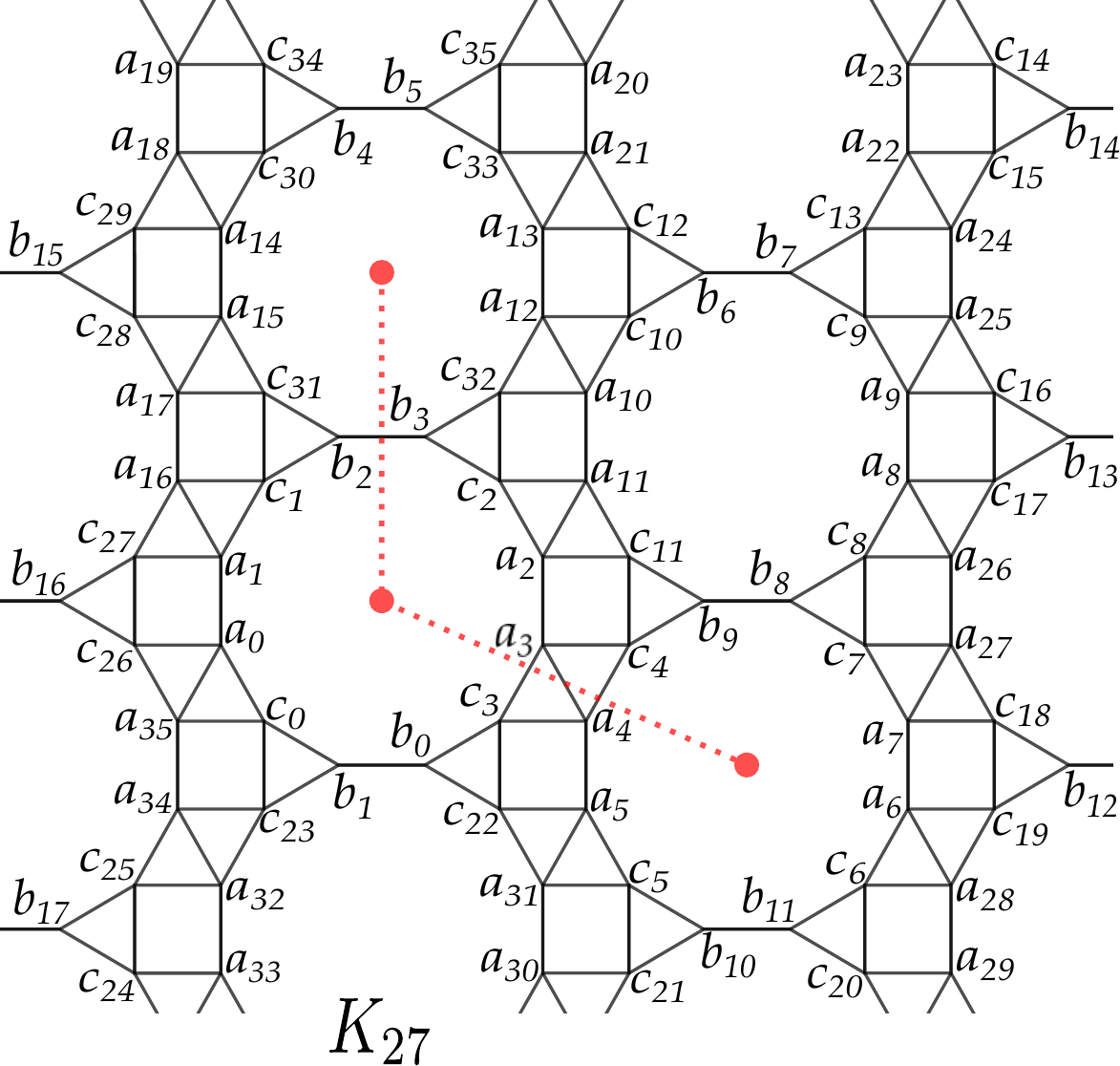}\hspace{5mm}
\includegraphics[height=6cm, width= 6cm]{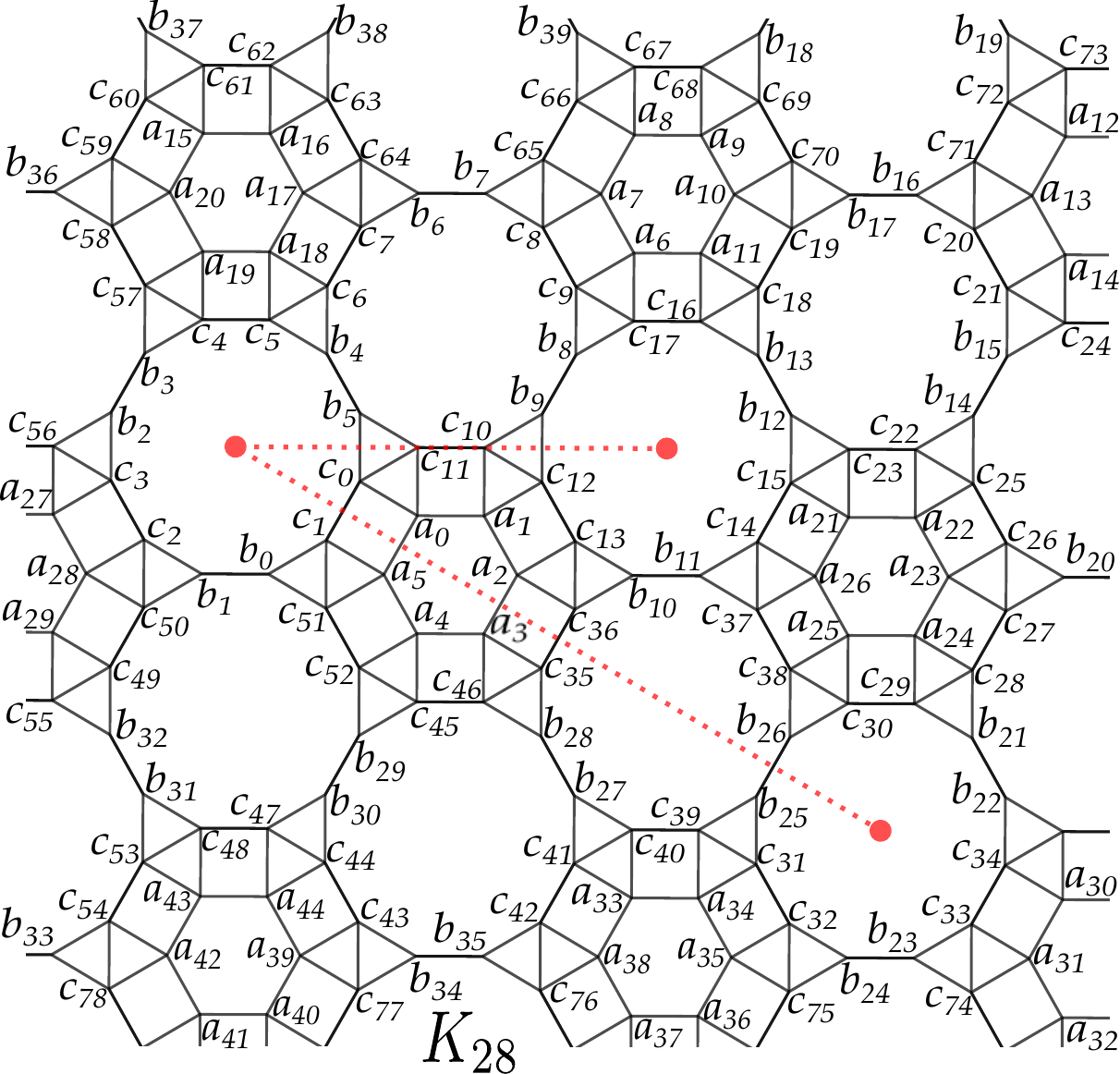}
\end{figure}
     \begin{figure}
     \centering
\includegraphics[height=6cm, width= 6cm]{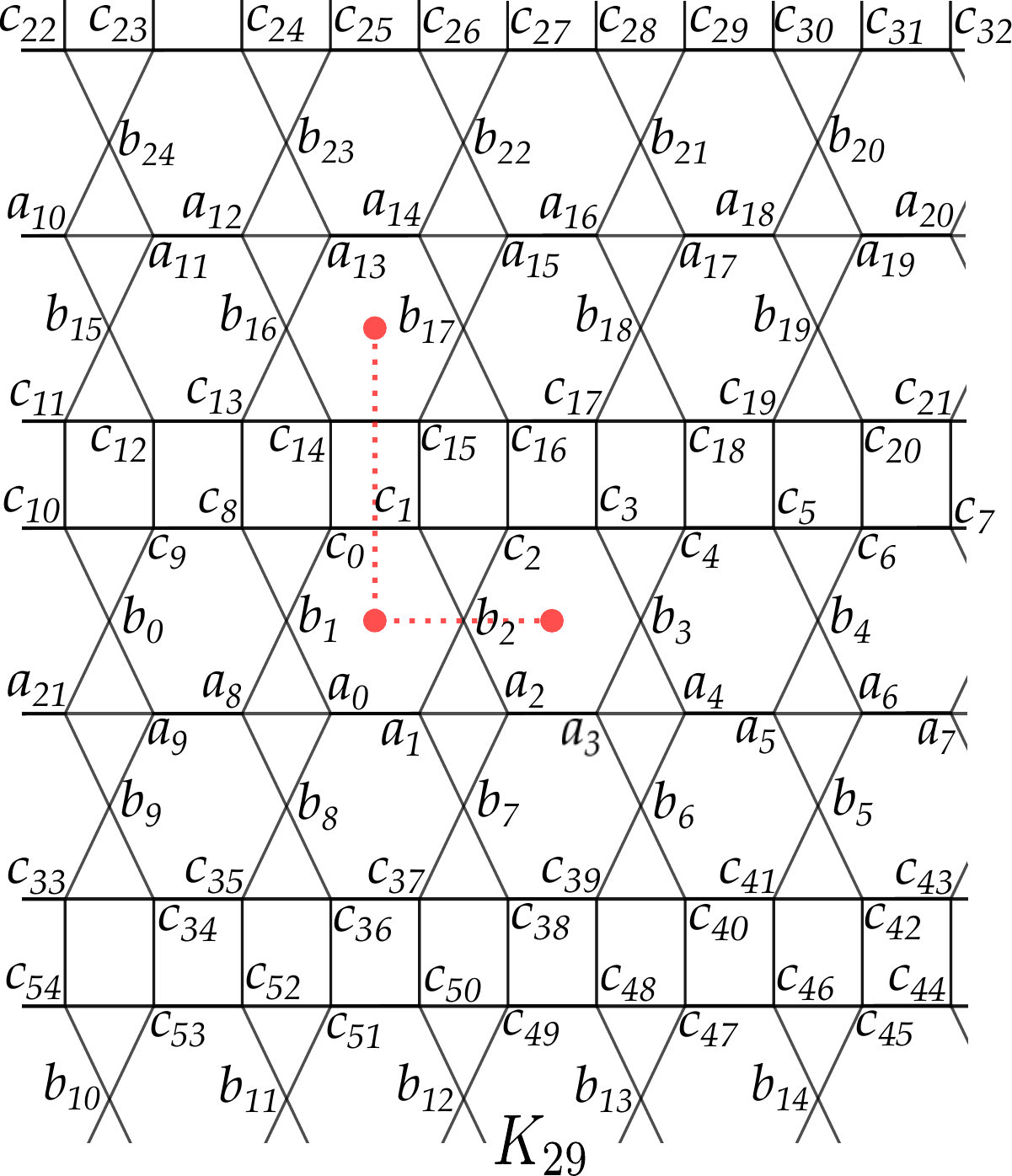}\hspace{5mm}
\includegraphics[height=6cm, width= 6cm]{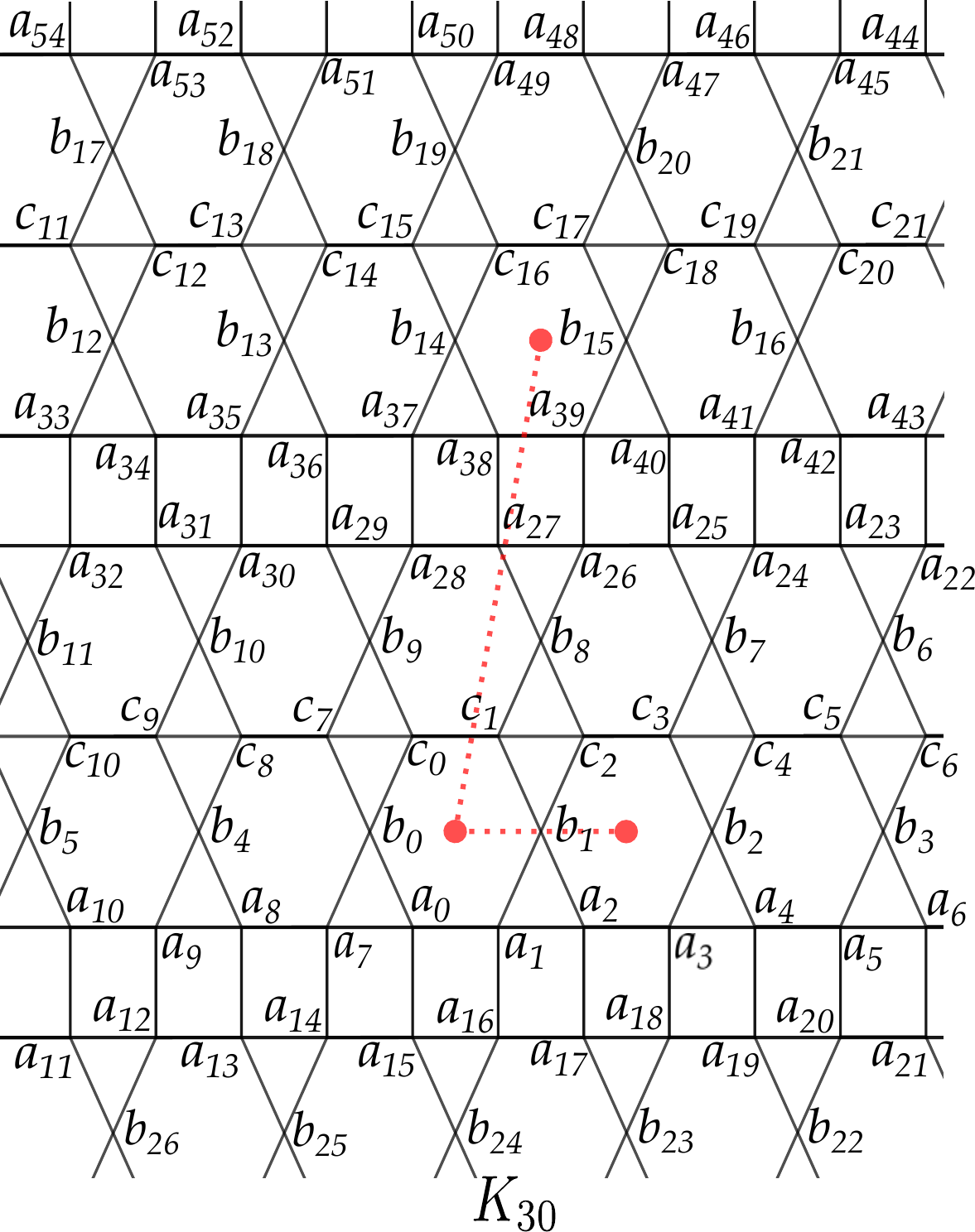}
\end{figure}
     \begin{figure}
     \centering
\includegraphics[height=6cm, width= 6cm]{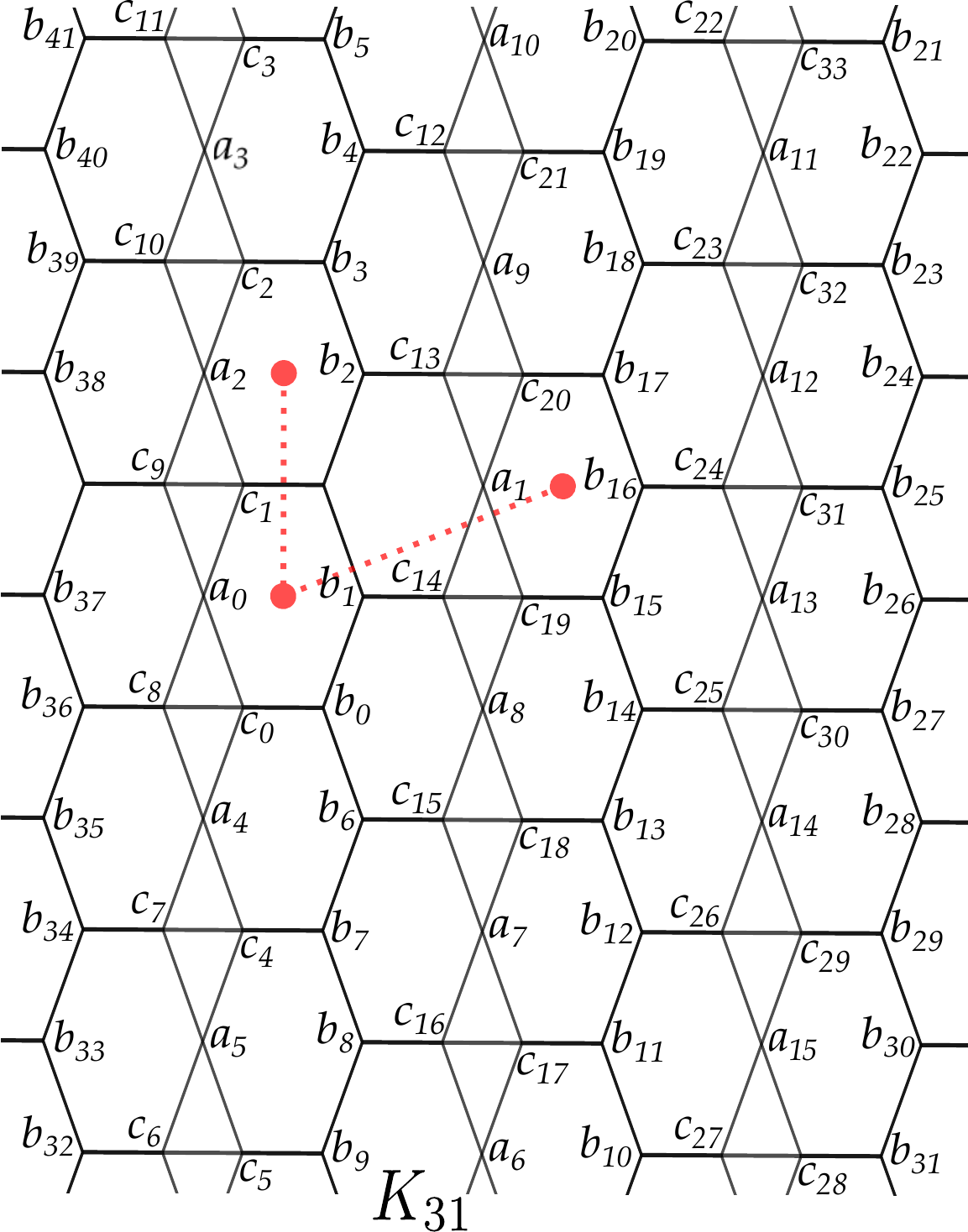}\hspace{5mm}
\includegraphics[height=6cm, width= 6cm]{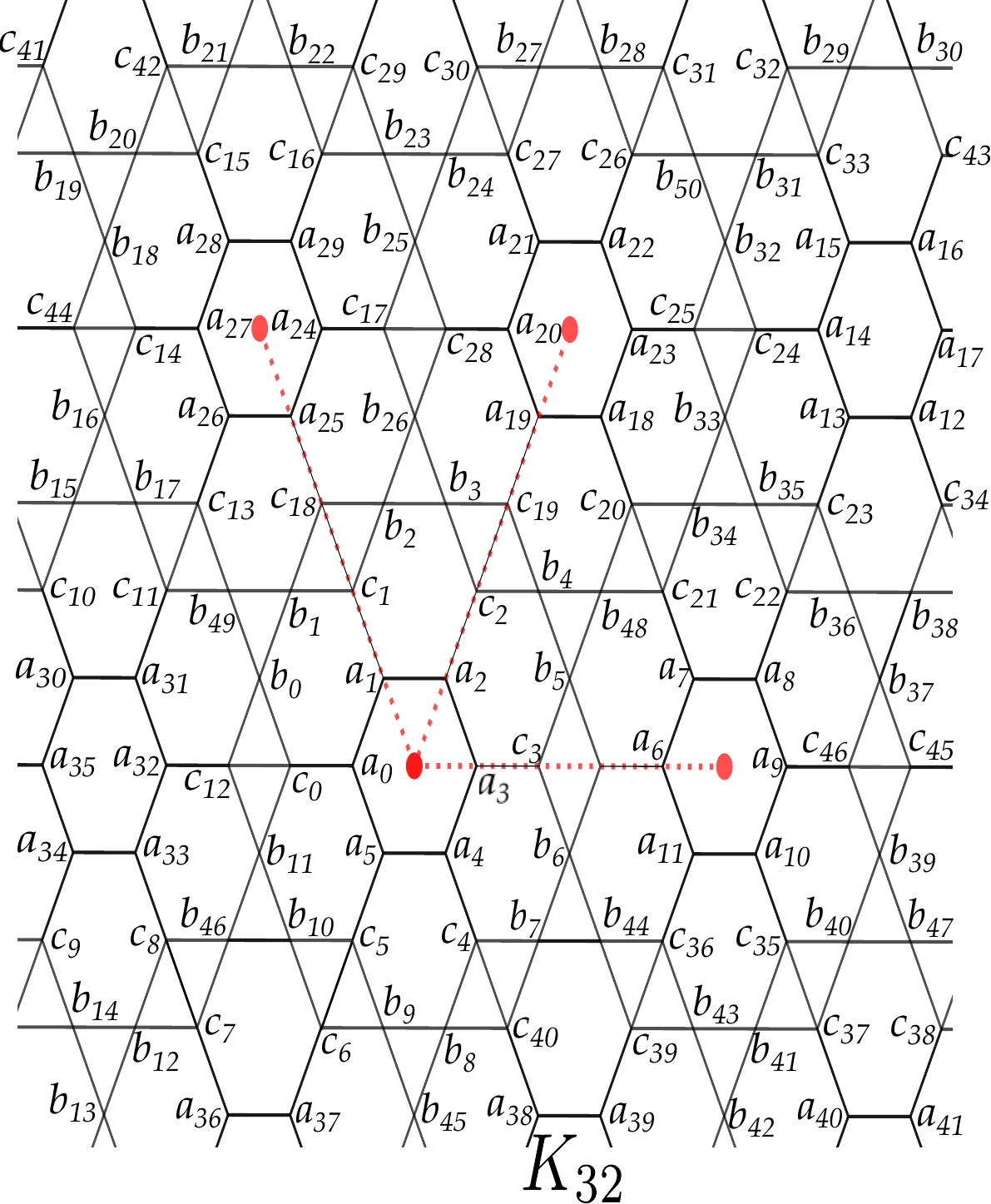}
\end{figure}
     \begin{figure}
     \centering
\includegraphics[height=6cm, width= 6cm]{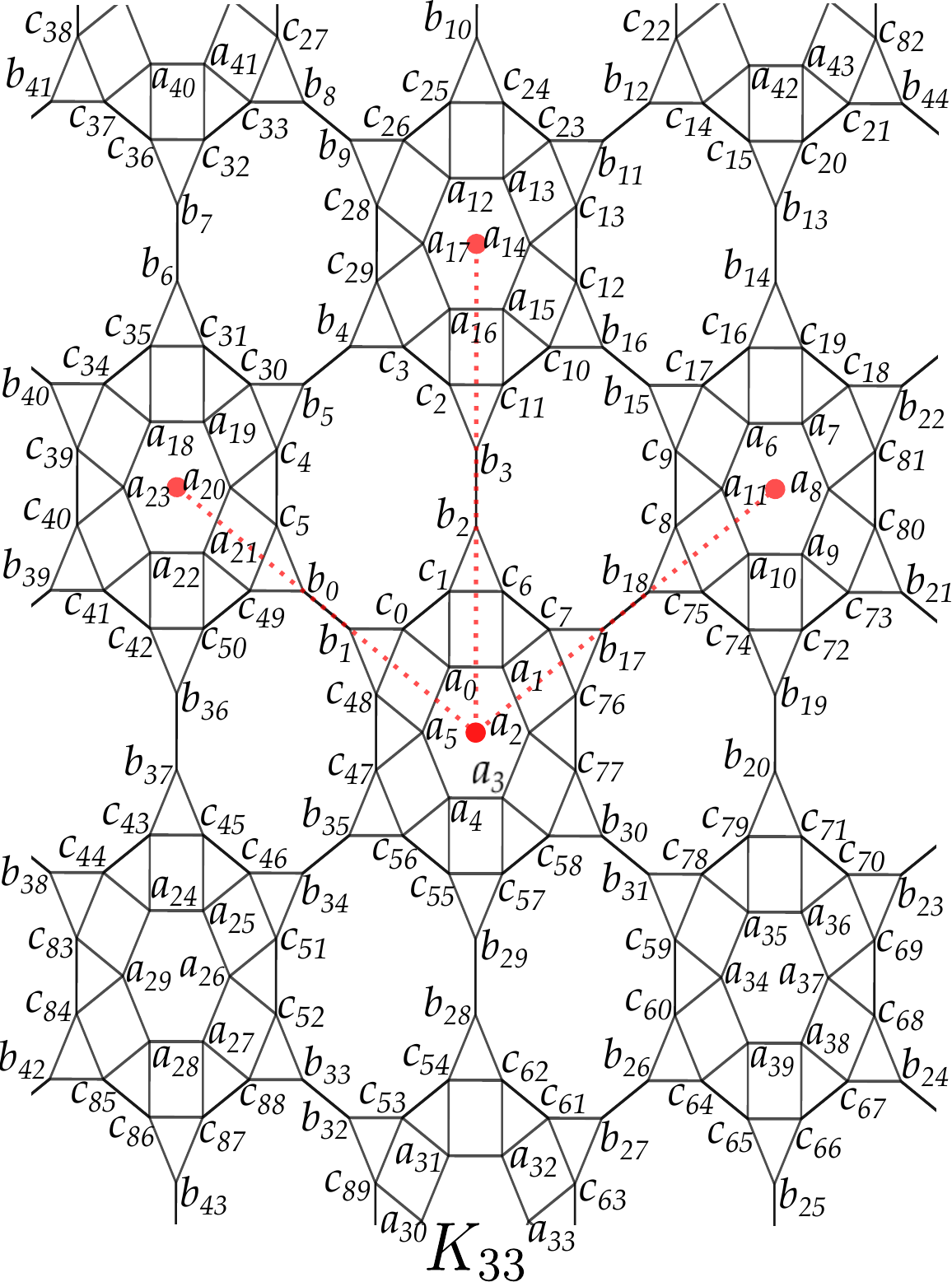}\hspace{5mm}
\includegraphics[height=6cm, width= 6cm]{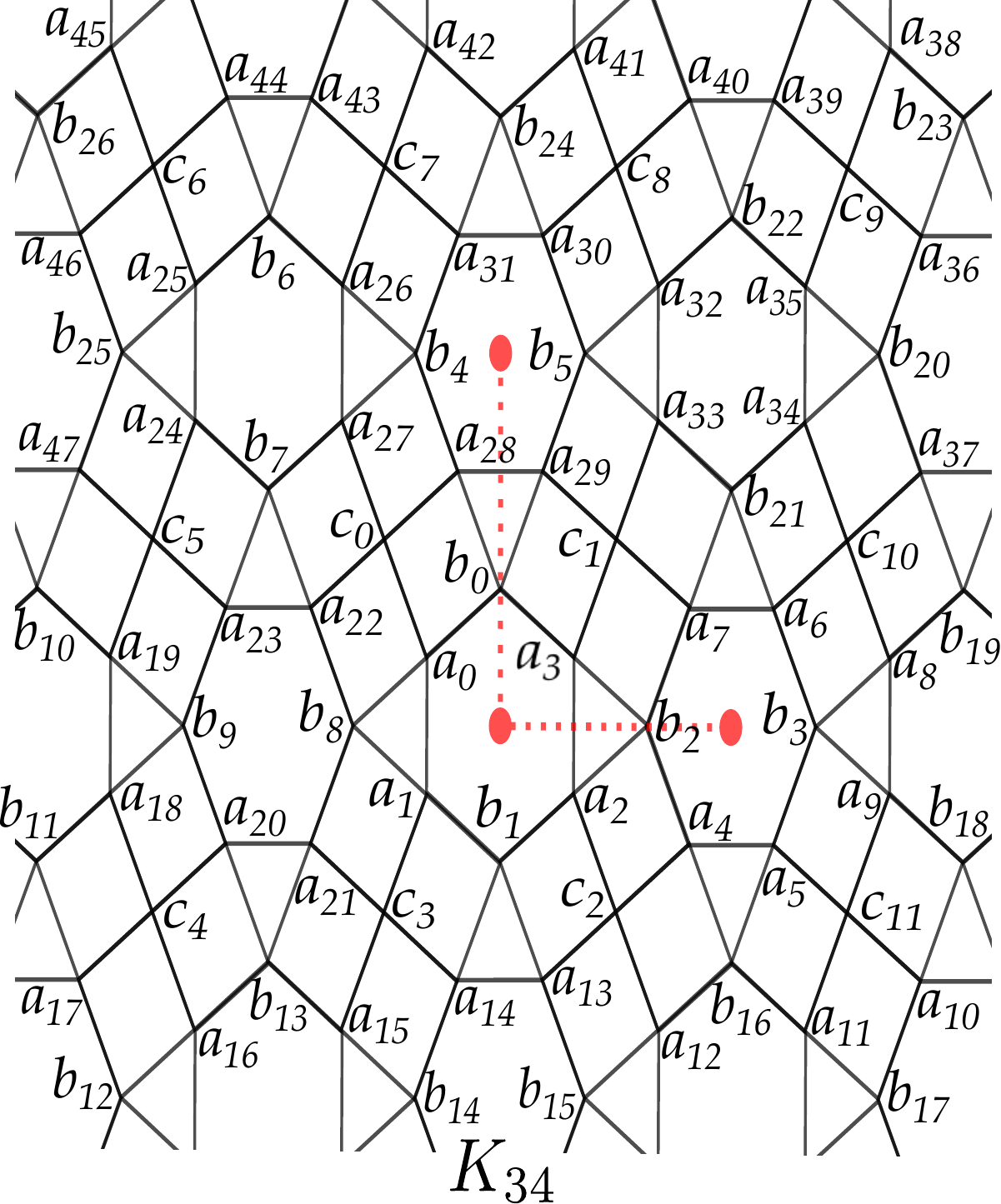}
\end{figure}
     \begin{figure}
     \centering
\includegraphics[height=6cm, width= 6cm]{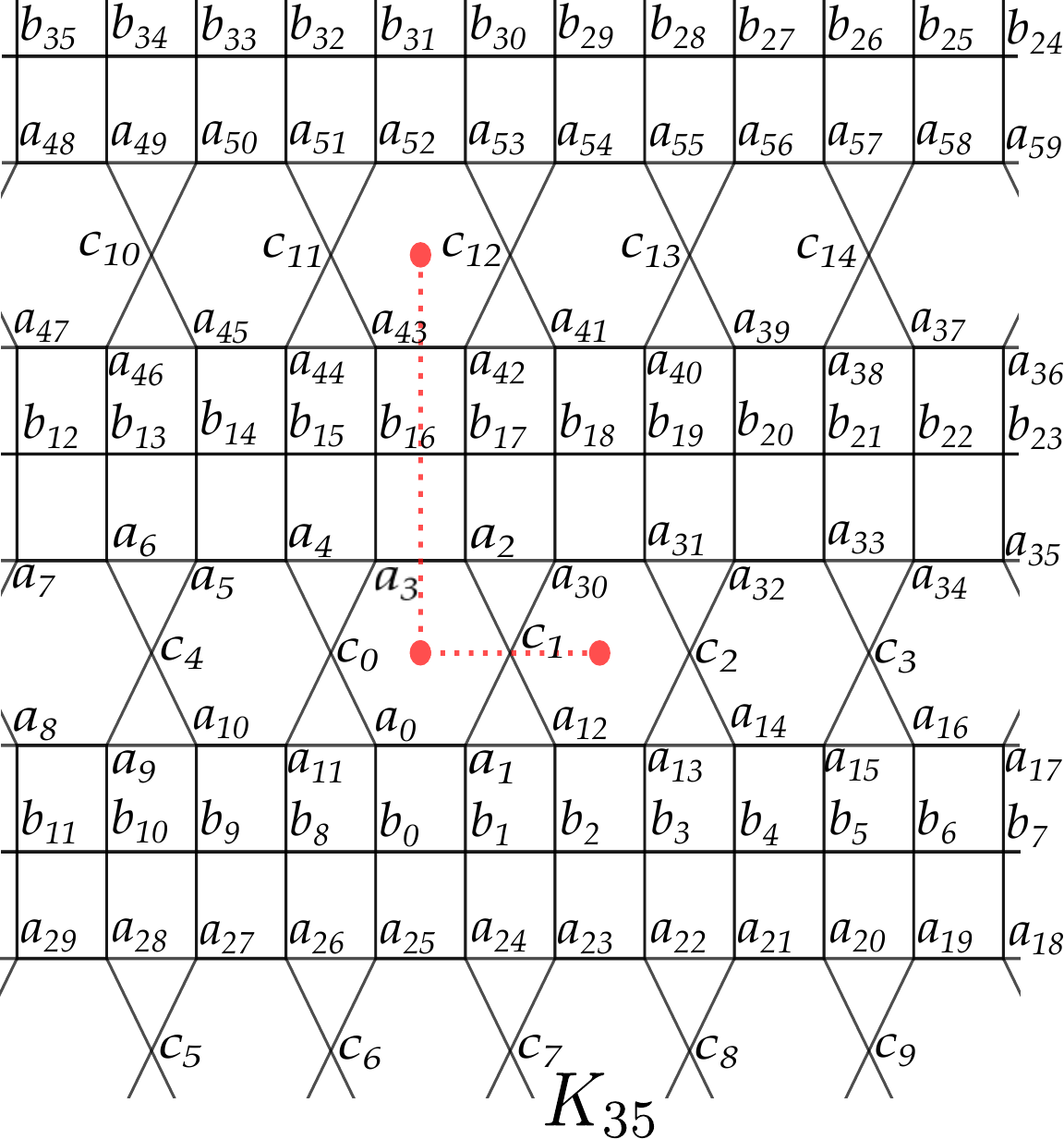}\hspace{5mm}
\includegraphics[height=6cm, width= 6cm]{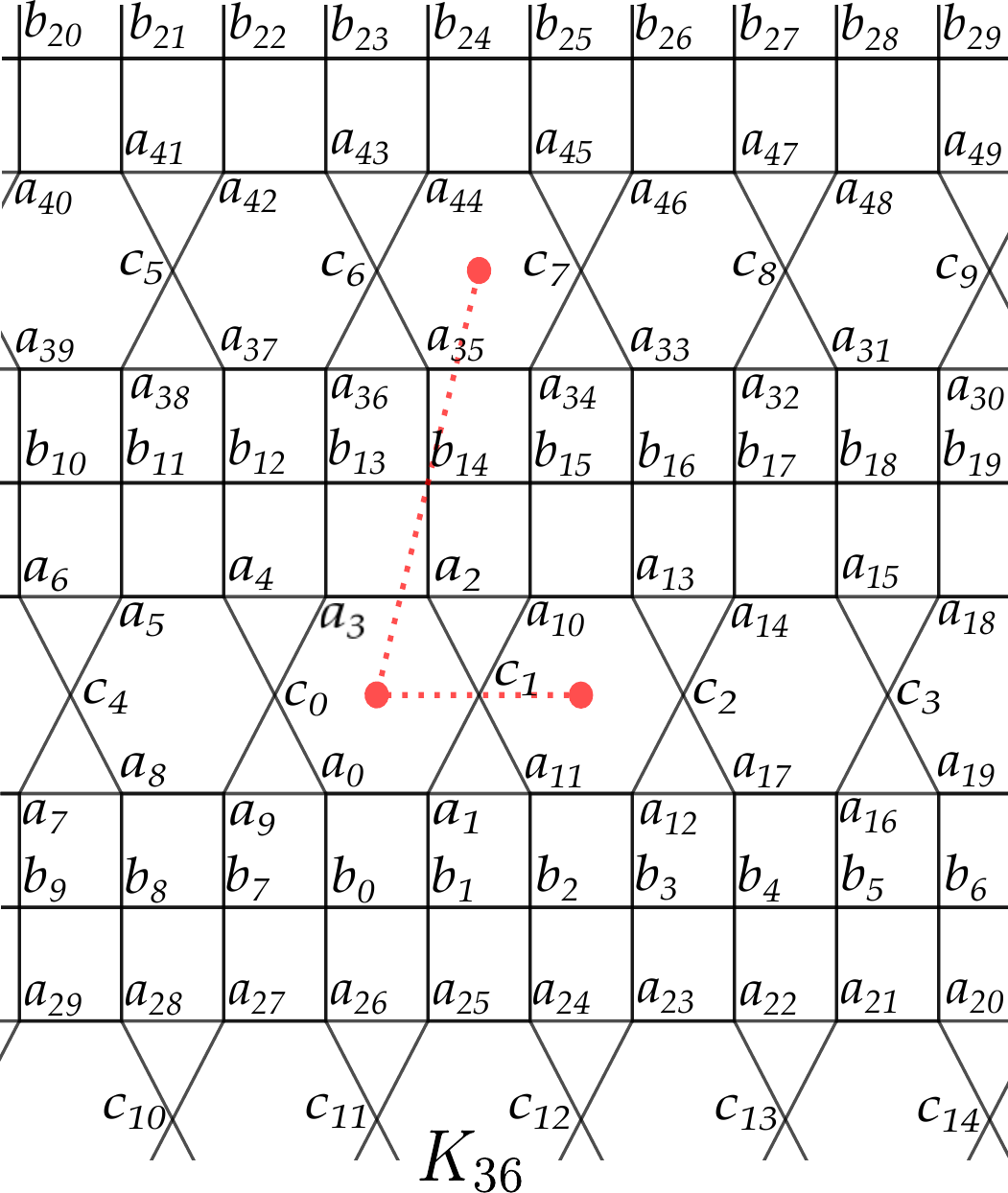}
\end{figure}
     \begin{figure}
     \centering
\includegraphics[height=6cm, width= 6cm]{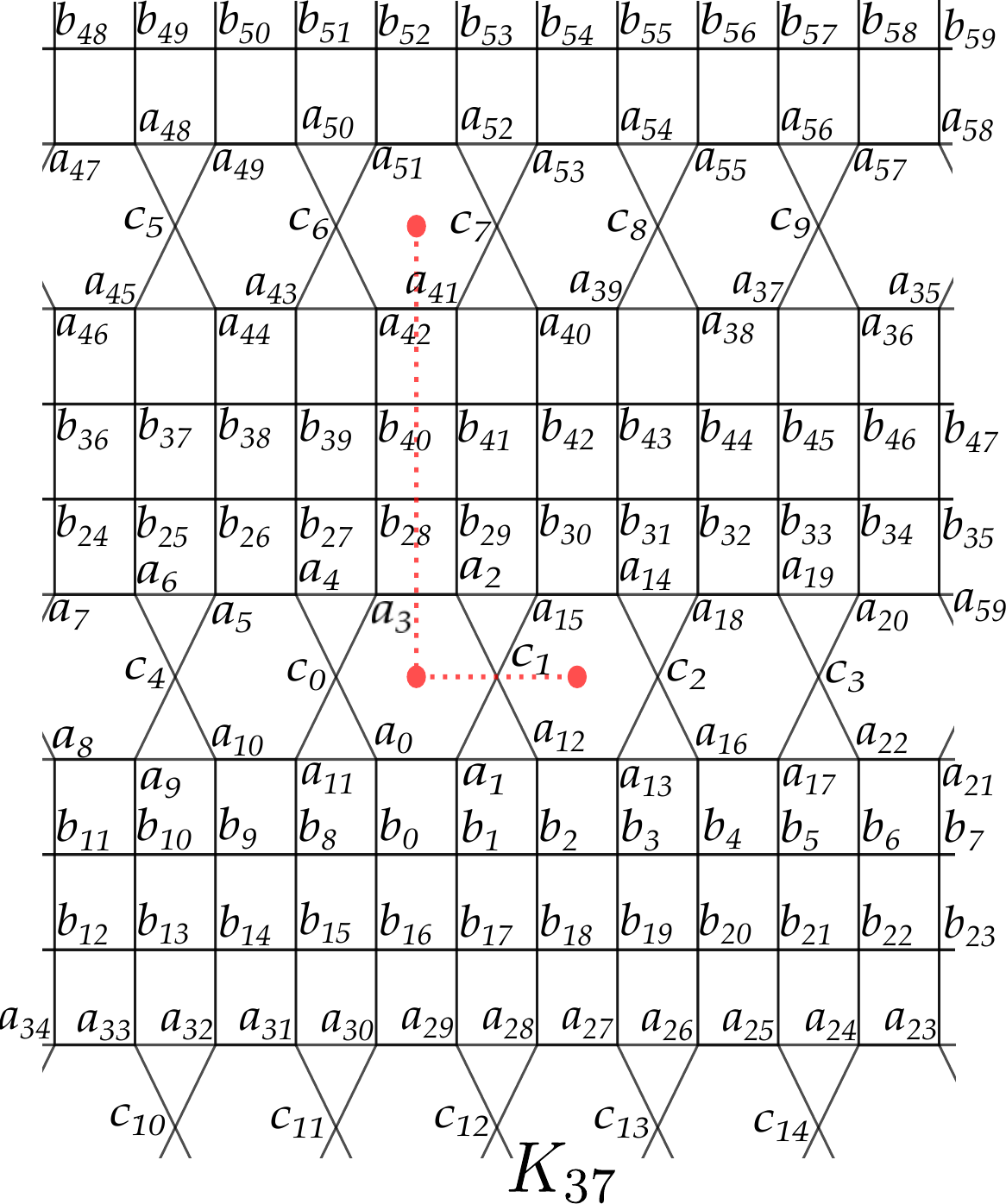}\hspace{5mm}
\includegraphics[height=6cm, width= 6cm]{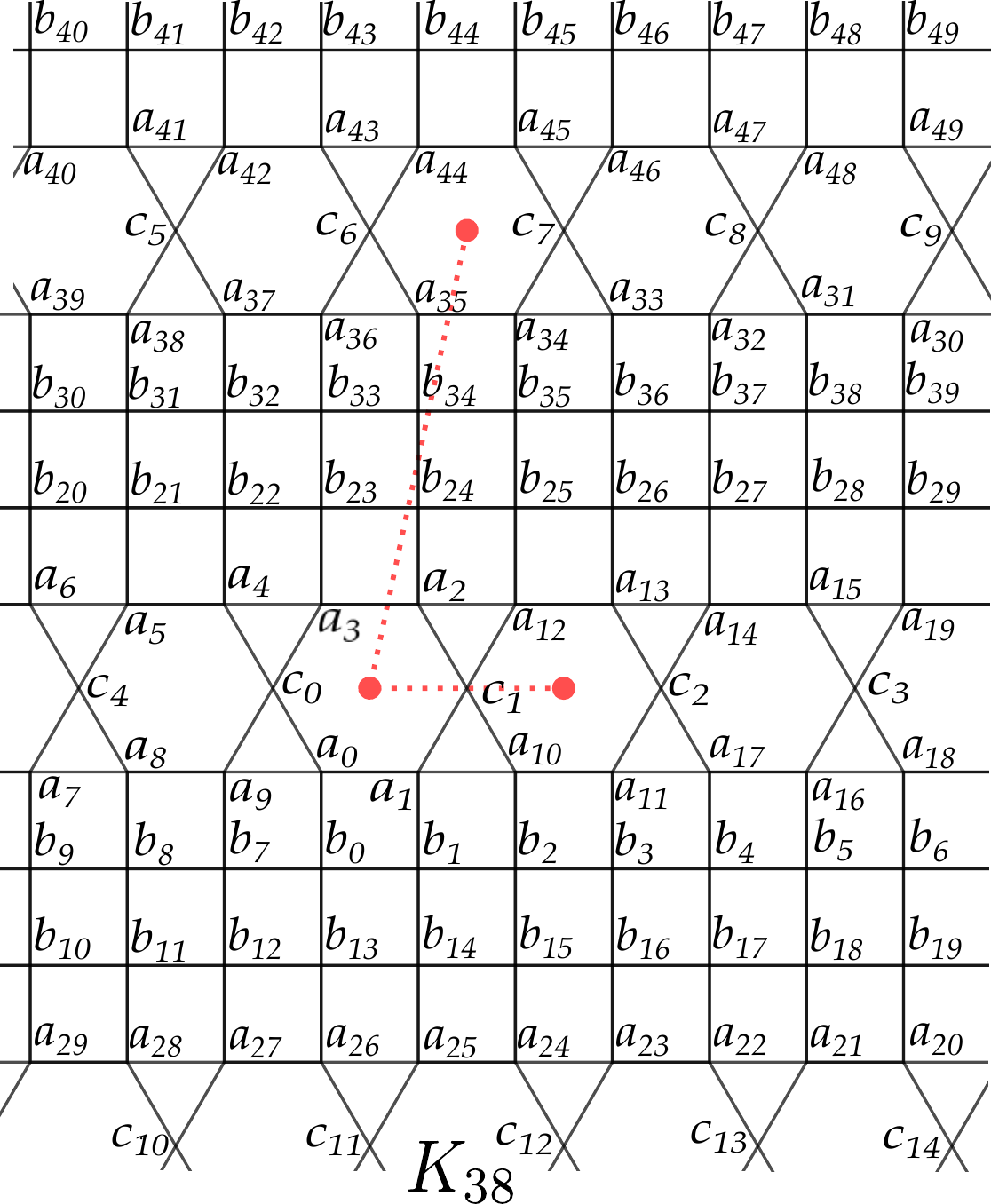}
\end{figure}     \begin{figure}
     \centering
\includegraphics[height=6cm, width= 6cm]{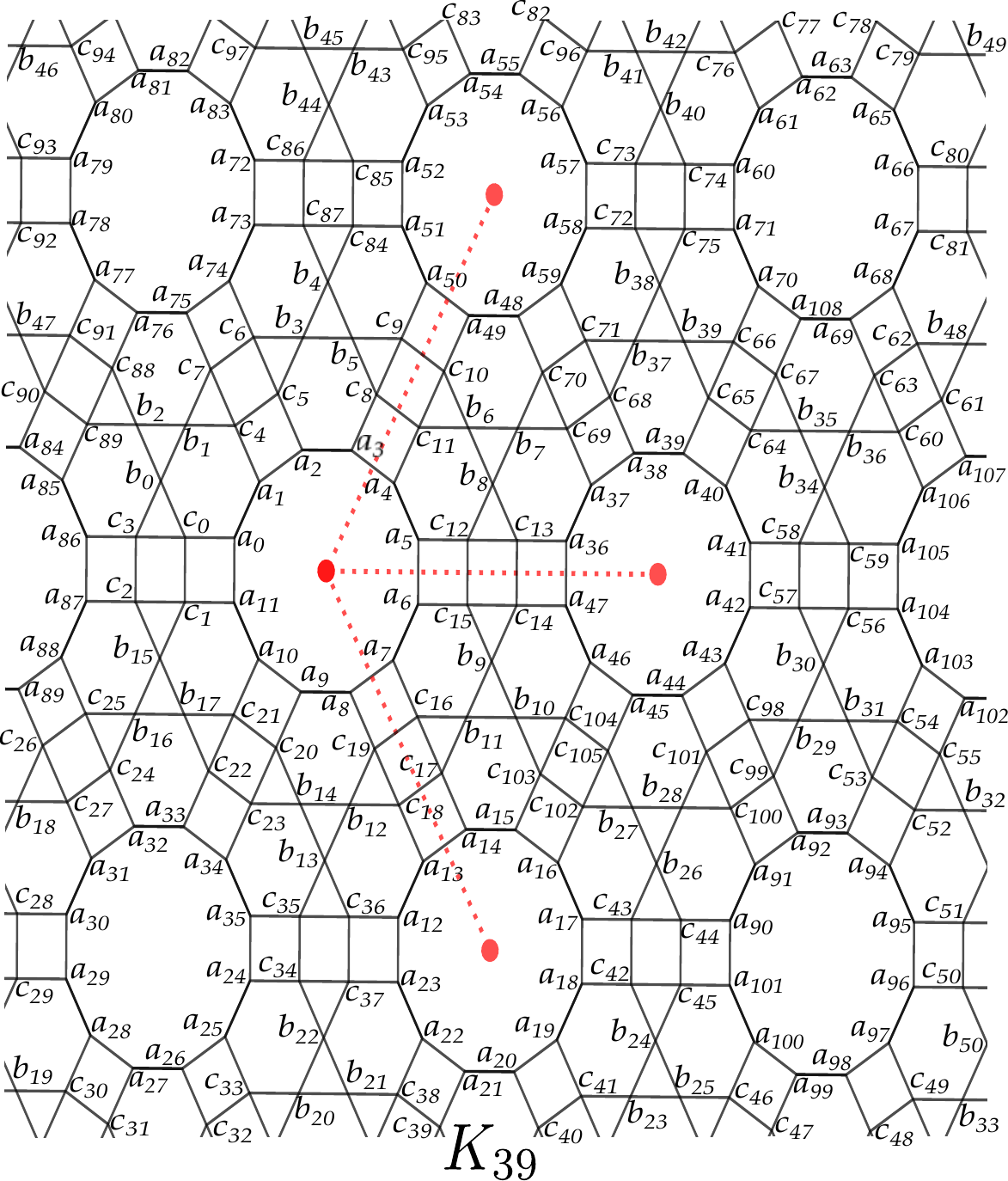}\hspace{5mm}
\includegraphics[height=6cm, width= 6cm]{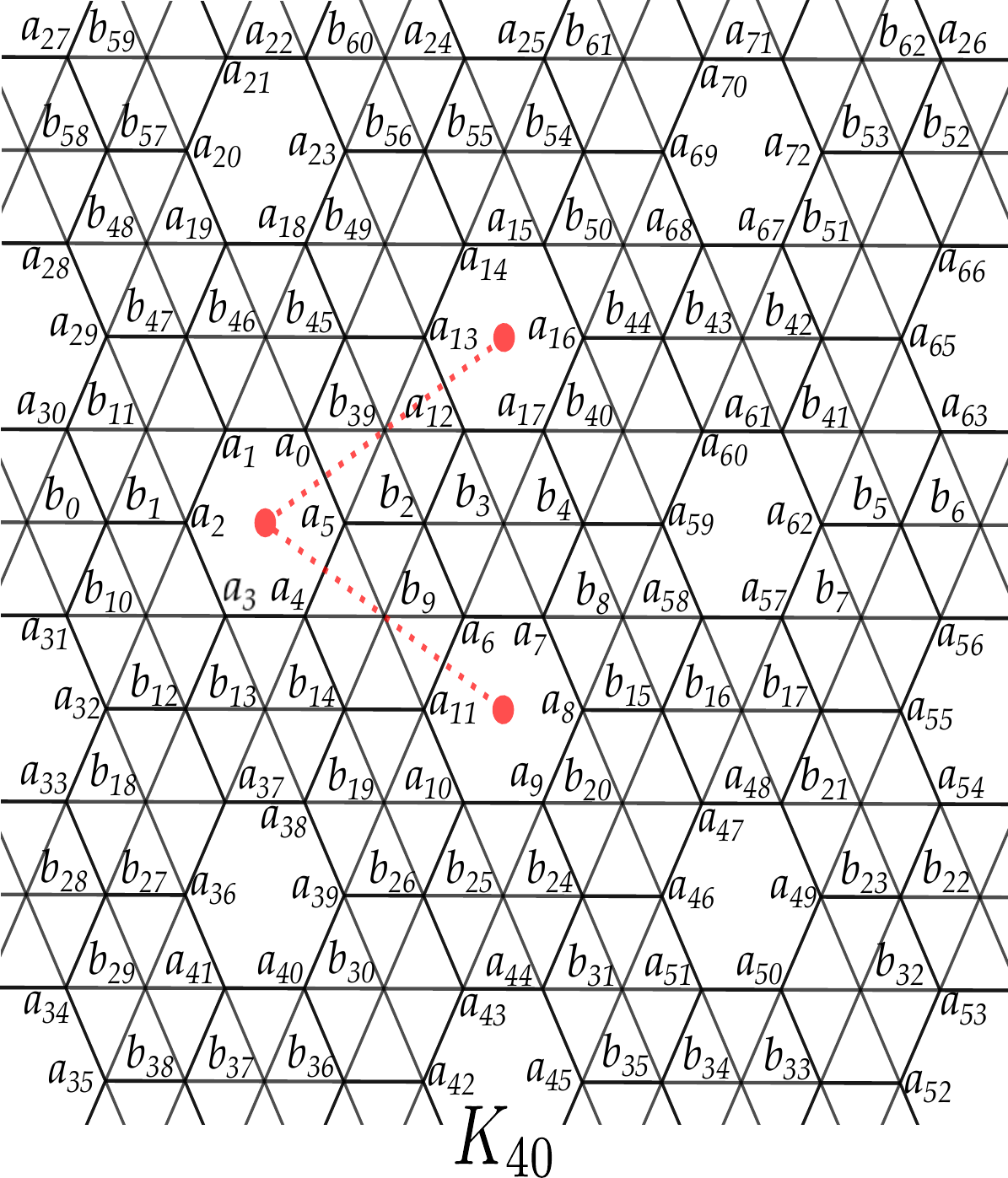}
\end{figure}     \begin{figure}
     \centering
\includegraphics[height=6cm, width= 6cm]{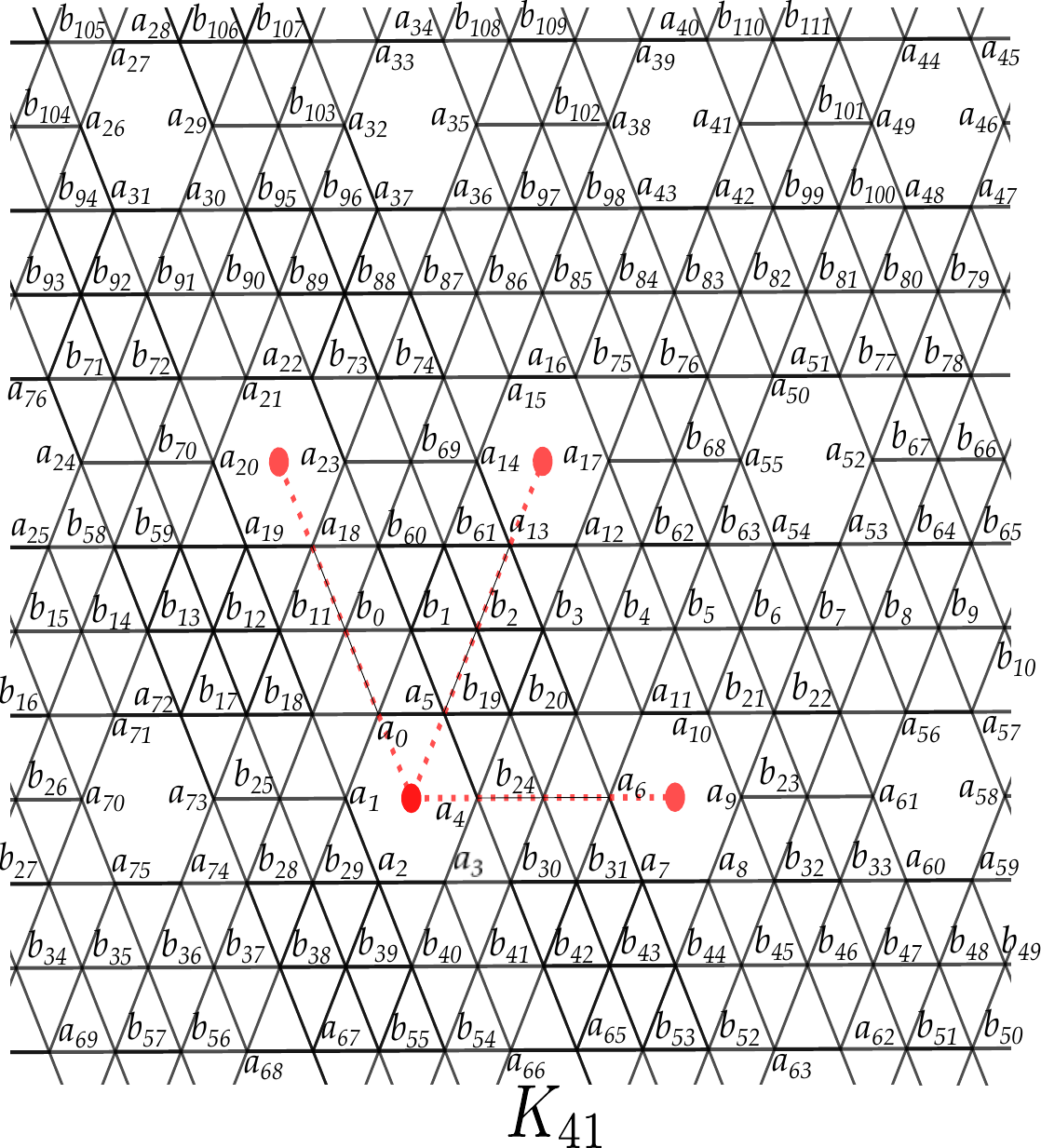}\hspace{5mm}
\includegraphics[height=6cm, width= 6cm]{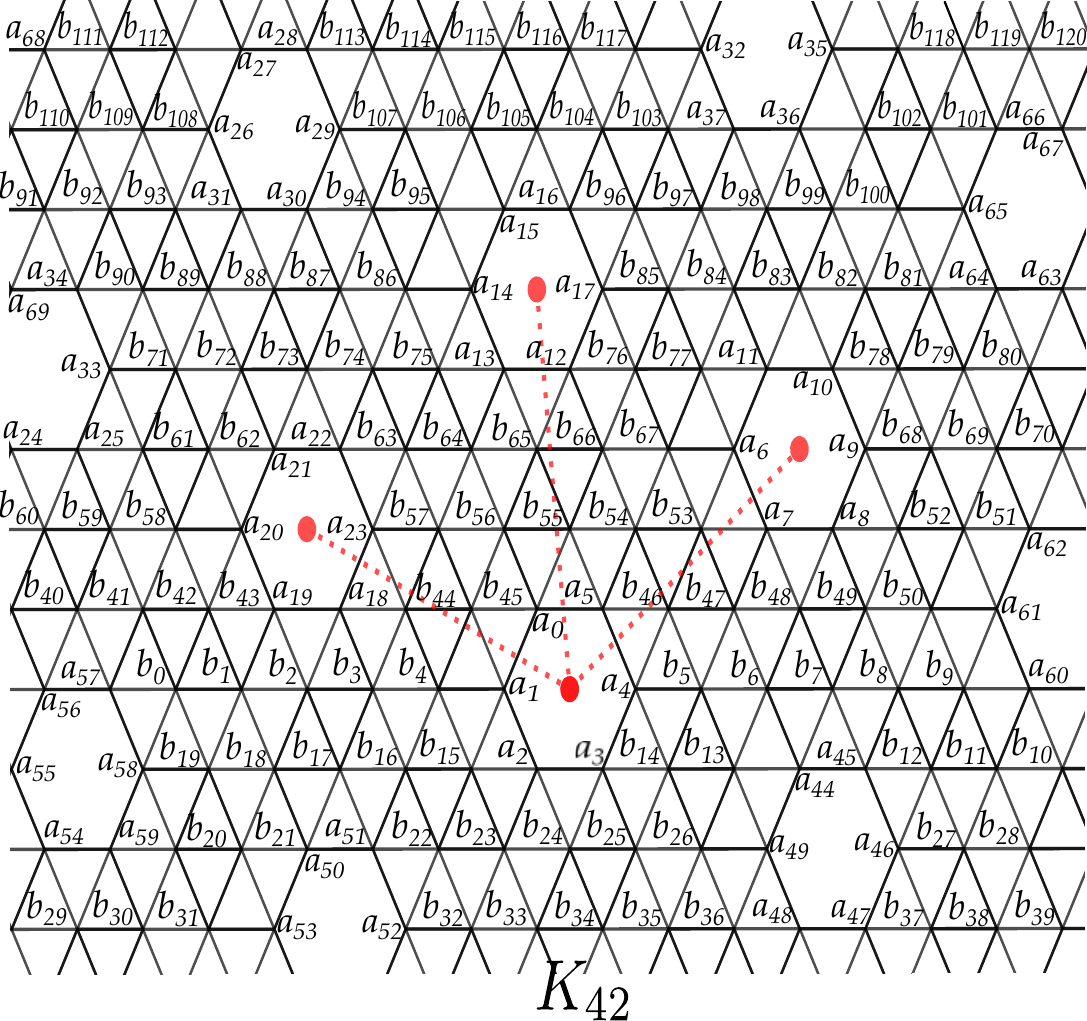}
\end{figure}     \begin{figure}
     \centering
\includegraphics[height=6cm, width= 6cm]{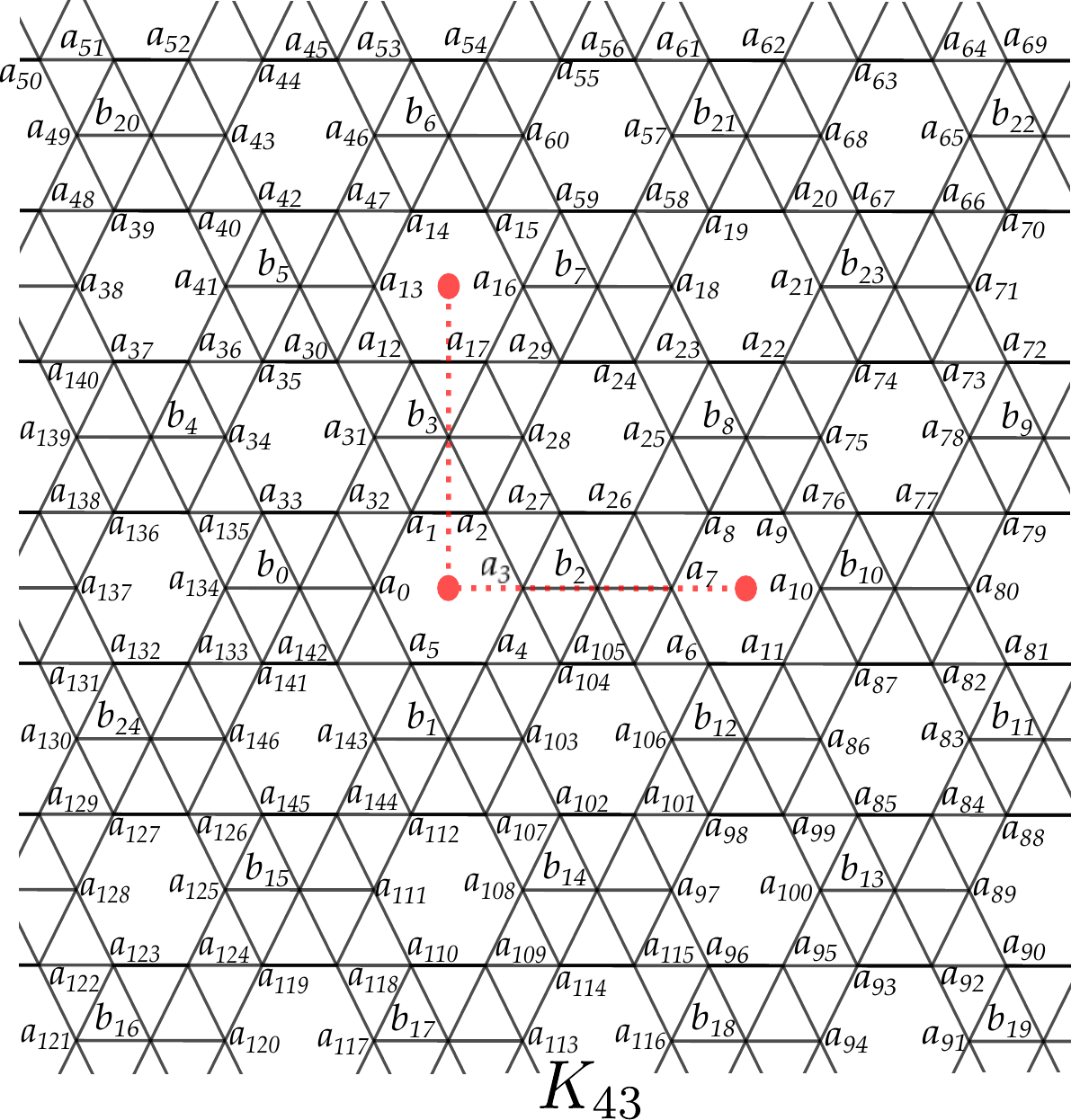}\hspace{5mm}
\includegraphics[height=6cm, width= 6cm]{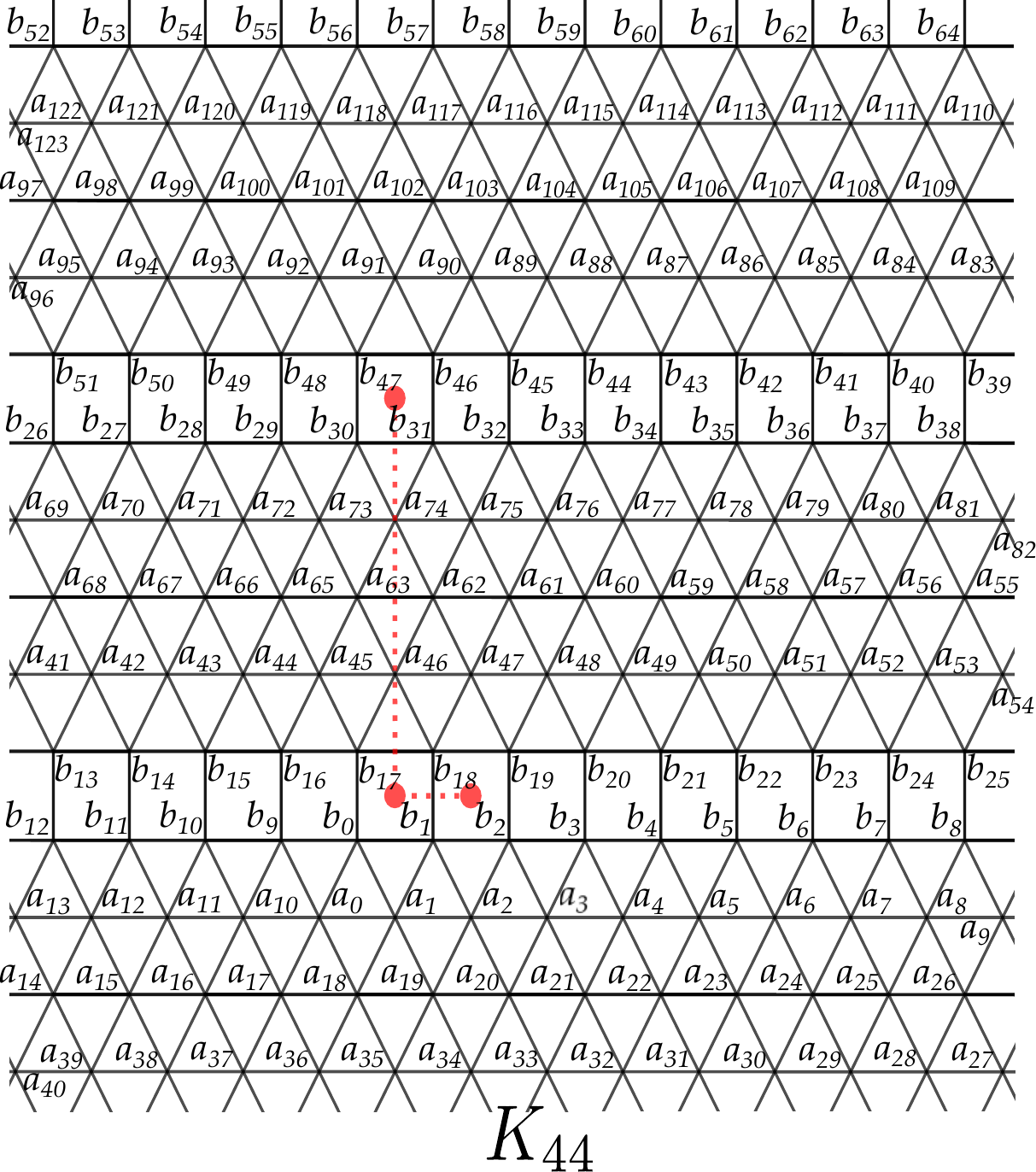}
\end{figure}     \begin{figure}
     \centering
\includegraphics[height=6cm, width= 6cm]{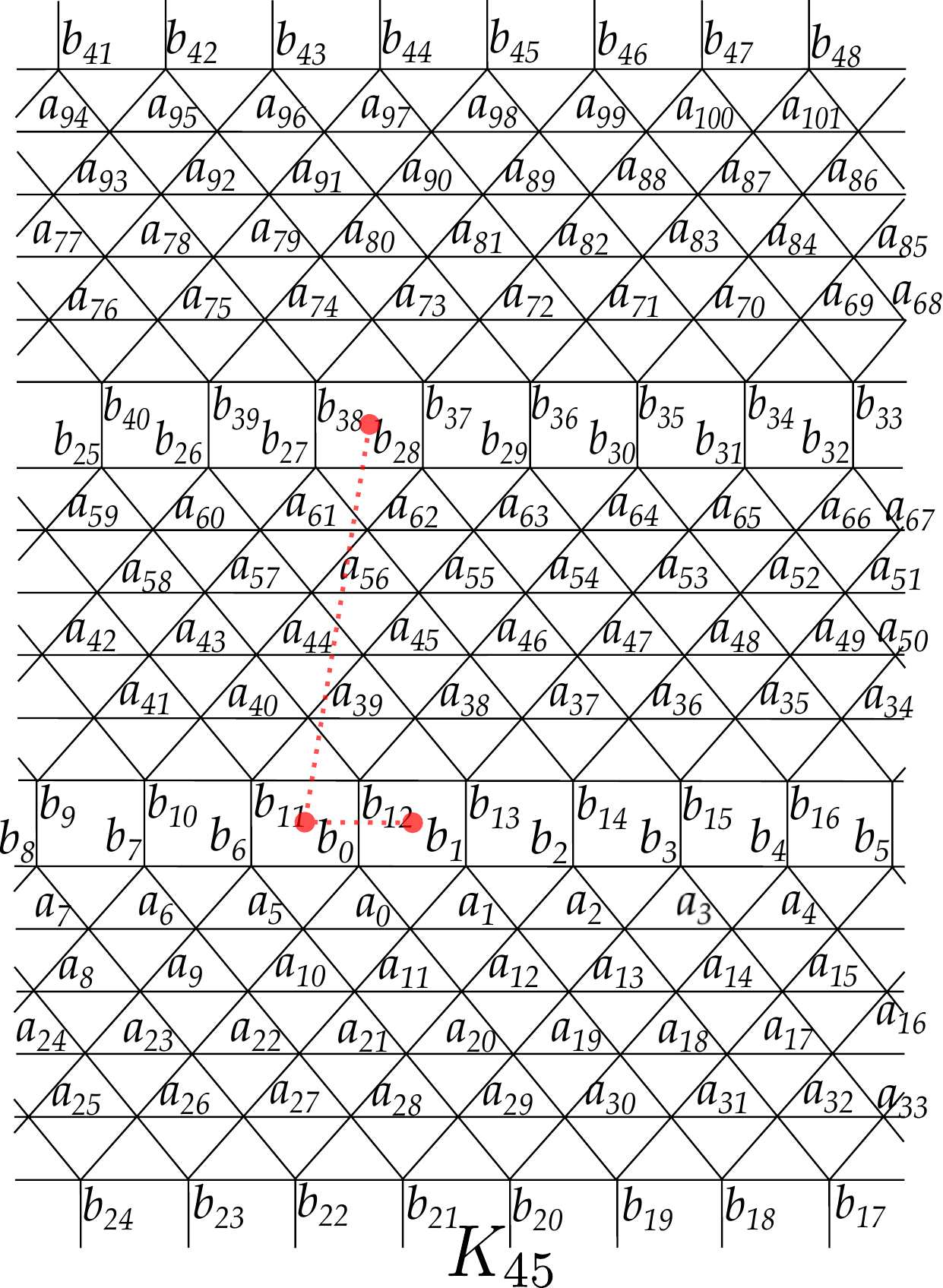}\hspace{5mm}
\includegraphics[height=6cm, width= 6cm]{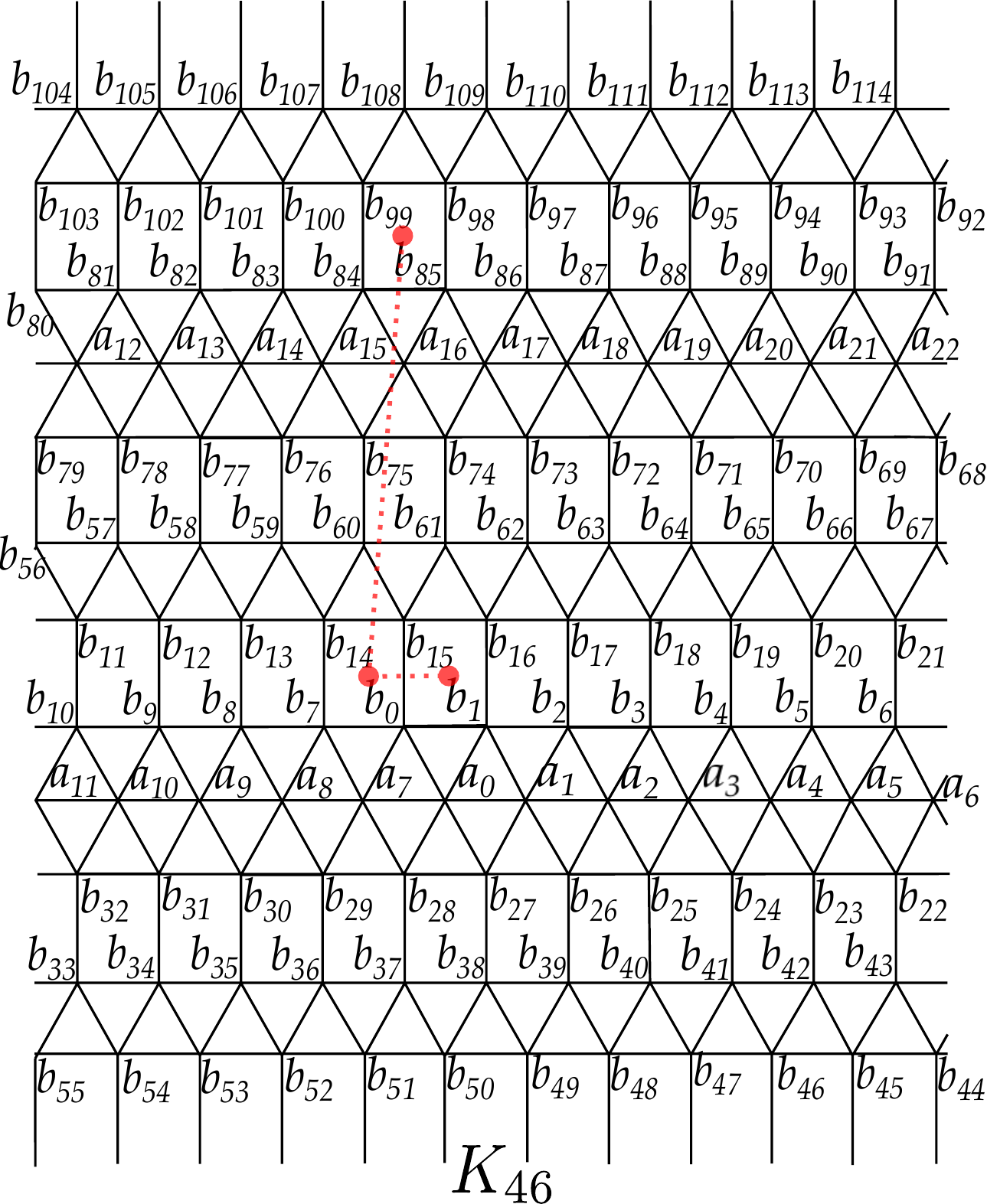}
\end{figure}
     \begin{figure}
     \centering
\includegraphics[height=6cm, width= 6cm]{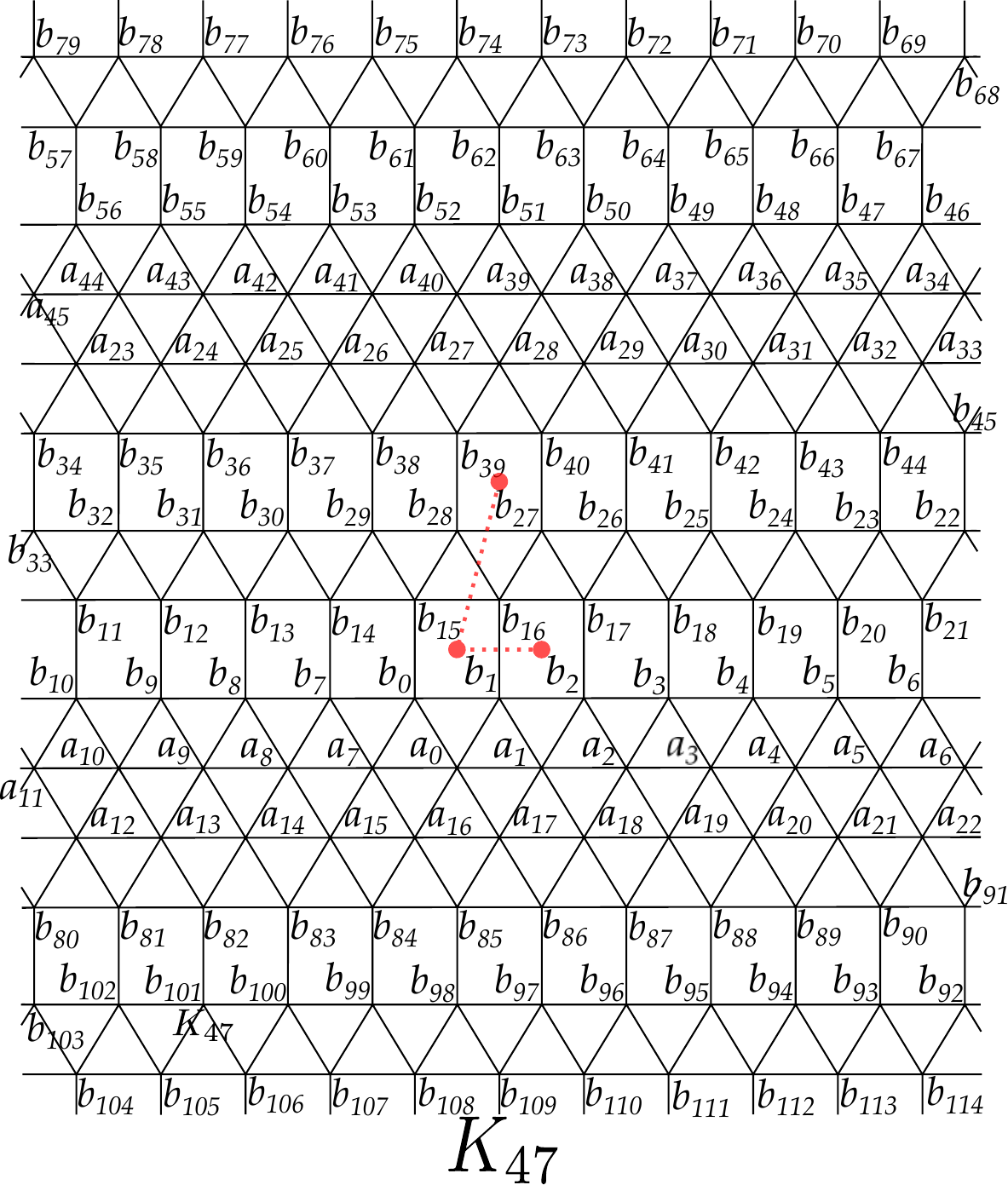}\hspace{5mm}
\includegraphics[height=6cm, width= 6cm]{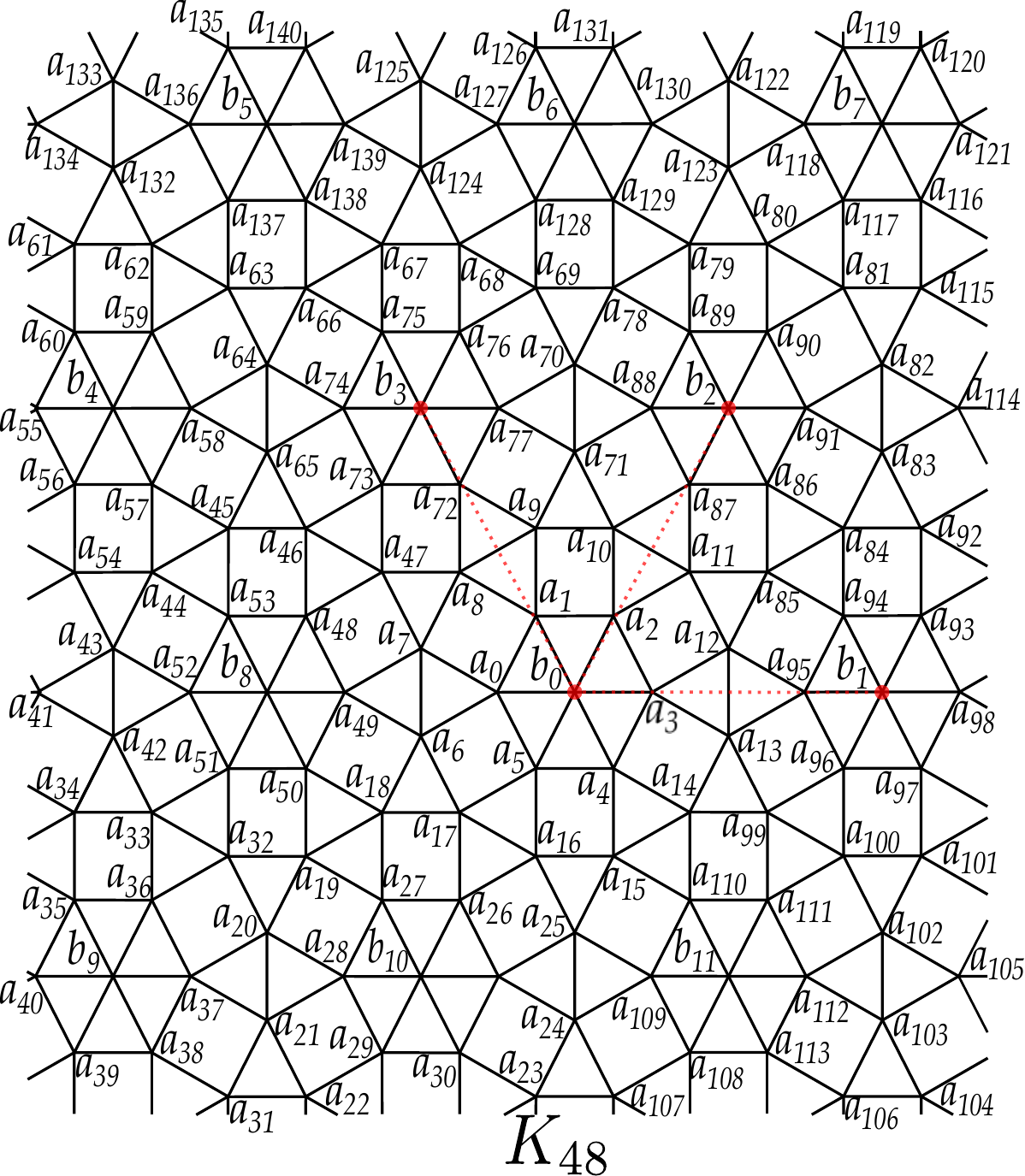}
\end{figure}     \begin{figure}
     \centering
\includegraphics[height=6cm, width= 6cm]{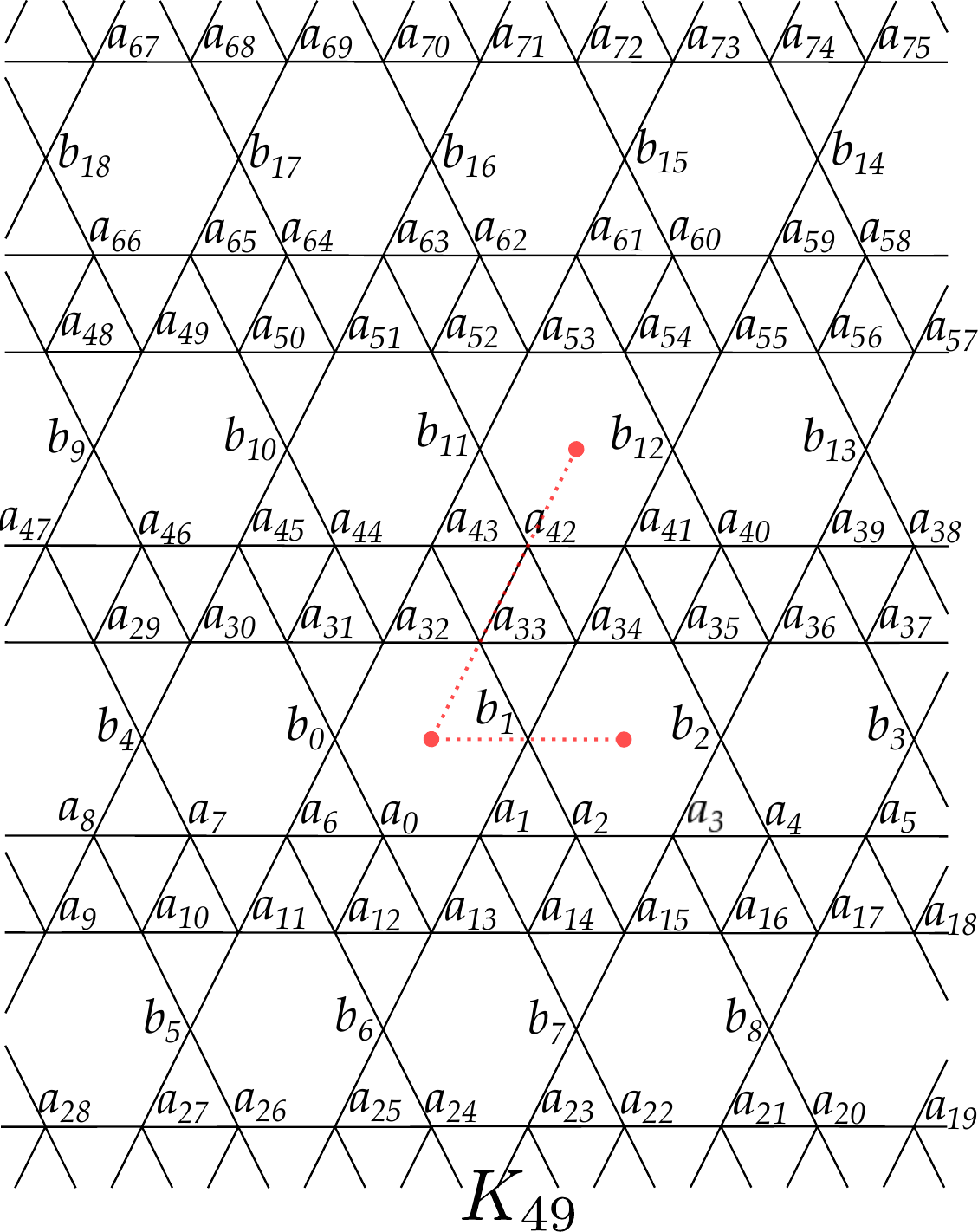}\hspace{5mm}
\includegraphics[height=6cm, width= 6cm]{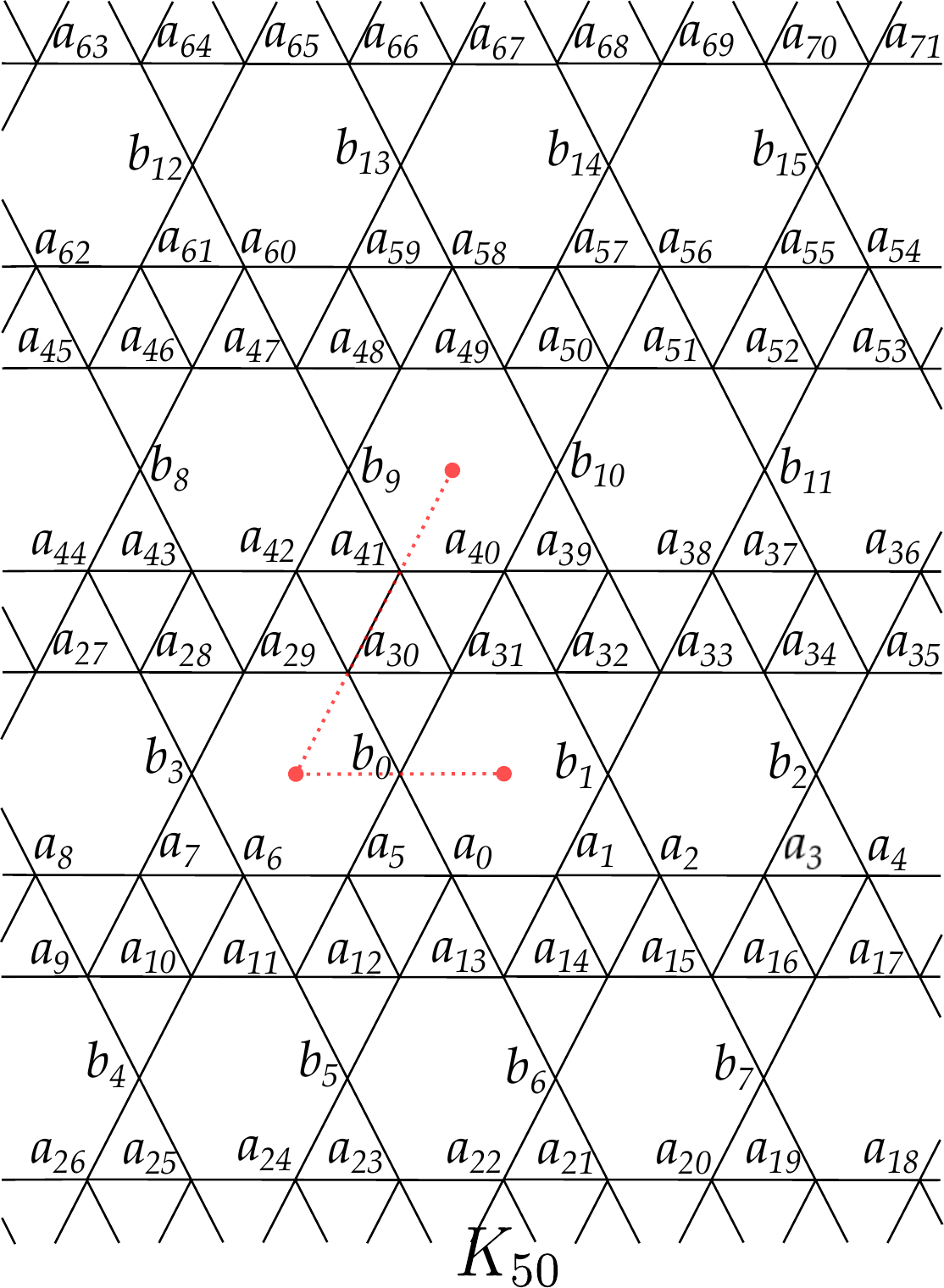}
\end{figure}     \begin{figure}
     \centering
\includegraphics[height=6cm, width= 6cm]{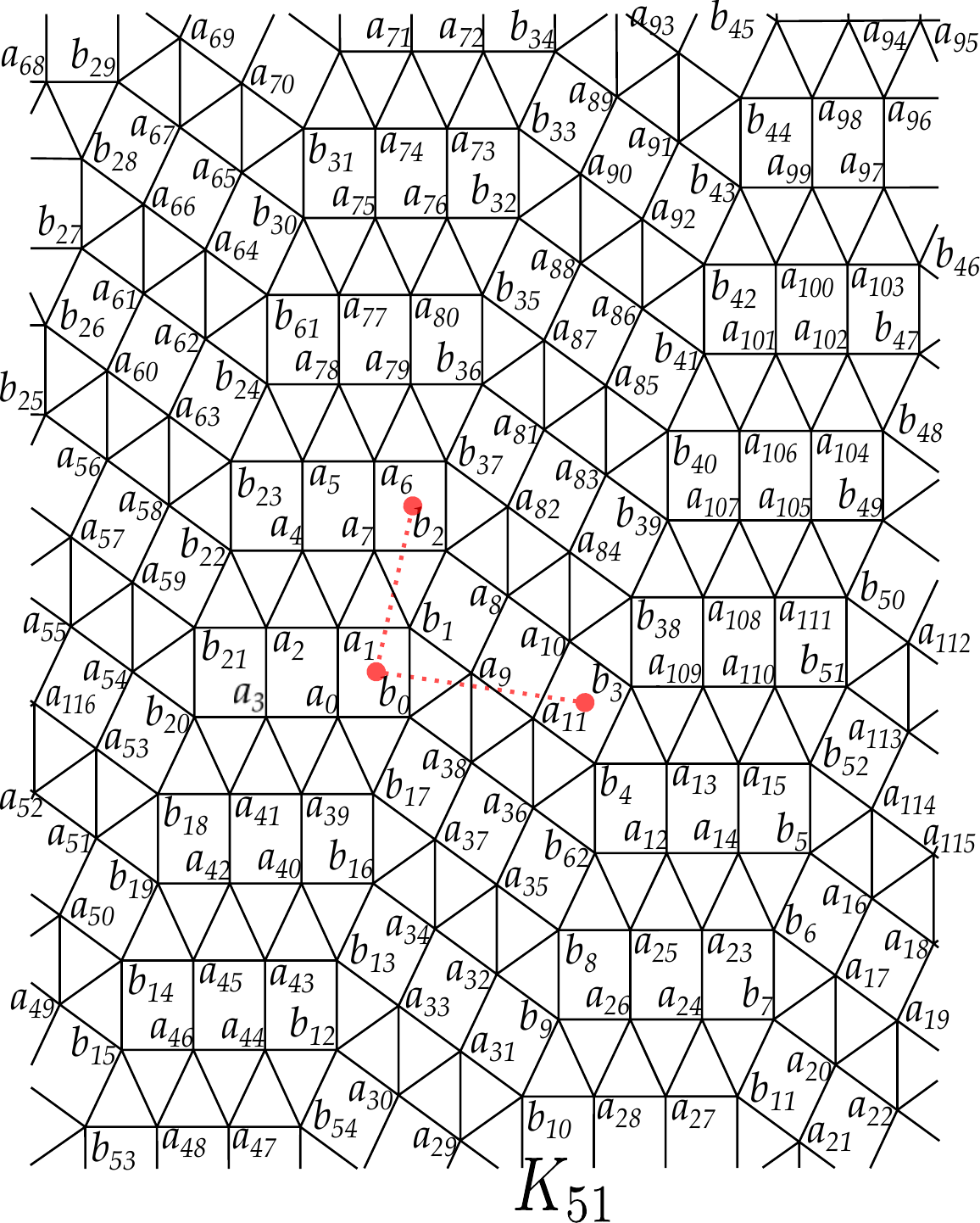}\hspace{5mm}
\includegraphics[height=6cm, width= 6cm]{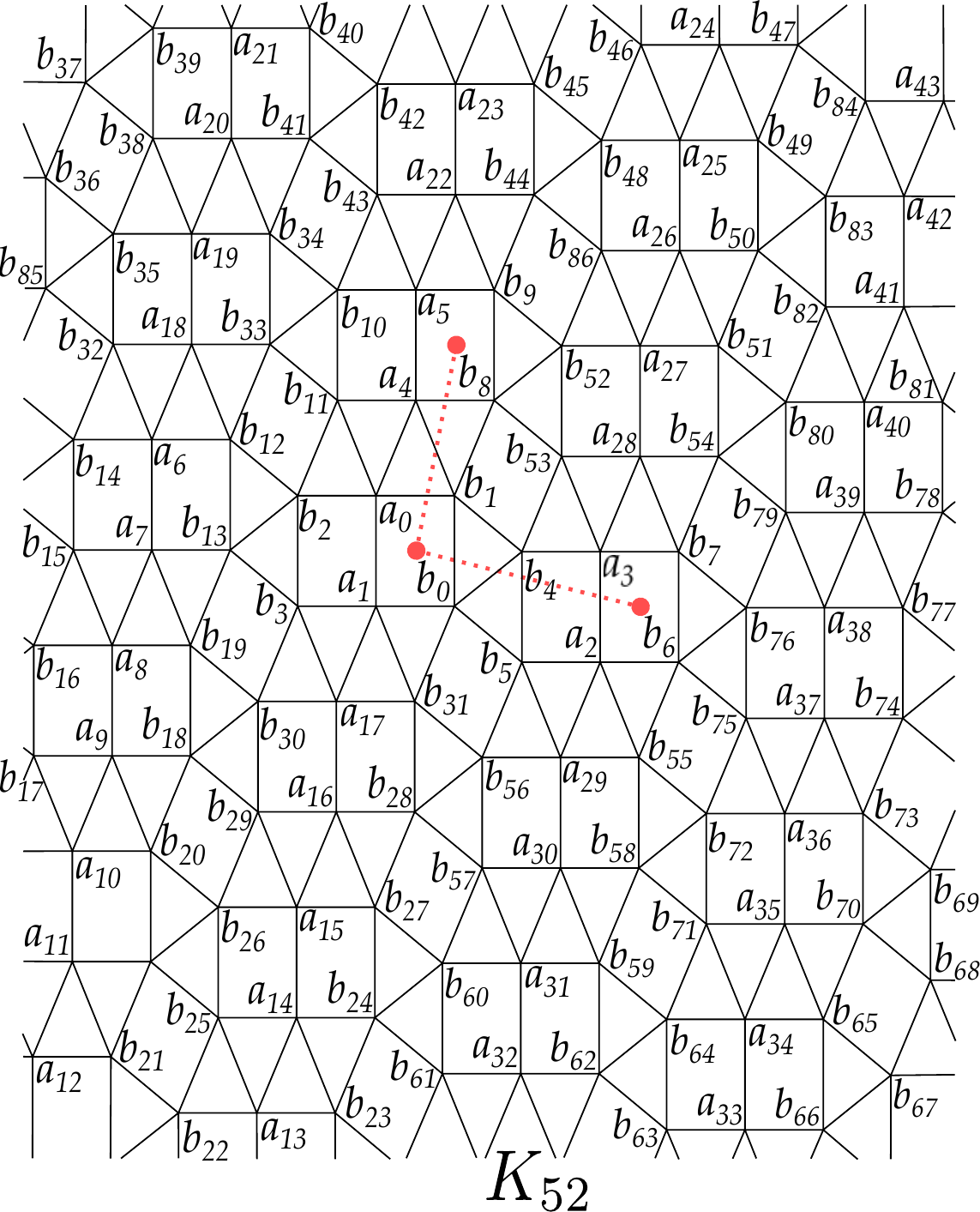}
\end{figure}     \begin{figure}
     \centering
\includegraphics[height=6cm, width= 6cm]{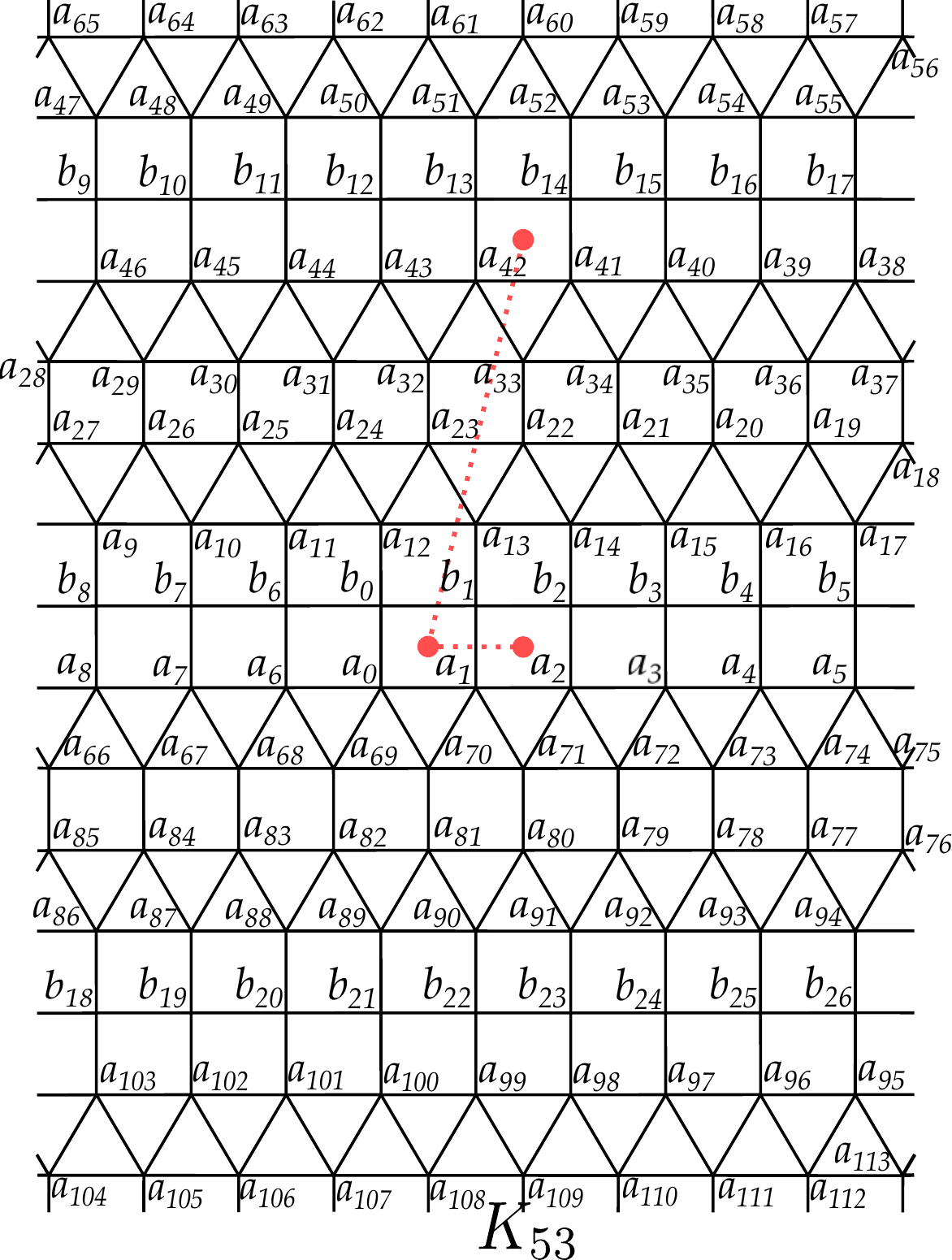}\hspace{5mm}
\includegraphics[height=6cm, width= 6cm]{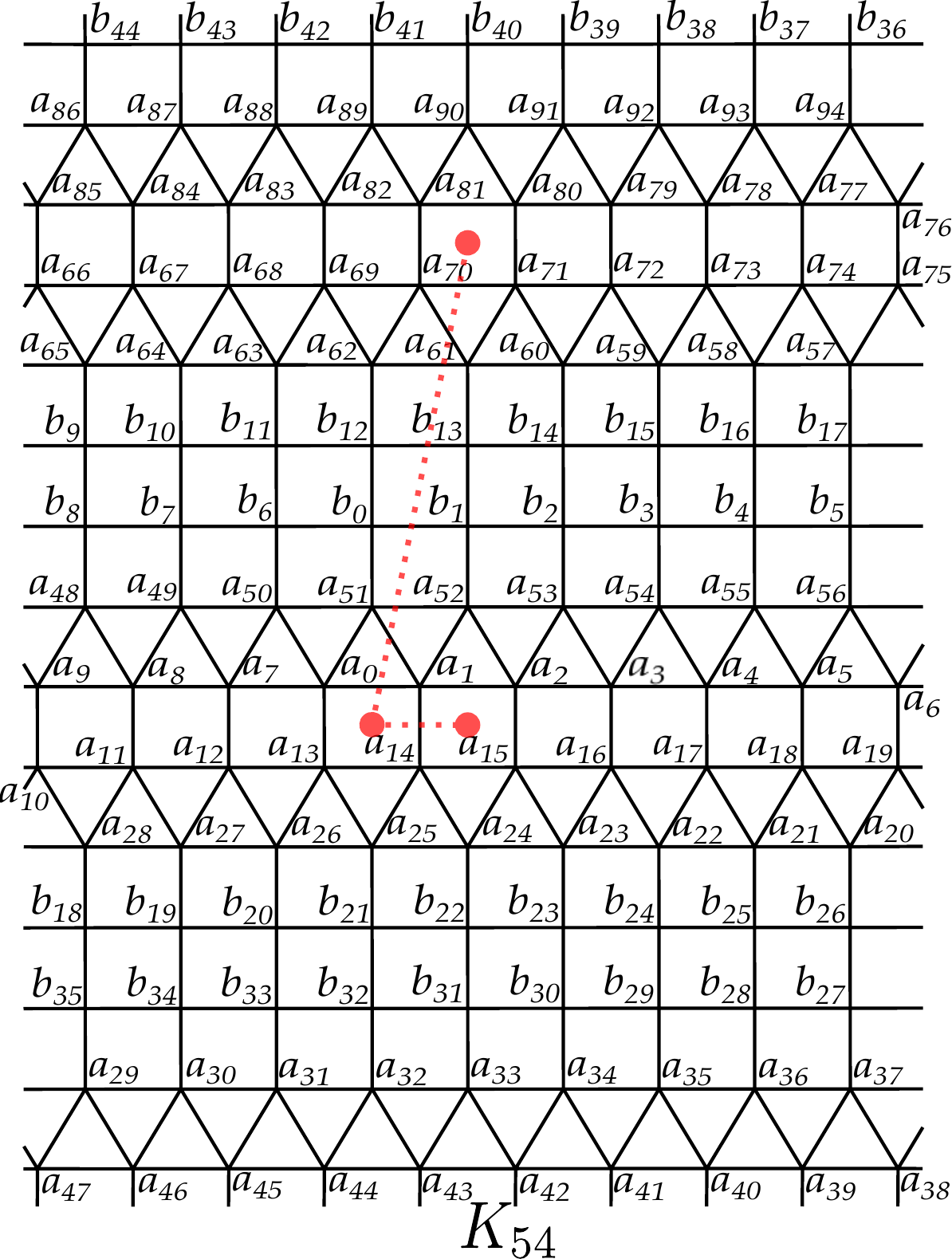}
\end{figure}     \begin{figure}
     \centering
\includegraphics[height=6cm, width= 6cm]{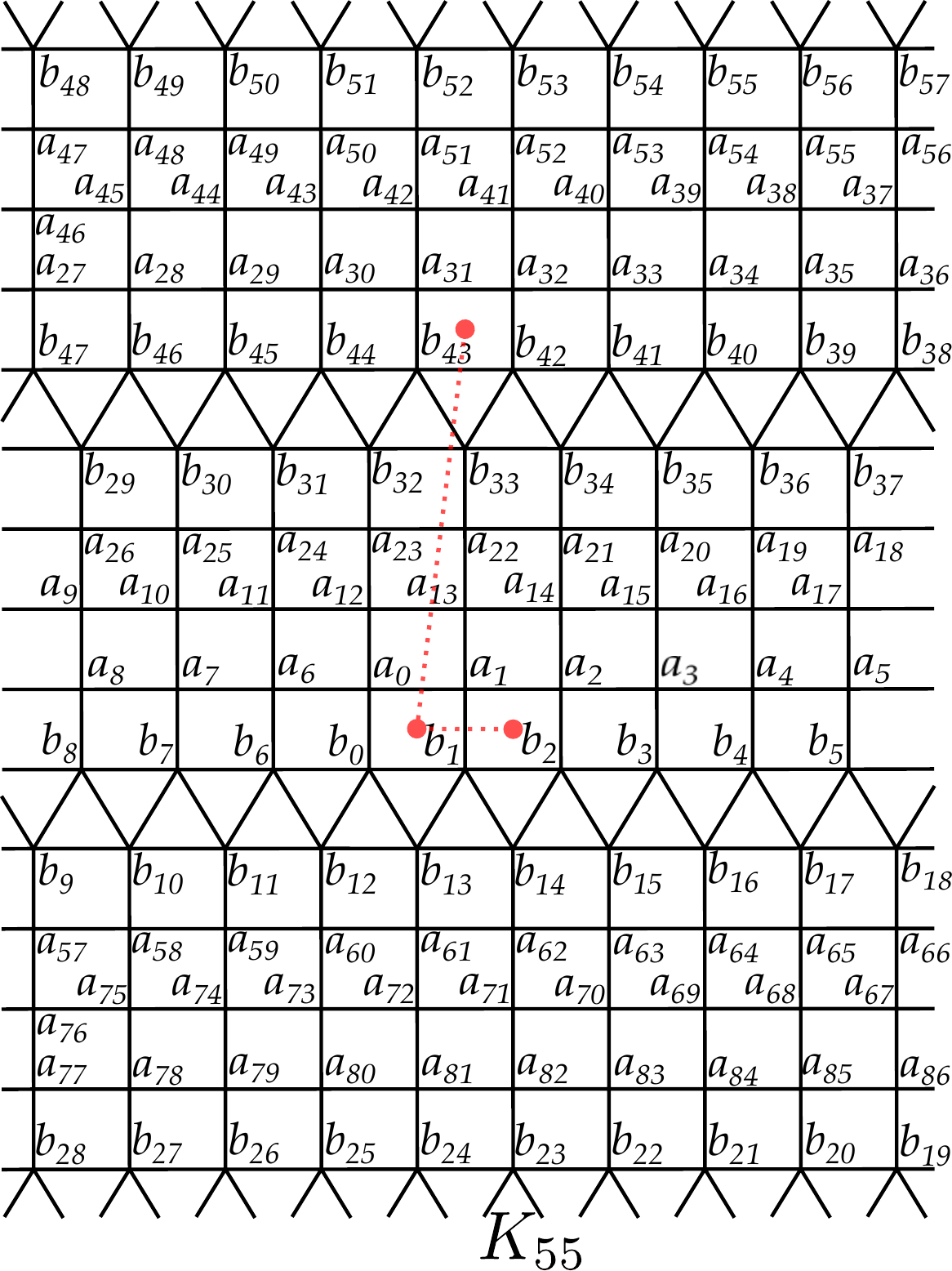}\hspace{5mm}
\includegraphics[height=6cm, width= 6cm]{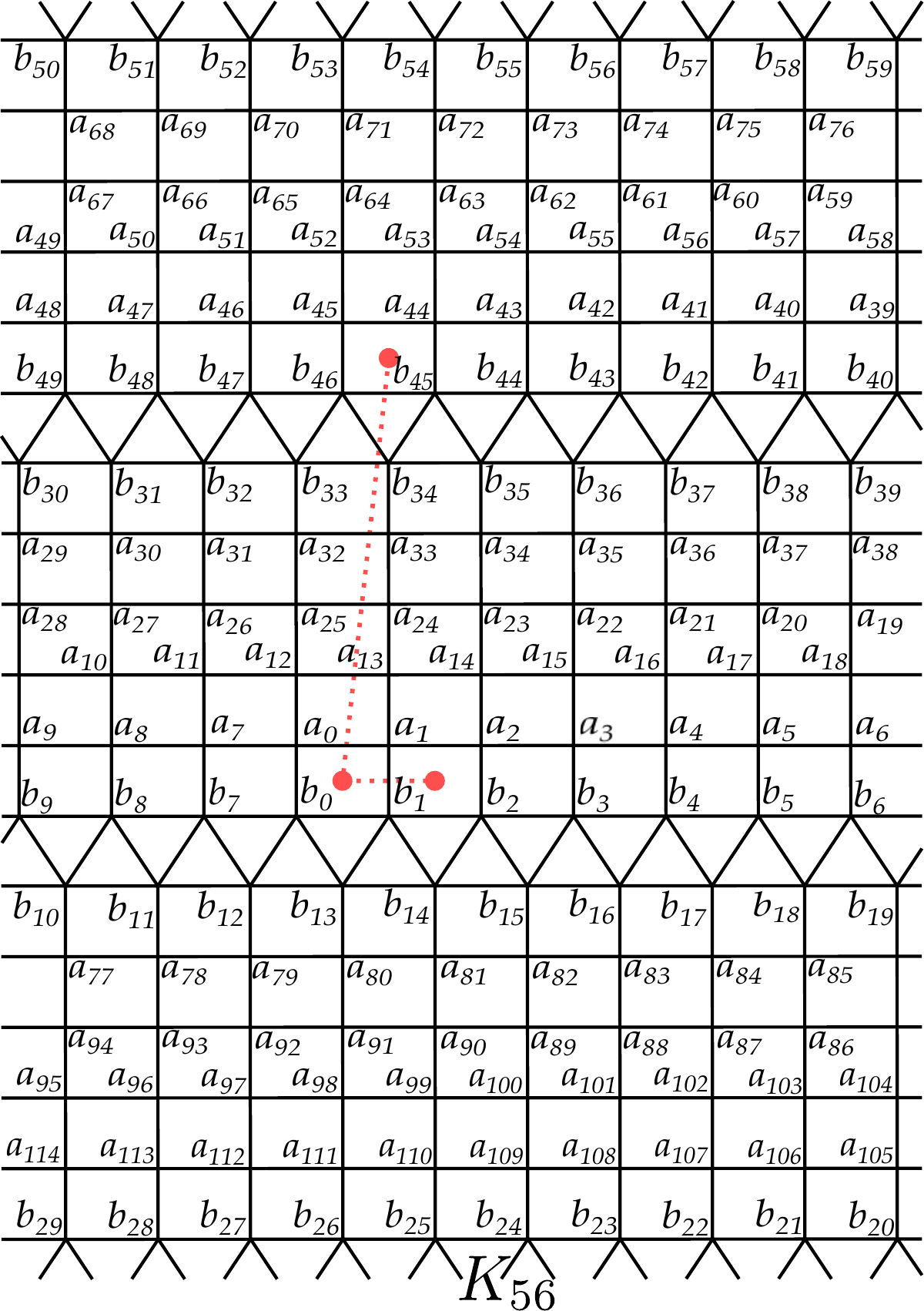}
\end{figure}     \begin{figure}
     \centering
\includegraphics[height=6cm, width= 6cm]{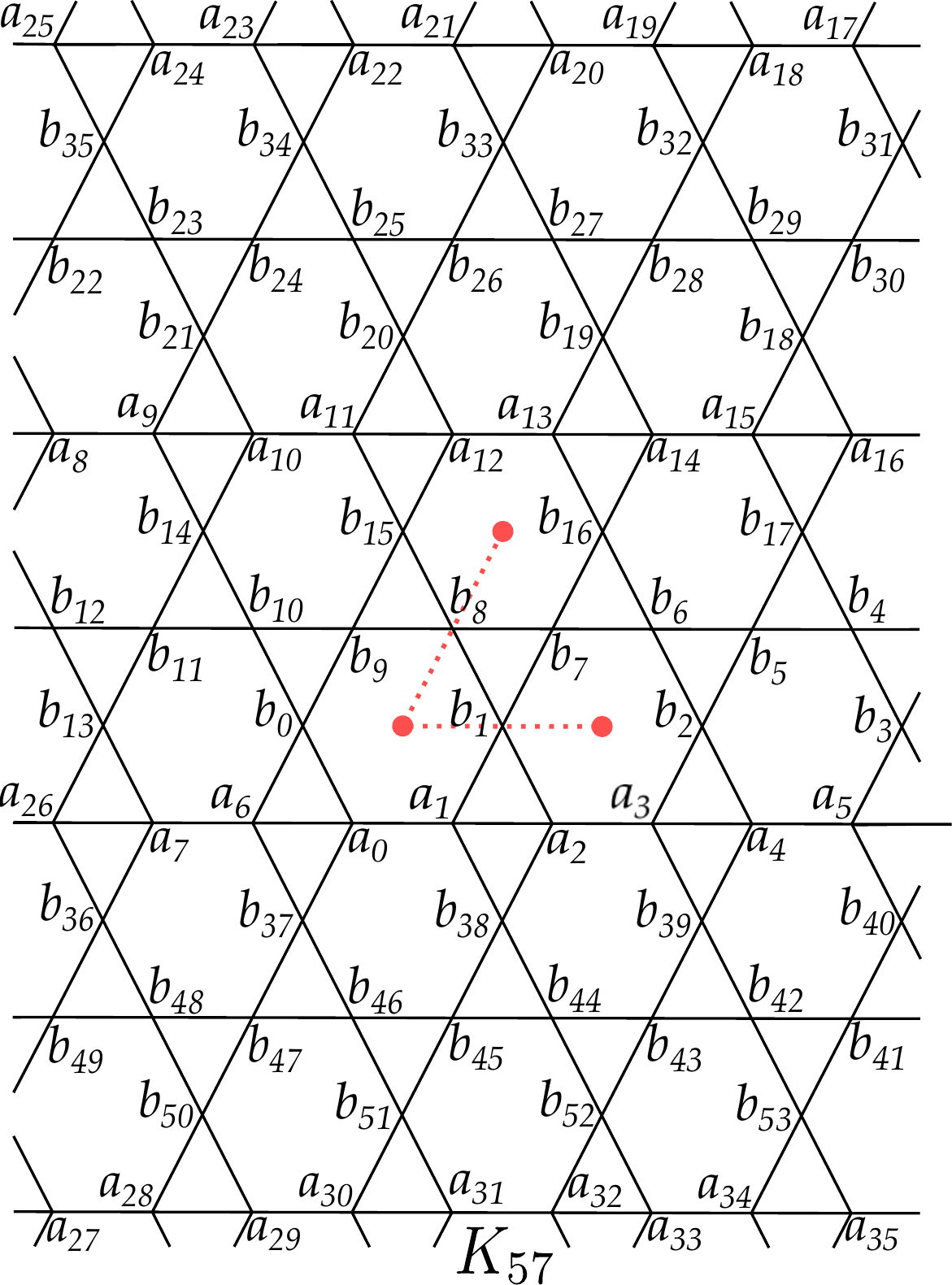}\hspace{5mm}
\includegraphics[height=6cm, width= 6cm]{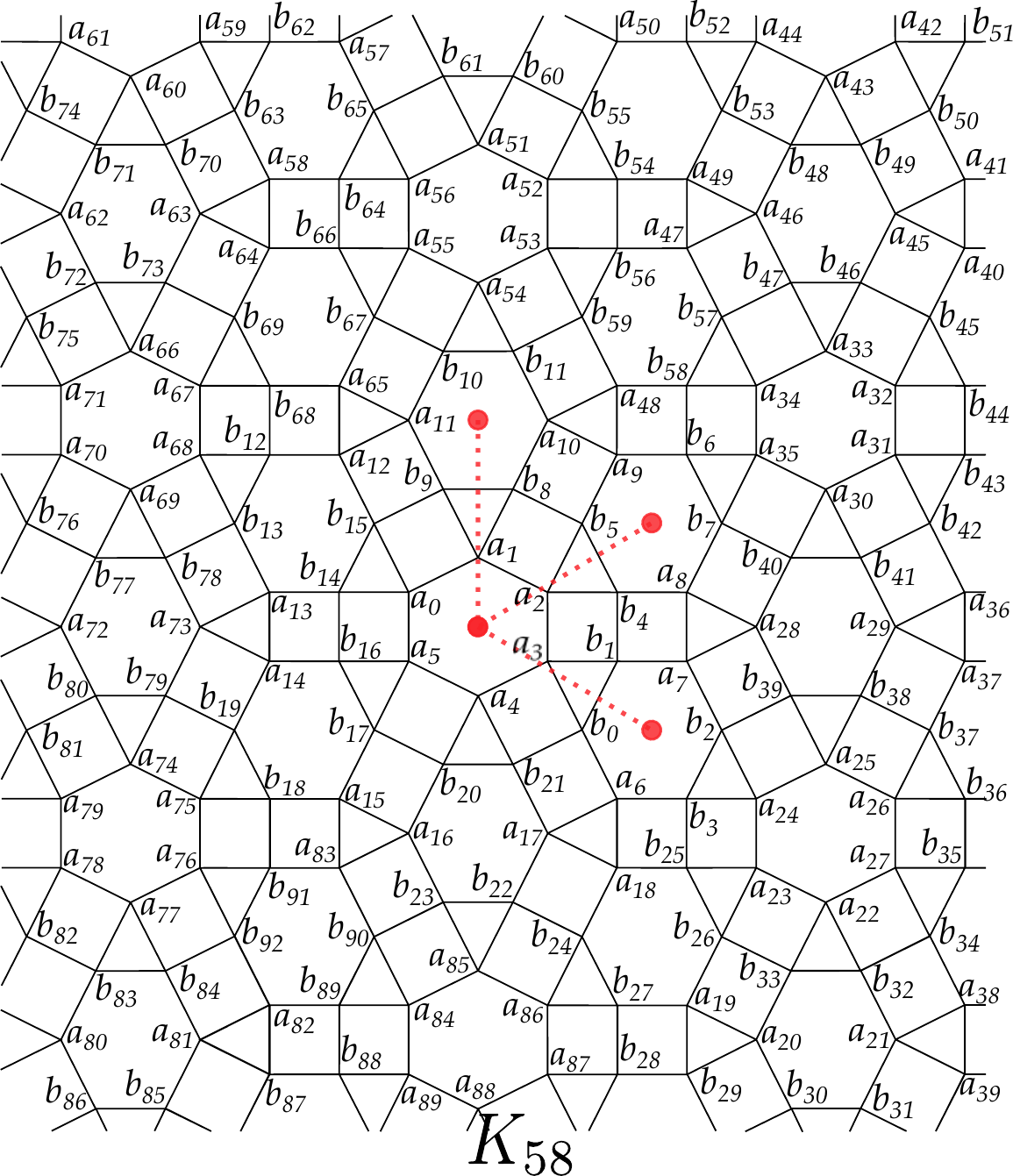}
\end{figure}     \begin{figure}
     \centering
\includegraphics[height=6cm, width= 6cm]{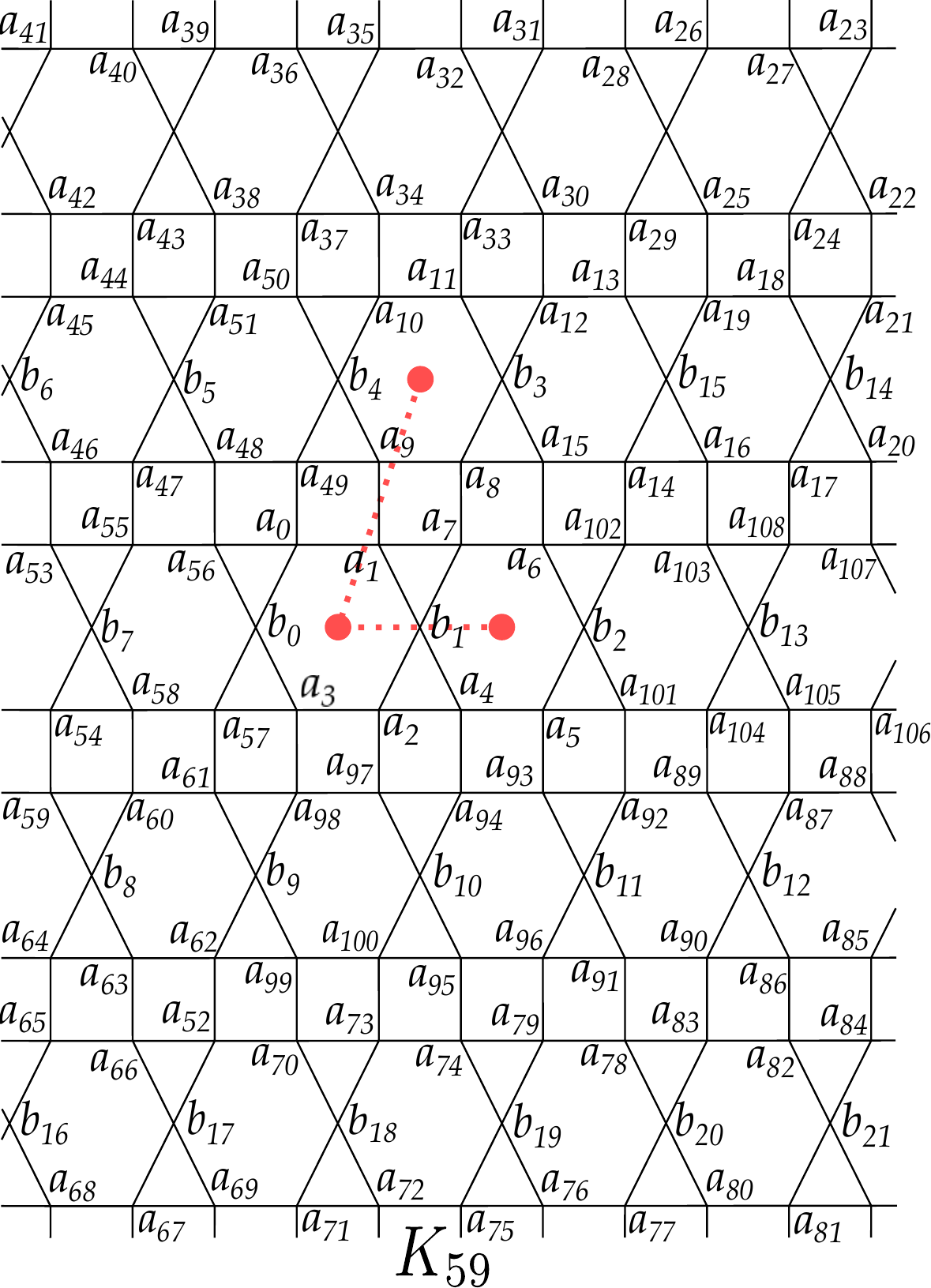}\hspace{5mm}
\includegraphics[height=6cm, width= 6cm]{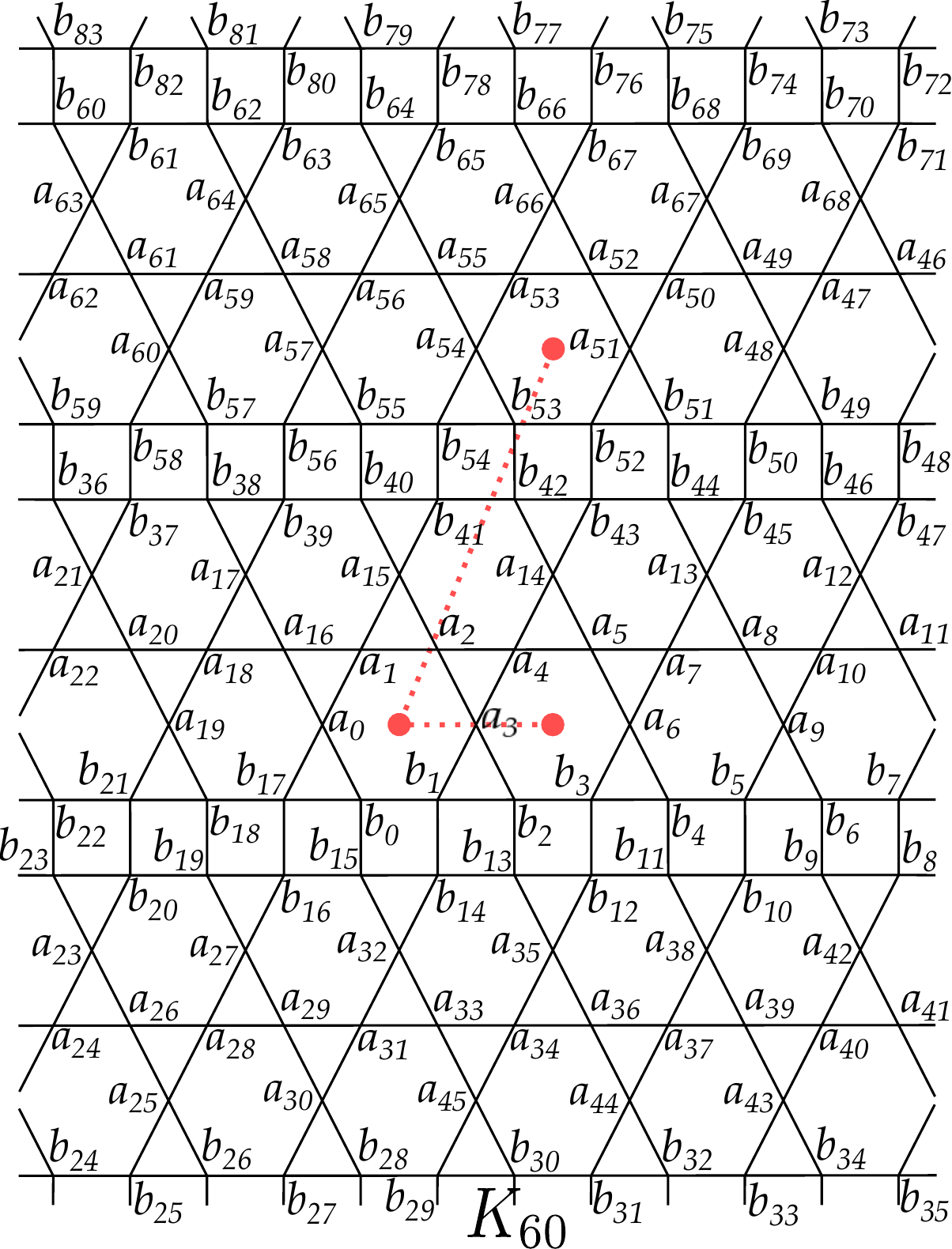}
\end{figure}     \begin{figure}
     \centering
\includegraphics[height=6cm, width= 6cm]{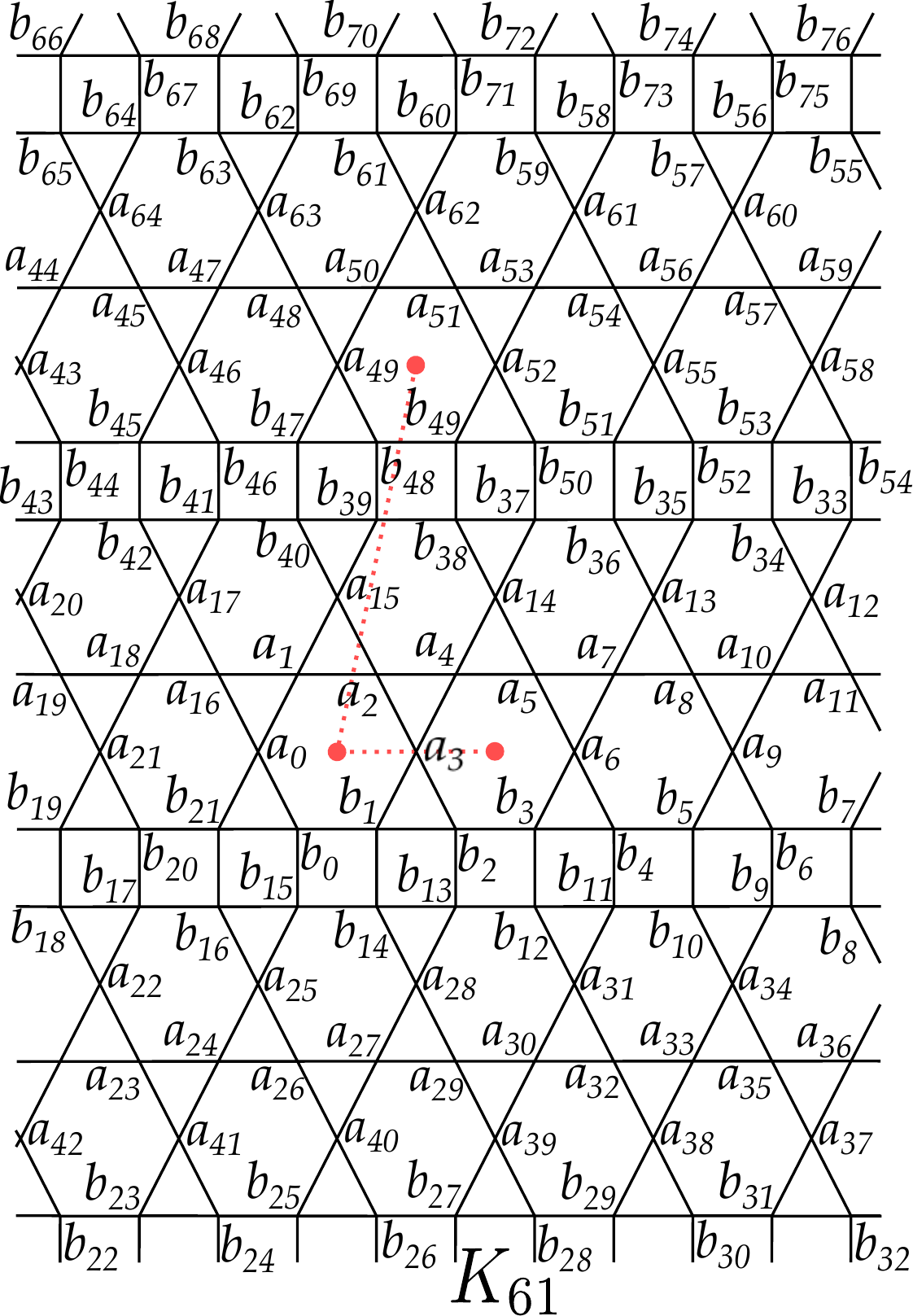}
\end{figure}

\newpage

\section{Proof of Theorem \ref{theo2}}

\begin{proposition}\label{prop2}
The maps $K_{i}$ ($1 \le i \le 61$) (in Section \ref{3uniform}) are unique up to isomorphism. 
\end{proposition} 

\begin{proof}[Proof of Theorem \ref{theo2}]
Let $X_1$ be a semiequivelar map on the torus that is the quotient of the plane's $3$-uniform lattice $K_1$. Let the vertices of $X_1$ form $m_1$ ${\rm Aut}(X_1)$-orbits. Let $K_1$ be as in Section \ref{3uniform}.
Let $V_{1} = V(K_1)$ be the vertex set of $K_1$. Let $H_{1}$ be the group of all the translations of $K_1$. So, $H_1 \leq {\rm Aut}(K_1)$.

Since $X_1$ is a semiequivelar map on the torus that is the quotient of the plane's $3$-uniform lattice $K_1$ (as by Proposition \ref{prop2} $K_1$ is unique), so, we can assume, there is a polyhedral covering map $\eta_{1} : K_1 \to X_1$ where $X_1 = K_1/\Gamma_{1}$  for some fixed element (vertex, edge or face) free subgroup $\Gamma_{1} \le {\rm Aut}(K_1)$. Hence $\Gamma_{1}$
consists of translations and glide reflections. Since $X_1 =
K_1/\Gamma_{1}$ is orientable, $\Gamma_{1}$ does not contain any glide reflection. Thus $\Gamma_{1} \leq H_{1}$.

 We take the middle point of the line segment joining vertices $a_{0}$ and $a_1$ as the origin $(0,0)$ of $K_1$. Let $\alpha_1 := a_{2} - a_{0}$ and $\beta_1 := a_{7}- a_{0}$ in $K_1$. Then $$H_1 := \langle x\mapsto x+\alpha_1, x\mapsto x+\beta_1\rangle.$$ Under the action of $H_1$, vertices of $K_1$ form five orbits.
Consider the subgroup $G_1$ of ${\rm Aut}(K_1)$ generated by $H_1$ and the map (the half rotation) $x\mapsto -x$. So,
\begin{align*}
  G_1 & =\{ \alpha : x\mapsto \varepsilon x + m\alpha_1 + n\beta_1  \, : \, \varepsilon=\pm 1, m, n \in \ZZ\} \cong H_1\rtimes \mathbb{Z}_2.
\end{align*}
Clearly, under the action of $G_1$, vertices of $K_1$ form three orbits. The orbits are 
\begin{align*}
& O_1 :=\langle a_{0} \rangle, O_2 :=\langle b_{0} \rangle, O_3 :=\langle c_{0} \rangle.
\end{align*}

\noindent {\bf Claim 1.} If $S \leq H_1$ then $S \unlhd G_1$.

\smallskip

Let $g \in G_1$ and $s\in S$. Then $g(x) = \varepsilon x+ma+nb+rc$ and $s(x) = x + pa+ qb+\ell c$ for some $m, n, r, p, q, \ell \in \mathbb{Z}$ and $\varepsilon\in\{1, -1\}$.
Therefore, 
\begin{align*}
(g\circ s\circ g^{-1})(x) & = (g\circ s)(\varepsilon(x-ma-nb-rc))\\
                          & = g(\varepsilon(x-ma-nb-rc)+pa+qb+\ell c)\\                          
                          & =x-ma-nb-rc+\varepsilon(pa+qb+\ell c)+ma+nb+rc\\
                          & = x+\varepsilon(pa+qb+\ell c)\\
                          & = s^{\varepsilon}(x).
\end{align*}
 Thus, $g\circ j\circ g^{-1} = s^{\varepsilon}\in S$. This completes the claim.

\smallskip

By Claim 1, $\Gamma_1$ is a normal subgroup of $G_1$. Therefore, $G_1/\Gamma_1$ acts on $X_1= K_1/\Gamma_1$.
Since 
\begin{align*}
& O_1 :=\langle a_{0} \rangle, O_2 :=\langle b_{0} \rangle, O_3 :=\langle c_{0} \rangle
\end{align*}
 are the $G_1$-orbits, it follows that $\eta_1(O_j)$ for $j=1, 2, \dots, 6$ are the $(G_1/\Gamma_1)$-orbits. Since the vertex set of $K_1$ is $\sqcup_{j=1}^{3}\eta_j(O_j)$ and $G_1/\Gamma_1 \leq {\rm Aut}(X_1)$, it follows that the number of ${\rm Aut}(X_1)$-orbits of vertices is $3$. 
 
 \medskip

Let $X_{2} = K_{2}/\Gamma_{2}$ be a semiequivelar map on the torus for some fixed element (vertex, edge or face) free subgroup $\Gamma_{2} \le {\rm Aut}(K_{2})$. Let the vertices of $X_{2}$ form $m_{2}$ ${\rm Aut}(X_{2})$-orbits.
We take the middle point of the line segment joining vertices $a_0$ and $a_1$ as the origin $(0,0)$ of $K_2$  (see Section \ref{3uniform}). Let  $\alpha_2 := a_{4} - a_0$ and $\beta_2 := a_{2} - a_{0} \in \mathbb{R}^2$. Similarly as above, define $H_2 := \langle x\mapsto x+\alpha_2, x\mapsto x+\beta_2\rangle$ and 
\begin{align*}
  G_2 & =\{ \alpha : x\mapsto \varepsilon x + m_2\alpha_2 + n_2\beta_2   \, : \, \varepsilon=\pm 1, m_2, n_2\in \ZZ\} \cong H_2\rtimes \mathbb{Z}_2.
\end{align*}
By the same arguments as above and in Claim 1, $\Gamma_2 \unlhd G_2$ and the number of $G_2/\Gamma_2$-orbits of vertices of $X_2$ is three. Therefore, $G_2/\Gamma_2$ acts on $X_2 = K_2/\Gamma_2$.
Since  $O_1 = \langle a_0 \rangle,$ $O_2 = \langle b_0 \rangle,$  $O_3 = \langle c_0 \rangle$
 are the $G_9$-orbits, it follows that $O_1 = \langle a_0 \rangle,$ $O_2 = \langle b_0 \rangle,$  $O_3 = \langle c_0 \rangle$ are the $(G_2/\Gamma_2)$-orbits.  
Since the vertex set of $X_2$ is $\sqcup_{j=1}^3 \eta_2(O_j)$ and $G_2/\Gamma_2 \leq {\rm Aut}(X_2)$, it follows that the number of ${\rm Aut}(X_2)$-orbits of vertices is $3$. 

\medskip

Let $X_{3} = K_{3}/\Gamma_{3}$ be a semiequivelar map on the torus for some fixed element (vertex, edge or face) free subgroup $\Gamma_{3} \le {\rm Aut}(K_{3})$. Let the vertices of $X_{3}$ form $m_{3}$ ${\rm Aut}(X_{3})$-orbits.
We take the middle point of the line segment joining vertices $b_1$ and $c_1$ as the origin $(0,0)$ of $K_3$  (see Section \ref{3uniform}). Let  $\alpha_3 := c_{22} - c_1$ and $\beta_3 := c_{5} - c_{1} \in \mathbb{R}^2$. Similarly as above, define $H_3 := \langle x\mapsto x+\alpha_3, x\mapsto x+\beta_3, x\mapsto x+\gamma_3\rangle$ and 
\begin{align*}
  G_3 & =\{ \alpha : x\mapsto \varepsilon x + m_3\alpha_3 + n_3\beta_3    \, : \, \varepsilon=\pm 1, m_3, n_3 \in \ZZ\} \cong H_3\rtimes \mathbb{Z}_2.
\end{align*}
By the same arguments as above and in Claim 1, $\Gamma_3 \unlhd G_3$ and the number of $G_3/\Gamma_3$-orbits of vertices of $X_3$ is eight. Therefore, $G_3/\Gamma_3$ acts on $X_3 = K_3/\Gamma_3$.
Since  $O_1 = \langle a_0 \rangle,$ $O_2 = \langle a_5 \rangle,$  $O_3 = \langle b_0 \rangle$, $O_4 = \langle b_1 \rangle,$ $O_5 = \langle b_2 \rangle,$ $O_6 = \langle c_0 \rangle,$ $O_7 = \langle c_1 \rangle,$ $O_8 = \langle c_2 \rangle$
 are the $G_9$-orbits, it follows that $O_1 = \langle a_0 \rangle,$ $O_2 = \langle a_5 \rangle,$  $O_3 = \langle b_0 \rangle$, $O_4 = \langle b_1 \rangle,$ $O_5 = \langle b_2 \rangle,$ $O_6 = \langle c_0 \rangle,$ $O_7 = \langle c_1 \rangle,$ $O_8 = \langle c_2 \rangle$ are the $(G_3/\Gamma_3)$-orbits.  
Since the vertex set of $X_3$ is $\sqcup_{j=1}^8 \eta_3(O_j)$ and $G_3/\Gamma_3 \leq {\rm Aut}(X_3)$, it follows that the number of ${\rm Aut}(X_3)$-orbits of vertices is $\le 8$. 
 
\medskip

Let $X_{4} = K_{4}/\Gamma_{4}$ be a semiequivelar map on the torus for some fixed element (vertex, edge or face) free subgroup $\Gamma_{4} \le {\rm Aut}(K_{4})$. Let the vertices of $X_{4}$ form $m_{4}$ ${\rm Aut}(X_{4})$-orbits.
We take the middle point of the line segment joining vertices $b_3$ and $b_4$ as the origin $(0,0)$ of $K_4$  (see Section \ref{3uniform}). Let  $\alpha_4 := b_{4} - b_3$ and $\beta_4 := b_{13} - b_{3} \in \mathbb{R}^2$. Similarly as above, define $H_4 := \langle x\mapsto x+\alpha_4, x\mapsto x+\beta_4\rangle$ and 
\begin{align*}
  G_4 & =\{ \alpha : x\mapsto \varepsilon x + m_4\alpha_4 + n_4\beta_4    \, : \, \varepsilon=\pm 1, m_4, n_4 \in \ZZ\} \cong H_4\rtimes \mathbb{Z}_2.
\end{align*}
By the same arguments as above and in Claim 1, $\Gamma_4 \unlhd G_4$ and the number of $G_4/\Gamma_4$-orbits of vertices of $X_4$ is three. Therefore, $G_4/\Gamma_4$ acts on $X_4 = K_4/\Gamma_4$.
Since  $O_1 = \langle a_0 \rangle,$ $O_2 = \langle b_3 \rangle,$  $O_3 = \langle c_0 \rangle$
 are the $G_4$-orbits, it follows that $O_1 = \langle a_0 \rangle,$ $O_2 = \langle b_3 \rangle,$  $O_3 = \langle c_0 \rangle$ are the $(G_4/\Gamma_4)$-orbits.  
Since the vertex set of $X_4$ is $\sqcup_{j=1}^3 \eta_4(O_j)$ and $G_4/\Gamma_4 \leq {\rm Aut}(X_4)$, it follows that the number of ${\rm Aut}(X_4)$-orbits of vertices is $3$.

\medskip

Let $X_{5} = K_{5}/\Gamma_{5}$ be a semiequivelar map on the torus for some fixed element (vertex, edge or face) free subgroup $\Gamma_{5} \le {\rm Aut}(K_{5})$. Let the vertices of $X_{5}$ form $m_{5}$ ${\rm Aut}(X_{5})$-orbits.
We take the middle point of the line segment joining vertices $a_0$ and $c_1$ as the origin $(0,0)$ of $K_5$  (see Section \ref{3uniform}). Let  $\alpha_5 := a_{6} - a_0$ and $\beta_5 := a_{32} - a_{0} \in \mathbb{R}^2$. Similarly as above, define $H_5 := \langle x\mapsto x+\alpha_5, x\mapsto x+\beta_5\rangle$ and 
\begin{align*}
  G_5 & =\{ \alpha : x\mapsto \varepsilon x + m_5\alpha_5 + n_5\beta_5    \, : \, \varepsilon=\pm 1, m_5, n_5 \in \ZZ\} \cong H_5\rtimes \mathbb{Z}_2.
\end{align*}
By the same arguments as above and in Claim 1, $\Gamma_5 \unlhd G_5$ and the number of $G_5/\Gamma_5$-orbits of vertices of $X_5$ is twelve. Therefore, $G_5/\Gamma_5$ acts on $X_5 = K_5/\Gamma_5$.
Since  $O_1 = \langle a_0 \rangle,$ $O_2 = \langle a_1 \rangle,$  $O_3 = \langle a_2 \rangle$ $O_4 = \langle a_3 \rangle,$$O_5 = \langle a_4 \rangle,$$O_6 = \langle a_5 \rangle,$$O_7 = \langle b_0 \rangle,$$O_8 = \langle c_0 \rangle,$$O_9 = \langle c_1 \rangle,$$O_{10} = \langle c_2 \rangle,$$O_{11} = \langle c_3 \rangle,$$O_{12} = \langle c_4 \rangle$
 are the $G_5$-orbits, it follows that $O_1 = \langle a_0 \rangle,$ $O_2 = \langle a_1 \rangle,$  $O_3 = \langle a_2 \rangle$ $O_4 = \langle a_3 \rangle,$$O_5 = \langle a_4 \rangle,$$O_6 = \langle a_5 \rangle,$$O_7 = \langle b_0 \rangle,$$O_8 = \langle c_0 \rangle,$$O_9 = \langle c_1 \rangle,$$O_{10} = \langle c_2 \rangle,$$O_{11} = \langle c_3 \rangle,$$O_{12} = \langle c_4 \rangle$ are the $(G_5/\Gamma_5)$-orbits.  
Since the vertex set of $X_5$ is $\sqcup_{j=1}^{12} \eta_5(O_j)$ and $G_5/\Gamma_5 \leq {\rm Aut}(X_5)$, it follows that the number of ${\rm Aut}(X_5)$-orbits of vertices is $\le12$. 
 
\medskip

Let $X_{6} = K_{6}/\Gamma_{6}$ be a semiequivelar map on the torus for some fixed element (vertex, edge or face) free subgroup $\Gamma_{6} \le {\rm Aut}(K_{6})$. Let the vertices of $X_{6}$ form $m_{6}$ ${\rm Aut}(X_{6})$-orbits.
We take the middle point of the line segment joining vertices $a_0$ and $a_1$ as the origin $(0,0)$ of $K_6$  (see Section \ref{3uniform}). Let  $\alpha_6 := a_{2} - a_0$ and $\beta_6 := a_{22} - a_{0} \in \mathbb{R}^2$. Similarly as above, define $H_6 := \langle x\mapsto x+\alpha_6, x\mapsto x+\beta_6\rangle$ and 
\begin{align*}
  G_6 & =\{ \alpha : x\mapsto \varepsilon x + m_6\alpha_6 + n_6\beta_6    \, : \, \varepsilon=\pm 1, m_6, n_6 \in \ZZ\} \cong H_6\rtimes \mathbb{Z}_2.
\end{align*}
By the same arguments as above and in Claim 1, $\Gamma_6 \unlhd G_6$ and the number of $G_6/\Gamma_6$-orbits of vertices of $X_6$ is seven. Therefore, $G_6/\Gamma_6$ acts on $X_6 = K_6/\Gamma_6$.
Since  $O_1 = \langle a_0 \rangle,$ $O_2 = \langle a_9 \rangle,$  $O_3 = \langle a_8 \rangle$ $O_4 = \langle b_0 \rangle,$$O_5 = \langle c_0 \rangle,$ $O_6 = \langle c_{1} \rangle,$$O_7 = \langle c_{10} \rangle$
 are the $G_6$-orbits, it follows that $O_1 = \langle a_0 \rangle,$ $O_2 = \langle a_9 \rangle,$  $O_3 = \langle a_8 \rangle$ $O_4 = \langle b_0 \rangle,$$O_5 = \langle c_0 \rangle,$ $O_6 = \langle c_{1} \rangle,$$O_7 = \langle c_{10} \rangle$ are the $(G_6/\Gamma_6)$-orbits.  
Since the vertex set of $X_6$ is $\sqcup_{j=1}^{7} \eta_6(O_j)$ and $G_6/\Gamma_6 \leq {\rm Aut}(X_6)$, it follows that the number of ${\rm Aut}(X_6)$-orbits of vertices is $\le7$. 

\medskip

Let $X_{7} = K_{7}/\Gamma_{7}$ be a semiequivelar map on the torus for some fixed element (vertex, edge or face) free subgroup $\Gamma_{7} \le {\rm Aut}(K_{7})$. Let the vertices of $X_{7}$ form $m_{7}$ ${\rm Aut}(X_{7})$-orbits.
We take the point $a_0$ as the origin $(0,0)$ of $K_7$  (see Section \ref{3uniform}). Let  $\alpha_7 := a_{1} - a_0$, $\beta_7 := a_{2} - a_{0}$ and $\gamma_7 := a_{3} - a_{0}\in \mathbb{R}^2$. Similarly as above, define $H_7 := \langle x\mapsto x+\alpha_7, x\mapsto x+\beta_7, x\mapsto x+\gamma_7\rangle$ and 
\begin{align*}
 G_7 & =\{ \alpha : x\mapsto \varepsilon x + m_7\alpha_7 + n_7\beta_7 +r_7\gamma_7   \, : \, \varepsilon=\pm 1, m_7, n_7, r_7 \in \ZZ\} \cong H_7\rtimes \mathbb{Z}_2
\end{align*}
By the same arguments as above and in Claim 1, $\Gamma_7 \unlhd G_7$ and the number of $G_7/\Gamma_7$-orbits of vertices of $X_7$ is seven. Therefore, $G_7/\Gamma_7$ acts on $X_7 = K_7/\Gamma_7$.
Since  $O_1 = \langle a_0 \rangle,$ $O_2 = \langle b_0 \rangle,$  $O_3 = \langle b_{19} \rangle$ $O_4 = \langle b_{17} \rangle,$$O_5 = \langle c_0 \rangle,$ $O_6 = \langle c_{1} \rangle,$ $O_7 = \langle c_{2} \rangle$
 are the $G_7$-orbits, it follows that $O_1 = \langle a_0 \rangle,$ $O_2 = \langle b_0 \rangle,$  $O_3 = \langle b_{19} \rangle$ $O_4 = \langle b_{17} \rangle,$$O_5 = \langle c_0 \rangle,$ $O_6 = \langle c_{1} \rangle,$ $O_7 = \langle c_{2} \rangle$ are the $(G_7/\Gamma_7)$-orbits.  
Since the vertex set of $X_7$ is $\sqcup_{j=1}^{7} \eta_7(O_j)$ and $G_7/\Gamma_7 \leq {\rm Aut}(X_7)$, it follows that the number of ${\rm Aut}(X_7)$-orbits of vertices is $\le7$.  
 
\medskip

Let $X_{8} = K_{8}/\Gamma_{8}$ be a semiequivelar map on the torus for some fixed element (vertex, edge or face) free subgroup $\Gamma_{8} \le {\rm Aut}(K_{8})$. Let the vertices of $X_{8}$ form $m_{8}$ ${\rm Aut}(X_{8})$-orbits.
We take the middle point of the line segment joining vertices $a_0$ and $a_6$ as the origin $(0,0)$ of $K_8$  (see Section \ref{3uniform}). Let  $\alpha_8 := a_{28} - a_0$ and $\beta_8 := a_{46} - a_{0} \in \mathbb{R}^2$. Similarly as above, define $H_8 := \langle x\mapsto x+\alpha_8, x\mapsto x+\beta_8\rangle$ and 
\begin{align*}
  G_8 & =\{ \alpha : x\mapsto \varepsilon x + m_8\alpha_8 + n_8\beta_8    \, : \, \varepsilon=\pm 1, m_8, n_8 \in \ZZ\} \cong H_8\rtimes \mathbb{Z}_2.
\end{align*}
By the same arguments as above and in Claim 1, $\Gamma_8 \unlhd G_8$ and the number of $G_8/\Gamma_8$-orbits of vertices of $X_8$ is ten. Therefore, $G_8/\Gamma_8$ acts on $X_8 = K_8/\Gamma_8$.
Since  $O_1 = \langle a_0 \rangle,$ $O_2 = \langle a_1 \rangle,$  $O_3 = \langle a_2 \rangle$ $O_4 = \langle a_3 \rangle,$$O_5 = \langle a_4 \rangle,$ $O_6 = \langle a_{5} \rangle,$ $O_7 = \langle b_{0} \rangle$, $O_8 = \langle c_{0} \rangle$, $O_{9} = \langle c_{1} \rangle$, $O_{10} = \langle c_{2} \rangle$ 
 are the $G_8$-orbits, it follows that $O_1 = \langle a_0 \rangle,$ $O_2 = \langle a_1 \rangle,$  $O_3 = \langle a_2 \rangle$ $O_4 = \langle a_3 \rangle,$$O_5 = \langle a_4 \rangle,$ $O_6 = \langle a_{5} \rangle,$ $O_7 = \langle b_{0} \rangle$, $O_8 = \langle c_{0} \rangle$, $O_{9} = \langle c_{1} \rangle$, $O_{10} = \langle c_{2} \rangle$ are the $(G_8/\Gamma_8)$-orbits.  
Since the vertex set of $X_8$ is $\sqcup_{j=1}^{10} \eta_8(O_j)$ and $G_8/\Gamma_8 \leq {\rm Aut}(X_8)$, it follows that the number of ${\rm Aut}(X_8)$-orbits of vertices is $\le10$. 
 
\medskip

Let $X_{9} = K_{9}/\Gamma_{9}$ be a semiequivelar map on the torus for some fixed element (vertex, edge or face) free subgroup $\Gamma_{9} \le {\rm Aut}(K_{9})$. Let the vertices of $X_{9}$ form $m_{9}$ ${\rm Aut}(X_{9})$-orbits.
We take the middle point of the line segment joining vertices $a_0$ and $a_3$ as the origin $(0,0)$ of $K_9$  (see Section \ref{3uniform}). Let  $\alpha_9 := a_{8} - a_0$, $\beta_9 := a_{15} - a_{0}$ and $\gamma_7 := a_{18} - a_{0}\in \mathbb{R}^2$. Similarly as above, define $H_9 := \langle x\mapsto x+\alpha_9, x\mapsto x+\beta_9, x\mapsto x+\gamma_9\rangle$ and 
\begin{align*}
 G_9 & =\{ \alpha : x\mapsto \varepsilon x + m_9\alpha_9 + n_9\beta_9 +r_9\gamma_9   \, : \, \varepsilon=\pm 1, m_9, n_9, r_9 \in \ZZ\} \cong H_9\rtimes \mathbb{Z}_2
\end{align*}
By the same arguments as above and in Claim 1, $\Gamma_9 \unlhd G_9$ and the number of $G_9/\Gamma_9$-orbits of vertices of $X_9$ is ten. Therefore, $G_9/\Gamma_9$ acts on $X_9 = K_9/\Gamma_9$.
Since  $O_1 = \langle a_0 \rangle,$ $O_2 = \langle a_1 \rangle,$  $O_3 = \langle a_2 \rangle$ $O_4 = \langle b_0 \rangle,$$O_5 = \langle c_0 \rangle,$ $O_6 = \langle c_{1} \rangle,$ $O_7 = \langle c_{4} \rangle$, $O_8 = \langle c_{5} \rangle$, $O_{9} = \langle c_{8} \rangle$, $O_{10} = \langle c_{9} \rangle$ 
 are the $G_9$-orbits, it follows that $O_1 = \langle a_0 \rangle,$ $O_2 = \langle a_1 \rangle,$  $O_3 = \langle a_2 \rangle$ $O_4 = \langle b_0 \rangle,$$O_5 = \langle c_0 \rangle,$ $O_6 = \langle c_{1} \rangle,$ $O_7 = \langle c_{4} \rangle$, $O_8 = \langle c_{5} \rangle$, $O_{9} = \langle c_{8} \rangle$, $O_{10} = \langle c_{9} \rangle$ are the $(G_9/\Gamma_9)$-orbits.  
Since the vertex set of $X_9$ is $\sqcup_{j=1}^{10} \eta_9(O_j)$ and $G_9/\Gamma_9 \leq {\rm Aut}(X_9)$, it follows that the number of ${\rm Aut}(X_9)$-orbits of vertices is $\le10$. 

 \medskip

Let $X_{10} = K_{10}/\Gamma_{10}$ be a semiequivelar map on the torus for some fixed element (vertex, edge or face) free subgroup $\Gamma_{10} \le {\rm Aut}(K_{10})$. Let the vertices of $X_{10}$ form $m_{10}$ ${\rm Aut}(X_{10})$-orbits.
We take the middle point of the line segment joining vertices $a_0$ and $a_{13}$ as the origin $(0,0)$ of $K_{10}$  (see Section \ref{3uniform}). Let  $\alpha_{10} := a_{1} - a_0$ and $\beta_{10} := a_{22} - a_{0} \in \mathbb{R}^2$. Similarly as above, define $H_{10} := \langle x\mapsto x+\alpha_{10}, x\mapsto x+\beta_{10}\rangle$ and 
\begin{align*}
  G_{10} & =\{ \alpha : x\mapsto \varepsilon x + m_{10}\alpha_{10} + n_{10}\beta_{10}   \, : \, \varepsilon=\pm 1, m_{10}, n_{10}\in \ZZ\} \cong H_{10}\rtimes \mathbb{Z}_2.
\end{align*}
By the same arguments as above and in Claim 1, $\Gamma_{10} \unlhd G_{10}$ and the number of $G_{10}/\Gamma_{10}$-orbits of vertices of $X_{10}$ is three. Therefore, $G_{10}/\Gamma_{10}$ acts on $X_{10} = K_{10}/\Gamma_{10}$.
Since  $O_1 = \langle a_0 \rangle,$ $O_2 = \langle b_3 \rangle,$  $O_3 = \langle c_0 \rangle$
 are the $G_{10}$-orbits, it follows that $O_1 = \langle a_0 \rangle,$ $O_2 = \langle b_3 \rangle,$  $O_3 = \langle c_0 \rangle$ are the $(G_{10}/\Gamma_{10})$-orbits.  
Since the vertex set of $X_{10}$ is $\sqcup_{j=1}^3 \eta_{10}(O_j)$ and $G_{10}/\Gamma_{10} \leq {\rm Aut}(X_{10})$, it follows that the number of ${\rm Aut}(X_{10})$-orbits of vertices is $3$. 

 \medskip

Let $X_{11} = K_{11}/\Gamma_{11}$ be a semiequivelar map on the torus for some fixed element (vertex, edge or face) free subgroup $\Gamma_{11} \le {\rm Aut}(K_{11})$. Let the vertices of $X_{11}$ form $m_{11}$ ${\rm Aut}(X_{11})$-orbits.
We take the middle point of the line segment joining vertices $a_0$ and $a_{13}$ as the origin $(0,0)$ of $K_{11}$  (see Section \ref{3uniform}). Let  $\alpha_{11} := a_{1} - a_0$ and $\beta_{11} := a_{22} - a_{0} \in \mathbb{R}^2$. Similarly as above, define $H_{11} := \langle x\mapsto x+\alpha_{11}, x\mapsto x+\beta_{11}\rangle$ and 
\begin{align*}
  G_{11} & =\{ \alpha : x\mapsto \varepsilon x + m_{11}\alpha_{11} + n_{11}\beta_{11}   \, : \, \varepsilon=\pm 1, m_{11}, n_{11}\in \ZZ\} \cong H_{11}\rtimes \mathbb{Z}_2.
\end{align*}
By the same arguments as above and in Claim 1, $\Gamma_{11} \unlhd G_{11}$ and the number of $G_{11}/\Gamma_{11}$-orbits of vertices of $X_{11}$ is three. Therefore, $G_{11}/\Gamma_{11}$ acts on $X_{11} = K_{11}/\Gamma_{11}$.
Since  $O_1 = \langle a_0 \rangle,$ $O_2 = \langle b_3 \rangle,$  $O_3 = \langle c_0 \rangle$
 are the $G_{11}$-orbits, it follows that $O_1 = \langle a_0 \rangle,$ $O_2 = \langle b_3 \rangle,$  $O_3 = \langle c_0 \rangle$ are the $(G_{11}/\Gamma_{11})$-orbits.  
Since the vertex set of $X_{11}$ is $\sqcup_{j=1}^3 \eta_{11}(O_j)$ and $G_{11}/\Gamma_{11} \leq {\rm Aut}(X_{11})$, it follows that the number of ${\rm Aut}(X_{11})$-orbits of vertices is $3$. 

 \medskip

Let $X_{12} = K_{12}/\Gamma_{12}$ be a semiequivelar map on the torus for some fixed element (vertex, edge or face) free subgroup $\Gamma_{12} \le {\rm Aut}(K_{12})$. Let the vertices of $X_{12}$ form $m_{12}$ ${\rm Aut}(X_{12})$-orbits.
We take the middle point of the line segment joining vertices $a_0$ and $a_{11}$ as the origin $(0,0)$ of $K_{12}$  (see Section \ref{3uniform}). Let  $\alpha_{12} := a_{1} - a_0$ and $\beta_{12} := a_{19} - a_{0} \in \mathbb{R}^2$. Similarly as above, define $H_{12} := \langle x\mapsto x+\alpha_{12}, x\mapsto x+\beta_{12}\rangle$ and 
\begin{align*}
  G_{12} & =\{ \alpha : x\mapsto \varepsilon x + m_{12}\alpha_{12} + n_{12}\beta_{12}   \, : \, \varepsilon=\pm 1, m_{12}, n_{12}\in \ZZ\} \cong H_{12}\rtimes \mathbb{Z}_2.
\end{align*}
By the same arguments as above and in Claim 1, $\Gamma_{12} \unlhd G_{12}$ and the number of $G_{12}/\Gamma_{12}$-orbits of vertices of $X_{12}$ is three. Therefore, $G_{12}/\Gamma_{12}$ acts on $X_{12} = K_{12}/\Gamma_{12}$.
Since  $O_1 = \langle a_0 \rangle,$ $O_2 = \langle b_2 \rangle,$  $O_3 = \langle c_0 \rangle$
 are the $G_{12}$-orbits, it follows that $O_1 = \langle a_0 \rangle,$ $O_2 = \langle b_2 \rangle,$  $O_3 = \langle c_0 \rangle$ are the $(G_{12}/\Gamma_{12})$-orbits.  
Since the vertex set of $X_{12}$ is $\sqcup_{j=1}^3 \eta_{12}(O_j)$ and $G_{12}/\Gamma_{12} \leq {\rm Aut}(X_{12})$, it follows that the number of ${\rm Aut}(X_{12})$-orbits of vertices is $3$. 

 \medskip

Let $X_{13} = K_{13}/\Gamma_{13}$ be a semiequivelar map on the torus for some fixed element (vertex, edge or face) free subgroup $\Gamma_{13} \le {\rm Aut}(K_{13})$. Let the vertices of $X_{13}$ form $m_{13}$ ${\rm Aut}(X_{13})$-orbits.
We take the middle point of the line segment joining vertices $a_0$ and $a_{14}$ as the origin $(0,0)$ of $K_{13}$  (see Section \ref{3uniform}). Let  $\alpha_{13} := a_{1} - a_0$ and $\beta_{13} := a_{24} - a_{0} \in \mathbb{R}^2$. Similarly as above, define $H_{13} := \langle x\mapsto x+\alpha_{13}, x\mapsto x+\beta_{13}\rangle$ and 
\begin{align*}
  G_{13} & =\{ \alpha : x\mapsto \varepsilon x + m_{13}\alpha_{13} + n_{13}\beta_{13}   \, : \, \varepsilon=\pm 1, m_{13}, n_{13}\in \ZZ\} \cong H_{13}\rtimes \mathbb{Z}_2.
\end{align*}
By the same arguments as above and in Claim 1, $\Gamma_{13} \unlhd G_{13}$ and the number of $G_{13}/\Gamma_{13}$-orbits of vertices of $X_{13}$ is three. Therefore, $G_{13}/\Gamma_{13}$ acts on $X_{13} = K_{13}/\Gamma_{13}$.
Since  $O_1 = \langle a_0 \rangle,$ $O_2 = \langle b_3 \rangle,$  $O_3 = \langle c_2 \rangle$
 are the $G_{13}$-orbits, it follows that $O_1 = \langle a_0 \rangle,$ $O_2 = \langle b_3 \rangle,$  $O_3 = \langle c_2 \rangle$ are the $(G_{13}/\Gamma_{13})$-orbits.  
Since the vertex set of $X_{13}$ is $\sqcup_{j=1}^3 \eta_{13}(O_j)$ and $G_{13}/\Gamma_{13} \leq {\rm Aut}(X_{13})$, it follows that the number of ${\rm Aut}(X_{13})$-orbits of vertices is $3$. 

\medskip

Let $X_{15} = K_{15}/\Gamma_{15}$ be a semiequivelar map on the torus for some fixed element (vertex, edge or face) free subgroup $\Gamma_{15} \le {\rm Aut}(K_{15})$. Let the vertices of $X_{15}$ form $m_{15}$ ${\rm Aut}(X_{15})$-orbits.
We take the middle point of the line segment joining vertices $c_0$ and $c_{3}$ as the origin $(0,0)$ of $K_{15}$  (see Section \ref{3uniform}). Let  $\alpha_{15} := a_{1} - a_0$, $\beta_{15} := a_{2} - a_{0}$ and $\gamma_{15} := a_{3} - a_{0}\in \mathbb{R}^2$. Similarly as above, define $H_{15} := \langle x\mapsto x+\alpha_{15}, x\mapsto x+\beta_{15}, x\mapsto x+\gamma_{15}\rangle$ and 
\begin{align*}
 G_{15} & =\{ \alpha : x\mapsto \varepsilon x + m_{15}\alpha_{15} + n_{15}\beta_{15} +r_{15}\gamma_{15}   \, : \, \varepsilon=\pm 1, m_{15}, n_{15}, r_{15} \in \ZZ\} \cong H_{15}\rtimes \mathbb{Z}_2
\end{align*}
By the same arguments as above and in Claim 1, $\Gamma_{15} \unlhd G_{15}$ and the number of $G_{15}/\Gamma_{15}$-orbits of vertices of $X_7$ is ten. Therefore, $G_{15}/\Gamma_{15}$ acts on $X_{15} = K_{15}/\Gamma_{15}$.
Since  $O_1 = \langle a_0 \rangle,$ $O_2 = \langle b_0 \rangle,$  $O_3 = \langle b_1 \rangle$ $O_4 = \langle b_{16} \rangle,$$O_5 = \langle b_{15} \rangle,$ $O_6 = \langle b_{14} \rangle,$ $O_7 = \langle b_{13} \rangle$, $O_8 = \langle c_{0} \rangle$, $O_{9} = \langle c_{1} \rangle$, $O_{10} = \langle c_{2} \rangle$
 are the $G_{15}$-orbits, it follows that $O_1 = \langle a_0 \rangle,$ $O_2 = \langle b_0 \rangle,$  $O_3 = \langle b_1 \rangle$ $O_4 = \langle b_{16} \rangle,$$O_5 = \langle b_{15} \rangle,$ $O_6 = \langle b_{14} \rangle,$ $O_7 = \langle b_{13} \rangle$, $O_8 = \langle c_{0} \rangle$, $O_{9} = \langle c_{1} \rangle$, $O_{10} = \langle c_{2} \rangle$ are the $(G_{15}/\Gamma_{15})$-orbits.  
Since the vertex set of $X_{15}$ is $\sqcup_{j=1}^{10} \eta_{15}(O_j)$ and $G_{15}/\Gamma_{15} \leq {\rm Aut}(X_{15})$, it follows that the number of ${\rm Aut}(X_{15})$-orbits of vertices is $\le10$.  

\medskip

Let $X_{16} = K_{16}/\Gamma_{16}$ be a semiequivelar map on the torus for some fixed element (vertex, edge or face) free subgroup $\Gamma_{16} \le {\rm Aut}(K_{16})$. Let the vertices of $X_{16}$ form $m_{16}$ ${\rm Aut}(X_{16})$-orbits.
We take the middle point of the line segment joining vertices $b_0$ and $b_{3}$ as the origin $(0,0)$ of $K_{16}$  (see Section \ref{3uniform}). Let  $\alpha_{16} := a_{1} - a_0$, $\beta_{16} := a_{2} - a_{0}$ and $\gamma_{16} := a_{3} - a_{0}\in \mathbb{R}^2$. Similarly as above, define $H_{16} := \langle x\mapsto x+\alpha_{16}, x\mapsto x+\beta_{16}, x\mapsto x+\gamma_{16}\rangle$ and 
\begin{align*}
 G_{16} & =\{ \alpha : x\mapsto \varepsilon x + m_{16}\alpha_{16} + n_{16}\beta_{16} +r_{16}\gamma_{16}   \, : \, \varepsilon=\pm 1, m_{16}, n_{16}, r_{16} \in \ZZ\} \cong H_{16}\rtimes \mathbb{Z}_2
\end{align*}
By the same arguments as above and in Claim 1, $\Gamma_{16} \unlhd G_{16}$ and the number of $G_{16}/\Gamma_{16}$-orbits of vertices of $X_{16}$ is eleven. Therefore, $G_{16}/\Gamma_{16}$ acts on $X_{16} = K_{16}/\Gamma_{16}$.
Since  $O_1 = \langle a_0 \rangle,$ $O_2 = \langle b_0 \rangle,$  $O_3 = \langle b_1 \rangle$ $O_4 = \langle b_{2} \rangle,$$O_5 = \langle b_{3} \rangle,$ $O_6 = \langle c_{0} \rangle,$ $O_7 = \langle c_{1} \rangle$, $O_8 = \langle c_{2} \rangle$, $O_{9} = \langle c_{3} \rangle$, $O_{10} = \langle c_{4} \rangle$, $O_{11} = \langle c_{5} \rangle$
 are the $G_{16}$-orbits, it follows that $O_1 = \langle a_0 \rangle,$ $O_2 = \langle b_0 \rangle,$  $O_3 = \langle b_1 \rangle$ $O_4 = \langle b_{2} \rangle,$$O_5 = \langle b_{3} \rangle,$ $O_6 = \langle c_{0} \rangle,$ $O_7 = \langle c_{1} \rangle$, $O_8 = \langle c_{2} \rangle$, $O_{9} = \langle c_{3} \rangle$, $O_{10} = \langle c_{4} \rangle$, $O_{11} = \langle c_{5} \rangle$ are the $(G_{16}/\Gamma_{16})$-orbits.  
Since the vertex set of $X_{16}$ is $\sqcup_{j=1}^{11} \eta_{16}(O_j)$ and $G_{16}/\Gamma_{16} \leq {\rm Aut}(X_{16})$, it follows that the number of ${\rm Aut}(X_{16})$-orbits of vertices is $\le11$.  

 \medskip

Let $X_{17} = K_{17}/\Gamma_{17}$ be a semiequivelar map on the torus for some fixed element (vertex, edge or face) free subgroup $\Gamma_{17} \le {\rm Aut}(K_{17})$. Let the vertices of $X_{17}$ form $m_{17}$ ${\rm Aut}(X_{17})$-orbits.
We take the middle point of the line segment joining vertices $a_0$ and $a_{3}$ as the origin $(0,0)$ of $K_{17}$  (see Section \ref{3uniform}). Let  $\alpha_{17} := a_{6} - a_0$, $\beta_{17} := a_{15} - a_{0}$ and $\gamma_{17} := a_{21} - a_{0}\in \mathbb{R}^2$. Similarly as above, define $H_{17} := \langle x\mapsto x+\alpha_{17}, x\mapsto x+\beta_{17}, x\mapsto x+\gamma_{17}\rangle$ and 
\begin{align*}
 G_{17} & =\{ \alpha : x\mapsto \varepsilon x + m_{17}\alpha_{17} + n_{17}\beta_{17} +r_{17}\gamma_{17}   \, : \, \varepsilon=\pm 1, m_{17}, n_{17}, r_{17} \in \ZZ\} \cong H_{17}\rtimes \mathbb{Z}_2
\end{align*}
By the same arguments as above and in Claim 1, $\Gamma_{17} \unlhd G_{17}$ and the number of $G_{17}/\Gamma_{17}$-orbits of vertices of $X_{17}$ is ten. Therefore, $G_{17}/\Gamma_{17}$ acts on $X_{17} = K_{17}/\Gamma_{17}$.
Since  $O_1 = \langle a_0 \rangle,$ $O_2 = \langle a_1 \rangle,$  $O_3 = \langle a_2 \rangle$ $O_4 = \langle b_{0} \rangle,$$O_5 = \langle c_{0} \rangle,$ $O_6 = \langle c_{1} \rangle,$ $O_7 = \langle c_{15} \rangle$, $O_8 = \langle c_{16} \rangle$, $O_{9} = \langle c_{17} \rangle$, $O_{10} = \langle c_{18} \rangle$
 are the $G_{17}$-orbits, it follows that $O_1 = \langle a_0 \rangle,$ $O_2 = \langle a_1 \rangle,$  $O_3 = \langle a_2 \rangle$ $O_4 = \langle b_{0} \rangle,$$O_5 = \langle c_{0} \rangle,$ $O_6 = \langle c_{1} \rangle,$ $O_7 = \langle c_{15} \rangle$, $O_8 = \langle c_{16} \rangle$, $O_{9} = \langle c_{17} \rangle$, $O_{10} = \langle c_{18} \rangle$ are the $(G_{17}/\Gamma_{17})$-orbits.  
Since the vertex set of $X_{17}$ is $\sqcup_{j=1}^{10} \eta_{17}(O_j)$ and $G_{17}/\Gamma_{17} \leq {\rm Aut}(X_{17})$, it follows that the number of ${\rm Aut}(X_{17})$-orbits of vertices is $\le10$.  
 
  \medskip

Let $X_{18} = K_{18}/\Gamma_{18}$ be a semiequivelar map on the torus for some fixed element (vertex, edge or face) free subgroup $\Gamma_{18} \le {\rm Aut}(K_{18})$. Let the vertices of $X_{18}$ form $m_{18}$ ${\rm Aut}(X_{18})$-orbits.
We take the middle point of the line segment joining vertices $a_0$ and $c_{2}$ as the origin $(0,0)$ of $K_{18}$  (see Section \ref{3uniform}). Let  $\alpha_{18} := a_{11} - a_0$, $\beta_{18} := a_{18} - a_{0}$ and $\gamma_{18} := a_{31} - a_{0}\in \mathbb{R}^2$. Similarly as above, define $H_{18} := \langle x\mapsto x+\alpha_{18}, x\mapsto x+\beta_{18}, x\mapsto x+\gamma_{18}\rangle$ and 
\begin{align*}
 G_{18} & =\{ \alpha : x\mapsto \varepsilon x + m_{18}\alpha_{18} + n_{18}\beta_{18} +r_{18}\gamma_{18}   \, : \, \varepsilon=\pm 1, m_{18}, n_{18}, r_{18} \in \ZZ\} \cong H_{18}\rtimes \mathbb{Z}_2
\end{align*}
By the same arguments as above and in Claim 1, $\Gamma_{18} \unlhd G_{18}$ and the number of $G_{18}/\Gamma_{18}$-orbits of vertices of $X_{18}$ is thirteen. Therefore, $G_{18}/\Gamma_{18}$ acts on $X_{18} = K_{18}/\Gamma_{18}$.
Since  $O_1 = \langle a_0 \rangle,$ $O_2 = \langle a_1 \rangle,$  $O_3 = \langle a_2 \rangle$ $O_4 = \langle a_3 \rangle,$$O_5 = \langle a_{4} \rangle,$ $O_6 = \langle a_{5} \rangle,$ $O_7 = \langle b_{0} \rangle$, $O_8 = \langle c_{0} \rangle$, $O_{9} = \langle c_{1} \rangle$, $O_{10} = \langle c_{2} \rangle$, $O_{11} = \langle c_{3} \rangle$, $O_{12} = \langle c_{4} \rangle$, $O_{13} = \langle c_{5} \rangle$
 are the $G_{18}$-orbits, it follows that $O_1 = \langle a_0 \rangle,$ $O_2 = \langle a_1 \rangle,$  $O_3 = \langle a_2 \rangle$ $O_4 = \langle a_3 \rangle,$$O_5 = \langle a_{4} \rangle,$ $O_6 = \langle a_{5} \rangle,$ $O_7 = \langle b_{0} \rangle$, $O_8 = \langle c_{0} \rangle$, $O_{9} = \langle c_{1} \rangle$, $O_{10} = \langle c_{2} \rangle$, $O_{11} = \langle c_{3} \rangle$, $O_{12} = \langle c_{4} \rangle$, $O_{13} = \langle c_{5} \rangle$ are the $(G_{18}/\Gamma_{18})$-orbits.  
Since the vertex set of $X_{18}$ is $\sqcup_{j=1}^{13} \eta_{18}(O_j)$ and $G_{18}/\Gamma_{18} \leq {\rm Aut}(X_{18})$, it follows that the number of ${\rm Aut}(X_{18})$-orbits of vertices is $\le13$.
 
  \medskip

Let $X_{19} = K_{19}/\Gamma_{19}$ be a semiequivelar map on the torus for some fixed element (vertex, edge or face) free subgroup $\Gamma_{19} \le {\rm Aut}(K_{19})$. Let the vertices of $X_{18}$ form $m_{19}$ ${\rm Aut}(X_{19})$-orbits.
We take the point $b_0$ as the origin $(0,0)$ of $K_{19}$  (see Section \ref{3uniform}). Let  $\alpha_{19} := b_{1} - b_0$, $\beta_{19} := b_{2} - b_{0}$ and $\gamma_{19} := b_{3} - b_{0}\in \mathbb{R}^2$. Similarly as above, define $H_{19} := \langle x\mapsto x+\alpha_{19}, x\mapsto x+\beta_{19}, x\mapsto x+\gamma_{19}\rangle$ and 
\begin{align*}
 G_{19} & =\{ \alpha : x\mapsto \varepsilon x + m_{19}\alpha_{19} + n_{19}\beta_{19} +r_{19}\gamma_{19}   \, : \, \varepsilon=\pm 1, m_{19}, n_{19}, r_{19} \in \ZZ\} \cong H_{19}\rtimes \mathbb{Z}_2
\end{align*}
By the same arguments as above and in Claim 1, $\Gamma_{19} \unlhd G_{19}$ and the number of $G_{19}/\Gamma_{19}$-orbits of vertices of $X_{19}$ is five. Therefore, $G_{19}/\Gamma_{19}$ acts on $X_{19} = K_{19}/\Gamma_{19}$.
Since  $O_1 = \langle a_0 \rangle,$ $O_2 = \langle a_1 \rangle,$  $O_3 = \langle a_5 \rangle$ $O_4 = \langle b_{0} \rangle,$$O_5 = \langle c_{0} \rangle$
 are the $G_{19}$-orbits, it follows that $O_1 = \langle a_0 \rangle,$ $O_2 = \langle a_1 \rangle,$  $O_3 = \langle a_5 \rangle$ $O_4 = \langle b_{0} \rangle,$$O_5 = \langle c_{0} \rangle$ are the $(G_{19}/\Gamma_{19})$-orbits.  
Since the vertex set of $X_{19}$ is $\sqcup_{j=1}^{5} \eta_{19}(O_j)$ and $G_{19}/\Gamma_{19} \leq {\rm Aut}(X_{19})$, it follows that the number of ${\rm Aut}(X_{19})$-orbits of vertices is $\le5$.

 \medskip

Let $X_{20} = K_{20}/\Gamma_{20}$ be a semiequivelar map on the torus for some fixed element (vertex, edge or face) free subgroup $\Gamma_{20} \le {\rm Aut}(K_{20})$. Let the vertices of $X_{20}$ form $m_{20}$ ${\rm Aut}(X_{20})$-orbits.
We take the middle point of the line segment joining vertices $a_0$ and $a_{3}$ as the origin $(0,0)$ of $K_{20}$  (see Section \ref{3uniform}). Let  $\alpha_{20} := a_{6} - a_0$, $\beta_{20} := a_{12} - a_{0}$ and $\gamma_{20} := a_{18} - a_{0}\in \mathbb{R}^2$. Similarly as above, define $H_{20} := \langle x\mapsto x+\alpha_{20}, x\mapsto x+\beta_{20}, x\mapsto x+\gamma_{20}\rangle$ and 
\begin{align*}
 G_{20} & =\{ \alpha : x\mapsto \varepsilon x + m_{20}\alpha_{20} + n_{20}\beta_{20} +r_{20}\gamma_{20}   \, : \, \varepsilon=\pm 1, m_{20}, n_{20}, r_{20} \in \ZZ\} \cong H_{20}\rtimes \mathbb{Z}_2
\end{align*}
By the same arguments as above and in Claim 1, $\Gamma_{20} \unlhd G_{20}$ and the number of $G_{20}/\Gamma_{20}$-orbits of vertices of $X_{20}$ is nine. Therefore, $G_{20}/\Gamma_{20}$ acts on $X_{20} = K_{20}/\Gamma_{20}$.
Since  $O_1 = \langle a_0 \rangle,$ $O_2 = \langle a_1 \rangle,$  $O_3 = \langle a_2 \rangle$ $O_4 = \langle b_{4} \rangle,$$O_5 = \langle b_{6} \rangle,$ $O_6 = \langle b_{8} \rangle,$ $O_7 = \langle c_{3} \rangle$, $O_8 = \langle c_{5} \rangle$, $O_{9} = \langle c_{6} \rangle$
 are the $G_{20}$-orbits, it follows that $O_1 = \langle a_0 \rangle,$ $O_2 = \langle a_1 \rangle,$  $O_3 = \langle a_2 \rangle$ $O_4 = \langle b_{4} \rangle,$$O_5 = \langle b_{6} \rangle,$ $O_6 = \langle b_{8} \rangle,$ $O_7 = \langle c_{3} \rangle$, $O_8 = \langle c_{5} \rangle$, $O_{9} = \langle c_{6} \rangle$ are the $(G_{20}/\Gamma_{20})$-orbits.  
Since the vertex set of $X_{20}$ is $\sqcup_{j=1}^{9} \eta_{20}(O_j)$ and $G_{20}/\Gamma_{20} \leq {\rm Aut}(X_{20})$, it follows that the number of ${\rm Aut}(X_{20})$-orbits of vertices is $\le9$.  

\medskip

Let $X_{21} = K_{21}/\Gamma_{21}$ be a semiequivelar map on the torus for some fixed element (vertex, edge or face) free subgroup $\Gamma_{21} \le {\rm Aut}(K_{21})$. Let the vertices of $X_{21}$ form $m_{21}$ ${\rm Aut}(X_{21})$-orbits.
We take the middle point of the line segment joining vertices $a_0$ and $a_1$ as the origin $(0,0)$ of $K_{21}$  (see Section \ref{3uniform}). Let  $\alpha_{21} := a_{2} - a_0$ and $\beta_{21} := a_{8} - a_{0} \in \mathbb{R}^2$. Similarly as above, define $H_{21} := \langle x\mapsto x+\alpha_{21}, x\mapsto x+\beta_{21}\rangle$ and 
\begin{align*}
  G_{21} & =\{ \alpha : x\mapsto \varepsilon x + m_{21}\alpha_{21} + n_{21}\beta_{21}    \, : \, \varepsilon=\pm 1, m_{21}, n_{21} \in \ZZ\} \cong H_{21}\rtimes \mathbb{Z}_2.
\end{align*}
By the same arguments as above and in Claim 1, $\Gamma_{21} \unlhd G_{21}$ and the number of $G_{21}/\Gamma_{21}$-orbits of vertices of $X_{21}$ is four. Therefore, $G_{21}/\Gamma_{21}$ acts on $X_{21} = K_{21}/\Gamma_{21}$.
Since  $O_1 = \langle a_0 \rangle,$ $O_2 = \langle c_0 \rangle,$  $O_3 = \langle c_1 \rangle$ $O_4 = \langle b_0 \rangle$
 are the $G_{21}$-orbits, it follows that $O_1 = \langle a_0 \rangle,$ $O_2 = \langle c_0 \rangle,$  $O_3 = \langle c_1 \rangle$ $O_4 = \langle b_0 \rangle$ are the $(G_{21}/\Gamma_{21})$-orbits.  
Since the vertex set of $X_{21}$ is $\sqcup_{j=1}^{4} \eta_{21}(O_j)$ and $G_{21}/\Gamma_{21} \leq {\rm Aut}(X_{21})$, it follows that the number of ${\rm Aut}(X_{21})$-orbits of vertices is $\le4$. 

\medskip

Let $X_{22} = K_{22}/\Gamma_{22}$ be a semiequivelar map on the torus for some fixed element (vertex, edge or face) free subgroup $\Gamma_{22}\le {\rm Aut}(K_{22})$. Let the vertices of $X_{22}$ form $m_{22}$ ${\rm Aut}(X_{22})$-orbits.
We take the middle point of the line segment joining vertices $a_0$ and $b_1$ as the origin $(0,0)$ of $K_{22}$  (see Section \ref{3uniform}).Let  $\alpha_{22} := a_{1} - a_0$ and $\beta_{22} := a_{2} - a_{0} \in \mathbb{R}^2$. Similarly as above, define $H_{22} := \langle x\mapsto x+\alpha_{22}, x\mapsto x+\beta_{22}\rangle$ and 
\begin{align*}
  G_{22} & =\{ \alpha : x\mapsto \varepsilon x + m_{22}\alpha_{22} + n_{22}\beta_{22}    \, : \, \varepsilon=\pm 1, m_{22}, n_{22} \in \ZZ\} \cong H_{22}\rtimes \mathbb{Z}_2.
\end{align*}
By the same arguments as above and in Claim 1, $\Gamma_{22} \unlhd G_{22}$ and the number of $G_{22}/\Gamma_{22}$-orbits of vertices of $X_{22}$ is six. Therefore, $G_{22}/\Gamma_{22}$ acts on $X_{22} = K_{22}/\Gamma_{22}$.
Since  $O_1 = \langle a_0 \rangle,$ $O_2 = \langle c_0 \rangle,$  $O_3 = \langle {b_0} \rangle$ $O_4 = \langle b_{1} \rangle,$$O_5 = \langle c_8 \rangle,$ $O_6 = \langle a_{10} \rangle$
 are the $G_{22}$-orbits, it follows that $O_1 = \langle a_0 \rangle,$ $O_2 = \langle c_0 \rangle,$  $O_3 = \langle {b_0} \rangle$ $O_4 = \langle b_{1} \rangle,$$O_5 = \langle c_8 \rangle,$ $O_6 = \langle a_{10} \rangle$ are the $(G_{22}/\Gamma_{22})$-orbits.  
Since the vertex set of $X_{22}$ is $\sqcup_{j=1}^{6} \eta_{22}(O_j)$ and $G_{22}/\Gamma_{22} \leq {\rm Aut}(X_{22})$, it follows that the number of ${\rm Aut}(X_{22})$-orbits of vertices is $\le6$.  
 
\medskip

Let $X_{23} = K_{23}/\Gamma_{23}$ be a semiequivelar map on the torus for some fixed element (vertex, edge or face) free subgroup $\Gamma_{23}\le {\rm Aut}(K_{23})$. Let the vertices of $X_{23}$ form $m_{23}$ ${\rm Aut}(X_{23})$-orbits.
We take the point $b_0$ as the origin $(0,0)$ of $K_{23}$  (see Section \ref{3uniform}).Let  $\alpha_{23} := b_{1} - b_0$ and $\beta_{23} := b_{2} - b_{0} \in \mathbb{R}^2$. Similarly as above, define $H_{23} := \langle x\mapsto x+\alpha_{23}, x\mapsto x+\beta_{23}\rangle$ and 
\begin{align*}
  G_{23} & =\{ \alpha : x\mapsto \varepsilon x + m_{23}\alpha_{23} + n_{23}\beta_{23}    \, : \, \varepsilon=\pm 1, m_{23}, n_{23} \in \ZZ\} \cong H_{23}\rtimes \mathbb{Z}_2.
\end{align*}
By the same arguments as above and in Claim 1, $\Gamma_{23} \unlhd G_{23}$ and the number of $G_{23}/\Gamma_{23}$-orbits of vertices of $X_{23}$ is five. Therefore, $G_{23}/\Gamma_{23}$ acts on $X_{23} = K_{23}/\Gamma_{23}$.
Since  $O_1 = \langle a_0 \rangle,$ $O_2 = \langle a_1 \rangle,$  $O_3 = \langle {b_0} \rangle$ $O_4 = \langle c_{0} \rangle,$$O_5 = \langle c_1 \rangle$
 are the $G_{23}$-orbits, it follows that $O_1 = \langle a_0 \rangle,$ $O_2 = \langle a_1 \rangle,$  $O_3 = \langle {b_0} \rangle$ $O_4 = \langle c_{0} \rangle,$$O_5 = \langle c_1 \rangle$ are the $(G_{23}/\Gamma_{23})$-orbits.  
Since the vertex set of $X_{23}$ is $\sqcup_{j=1}^{5} \eta_{23}(O_j)$ and $G_{23}/\Gamma_{23} \leq {\rm Aut}(X_{23})$, it follows that the number of ${\rm Aut}(X_{23})$-orbits of vertices is $\le5$.  

 \medskip

Let $X_{24} = K_{24}/\Gamma_{24}$ be a semiequivelar map on the torus for some fixed element (vertex, edge or face) free subgroup $\Gamma_{24} \le {\rm Aut}(K_{24})$. Let the vertices of $X_{24}$ form $m_{24}$ ${\rm Aut}(X_{24})$-orbits.
We take the middle point of the line segment joining vertices $a_0$ and $a_{6}$ as the origin $(0,0)$ of $K_{24}$  (see Section \ref{3uniform}). Let  $\alpha_{24} := a_{12} - a_0$, $\beta_{24} := a_{30} - a_{0}$ and $\gamma_{24} := a_{43} - a_{0}\in \mathbb{R}^2$. Similarly as above, define $H_{24} := \langle x\mapsto x+\alpha_{24}, x\mapsto x+\beta_{24}, x\mapsto x+\gamma_{24}\rangle$ and 
\begin{align*}
 G_{24} & =\{ \alpha : x\mapsto \varepsilon x + m_{24}\alpha_{24} + n_{24}\beta_{24} +r_{24}\gamma_{24}   \, : \, \varepsilon=\pm 1, m_{24}, n_{24}, r_{24} \in \ZZ\} \cong H_{24}\rtimes \mathbb{Z}_2
\end{align*}
By the same arguments as above and in Claim 1, $\Gamma_{24} \unlhd G_{24}$ and the number of $G_{24}/\Gamma_{24}$-orbits of vertices of $X_{24}$ is fifteen. Therefore, $G_{24}/\Gamma_{24}$ acts on $X_{24} = K_{24}/\Gamma_{24}$.
Since  $O_1 = \langle a_0 \rangle,$ $O_2 = \langle a_1 \rangle,$  $O_3 = \langle a_2 \rangle$ $O_4 = \langle a_3 \rangle,$$O_5 = \langle a_{4} \rangle,$ $O_6 = \langle a_{5} \rangle,$ $O_7 = \langle b_{0} \rangle$, $O_8 = \langle b_{2} \rangle$, $O_{9} = \langle b_{4} \rangle$, $O_{10} = \langle c_{0} \rangle$, $O_{11} = \langle c_{1} \rangle$, $O_{12} = \langle c_{2} \rangle$, $O_{13} = \langle c_{3} \rangle$, $O_{14} = \langle c_{4} \rangle$, $O_{15} = \langle c_{5} \rangle$
 are the $G_{24}$-orbits, it follows that $O_1 = \langle a_0 \rangle,$ $O_2 = \langle a_1 \rangle,$  $O_3 = \langle a_2 \rangle$ $O_4 = \langle a_3 \rangle,$$O_5 = \langle a_{4} \rangle,$ $O_6 = \langle a_{5} \rangle,$ $O_7 = \langle b_{0} \rangle$, $O_8 = \langle b_{2} \rangle$, $O_{9} = \langle b_{4} \rangle$, $O_{10} = \langle c_{0} \rangle$, $O_{11} = \langle c_{1} \rangle$, $O_{12} = \langle c_{2} \rangle$, $O_{13} = \langle c_{3} \rangle$, $O_{14} = \langle c_{4} \rangle$, $O_{15} = \langle c_{5} \rangle$ are the $(G_{24}/\Gamma_{24})$-orbits.  
Since the vertex set of $X_{24}$ is $\sqcup_{j=1}^{15} \eta_{24}(O_j)$ and $G_{24}/\Gamma_{24} \leq {\rm Aut}(X_{24})$, it follows that the number of ${\rm Aut}(X_{24})$-orbits of vertices is $\le15$.
 
\medskip

Let $X_{25} = K_{25}/\Gamma_{25}$ be a semiequivelar map on the torus for some fixed element (vertex, edge or face) free subgroup $\Gamma_{25}\le {\rm Aut}(K_{25})$. Let the vertices of $X_{25}$ form $m_{25}$ ${\rm Aut}(X_{25})$-orbits.
We take the middle point of the line segment joining vertices $b_0$ and $b_1$ as the origin $(0,0)$ of $K_{25}$  (see Section \ref{3uniform}).Let  $\alpha_{25} := b_{2} - b_0$ and $\beta_{25} := b_{5} - b_{0} \in \mathbb{R}^2$. Similarly as above, define $H_{25} := \langle x\mapsto x+\alpha_{25}, x\mapsto x+\beta_{25}\rangle$ and 
\begin{align*}
  G_{25} & =\{ \alpha : x\mapsto \varepsilon x + m_{25}\alpha_{25} + n_{25}\beta_{25}    \, : \, \varepsilon=\pm 1, m_{25}, n_{25} \in \ZZ\} \cong H_{25}\rtimes \mathbb{Z}_2.
\end{align*}
By the same arguments as above and in Claim 1, $\Gamma_{25} \unlhd G_{25}$ and the number of $G_{25}/\Gamma_{25}$-orbits of vertices of $X_{25}$ is five. Therefore, $G_{25}/\Gamma_{25}$ acts on $X_{25} = K_{25}/\Gamma_{25}$.
Since  $O_1 = \langle a_0 \rangle,$ $O_2 = \langle a_1 \rangle,$  $O_3 = \langle {b_0} \rangle$ $O_4 = \langle c_{0} \rangle,$$O_5 = \langle c_2 \rangle$
 are the $G_{25}$-orbits, it follows that $O_1 = \langle a_0 \rangle,$ $O_2 = \langle a_1 \rangle,$  $O_3 = \langle {b_0} \rangle$ $O_4 = \langle c_{0} \rangle,$$O_5 = \langle c_2 \rangle$ are the $(G_{25}/\Gamma_{25})$-orbits.  
Since the vertex set of $X_{25}$ is $\sqcup_{j=1}^{5} \eta_{25}(O_j)$ and $G_{25}/\Gamma_{25} \leq {\rm Aut}(X_{25})$, it follows that the number of ${\rm Aut}(X_{25})$-orbits of vertices is $\le5$.  
 
\medskip

Let $X_{26} = K_{26}/\Gamma_{26}$ be a semiequivelar map on the torus for some fixed element (vertex, edge or face) free subgroup $\Gamma_{26}\le {\rm Aut}(K_{26})$. Let the vertices of $X_{26}$ form $m_{26}$ ${\rm Aut}(X_{26})$-orbits.
We take the middle point of the line segment joining vertices $b_0$ and $b_1$ as the origin $(0,0)$ of $K_{26}$  (see Section \ref{3uniform}).Let  $\alpha_{26} := b_{3} - b_0$ and $\beta_{26} := b_{12} - b_{0} \in \mathbb{R}^2$. Similarly as above, define $H_{26} := \langle x\mapsto x+\alpha_{26}, x\mapsto x+\beta_{26}\rangle$ and 
\begin{align*}
  G_{26} & =\{ \alpha : x\mapsto \varepsilon x + m_{26}\alpha_{26} + n_{26}\beta_{26}    \, : \, \varepsilon=\pm 1, m_{26}, n_{26} \in \ZZ\} \cong H_{26}\rtimes \mathbb{Z}_2.
\end{align*}
By the same arguments as above and in Claim 1, $\Gamma_{26} \unlhd G_{26}$ and the number of $G_{26}/\Gamma_{26}$-orbits of vertices of $X_{26}$ is four. Therefore, $G_{26}/\Gamma_{26}$ acts on $X_{26} = K_{26}/\Gamma_{26}$.
Since  $O_1 = \langle a_0 \rangle,$ $O_2 = \langle b_0 \rangle,$  $O_3 = \langle c_{10} \rangle$ $O_4 = \langle c_{11} \rangle$
 are the $G_{26}$-orbits, it follows that $O_1 = \langle a_0 \rangle,$ $O_2 = \langle b_0 \rangle,$  $O_3 = \langle c_{10} \rangle$ $O_4 = \langle c_{11} \rangle$ are the $(G_{26}/\Gamma_{26})$-orbits.  
Since the vertex set of $X_{26}$ is $\sqcup_{j=1}^{4} \eta_{26}(O_j)$ and $G_{26}/\Gamma_{26} \leq {\rm Aut}(X_{26})$, it follows that the number of ${\rm Aut}(X_{26})$-orbits of vertices is $\le4$.  
  
\medskip

Let $X_{27} = K_{27}/\Gamma_{27}$ be a semiequivelar map on the torus for some fixed element (vertex, edge or face) free subgroup $\Gamma_{27}\le {\rm Aut}(K_{27})$. Let the vertices of $X_{27}$ form $m_{27}$ ${\rm Aut}(X_{27})$-orbits.
We take the middle point of the line segment joining vertices $a_0$ and $a_2$ as the origin $(0,0)$ of $K_{27}$  (see Section \ref{3uniform}).Let  $\alpha_{27} := a_{5} - a_0$ and $\beta_{27} := a_{15} - a_{0} \in \mathbb{R}^2$. Similarly as above, define $H_{27} := \langle x\mapsto x+\alpha_{27}, x\mapsto x+\beta_{27}\rangle$ and 
\begin{align*}
  G_{27} & =\{ \alpha : x\mapsto \varepsilon x + m_{27}\alpha_{27} + n_{27}\beta_{27}    \, : \, \varepsilon=\pm 1, m_{27}, n_{27} \in \ZZ\} \cong H_{27}\rtimes \mathbb{Z}_2.
\end{align*}
By the same arguments as above and in Claim 1, $\Gamma_{27} \unlhd G_{27}$ and the number of $G_{27}/\Gamma_{27}$-orbits of vertices of $X_{27}$ is five. Therefore, $G_{27}/\Gamma_{27}$ acts on $X_{27} = K_{27}/\Gamma_{27}$.
Since  $O_1 = \langle a_0 \rangle,$ $O_2 = \langle a_1 \rangle,$  $O_3 = \langle {b_0} \rangle$ $O_4 = \langle c_{0} \rangle,$$O_5 = \langle c_1 \rangle$
 are the $G_{27}$-orbits, it follows that $O_1 = \langle a_0 \rangle,$ $O_2 = \langle a_1 \rangle,$  $O_3 = \langle {b_0} \rangle$ $O_4 = \langle c_{0} \rangle,$$O_5 = \langle c_1 \rangle$ are the $(G_{27}/\Gamma_{27})$-orbits.  
Since the vertex set of $X_{27}$ is $\sqcup_{j=1}^{5} \eta_{27}(O_j)$ and $G_{27}/\Gamma_{27} \leq {\rm Aut}(X_{27})$, it follows that the number of ${\rm Aut}(X_{27})$-orbits of vertices is $\le5$.  
 
 \medskip

Let $X_{29} = K_{29}/\Gamma_{29}$ be a semiequivelar map on the torus for some fixed element (vertex, edge or face) free subgroup $\Gamma_{29}\le {\rm Aut}(K_{29})$. Let the vertices of $X_{29}$ form $m_{29}$ ${\rm Aut}(X_{29})$-orbits.
We take the middle point of the line segment joining vertices $a_0$ and $c_1$ as the origin $(0,0)$ of $K_{29}$  (see Section \ref{3uniform}).Let  $\alpha_{29} := a_{2} - a_0$ and $\beta_{29} := a_{13} - a_{0} \in \mathbb{R}^2$. Similarly as above, define $H_{29} := \langle x\mapsto x+\alpha_{29}, x\mapsto x+\beta_{29}\rangle$ and 
\begin{align*}
  G_{29} & =\{ \alpha : x\mapsto \varepsilon x + m_{29}\alpha_{29} + n_{29}\beta_{29}    \, : \, \varepsilon=\pm 1, m_{29}, n_{29} \in \ZZ\} \cong H_{29}\rtimes \mathbb{Z}_2.
\end{align*}
By the same arguments as above and in Claim 1, $\Gamma_{29} \unlhd G_{29}$ and the number of $G_{29}/\Gamma_{29}$-orbits of vertices of $X_{29}$ is nine. Therefore, $G_{29}/\Gamma_{29}$ acts on $X_{29} = K_{29}/\Gamma_{29}$.
Since $O_1 = \langle a_0 \rangle,$ $O_2 = \langle a_1 \rangle,$  $O_3 = \langle b_1 \rangle$ $O_4 = \langle b_{7} \rangle,$$O_5 = \langle b_{8} \rangle,$ $O_6 = \langle c_{0} \rangle,$ $O_7 = \langle c_{1} \rangle$, $O_8 = \langle c_{36} \rangle$, $O_{9} = \langle c_{37} \rangle$
 are the $G_{29}$-orbits, it follows that $O_1 = \langle a_0 \rangle,$ $O_2 = \langle a_1 \rangle,$  $O_3 = \langle b_1 \rangle$ $O_4 = \langle b_{7} \rangle,$$O_5 = \langle b_{8} \rangle,$ $O_6 = \langle c_{0} \rangle,$ $O_7 = \langle c_{1} \rangle$, $O_8 = \langle c_{36} \rangle$, $O_{9} = \langle c_{37} \rangle$ are the $(G_{29}/\Gamma_{29})$-orbits.  
Since the vertex set of $X_{29}$ is $\sqcup_{j=1}^{9} \eta_{29}(O_j)$ and $G_{29}/\Gamma_{29} \leq {\rm Aut}(X_{29})$, it follows that the number of ${\rm Aut}(X_{29})$-orbits of vertices is $\le9$.  
 
 \medskip

Let $X_{30} = K_{30}/\Gamma_{30}$ be a semiequivelar map on the torus for some fixed element (vertex, edge or face) free subgroup $\Gamma_{30}\le {\rm Aut}(K_{30})$. Let the vertices of $X_{30}$ form $m_{30}$ ${\rm Aut}(X_{30})$-orbits.
We take the middle point of the line segment joining vertices $a_0$ and $c_1$ as the origin $(0,0)$ of $K_{30}$  (see Section \ref{3uniform}).Let  $\alpha_{30} := a_{2} - a_0$ and $\beta_{30} := a_{38} - a_{0} \in \mathbb{R}^2$. Similarly as above, define $H_{30} := \langle x\mapsto x+\alpha_{30}, x\mapsto x+\beta_{30}\rangle$ and 
\begin{align*}
  G_{30} & =\{ \alpha : x\mapsto \varepsilon x + m_{30}\alpha_{30} + n_{30}\beta_{30}    \, : \, \varepsilon=\pm 1, m_{30}, n_{30} \in \ZZ\} \cong H_{30}\rtimes \mathbb{Z}_2.
\end{align*}
By the same arguments as above and in Claim 1, $\Gamma_{30} \unlhd G_{30}$ and the number of $G_{30}/\Gamma_{30}$-orbits of vertices of $X_{30}$ is eight. Therefore, $G_{30}/\Gamma_{30}$ acts on $X_{30} = K_{30}/\Gamma_{30}$.
Since $O_1 = \langle a_0 \rangle,$ $O_2 = \langle a_1 \rangle,$  $O_3 = \langle a_{27} \rangle$ $O_4 = \langle a_{28} \rangle,$$O_5 = \langle b_{0} \rangle,$ $O_6 = \langle b_{9} \rangle,$ $O_7 = \langle c_{0} \rangle$, $O_8 = \langle c_{1} \rangle$
 are the $G_{30}$-orbits, it follows that $O_1 = \langle a_0 \rangle,$ $O_2 = \langle a_1 \rangle,$  $O_3 = \langle a_{27} \rangle$ $O_4 = \langle a_{28} \rangle,$$O_5 = \langle b_{0} \rangle,$ $O_6 = \langle b_{9} \rangle,$ $O_7 = \langle c_{0} \rangle$, $O_8 = \langle c_{1} \rangle$ are the $(G_{30}/\Gamma_{30})$-orbits.  
Since the vertex set of $X_{30}$ is $\sqcup_{j=1}^{8} \eta_{30}(O_j)$ and $G_{30}/\Gamma_{30} \leq {\rm Aut}(X_{30})$, it follows that the number of ${\rm Aut}(X_{30})$-orbits of vertices is $\le8$.  
 
 \medskip

Let $X_{31} = K_{31}/\Gamma_{31}$ be a semiequivelar map on the torus for some fixed element (vertex, edge or face) free subgroup $\Gamma_{31}\le {\rm Aut}(K_{31})$. Let the vertices of $X_{31}$ form $m_{31}$ ${\rm Aut}(X_{31})$-orbits.
We take the middle point of the line segment joining vertices $a_0$ and $b_1$ as the origin $(0,0)$ of $K_{31}$  (see Section \ref{3uniform}).Let  $\alpha_{31} := a_{1} - a_0$ and $\beta_{31} := a_{2} - a_{0} \in \mathbb{R}^2$. Similarly as above, define $H_{31} := \langle x\mapsto x+\alpha_{31}, x\mapsto x+\beta_{31}\rangle$ and 
\begin{align*}
  G_{31} & =\{ \alpha : x\mapsto \varepsilon x + m_{31}\alpha_{31} + n_{31}\beta_{31}    \, : \, \varepsilon=\pm 1, m_{31}, n_{31} \in \ZZ\} \cong H_{31}\rtimes \mathbb{Z}_2.
\end{align*}
By the same arguments as above and in Claim 1, $\Gamma_{31} \unlhd G_{31}$ and the number of $G_{31}/\Gamma_{31}$-orbits of vertices of $X_{31}$ is six. Therefore, $G_{31}/\Gamma_{31}$ acts on $X_{31} = K_{31}/\Gamma_{31}$.
Since $O_1 = \langle a_0 \rangle,$ $O_2 = \langle c_0 \rangle,$  $O_3 = \langle b_{0} \rangle$ $O_4 = \langle b_{1} \rangle,$$O_5 = \langle c_{14} \rangle,$ $O_6 = \langle a_{1} \rangle$
 are the $G_{31}$-orbits, it follows that $O_1 = \langle a_0 \rangle,$ $O_2 = \langle c_0 \rangle,$  $O_3 = \langle b_{0} \rangle$ $O_4 = \langle b_{1} \rangle,$$O_5 = \langle c_{14} \rangle,$ $O_6 = \langle a_{1} \rangle$ are the $(G_{31}/\Gamma_{31})$-orbits.  
Since the vertex set of $X_{31}$ is $\sqcup_{j=1}^{6} \eta_{31}(O_j)$ and $G_{31}/\Gamma_{31} \leq {\rm Aut}(X_{31})$, it follows that the number of ${\rm Aut}(X_{31})$-orbits of vertices is $\le6$.  
 
  \medskip

Let $X_{32} = K_{32}/\Gamma_{32}$ be a semiequivelar map on the torus for some fixed element (vertex, edge or face) free subgroup $\Gamma_{32} \le {\rm Aut}(K_{32})$. Let the vertices of $X_{32}$ form $m_{32}$ ${\rm Aut}(X_{32})$-orbits.
We take the middle point of the line segment joining vertices $a_0$ and $a_{3}$ as the origin $(0,0)$ of $K_{32}$  (see Section \ref{3uniform}). Let  $\alpha_{32} := a_{6} - a_0$, $\beta_{32} := a_{20} - a_{0}$ and $\gamma_{32} := a_{27} - a_{0}\in \mathbb{R}^2$. Similarly as above, define $H_{32} := \langle x\mapsto x+\alpha_{32}, x\mapsto x+\beta_{32}, x\mapsto x+\gamma_{32}\rangle$ and 
\begin{align*}
 G_{32} & =\{ \alpha : x\mapsto \varepsilon x + m_{32}\alpha_{32} + n_{32}\beta_{32} +r_{32}\gamma_{32}   \, : \, \varepsilon=\pm 1, m_{32}, n_{32}, r_{32} \in \ZZ\} \cong H_{32}\rtimes \mathbb{Z}_2
\end{align*}
By the same arguments as above and in Claim 1, $\Gamma_{32} \unlhd G_{32}$ and the number of $G_{32}/\Gamma_{32}$-orbits of vertices of $X_{32}$ is twelve. Therefore, $G_{32}/\Gamma_{32}$ acts on $X_{32} = K_{32}/\Gamma_{32}$.
Since  $O_1 = \langle a_0 \rangle,$ $O_2 = \langle a_1 \rangle,$  $O_3 = \langle a_2 \rangle$ $O_4 = \langle c_0 \rangle,$$O_5 = \langle b_{0} \rangle,$ $O_6 = \langle b_{1} \rangle,$ $O_7 = \langle c_{1} \rangle$, $O_8 = \langle b_{2} \rangle$, $O_{9} = \langle b_{3} \rangle$, $O_{10} = \langle c_{2} \rangle$, $O_{11} = \langle b_{4} \rangle$, $O_{12} = \langle b_{5} \rangle$
 are the $G_{32}$-orbits, it follows that $O_1 = \langle a_0 \rangle,$ $O_2 = \langle a_1 \rangle,$  $O_3 = \langle a_2 \rangle$ $O_4 = \langle c_0 \rangle,$$O_5 = \langle b_{0} \rangle,$ $O_6 = \langle b_{1} \rangle,$ $O_7 = \langle c_{1} \rangle$, $O_8 = \langle b_{2} \rangle$, $O_{9} = \langle b_{3} \rangle$, $O_{10} = \langle c_{2} \rangle$, $O_{11} = \langle b_{4} \rangle$, $O_{12} = \langle b_{5} \rangle$ are the $(G_{32}/\Gamma_{32})$-orbits.  
Since the vertex set of $X_{32}$ is $\sqcup_{j=1}^{12} \eta_{32}(O_j)$ and $G_{32}/\Gamma_{32} \leq {\rm Aut}(X_{32})$, it follows that the number of ${\rm Aut}(X_{32})$-orbits of vertices is $\le12$.
 
\medskip

Let $X_{33} = K_{33}/\Gamma_{33}$ be a semiequivelar map on the torus for some fixed element (vertex, edge or face) free subgroup $\Gamma_{33} \le {\rm Aut}(K_{33})$. Let the vertices of $X_{33}$ form $m_{33}$ ${\rm Aut}(X_{33})$-orbits.
We take the middle point of the line segment joining vertices $a_0$ and $a_{3}$ as the origin $(0,0)$ of $K_{33}$  (see Section \ref{3uniform}). Let  $\alpha_{33} := a_{6} - a_0$, $\beta_{33} := a_{12} - a_{0}$ and $\gamma_{33} := a_{18} - a_{0}\in \mathbb{R}^2$. Similarly as above, define $H_{33} := \langle x\mapsto x+\alpha_{33}, x\mapsto x+\beta_{33}, x\mapsto x+\gamma_{33}\rangle$ and 
\begin{align*}
 G_{33} & =\{ \alpha : x\mapsto \varepsilon x + m_{33}\alpha_{33} + n_{33}\beta_{33} +r_{33}\gamma_{33}   \, : \, \varepsilon=\pm 1, m_{33}, n_{33}, r_{33} \in \ZZ\} \cong H_{32}\rtimes \mathbb{Z}_2
\end{align*}
By the same arguments as above and in Claim 1, $\Gamma_{33} \unlhd G_{33}$ and the number of $G_{33}/\Gamma_{33}$-orbits of vertices of $X_{33}$ is twelve. Therefore, $G_{32}/\Gamma_{33}$ acts on $X_{33} = K_{33}/\Gamma_{33}$.
Since $O_1 = \langle a_0 \rangle,$ $O_2 = \langle a_1 \rangle,$  $O_3 = \langle a_2 \rangle$ $O_4 = \langle b_0 \rangle,$$O_5 = \langle c_{0} \rangle,$ $O_6 = \langle c_{1} \rangle,$ $O_7 = \langle b_{2} \rangle$, $O_8 = \langle c_{2} \rangle$, $O_{9} = \langle c_{3} \rangle$, $O_{10} = \langle b_{4} \rangle$, $O_{11} = \langle c_{4} \rangle$, $O_{12} = \langle c_{5} \rangle$
 are the $G_{33}$-orbits, it follows that $O_1 = \langle a_0 \rangle,$ $O_2 = \langle a_1 \rangle,$  $O_3 = \langle a_2 \rangle$ $O_4 = \langle b_0 \rangle,$$O_5 = \langle c_{0} \rangle,$ $O_6 = \langle c_{1} \rangle,$ $O_7 = \langle b_{2} \rangle$, $O_8 = \langle c_{2} \rangle$, $O_{9} = \langle c_{3} \rangle$, $O_{10} = \langle b_{4} \rangle$, $O_{11} = \langle c_{4} \rangle$, $O_{12} = \langle c_{5} \rangle$ are the $(G_{33}/\Gamma_{33})$-orbits.  
Since the vertex set of $X_{33}$ is $\sqcup_{j=1}^{12} \eta_{33}(O_j)$ and $G_{33}/\Gamma_{33} \leq {\rm Aut}(X_{33})$, it follows that the number of ${\rm Aut}(X_{33})$-orbits of vertices is $\le12$.  
\medskip

Let $X_{35} = K_{35}/\Gamma_{35}$ be a semiequivelar map on the torus for some fixed element (vertex, edge or face) free subgroup $\Gamma_{35}\le {\rm Aut}(K_{35})$. Let the vertices of $X_{35}$ form $m_{35}$ ${\rm Aut}(X_{35})$-orbits.
We take the middle point of the line segment joining vertices $a_0$ and $a_2$ as the origin $(0,0)$ of $K_{35}$  (see Section \ref{3uniform}).Let  $\alpha_{35} := a_{12} - a_0$ and $\beta_{35} := a_{43} - a_{0} \in \mathbb{R}^2$. Similarly as above, define $H_{35} := \langle x\mapsto x+\alpha_{35}, x\mapsto x+\beta_{35}\rangle$ and 
\begin{align*}
  G_{35} & =\{ \alpha : x\mapsto \varepsilon x + m_{35}\alpha_{35} + n_{35}\beta_{35}    \, : \, \varepsilon=\pm 1, m_{35}, n_{35} \in \ZZ\} \cong H_{35}\rtimes \mathbb{Z}_2.
\end{align*}
By the same arguments as above and in Claim 1, $\Gamma_{35} \unlhd G_{35}$ and the number of $G_{35}/\Gamma_{35}$-orbits of vertices of $X_{35}$ is four. Therefore, $G_{35}/\Gamma_{35}$ acts on $X_{35} = K_{35}/\Gamma_{35}$.
Since $O_1 = \langle a_0 \rangle,$ $O_2 = \langle a_1 \rangle,$  $O_3 = \langle c_{1} \rangle$ $O_4 = \langle b_{0} \rangle$
 are the $G_{35}$-orbits, it follows that $O_1 = \langle a_0 \rangle,$ $O_2 = \langle a_1 \rangle,$  $O_3 = \langle c_{1} \rangle$ $O_4 = \langle b_{0} \rangle$ are the $(G_{35}/\Gamma_{35})$-orbits.  
Since the vertex set of $X_{35}$ is $\sqcup_{j=1}^{4} \eta_{35}(O_j)$ and $G_{35}/\Gamma_{35} \leq {\rm Aut}(X_{35})$, it follows that the number of ${\rm Aut}(X_{35})$-orbits of vertices is $\le4$.  
 
 \medskip

Let $X_{36} = K_{36}/\Gamma_{36}$ be a semiequivelar map on the torus for some fixed element (vertex, edge or face) free subgroup $\Gamma_{36}\le {\rm Aut}(K_{36})$. Let the vertices of $X_{36}$ form $m_{36}$ ${\rm Aut}(X_{36})$-orbits.
We take the middle point of the line segment joining vertices $a_0$ and $a_2$ as the origin $(0,0)$ of $K_{36}$  (see Section \ref{3uniform}).Let  $\alpha_{36} := a_{11} - a_0$ and $\beta_{36} := a_{35} - a_{0} \in \mathbb{R}^2$. Similarly as above, define $H_{36} := \langle x\mapsto x+\alpha_{36}, x\mapsto x+\beta_{36}\rangle$ and 
\begin{align*}
  G_{36} & =\{ \alpha : x\mapsto \varepsilon x + m_{36}\alpha_{36} + n_{36}\beta_{36}    \, : \, \varepsilon=\pm 1, m_{36}, n_{36} \in \ZZ\} \cong H_{36}\rtimes \mathbb{Z}_2.
\end{align*}
By the same arguments as above and in Claim 1, $\Gamma_{36} \unlhd G_{36}$ and the number of $G_{36}/\Gamma_{36}$-orbits of vertices of $X_{36}$ is five. Therefore, $G_{36}/\Gamma_{36}$ acts on $X_{36} = K_{36}/\Gamma_{36}$.
Since $O_1 = \langle a_0 \rangle,$ $O_2 = \langle a_1 \rangle,$  $O_3 = \langle c_{1} \rangle$ $O_4 = \langle b_{0} \rangle,$$O_5 = \langle b_{1} \rangle$
 are the $G_{36}$-orbits, it follows that $O_1 = \langle a_0 \rangle,$ $O_2 = \langle a_1 \rangle,$  $O_3 = \langle c_{1} \rangle$ $O_4 = \langle b_{0} \rangle,$$O_5 = \langle b_{1} \rangle$ are the $(G_{36}/\Gamma_{36})$-orbits.  
Since the vertex set of $X_{36}$ is $\sqcup_{j=1}^{5} \eta_{36}(O_j)$ and $G_{36}/\Gamma_{36} \leq {\rm Aut}(X_{36})$, it follows that the number of ${\rm Aut}(X_{36})$-orbits of vertices is $\le5$.  
 
 \medskip

Let $X_{37} = K_{37}/\Gamma_{37}$ be a semiequivelar map on the torus for some fixed element (vertex, edge or face) free subgroup $\Gamma_{37}\le {\rm Aut}(K_{37})$. Let the vertices of $X_{37}$ form $m_{37}$ ${\rm Aut}(X_{37})$-orbits.
We take the middle point of the line segment joining vertices $a_0$ and $a_2$ as the origin $(0,0)$ of $K_{37}$  (see Section \ref{3uniform}).Let  $\alpha_{37} := a_{12} - a_0$ and $\beta_{37} := a_{42} - a_{0} \in \mathbb{R}^2$. Similarly as above, define $H_{37} := \langle x\mapsto x+\alpha_{37}, x\mapsto x+\beta_{37}\rangle$ and 
\begin{align*}
  G_{37} & =\{ \alpha : x\mapsto \varepsilon x + m_{37}\alpha_{37} + n_{37}\beta_{37}    \, : \, \varepsilon=\pm 1, m_{37}, n_{37} \in \ZZ\} \cong H_{37}\rtimes \mathbb{Z}_2.
\end{align*}
By the same arguments as above and in Claim 1, $\Gamma_{37} \unlhd G_{37}$ and the number of $G_{37}/\Gamma_{37}$-orbits of vertices of $X_{37}$ is five. Therefore, $G_{37}/\Gamma_{37}$ acts on $X_{37} = K_{37}/\Gamma_{37}$.
Since $O_1 = \langle a_0 \rangle,$ $O_2 = \langle a_1 \rangle,$  $O_3 = \langle c_{1} \rangle$, $O_4 = \langle b_{0} \rangle,$$O_5 = \langle b_{1} \rangle$
 are the $G_{37}$-orbits, it follows that $O_1 = \langle a_0 \rangle,$ $O_2 = \langle a_1 \rangle,$  $O_3 = \langle c_{1} \rangle$ $O_4 = \langle b_{0} \rangle,$$O_5 = \langle b_{1} \rangle$ are the $(G_{37}/\Gamma_{37})$-orbits.  
Since the vertex set of $X_{37}$ is $\sqcup_{j=1}^{5} \eta_{37}(O_j)$ and $G_{37}/\Gamma_{37} \leq {\rm Aut}(X_{37})$, it follows that the number of ${\rm Aut}(X_{37})$-orbits of vertices is $\le5$.  
 
 \medskip

Let $X_{38} = K_{38}/\Gamma_{38}$ be a semiequivelar map on the torus for some fixed element (vertex, edge or face) free subgroup $\Gamma_{38}\le {\rm Aut}(K_{38})$. Let the vertices of $X_{38}$ form $m_{38}$ ${\rm Aut}(X_{38})$-orbits.
We take the middle point of the line segment joining vertices $a_0$ and $a_2$ as the origin $(0,0)$ of $K_{38}$  (see Section \ref{3uniform}).Let  $\alpha_{38} := a_{10} - a_0$ and $\beta_{38} := a_{35} - a_{0} \in \mathbb{R}^2$. Similarly as above, define $H_{38} := \langle x\mapsto x+\alpha_{38}, x\mapsto x+\beta_{38}\rangle$ and 
\begin{align*}
  G_{38} & =\{ \alpha : x\mapsto \varepsilon x + m_{38}\alpha_{38} + n_{38}\beta_{38}    \, : \, \varepsilon=\pm 1, m_{38}, n_{38} \in \ZZ\} \cong H_{38}\rtimes \mathbb{Z}_2.
\end{align*}
By the same arguments as above and in Claim 1, $\Gamma_{38} \unlhd G_{38}$ and the number of $G_{38}/\Gamma_{38}$-orbits of vertices of $X_{38}$ is five. Therefore, $G_{38}/\Gamma_{38}$ acts on $X_{38} = K_{38}/\Gamma_{38}$.
Since $O_1 = \langle a_0 \rangle,$ $O_2 = \langle b_0 \rangle,$  $O_3 = \langle b_{23} \rangle$, $O_4 = \langle a_{1} \rangle,$ $O_5 = \langle c_{1} \rangle$
 are the $G_{38}$-orbits, it follows that $O_1 = \langle a_0 \rangle,$ $O_2 = \langle b_0 \rangle,$  $O_3 = \langle b_{23} \rangle$ $O_4 = \langle a_{1} \rangle,$ $O_5 = \langle c_{1} \rangle$ are the $(G_{38}/\Gamma_{38})$-orbits.  
Since the vertex set of $X_{38}$ is $\sqcup_{j=1}^{5} \eta_{38}(O_j)$ and $G_{38}/\Gamma_{38} \leq {\rm Aut}(X_{38})$, it follows that the number of ${\rm Aut}(X_{38})$-orbits of vertices is $\le5$.  
 
 \medskip

Let $X_{39} = K_{39}/\Gamma_{39}$ be a semiequivelar map on the torus for some fixed element (vertex, edge or face) free subgroup $\Gamma_{39} \le {\rm Aut}(K_{39})$. Let the vertices of $X_{39}$ form $m_{39}$ ${\rm Aut}(X_{39})$-orbits.
We take the middle point of the line segment joining vertices $a_0$ and $a_{6}$ as the origin $(0,0)$ of $K_{39}$  (see Section \ref{3uniform}). Let  $\alpha_{39} := a_{12} - a_0$, $\beta_{39} := a_{36} - a_{0}$ and $\gamma_{39} := a_{52} - a_{0}\in \mathbb{R}^2$. Similarly as above, define $H_{39} := \langle x\mapsto x+\alpha_{39}, x\mapsto x+\beta_{39}, x\mapsto x+\gamma_{39}\rangle$ and 
\begin{align*}
 G_{39} & =\{ \alpha : x\mapsto \varepsilon x + m_{39}\alpha_{39} + n_{39}\beta_{39} +r_{39}\gamma_{39}   \, : \, \varepsilon=\pm 1, m_{39}, n_{39}, r_{39} \in \ZZ\} \cong H_{39}\rtimes \mathbb{Z}_2
\end{align*}
By the same arguments as above and in Claim 1, $\Gamma_{39} \unlhd G_{39}$ and the number of $G_{39}/\Gamma_{39}$-orbits of vertices of $X_{39}$ is fifteen. Therefore, $G_{39}/\Gamma_{39}$ acts on $X_{39} = K_{39}/\Gamma_{39}$.
Since  $O_1 = \langle a_0 \rangle,$ $O_2 = \langle a_1 \rangle,$  $O_3 = \langle a_2 \rangle$, $O_4 = \langle a_3 \rangle,$ $O_5 = \langle a_{4} \rangle,$ $O_6 = \langle a_{5} \rangle,$ $O_7 = \langle c_{0} \rangle$, $O_8 = \langle b_{0} \rangle$, $O_{9} = \langle b_{1} \rangle$, $O_{10} = \langle c_{4} \rangle$, $O_{11} = \langle c_{5} \rangle$, $O_{12} = \langle b_{5} \rangle$, $O_{13} = \langle c_{8} \rangle$, $O_{14} = \langle c_{11} \rangle$, $O_{15} = \langle c_{12} \rangle$
 are the $G_{32}$-orbits, it follows that $O_1 = \langle a_0 \rangle,$ $O_2 = \langle a_1 \rangle,$  $O_3 = \langle a_2 \rangle$, $O_4 = \langle a_3 \rangle,$ $O_5 = \langle a_{4} \rangle,$ $O_6 = \langle a_{5} \rangle,$ $O_7 = \langle c_{0} \rangle$, $O_8 = \langle b_{0} \rangle$, $O_{9} = \langle b_{1} \rangle$, $O_{10} = \langle c_{4} \rangle$, $O_{11} = \langle c_{5} \rangle$, $O_{12} = \langle b_{5} \rangle$, $O_{13} = \langle c_{8} \rangle$, $O_{14} = \langle c_{11} \rangle$, $O_{15} = \langle c_{12} \rangle$ are the $(G_{39}/\Gamma_{39})$-orbits.  
Since the vertex set of $X_{39}$ is $\sqcup_{j=1}^{15} \eta_{39}(O_j)$ and $G_{39}/\Gamma_{39} \leq {\rm Aut}(X_{39})$, it follows that the number of ${\rm Aut}(X_{39})$-orbits of vertices is $\le15$.

 \medskip

Let $X_{40} = K_{40}/\Gamma_{40}$ be a semiequivelar map on the torus for some fixed element (vertex, edge or face) free subgroup $\Gamma_{40}\le {\rm Aut}(K_{40})$. Let the vertices of $X_{40}$ form $m_{40}$ ${\rm Aut}(X_{40})$-orbits.
We take the middle point of the line segment joining vertices $a_0$ and $a_3$ as the origin $(0,0)$ of $K_{40}$  (see Section \ref{3uniform}).Let  $\alpha_{40} := a_{7} - a_0$ and $\beta_{40} := a_{15} - a_{0} \in \mathbb{R}^2$. Similarly as above, define $H_{40} := \langle x\mapsto x+\alpha_{40}, x\mapsto x+\beta_{40}\rangle$ and 
\begin{align*}
  G_{40} & =\{ \alpha : x\mapsto \varepsilon x + m_{40}\alpha_{40} + n_{40}\beta_{40}    \, : \, \varepsilon=\pm 1, m_{40}, n_{40} \in \ZZ\} \cong H_{40}\rtimes \mathbb{Z}_2.
\end{align*}
By the same arguments as above and in Claim 1, $\Gamma_{40} \unlhd G_{40}$ and the number of $G_{40}/\Gamma_{40}$-orbits of vertices of $X_{40}$ is seven. Therefore, $G_{40}/\Gamma_{40}$ acts on $X_{40} = K_{40}/\Gamma_{40}$.
Since $O_1 = \langle a_0 \rangle,$ $O_2 = \langle a_1 \rangle,$  $O_3 = \langle a_{2} \rangle$ $O_4 = \langle b_{13} \rangle,$$O_5 = \langle b_{12} \rangle,$ $O_6 = \langle b_{10} \rangle,$ $O_7 = \langle  b_{11} \rangle$
 are the $G_{40}$-orbits, it follows that $O_1 = \langle a_0 \rangle,$ $O_2 = \langle a_1 \rangle,$  $O_3 = \langle a_{2} \rangle$ $O_4 = \langle b_{13} \rangle,$$O_5 = \langle b_{12} \rangle,$ $O_6 = \langle b_{10} \rangle,$ $O_7 = \langle  b_{11} \rangle$ are the $(G_{40}/\Gamma_{40})$-orbits.  
Since the vertex set of $X_{40}$ is $\sqcup_{j=1}^{7} \eta_{40}(O_j)$ and $G_{40}/\Gamma_{40} \leq {\rm Aut}(X_{40})$, it follows that the number of ${\rm Aut}(X_{40})$-orbits of vertices is $\le7$.  
 
 \medskip

Let $X_{41} = K_{41}/\Gamma_{41}$ be a semiequivelar map on the torus for some fixed element (vertex, edge or face) free subgroup $\Gamma_{41} \le {\rm Aut}(K_{41})$. Let the vertices of $X_{41}$ form $m_{41}$ ${\rm Aut}(X_{41})$-orbits.
We take the middle point of the line segment joining vertices $a_0$ and $a_{6}$ as the origin $(0,0)$ of $K_{41}$  (see Section \ref{3uniform}). Let  $\alpha_{41} := a_{11} - a_0$, $\beta_{41} := a_{15} - a_{0}$ and $\gamma_{41} := a_{21} - a_{0}\in \mathbb{R}^2$. Similarly as above, define $H_{41} := \langle x\mapsto x+\alpha_{41}, x\mapsto x+\beta_{41}, x\mapsto x+\gamma_{41}\rangle$ and 
\begin{align*}
 G_{41} & =\{ \alpha : x\mapsto \varepsilon x + m_{41}\alpha_{41} + n_{41}\beta_{41} +r_{41}\gamma_{41}   \, : \, \varepsilon=\pm 1, m_{41}, n_{41}, r_{41} \in \ZZ\} \cong H_{41}\rtimes \mathbb{Z}_2
\end{align*}
By the same arguments as above and in Claim 1, $\Gamma_{41} \unlhd G_{41}$ and the number of $G_{41}/\Gamma_{41}$-orbits of vertices of $X_{41}$ is nine. Therefore, $G_{41}/\Gamma_{41}$ acts on $X_{41} = K_{41}/\Gamma_{41}$.
Since  $O_1 = \langle a_0 \rangle,$ $O_2 = \langle a_1 \rangle,$  $O_3 = \langle a_2 \rangle$, $O_4 = \langle b_{18} \rangle,$ $O_5 = \langle b_{0} \rangle,$ $O_6 = \langle b_{1} \rangle,$ $O_7 = \langle b_{2} \rangle$, $O_8 = \langle b_{19} \rangle$, $O_{9} = \langle b_{24} \rangle$
 are the $G_{41}$-orbits, it follows that $O_1 = \langle a_0 \rangle,$ $O_2 = \langle a_1 \rangle,$  $O_3 = \langle a_2 \rangle$, $O_4 = \langle b_{18} \rangle,$ $O_5 = \langle b_{0} \rangle,$ $O_6 = \langle b_{1} \rangle,$ $O_7 = \langle b_{2} \rangle$, $O_8 = \langle b_{19} \rangle$, $O_{9} = \langle b_{24} \rangle$ are the $(G_{41}/\Gamma_{41})$-orbits.  
Since the vertex set of $X_{41}$ is $\sqcup_{j=1}^{9} \eta_{41}(O_j)$ and $G_{41}/\Gamma_{41} \leq {\rm Aut}(X_{41})$, it follows that the number of ${\rm Aut}(X_{41})$-orbits of vertices is $\le9$.
 
 \medskip

Let $X_{42} = K_{42}/\Gamma_{42}$ be a semiequivelar map on the torus for some fixed element (vertex, edge or face) free subgroup $\Gamma_{42} \le {\rm Aut}(K_{42})$. Let the vertices of $X_{42}$ form $m_{42}$ ${\rm Aut}(X_{42})$-orbits.
We take the middle point of the line segment joining vertices $a_0$ and $a_{3}$ as the origin $(0,0)$ of $K_{42}$  (see Section \ref{3uniform}). Let  $\alpha_{42} := a_{11} - a_0$, $\beta_{42} := a_{15} - a_{0}$ and $\gamma_{42} := a_{21} - a_{0}\in \mathbb{R}^2$. Similarly as above, define $H_{42} := \langle x\mapsto x+\alpha_{42}, x\mapsto x+\beta_{42}, x\mapsto x+\gamma_{42}\rangle$ and 
\begin{align*}
 G_{42} & =\{ \alpha : x\mapsto \varepsilon x + m_{42}\alpha_{42} + n_{42}\beta_{42} +r_{42}\gamma_{42}   \, : \, \varepsilon=\pm 1, m_{42}, n_{42}, r_{42} \in \ZZ\} \cong H_{42}\rtimes \mathbb{Z}_2
\end{align*}
By the same arguments as above and in Claim 1, $\Gamma_{42} \unlhd G_{42}$ and the number of $G_{42}/\Gamma_{42}$-orbits of vertices of $X_{42}$ is nine. Therefore, $G_{42}/\Gamma_{42}$ acts on $X_{42} = K_{42}/\Gamma_{42}$.
Since  $O_1 = \langle a_0 \rangle,$ $O_2 = \langle a_1 \rangle,$  $O_3 = \langle a_2 \rangle$, $O_4 = \langle b_{24} \rangle,$ $O_5 = \langle b_{25} \rangle,$ $O_6 = \langle b_{14} \rangle,$ $O_7 = \langle b_{5} \rangle$, $O_8 = \langle b_{47} \rangle$, $O_{9} = \langle b_{46} \rangle$
 are the $G_{42}$-orbits, it follows that $O_1 = \langle a_0 \rangle,$ $O_2 = \langle a_1 \rangle,$ $O_3 = \langle a_2 \rangle$, $O_4 = \langle b_{24} \rangle,$ $O_5 = \langle b_{25} \rangle,$ $O_6 = \langle b_{14} \rangle,$ $O_7 = \langle b_{5} \rangle$, $O_8 = \langle b_{47} \rangle$, $O_{9} = \langle b_{46} \rangle$ are the $(G_{41}/\Gamma_{42})$-orbits.  
Since the vertex set of $X_{42}$ is $\sqcup_{j=1}^{9} \eta_{42}(O_j)$ and $G_{42}/\Gamma_{42} \leq {\rm Aut}(X_{42})$, it follows that the number of ${\rm Aut}(X_{42})$-orbits of vertices is $\le9$.

 \medskip

Let $X_{43} = K_{43}/\Gamma_{43}$ be a semiequivelar map on the torus for some fixed element (vertex, edge or face) free subgroup $\Gamma_{43}\le {\rm Aut}(K_{43})$. Let the vertices of $X_{43}$ form $m_{43}$ ${\rm Aut}(X_{43})$-orbits.
We take the middle point of the line segment joining vertices $a_0$ and $a_3$ as the origin $(0,0)$ of $K_{43}$  (see Section \ref{3uniform}).Let  $\alpha_{43} := a_{7} - a_0$ and $\beta_{43} := a_{13} - a_{0} \in \mathbb{R}^2$. Similarly as above, define $H_{43} := \langle x\mapsto x+\alpha_{43}, x\mapsto x+\beta_{43}\rangle$ and 
\begin{align*}
  G_{43} & =\{ \alpha : x\mapsto \varepsilon x + m_{43}\alpha_{43} + n_{43}\beta_{43}    \, : \, \varepsilon=\pm 1, m_{43}, n_{43} \in \ZZ\} \cong H_{43}\rtimes \mathbb{Z}_2.
\end{align*}
By the same arguments as above and in Claim 1, $\Gamma_{43} \unlhd G_{43}$ and the number of $G_{43}/\Gamma_{43}$-orbits of vertices of $X_{43}$ is seven. Therefore, $G_{43}/\Gamma_{43}$ acts on $X_{43} = K_{43}/\Gamma_{43}$.
Since $O_1 = \langle a_0 \rangle,$ $O_2 = \langle a_1 \rangle,$  $O_3 = \langle a_{2} \rangle$ $O_4 = \langle a_{24} \rangle,$ $O_5 = \langle a_{25} \rangle,$ $O_6 = \langle a_{26} \rangle,$ $O_7 = \langle  b_{0} \rangle$
 are the $G_{43}$-orbits, it follows that $O_1 = \langle a_0 \rangle,$ $O_2 = \langle a_1 \rangle,$  $O_3 = \langle a_{2} \rangle$ $O_4 = \langle a_{24} \rangle,$ $O_5 = \langle a_{25} \rangle,$ $O_6 = \langle a_{26} \rangle,$ $O_7 = \langle  b_{0} \rangle$ are the $(G_{43}/\Gamma_{43})$-orbits.  
Since the vertex set of $X_{43}$ is $\sqcup_{j=1}^{7} \eta_{43}(O_j)$ and $G_{43}/\Gamma_{43} \leq {\rm Aut}(X_{43})$, it follows that the number of ${\rm Aut}(X_{43})$-orbits of vertices is $\le7$.  

 \medskip

Let $X_{44} = K_{44}/\Gamma_{44}$ be a semiequivelar map on the torus for some fixed element (vertex, edge or face) free subgroup $\Gamma_{44}\le {\rm Aut}(K_{44})$. Let the vertices of $X_{44}$ form $m_{44}$ ${\rm Aut}(X_{44})$-orbits.
We take the middle point of the line segment joining vertices $b_0$ and $b_{18}$ as the origin $(0,0)$ of $K_{44}$  (see Section \ref{3uniform}).Let  $\alpha_{44} := b_{1} - b_0$ and $\beta_{44} := b_{30} - b_{0} \in \mathbb{R}^2$. Similarly as above, define $H_{44} := \langle x\mapsto x+\alpha_{44}, x\mapsto x+\beta_{44}\rangle$ and 
\begin{align*}
  G_{44} & =\{ \alpha : x\mapsto \varepsilon x + m_{44}\alpha_{44} + n_{44}\beta_{44}    \, : \, \varepsilon=\pm 1, m_{44}, n_{44} \in \ZZ\} \cong H_{44}\rtimes \mathbb{Z}_2.
\end{align*}
By the same arguments as above and in Claim 1, $\Gamma_{44} \unlhd G_{44}$ and the number of $G_{44}/\Gamma_{44}$-orbits of vertices of $X_{44}$ is three. Therefore, $G_{44}/\Gamma_{44}$ acts on $X_{44} = K_{44}/\Gamma_{44}$.
Since $O_1 = \langle a_0 \rangle,$ $O_2 = \langle a_{18} \rangle,$  $O_3 = \langle b_{0} \rangle$
 are the $G_{44}$-orbits, it follows that $O_1 = \langle a_0 \rangle,$ $O_2 = \langle a_{18} \rangle,$  $O_3 = \langle b_{0} \rangle$ are the $(G_{44}/\Gamma_{44})$-orbits.  
Since the vertex set of $X_{44}$ is $\sqcup_{j=1}^{3} \eta_{44}(O_j)$ and $G_{44}/\Gamma_{44} \leq {\rm Aut}(X_{44})$, it follows that the number of ${\rm Aut}(X_{44})$-orbits of vertices is $3$.  

 \medskip

Let $X_{45} = K_{45}/\Gamma_{45}$ be a semiequivelar map on the torus for some fixed element (vertex, edge or face) free subgroup $\Gamma_{45}\le {\rm Aut}(K_{45})$. Let the vertices of $X_{45}$ form $m_{45}$ ${\rm Aut}(X_{45})$-orbits.
We take the middle point of the line segment joining vertices $b_0$ and $b_{11}$ as the origin $(0,0)$ of $K_{45}$  (see Section \ref{3uniform}).Let  $\alpha_{45} := b_{1} - b_0$ and $\beta_{45} := b_{28} - b_{0} \in \mathbb{R}^2$. Similarly as above, define $H_{45} := \langle x\mapsto x+\alpha_{45}, x\mapsto x+\beta_{45}\rangle$ and 
\begin{align*}
  G_{45} & =\{ \alpha : x\mapsto \varepsilon x + m_{45}\alpha_{45} + n_{45}\beta_{45}    \, : \, \varepsilon=\pm 1, m_{45}, n_{45} \in \ZZ\} \cong H_{45}\rtimes \mathbb{Z}_2.
\end{align*}
By the same arguments as above and in Claim 1, $\Gamma_{45} \unlhd G_{45}$ and the number of $G_{45}/\Gamma_{45}$-orbits of vertices of $X_{45}$ is four. Therefore, $G_{45}/\Gamma_{45}$ acts on $X_{45} = K_{45}/\Gamma_{45}$.
Since $O_1 = \langle a_0 \rangle,$ $O_2 = \langle a_{11} \rangle,$  $O_3 = \langle a_{21} \rangle$, $O_4 = \langle b_{0} \rangle$
 are the $G_{45}$-orbits, it follows that  $O_1 = \langle a_0 \rangle,$ $O_2 = \langle a_{11} \rangle,$  $O_3 = \langle a_{21} \rangle$, $O_4 = \langle b_{0} \rangle$ are the $(G_{45}/\Gamma_{45})$-orbits.  
Since the vertex set of $X_{45}$ is $\sqcup_{j=1}^{4} \eta_{45}(O_j)$ and $G_{45}/\Gamma_{45} \leq {\rm Aut}(X_{45})$, it follows that the number of ${\rm Aut}(X_{45})$-orbits of vertices is $\le4$.  

 \medskip

Let $X_{46} = K_{46}/\Gamma_{46}$ be a semiequivelar map on the torus for some fixed element (vertex, edge or face) free subgroup $\Gamma_{46}\le {\rm Aut}(K_{46})$. Let the vertices of $X_{46}$ form $m_{46}$ ${\rm Aut}(X_{46})$-orbits.
We take the middle point of the line segment joining vertices $b_0$ and $b_{14}$ as the origin $(0,0)$ of $K_{46}$  (see Section \ref{3uniform}).Let  $\alpha_{46} := b_{1} - b_0$ and $\beta_{46} := b_{85} - b_{0} \in \mathbb{R}^2$. Similarly as above, define $H_{46} := \langle x\mapsto x+\alpha_{46}, x\mapsto x+\beta_{46}\rangle$ and 
\begin{align*}
  G_{46} & =\{ \alpha : x\mapsto \varepsilon x + m_{46}\alpha_{46} + n_{46}\beta_{46}    \, : \, \varepsilon=\pm 1, m_{46}, n_{46} \in \ZZ\} \cong H_{46}\rtimes \mathbb{Z}_2.
\end{align*}
By the same arguments as above and in Claim 1, $\Gamma_{46} \unlhd G_{46}$ and the number of $G_{46}/\Gamma_{46}$-orbits of vertices of $X_{46}$ is four. Therefore, $G_{46}/\Gamma_{46}$ acts on $X_{46} = K_{46}/\Gamma_{46}$.
Since $O_1 = \langle a_0 \rangle,$ $O_2 = \langle b_{0} \rangle,$  $O_3 = \langle b_{60} \rangle$, $O_4 = \langle b_{75} \rangle$
 are the $G_{46}$-orbits, it follows that  $O_1 = \langle a_0 \rangle,$ $O_2 = \langle b_{0} \rangle,$  $O_3 = \langle b_{60} \rangle$, $O_4 = \langle b_{75} \rangle$ are the $(G_{46}/\Gamma_{46})$-orbits.  
Since the vertex set of $X_{46}$ is $\sqcup_{j=1}^{4} \eta_{46}(O_j)$ and $G_{46}/\Gamma_{46} \leq {\rm Aut}(X_{46})$, it follows that the number of ${\rm Aut}(X_{46})$-orbits of vertices is $\le4$.  
 
 \medskip

Let $X_{47} = K_{47}/\Gamma_{47}$ be a semiequivelar map on the torus for some fixed element (vertex, edge or face) free subgroup $\Gamma_{47}\le {\rm Aut}(K_{47})$. Let the vertices of $X_{47}$ form $m_{47}$ ${\rm Aut}(X_{47})$-orbits.
We take the middle point of the line segment joining vertices $b_0$ and $b_{16}$ as the origin $(0,0)$ of $K_{47}$  (see Section \ref{3uniform}).Let  $\alpha_{47} := b_{1} - b_0$ and $\beta_{47} := b_{51} - b_{0} \in \mathbb{R}^2$. Similarly as above, define $H_{47} := \langle x\mapsto x+\alpha_{47}, x\mapsto x+\beta_{47}\rangle$ and 
\begin{align*}
  G_{47} & =\{ \alpha : x\mapsto \varepsilon x + m_{47}\alpha_{47} + n_{47}\beta_{47}    \, : \, \varepsilon=\pm 1, m_{47}, n_{47} \in \ZZ\} \cong H_{47}\rtimes \mathbb{Z}_2.
\end{align*}
By the same arguments as above and in Claim 1, $\Gamma_{47} \unlhd G_{47}$ and the number of $G_{47}/\Gamma_{47}$-orbits of vertices of $X_{47}$ is five. Therefore, $G_{47}/\Gamma_{47}$ acts on $X_{47} = K_{47}/\Gamma_{47}$.
Since $O_1 = \langle b_{0} \rangle,$ $O_2 = \langle b_{28} \rangle,$  $O_3 = \langle b_{39} \rangle$, $O_4 = \langle a_{0} \rangle$, $O_5 = \langle a_{16} \rangle$
 are the $G_{47}$-orbits, it follows that  $O_1 = \langle b_{0} \rangle,$ $O_2 = \langle b_{28} \rangle,$  $O_3 = \langle b_{39} \rangle$, $O_4 = \langle a_{0} \rangle$, $O_5 = \langle a_{16} \rangle
 $ are the $(G_{47}/\Gamma_{47})$-orbits.  
Since the vertex set of $X_{47}$ is $\sqcup_{j=1}^{5} \eta_{47}(O_j)$ and $G_{47}/\Gamma_{47} \leq {\rm Aut}(X_{47})$, it follows that the number of ${\rm Aut}(X_{47})$-orbits of vertices is $\le5$.  
 
 \medskip

Let $X_{48} = K_{48}/\Gamma_{48}$ be a semiequivelar map on the torus for some fixed element (vertex, edge or face) free subgroup $\Gamma_{48} \le {\rm Aut}(K_{48})$. Let the vertices of $X_{48}$ form $m_{48}$ ${\rm Aut}(X_{48})$-orbits.
We take the point $b_0$ as the origin $(0,0)$ of $K_{48}$  (see Section \ref{3uniform}). Let  $\alpha_{48} := b_{1} - b_0$, $\beta_{48} := b_{2} - b_{0}$ and $\gamma_{48} := b_{3} - b_{0}\in \mathbb{R}^2$. Similarly as above, define $H_{48} := \langle x\mapsto x+\alpha_{48}, x\mapsto x+\beta_{48}, x\mapsto x+\gamma_{48}\rangle$ and 
\begin{align*}
 G_{48} & =\{ \alpha : x\mapsto \varepsilon x + m_{48}\alpha_{48} + n_{48}\beta_{48} +r_{48}\gamma_{48}   \, : \, \varepsilon=\pm 1, m_{48}, n_{48}, r_{48} \in \ZZ\} \cong H_{48}\rtimes \mathbb{Z}_2
\end{align*}
By the same arguments as above and in Claim 1, $\Gamma_{48} \unlhd G_{48}$ and the number of $G_{48}/\Gamma_{48}$-orbits of vertices of $X_{48}$ is seven. Therefore, $G_{48}/\Gamma_{48}$ acts on $X_{48} = K_{48}/\Gamma_{48}$.
Since  $O_1 = \langle a_0 \rangle,$ $O_2 = \langle a_1 \rangle,$  $O_3 = \langle a_2 \rangle$, $O_4 = \langle a_{6} \rangle,$ $O_5 = \langle a_8 \rangle,$ $O_6 = \langle a_{10} \rangle,$ $O_7 = \langle b_{0} \rangle$
 are the $G_{48}$-orbits, it follows that $O_1 = \langle a_0 \rangle,$ $O_2 = \langle a_1 \rangle,$  $O_3 = \langle a_2 \rangle$, $O_4 = \langle a_{6} \rangle,$ $O_5 = \langle a_8 \rangle,$ $O_6 = \langle a_{10} \rangle,$ $O_7 = \langle b_{0} \rangle$ are the $(G_{48}/\Gamma_{48})$-orbits.  
Since the vertex set of $X_{48}$ is $\sqcup_{j=1}^{7} \eta_{48}(O_j)$ and $G_{48}/\Gamma_{48} \leq {\rm Aut}(X_{48})$, it follows that the number of ${\rm Aut}(X_{48})$-orbits of vertices is $\le7$.

 \medskip

Let $X_{49} = K_{49}/\Gamma_{49}$ be a semiequivelar map on the torus for some fixed element (vertex, edge or face) free subgroup $\Gamma_{49}\le {\rm Aut}(K_{49})$. Let the vertices of $X_{49}$ form $m_{49}$ ${\rm Aut}(X_{49})$-orbits.
We take the middle point of the line segment joining vertices $b_0$ and $b_{1}$ as the origin $(0,0)$ of $K_{49}$  (see Section \ref{3uniform}).Let  $\alpha_{49} := b_{1} - b_0$ and $\beta_{49} := b_{11} - b_{0} \in \mathbb{R}^2$. Similarly as above, define $H_{49} := \langle x\mapsto x+\alpha_{49}, x\mapsto x+\beta_{49}\rangle$ and 
\begin{align*}
  G_{49} & =\{ \alpha : x\mapsto \varepsilon x + m_{49}\alpha_{49} + n_{49}\beta_{49}    \, : \, \varepsilon=\pm 1, m_{49}, n_{49} \in \ZZ\} \cong H_{49}\rtimes \mathbb{Z}_2.
\end{align*}
By the same arguments as above and in Claim 1, $\Gamma_{49} \unlhd G_{49}$ and the number of $G_{49}/\Gamma_{49}$-orbits of vertices of $X_{49}$ is three. Therefore, $G_{49}/\Gamma_{49}$ acts on $X_{49} = K_{49}/\Gamma_{49}$.
Since $O_1 = \langle a_0 \rangle,$ $O_2 = \langle a_{1} \rangle,$  $O_3 = \langle b_{0} \rangle$
 are the $G_{44}$-orbits, it follows that $O_1 = \langle a_0 \rangle,$ $O_2 = \langle a_{1} \rangle,$  $O_3 = \langle b_{0} \rangle$ are the $(G_{49}/\Gamma_{49})$-orbits.  
Since the vertex set of $X_{49}$ is $\sqcup_{j=1}^{3} \eta_{49}(O_j)$ and $G_{49}/\Gamma_{49} \leq {\rm Aut}(X_{49})$, it follows that the number of ${\rm Aut}(X_{49})$-orbits of vertices is $3$.  
    
 \medskip

Let $X_{50} = K_{50}/\Gamma_{50}$ be a semiequivelar map on the torus for some fixed element (vertex, edge or face) free subgroup $\Gamma_{50}\le {\rm Aut}(K_{50})$. Let the vertices of $X_{50}$ form $m_{50}$ ${\rm Aut}(X_{50})$-orbits.
We take the middle point of the line segment joining vertices $b_0$ and $b_{3}$ as the origin $(0,0)$ of $K_{50}$  (see Section \ref{3uniform}).Let  $\alpha_{50} := b_{1} - b_0$ and $\beta_{50} := b_{10} - b_{0} \in \mathbb{R}^2$. Similarly as above, define $H_{50} := \langle x\mapsto x+\alpha_{50}, x\mapsto x+\beta_{50}\rangle$ and 
\begin{align*}
  G_{50} & =\{ \alpha : x\mapsto \varepsilon x + m_{50}\alpha_{50} + n_{50}\beta_{50}    \, : \, \varepsilon=\pm 1, m_{50}, n_{50} \in \ZZ\} \cong H_{50}\rtimes \mathbb{Z}_2.
\end{align*}
By the same arguments as above and in Claim 1, $\Gamma_{50} \unlhd G_{50}$ and the number of $G_{50}/\Gamma_{50}$-orbits of vertices of $X_{50}$ is three. Therefore, $G_{50}/\Gamma_{50}$ acts on $X_{50} = K_{50}/\Gamma_{50}$.
Since $O_1 = \langle a_5 \rangle,$ $O_2 = \langle a_{6} \rangle,$  $O_3 = \langle b_{0} \rangle$
 are the $G_{50}$-orbits, it follows that $O_1 = \langle a_5 \rangle,$ $O_2 = \langle a_{6} \rangle,$  $O_3 = \langle b_{0} \rangle$ are the $(G_{50}/\Gamma_{50})$-orbits.  
Since the vertex set of $X_{50}$ is $\sqcup_{j=1}^{3} \eta_{50}(O_j)$ and $G_{50}/\Gamma_{50} \leq {\rm Aut}(X_{50})$, it follows that the number of ${\rm Aut}(X_{50})$-orbits of vertices is $3$.   
    
 \medskip

Let $X_{51} = K_{51}/\Gamma_{51}$ be a semiequivelar map on the torus for some fixed element (vertex, edge or face) free subgroup $\Gamma_{51}\le {\rm Aut}(K_{51})$. Let the vertices of $X_{51}$ form $m_{51}$ ${\rm Aut}(X_{51})$-orbits.
We take the middle point of the line segment joining vertices $a_0$ and $b_{1}$ as the origin $(0,0)$ of $K_{51}$  (see Section \ref{3uniform}).Let  $\alpha_{51} := a_{14} - a_0$ and $\beta_{51} := a_{7} - a_{0} \in \mathbb{R}^2$. Similarly as above, define $H_{51} := \langle x\mapsto x+\alpha_{51}, x\mapsto x+\beta_{51}\rangle$ and 
\begin{align*}
  G_{51} & =\{ \alpha : x\mapsto \varepsilon x + m_{51}\alpha_{51} + n_{51}\beta_{51}    \, : \, \varepsilon=\pm 1, m_{51}, n_{51} \in \ZZ\} \cong H_{51}\rtimes \mathbb{Z}_2.
\end{align*}
By the same arguments as above and in Claim 1, $\Gamma_{51} \unlhd G_{51}$ and the number of $G_{51}/\Gamma_{51}$-orbits of vertices of $X_{51}$ is three. Therefore, $G_{51}/\Gamma_{51}$ acts on $X_{51} = K_{51}/\Gamma_{51}$.
Since $O_1 = \langle a_1 \rangle,$ $O_2 = \langle a_2 \rangle,$  $O_3 = \langle b_{21} \rangle$, $O_4 = \langle b_{20} \rangle,$ $O_5 = \langle a_{3} \rangle,$ $O_6 = \langle a_{0} \rangle,$ $O_7 = \langle b_{0} \rangle$, $O_8 = \langle b_{1} \rangle$, $O_{9} = \langle b_{2} \rangle$, $O_{10} = \langle a_{8} \rangle$, $O_{11} = \langle a_{10} \rangle$, $O_{12} = \langle b_{3} \rangle$, $O_{13} = \langle b_{4} \rangle$, $O_{14} = \langle a_{11} \rangle$, $O_{15} = \langle a_{9} \rangle$ are the $G_{50}$-orbits, it follows that $O_1 = \langle a_1 \rangle,$ $O_2 = \langle a_2 \rangle,$  $O_3 = \langle b_{21} \rangle$, $O_4 = \langle b_{20} \rangle,$ $O_5 = \langle a_{3} \rangle,$ $O_6 = \langle a_{0} \rangle,$ $O_7 = \langle b_{0} \rangle$, $O_8 = \langle b_{1} \rangle$, $O_{9} = \langle b_{2} \rangle$, $O_{10} = \langle a_{8} \rangle$, $O_{11} = \langle a_{10} \rangle$, $O_{12} = \langle b_{3} \rangle$, $O_{13} = \langle b_{4} \rangle$, $O_{14} = \langle a_{11} \rangle$, $O_{15} = \langle a_{9} \rangle$ are the $(G_{51}/\Gamma_{51})$-orbits.  
Since the vertex set of $X_{51}$ is $\sqcup_{j=1}^{15} \eta_{51}(O_j)$ and $G_{51}/\Gamma_{51} \leq {\rm Aut}(X_{51})$, it follows that the number of ${\rm Aut}(X_{51})$-orbits of vertices is $\le15$.   
    
 \medskip

Let $X_{52} = K_{52}/\Gamma_{52}$ be a semiequivelar map on the torus for some fixed element (vertex, edge or face) free subgroup $\Gamma_{52}\le {\rm Aut}(K_{52})$. Let the vertices of $X_{52}$ form $m_{52}$ ${\rm Aut}(X_{52})$-orbits.
We take the middle point of the line segment joining vertices $a_0$ and $b_{0}$ as the origin $(0,0)$ of $K_{52}$  (see Section \ref{3uniform}).Let  $\alpha_{52} := a_{3} - a_0$ and $\beta_{52} := a_{5} - a_{0} \in \mathbb{R}^2$. Similarly as above, define $H_{52} := \langle x\mapsto x+\alpha_{52}, x\mapsto x+\beta_{52}\rangle$ and 
\begin{align*}
  G_{52} & =\{ \alpha : x\mapsto \varepsilon x + m_{52}\alpha_{52} + n_{52}\beta_{52}    \, : \, \varepsilon=\pm 1, m_{52}, n_{52} \in \ZZ\} \cong H_{52}\rtimes \mathbb{Z}_2.
\end{align*}
By the same arguments as above and in Claim 1, $\Gamma_{52} \unlhd G_{52}$ and the number of $G_{52}/\Gamma_{52}$-orbits of vertices of $X_{52}$ is six. Therefore, $G_{52}/\Gamma_{52}$ acts on $X_{52} = K_{52}/\Gamma_{52}$.
Since $O_1 = \langle a_0 \rangle,$ $O_2 = \langle b_1 \rangle,$  $O_3 = \langle b_{0} \rangle$, $O_4 = \langle b_{1} \rangle,$ $O_5 = \langle b_{3} \rangle,$ $O_6 = \langle b_{2} \rangle$
 are the $G_{52}$-orbits, it follows that $O_1 = \langle a_0 \rangle,$ $O_2 = \langle b_1 \rangle,$  $O_3 = \langle b_{0} \rangle$, $O_4 = \langle b_{1} \rangle,$ $O_5 = \langle b_{3} \rangle,$ $O_6 = \langle b_{2} \rangle$ are the $(G_{52}/\Gamma_{52})$-orbits.  
Since the vertex set of $X_{52}$ is $\sqcup_{j=1}^{6} \eta_{52}(O_j)$ and $G_{52}/\Gamma_{52} \leq {\rm Aut}(X_{52})$, it follows that the number of ${\rm Aut}(X_{52})$-orbits of vertices is $\le6$.   

\medskip

Let $X_{53} = K_{53}/\Gamma_{53}$ be a semiequivelar map on the torus for some fixed element (vertex, edge or face) free subgroup $\Gamma_{53}\le {\rm Aut}(K_{53})$. Let the vertices of $X_{53}$ form $m_{53}$ ${\rm Aut}(X_{53})$-orbits.
We take the middle point of the line segment joining vertices $a_0$ and $b_{1}$ as the origin $(0,0)$ of $K_{53}$  (see Section \ref{3uniform}).Let  $\alpha_{53} := a_{1} - a_0$ and $\beta_{53} := a_{42} - a_{0} \in \mathbb{R}^2$. Similarly as above, define $H_{53} := \langle x\mapsto x+\alpha_{53}, x\mapsto x+\beta_{53}\rangle$ and 
\begin{align*}
  G_{53} & =\{ \alpha : x\mapsto \varepsilon x + m_{53}\alpha_{53} + n_{53}\beta_{53}    \, : \, \varepsilon=\pm 1, m_{53}, n_{53} \in \ZZ\} \cong H_{53}\rtimes \mathbb{Z}_2.
\end{align*}
By the same arguments as above and in Claim 1, $\Gamma_{53} \unlhd G_{53}$ and the number of $G_{53}/\Gamma_{53}$-orbits of vertices of $X_{53}$ is five. Therefore, $G_{53}/\Gamma_{53}$ acts on $X_{53} = K_{53}/\Gamma_{53}$.
Since $O_1 = \langle a_0 \rangle,$ $O_2 = \langle b_0 \rangle,$  $O_3 = \langle a_{12} \rangle$, $O_4 = \langle a_{23} \rangle,$ $O_5 = \langle a_{32} \rangle$
 are the $G_{53}$-orbits, it follows that $O_1 = \langle a_0 \rangle,$ $O_2 = \langle b_0 \rangle,$  $O_3 = \langle a_{12} \rangle$, $O_4 = \langle a_{23} \rangle,$ $O_5 = \langle a_{32} \rangle$ are the $(G_{53}/\Gamma_{53})$-orbits.  
Since the vertex set of $X_{53}$ is $\sqcup_{j=1}^{5} \eta_{53}(O_j)$ and $G_{53}/\Gamma_{53} \leq {\rm Aut}(X_{53})$, it follows that the number of ${\rm Aut}(X_{53})$-orbits of vertices is $\le5$.   
 
\medskip

Let $X_{54} = K_{54}/\Gamma_{54}$ be a semiequivelar map on the torus for some fixed element (vertex, edge or face) free subgroup $\Gamma_{54}\le {\rm Aut}(K_{54})$. Let the vertices of $X_{54}$ form $m_{54}$ ${\rm Aut}(X_{54})$-orbits.
We take the middle point of the line segment joining vertices $a_0$ and $a_{14}$ as the origin $(0,0)$ of $K_{54}$  (see Section \ref{3uniform}).Let  $\alpha_{54} := a_{1} - a_0$ and $\beta_{54} := a_{81} - a_{0} \in \mathbb{R}^2$. Similarly as above, define $H_{54} := \langle x\mapsto x+\alpha_{54}, x\mapsto x+\beta_{54}\rangle$ and 
\begin{align*}
  G_{54} & =\{ \alpha : x\mapsto \varepsilon x + m_{54}\alpha_{54} + n_{54}\beta_{54}    \, : \, \varepsilon=\pm 1, m_{54}, n_{54} \in \ZZ\} \cong H_{54}\rtimes \mathbb{Z}_2.
\end{align*}
By the same arguments as above and in Claim 1, $\Gamma_{54} \unlhd G_{54}$ and the number of $G_{54}/\Gamma_{54}$-orbits of vertices of $X_{54}$ is three. Therefore, $G_{54}/\Gamma_{54}$ acts on $X_{54} = K_{54}/\Gamma_{54}$.
Since $O_1 = \langle a_0 \rangle,$ $O_2 = \langle a_{51} \rangle,$  $O_3 = \langle b_{0} \rangle$,
 are the $G_{54}$-orbits, it follows that $O_1 = \langle a_0 \rangle,$ $O_2 = \langle a_{51} \rangle,$  $O_3 = \langle b_{0} \rangle$nare the $(G_{54}/\Gamma_{54})$-orbits.  
Since the vertex set of $X_{54}$ is $\sqcup_{j=1}^{3} \eta_{54}(O_j)$ and $G_{54}/\Gamma_{54} \leq {\rm Aut}(X_{54})$, it follows that the number of ${\rm Aut}(X_{54})$-orbits of vertices is $3$.   
 
\medskip

Let $X_{55} = K_{55}/\Gamma_{55}$ be a semiequivelar map on the torus for some fixed element (vertex, edge or face) free subgroup $\Gamma_{55}\le {\rm Aut}(K_{55})$. Let the vertices of $X_{55}$ form $m_{55}$ ${\rm Aut}(X_{55})$-orbits.
We take the middle point of the line segment joining vertices $a_0$ and $b_{1}$ as the origin $(0,0)$ of $K_{55}$  (see Section \ref{3uniform}).Let  $\alpha_{55} := a_{1} - a_0$ and $\beta_{55} := a_{31} - a_{0} \in \mathbb{R}^2$. Similarly as above, define $H_{55} := \langle x\mapsto x+\alpha_{55}, x\mapsto x+\beta_{55}\rangle$ and 
\begin{align*}
  G_{55} & =\{ \alpha : x\mapsto \varepsilon x + m_{55}\alpha_{55} + n_{55}\beta_{55}    \, : \, \varepsilon=\pm 1, m_{55}, n_{55} \in \ZZ\} \cong H_{55}\rtimes \mathbb{Z}_2.
\end{align*}
By the same arguments as above and in Claim 1, $\Gamma_{55} \unlhd G_{55}$ and the number of $G_{55}/\Gamma_{55}$-orbits of vertices of $X_{55}$ is five. Therefore, $G_{55}/\Gamma_{55}$ acts on $X_{55} = K_{55}/\Gamma_{55}$.
Since $O_1 = \langle b_0 \rangle,$ $O_2 = \langle a_0 \rangle,$  $O_3 = \langle a_{12} \rangle$, $O_4 = \langle a_{23} \rangle,$ $O_5 = \langle a_{32} \rangle$
 are the $G_{55}$-orbits, it follows that $O_1 = \langle b_0 \rangle,$ $O_2 = \langle a_0 \rangle,$  $O_3 = \langle a_{12} \rangle$, $O_4 = \langle a_{23} \rangle,$ $O_5 = \langle a_{32} \rangle$ are the $(G_{55}/\Gamma_{55})$-orbits.  
Since the vertex set of $X_{55}$ is $\sqcup_{j=1}^{5} \eta_{55}(O_j)$ and $G_{55}/\Gamma_{55} \leq {\rm Aut}(X_{55})$, it follows that the number of ${\rm Aut}(X_{55})$-orbits of vertices is $\le5$.   
 
\medskip

Let $X_{56} = K_{56}/\Gamma_{56}$ be a semiequivelar map on the torus for some fixed element (vertex, edge or face) free subgroup $\Gamma_{56}\le {\rm Aut}(K_{56})$. Let the vertices of $X_{56}$ form $m_{56}$ ${\rm Aut}(X_{56})$-orbits.
We take the middle point of the line segment joining vertices $a_0$ and $b_{1}$ as the origin $(0,0)$ of $K_{56}$  (see Section \ref{3uniform}).Let  $\alpha_{56} := a_{1} - a_0$ and $\beta_{56} := a_{45} - a_{0} \in \mathbb{R}^2$. Similarly as above, define $H_{56} := \langle x\mapsto x+\alpha_{56}, x\mapsto x+\beta_{56}\rangle$ and 
\begin{align*}
  G_{56} & =\{ \alpha : x\mapsto \varepsilon x + m_{56}\alpha_{56} + n_{56}\beta_{56}    \, : \, \varepsilon=\pm 1, m_{56}, n_{56} \in \ZZ\} \cong H_{56}\rtimes \mathbb{Z}_2.
\end{align*}
By the same arguments as above and in Claim 1, $\Gamma_{56} \unlhd G_{56}$ and the number of $G_{56}/\Gamma_{56}$-orbits of vertices of $X_{56}$ is six. Therefore, $G_{56}/\Gamma_{56}$ acts on $X_{56} = K_{56}/\Gamma_{56}$.
Since $O_1 = \langle b_0 \rangle,$ $O_2 = \langle a_0 \rangle,$  $O_3 = \langle a_{12} \rangle$, $O_4 = \langle a_{25} \rangle,$ $O_5 = \langle a_{32} \rangle$, $O_6 = \langle a_{33} \rangle$
 are the $G_{56}$-orbits, it follows that $O_1 = \langle b_0 \rangle,$ $O_2 = \langle a_0 \rangle,$  $O_3 = \langle a_{12} \rangle$, $O_4 = \langle a_{25} \rangle,$ $O_5 = \langle a_{32} \rangle$, $O_6 = \langle a_{33} \rangle$ are the $(G_{56}/\Gamma_{56})$-orbits.  
Since the vertex set of $X_{56}$ is $\sqcup_{j=1}^{6} \eta_{56}(O_j)$ and $G_{56}/\Gamma_{56} \leq {\rm Aut}(X_{56})$, it follows that the number of ${\rm Aut}(X_{56})$-orbits of vertices is $\le6$.   
  
\medskip

Let $X_{57} = K_{57}/\Gamma_{57}$ be a semiequivelar map on the torus for some fixed element (vertex, edge or face) free subgroup $\Gamma_{57}\le {\rm Aut}(K_{57})$. Let the vertices of $X_{57}$ form $m_{57}$ ${\rm Aut}(X_{57})$-orbits.
We take the middle point of the line segment joining vertices $b_0$ and $b_{1}$ as the origin $(0,0)$ of $K_{57}$  (see Section \ref{3uniform}).Let  $\alpha_{57} := b_{1} - b_0$ and $\beta_{57} := b_{20} - b_{0} \in \mathbb{R}^2$. Similarly as above, define $H_{57} := \langle x\mapsto x+\alpha_{57}, x\mapsto x+\beta_{57}\rangle$ and 
\begin{align*}
  G_{57} & =\{ \alpha : x\mapsto \varepsilon x + m_{57}\alpha_{57} + n_{57}\beta_{57}    \, : \, \varepsilon=\pm 1, m_{57}, n_{57} \in \ZZ\} \cong H_{57}\rtimes \mathbb{Z}_2.
\end{align*}
By the same arguments as above and in Claim 1, $\Gamma_{57} \unlhd G_{57}$ and the number of $G_{57}/\Gamma_{57}$-orbits of vertices of $X_{57}$ is six. Therefore, $G_{57}/\Gamma_{57}$ acts on $X_{57} = K_{57}/\Gamma_{57}$.
Since $O_1 = \langle a_1 \rangle,$ $O_2 = \langle a_0 \rangle,$  $O_3 = \langle b_{0} \rangle$, $O_4 = \langle b_{9} \rangle,$ $O_5 = \langle b_{8} \rangle$, $O_6 = \langle b_{15} \rangle$
 are the $G_{57}$-orbits, it follows that $O_1 = \langle a_1 \rangle,$ $O_2 = \langle a_0 \rangle,$  $O_3 = \langle b_{0} \rangle$, $O_4 = \langle b_{9} \rangle,$ $O_5 = \langle b_{8} \rangle$, $O_6 = \langle b_{15} \rangle$ are the $(G_{57}/\Gamma_{57})$-orbits.  
Since the vertex set of $X_{56}$ is $\sqcup_{j=1}^{6} \eta_{57}(O_j)$ and $G_{57}/\Gamma_{57} \leq {\rm Aut}(X_{57})$, it follows that the number of ${\rm Aut}(X_{57})$-orbits of vertices is $\le6$.   

\medskip

Let $X_{58} = K_{58}/\Gamma_{58}$ be a semiequivelar map on the torus for some fixed element (vertex, edge or face) free subgroup $\Gamma_{58} \le {\rm Aut}(K_{58})$. Let the vertices of $X_{58}$ form $m_{58}$ ${\rm Aut}(X_{58})$-orbits.
We take the middle point of the line segment joining vertices $a_0$ and $a_{3}$ as the origin $(0,0)$ of $K_{58}$  (see Section \ref{3uniform}). Let  $\alpha_{58} := a_{24} - a_0$, $\beta_{58} := a_{34} - a_{0}$ and $\gamma_{58} := a_{56} - a_{0}\in \mathbb{R}^2$. Similarly as above, define $H_{58} := \langle x\mapsto x+\alpha_{58}, x\mapsto x+\beta_{58}, x\mapsto x+\gamma_{58}\rangle$ and 
\begin{align*}
 G_{58} & =\{ \alpha : x\mapsto \varepsilon x + m_{58}\alpha_{58} + n_{58}\beta_{58} +r_{58}\gamma_{58}   \, : \, \varepsilon=\pm 1, m_{58}, n_{58}, r_{58} \in \ZZ\} \cong H_{58}\rtimes \mathbb{Z}_2
\end{align*}
By the same arguments as above and in Claim 1, $\Gamma_{58} \unlhd G_{58}$ and the number of $G_{58}/\Gamma_{58}$-orbits of vertices of $X_{58}$ is twelve. Therefore, $G_{58}/\Gamma_{58}$ acts on $X_{58} = K_{58}/\Gamma_{58}$.
Since  $O_1 = \langle a_0 \rangle,$ $O_2 = \langle a_1 \rangle,$  $O_3 = \langle a_2 \rangle$, $O_4 = \langle b_0 \rangle,$ $O_5 = \langle a_{6} \rangle,$ $O_6 = \langle b_{3} \rangle,$ $O_7 = \langle b_{4} \rangle$, $O_8 = \langle a_{8} \rangle$, $O_{9} = \langle b_{7} \rangle$, $O_{10} = \langle b_{8} \rangle$, $O_{11} = \langle b_{9} \rangle$, $O_{12} = \langle a_{10} \rangle$
 are the $G_{58}$-orbits, it follows that$O_1 = \langle a_0 \rangle,$ $O_2 = \langle a_1 \rangle,$  $O_3 = \langle a_2 \rangle$, $O_4 = \langle b_0 \rangle,$ $O_5 = \langle a_{6} \rangle,$ $O_6 = \langle b_{3} \rangle,$ $O_7 = \langle b_{4} \rangle$, $O_8 = \langle a_{8} \rangle$, $O_{9} = \langle b_{7} \rangle$, $O_{10} = \langle b_{8} \rangle$, $O_{11} = \langle b_{9} \rangle$, $O_{12} = \langle a_{10} \rangle$ are the $(G_{58}/\Gamma_{58})$-orbits.  
Since the vertex set of $X_{58}$ is $\sqcup_{j=1}^{12} \eta_{58}(O_j)$ and $G_{58}/\Gamma_{58} \leq {\rm Aut}(X_{58})$, it follows that the number of ${\rm Aut}(X_{58})$-orbits of vertices is $\le12$.
  
\medskip

Let $X_{59} = K_{59}/\Gamma_{59}$ be a semiequivelar map on the torus for some fixed element (vertex, edge or face) free subgroup $\Gamma_{59}\le {\rm Aut}(K_{59})$. Let the vertices of $X_{59}$ form $m_{59}$ ${\rm Aut}(X_{59})$-orbits.
We take the middle point of the line segment joining vertices $b_0$ and $b_{1}$ as the origin $(0,0)$ of $K_{59}$  (see Section \ref{3uniform}).Let  $\alpha_{59} := b_{1} - b_0$ and $\beta_{59} := b_{18} - b_{0} \in \mathbb{R}^2$. Similarly as above, define $H_{59} := \langle x\mapsto x+\alpha_{59}, x\mapsto x+\beta_{59}\rangle$ and 
\begin{align*}
  G_{59} & =\{ \alpha : x\mapsto \varepsilon x + m_{59}\alpha_{59} + n_{59}\beta_{59}    \, : \, \varepsilon=\pm 1, m_{59}, n_{59} \in \ZZ\} \cong H_{59}\rtimes \mathbb{Z}_2.
\end{align*}
By the same arguments as above and in Claim 1, $\Gamma_{59} \unlhd G_{59}$ and the number of $G_{59}/\Gamma_{59}$-orbits of vertices of $X_{59}$ is six. Therefore, $G_{59}/\Gamma_{59}$ acts on $X_{59} = K_{59}/\Gamma_{59}$.
Since $O_1 = \langle a_0 \rangle,$ $O_2 = \langle a_1 \rangle,$  $O_3 = \langle b_{1} \rangle$, $O_4 = \langle a_{9} \rangle,$ $O_5 = \langle b_{4} \rangle$, $O_6 = \langle a_{11} \rangle$
 are the $G_{59}$-orbits, it follows that $O_1 = \langle a_0 \rangle,$ $O_2 = \langle a_1 \rangle,$  $O_3 = \langle b_{1} \rangle$, $O_4 = \langle a_{9} \rangle,$ $O_5 = \langle b_{4} \rangle$, $O_6 = \langle a_{11} \rangle$ are the $(G_{59}/\Gamma_{59})$-orbits.  
Since the vertex set of $X_{59}$ is $\sqcup_{j=1}^{6} \eta_{59}(O_j)$ and $G_{59}/\Gamma_{59} \leq {\rm Aut}(X_{59})$, it follows that the number of ${\rm Aut}(X_{59})$-orbits of vertices is $\le6$.   
  
\medskip

Let $X_{60} = K_{60}/\Gamma_{60}$ be a semiequivelar map on the torus for some fixed element (vertex, edge or face) free subgroup $\Gamma_{60}\le {\rm Aut}(K_{60})$. Let the vertices of $X_{60}$ form $m_{60}$ ${\rm Aut}(X_{60})$-orbits.
We take the middle point of the line segment joining vertices $a_0$ and $a_{3}$ as the origin $(0,0)$ of $K_{60}$  (see Section \ref{3uniform}).Let  $\alpha_{60} := a_{3} - a_0$ and $\beta_{60} := a_{54} - a_{0} \in \mathbb{R}^2$. Similarly as above, define $H_{60} := \langle x\mapsto x+\alpha_{60}, x\mapsto x+\beta_{60}\rangle$ and 
\begin{align*}
  G_{60} & =\{ \alpha : x\mapsto \varepsilon x + m_{60}\alpha_{60} + n_{60}\beta_{60}    \, : \, \varepsilon=\pm 1, m_{60}, n_{60} \in \ZZ\} \cong H_{60}\rtimes \mathbb{Z}_2.
\end{align*}
By the same arguments as above and in Claim 1, $\Gamma_{60} \unlhd G_{60}$ and the number of $G_{60}/\Gamma_{60}$-orbits of vertices of $X_{60}$ is eight. Therefore, $G_{60}/\Gamma_{60}$ acts on $X_{60} = K_{60}/\Gamma_{60}$.
Since $O_1 = \langle a_0 \rangle,$ $O_2 = \langle a_1 \rangle,$  $O_3 = \langle a_{2} \rangle$, $O_4 = \langle b_{1} \rangle,$ $O_5 = \langle b_{0} \rangle$, $O_6 = \langle b_{15} \rangle$, $O_6 = \langle b_{14} \rangle$, $O_6 = \langle a_{15} \rangle$
 are the $G_{60}$-orbits, it follows that $O_1 = \langle a_0 \rangle,$ $O_2 = \langle a_1 \rangle,$  $O_3 = \langle a_{2} \rangle$, $O_4 = \langle b_{1} \rangle,$ $O_5 = \langle b_{0} \rangle$, $O_6 = \langle b_{15} \rangle$, $O_6 = \langle b_{14} \rangle$, $O_6 = \langle a_{15} \rangle$ are the $(G_{60}/\Gamma_{60})$-orbits.  
Since the vertex set of $X_{60}$ is $\sqcup_{j=1}^{8} \eta_{60}(O_j)$ and $G_{60}/\Gamma_{60} \leq {\rm Aut}(X_{60})$, it follows that the number of ${\rm Aut}(X_{60})$-orbits of vertices is $\le8$.   

\medskip

Let $X_{61} = K_{61}/\Gamma_{61}$ be a semiequivelar map on the torus for some fixed element (vertex, edge or face) free subgroup $\Gamma_{61}\le {\rm Aut}(K_{61})$. Let the vertices of $X_{61}$ form $m_{61}$ ${\rm Aut}(X_{61})$-orbits.
We take the middle point of the line segment joining vertices $a_0$ and $a_{3}$ as the origin $(0,0)$ of $K_{61}$  (see Section \ref{3uniform}).Let  $\alpha_{61} := a_{3} - a_0$ and $\beta_{61} := a_{49} - a_{0} \in \mathbb{R}^2$. Similarly as above, define $H_{61} := \langle x\mapsto x+\alpha_{61}, x\mapsto x+\beta_{61}\rangle$ and 
\begin{align*}
  G_{61} & =\{ \alpha : x\mapsto \varepsilon x + m_{61}\alpha_{61} + n_{61}\beta_{61}    \, : \, \varepsilon=\pm 1, m_{61}, n_{61} \in \ZZ\} \cong H_{61}\rtimes \mathbb{Z}_2.
\end{align*}
By the same arguments as above and in Claim 1, $\Gamma_{61} \unlhd G_{61}$ and the number of $G_{61}/\Gamma_{61}$-orbits of vertices of $X_{61}$ is eight. Therefore, $G_{61}/\Gamma_{61}$ acts on $X_{61} = K_{61}/\Gamma_{61}$.
Since $O_1 = \langle a_0 \rangle,$ $O_2 = \langle a_1 \rangle,$  $O_3 = \langle a_{2} \rangle$, $O_4 = \langle a_{15} \rangle,$ $O_5 = \langle b_{0} \rangle$, $O_6 = \langle b_{1} \rangle$, $O_6 = \langle b_{14} \rangle$, $O_6 = \langle b_{15} \rangle$
 are the $G_{61}$-orbits, it follows that $O_1 = \langle a_0 \rangle,$ $O_2 = \langle a_1 \rangle,$  $O_3 = \langle a_{2} \rangle$, $O_4 = \langle a_{15} \rangle,$ $O_5 = \langle b_{0} \rangle$, $O_6 = \langle b_{1} \rangle$, $O_6 = \langle b_{14} \rangle$, $O_6 = \langle b_{15} \rangle$ are the $(G_{61}/\Gamma_{61})$-orbits.  
Since the vertex set of $X_{61}$ is $\sqcup_{j=1}^{8} \eta_{61}(O_j)$ and $G_{61}/\Gamma_{61} \leq {\rm Aut}(X_{61})$, it follows that the number of ${\rm Aut}(X_{61})$-orbits of vertices is $\le8$.   
\end{proof}

\section{Acknowledgements}

Marbarisha M. Kharkongor is supported by University Grants Commission (UGC) (1182/(CSIR-UGC NET JUNE 2019)) and Dipendu Maity is supported by NBHM, DAE (No. 02011/9/2021-NBHM(R.P.)/R$\&$D-II/9101).

\section{Conflicts of Interest Statement} 
On behalf of all authors, the corresponding author states that there is no conflict of interest.


{\small

}

\end{document}